\documentclass[11pt]{article}
%
\usepackage[utf8]{inputenc}
\usepackage[top=2.3cm, bottom=2.3cm,left=2.5cm, right=2.5cm]{geometry}
\usepackage[maxbibnames=99,backend=biber,
style=numeric-comp,giveninits=true,date=year,
doi=false,isbn=false,url=false
]{biblatex}
\addbibresource{references.bib}
\AtEveryBibitem{%
  \clearfield{number}}
\DeclareFieldFormat{pages}{#1}
\renewbibmacro{in:}{%
  \ifentrytype{article}
    {}
    {\bibstring{in}%
     \printunit{\intitlepunct}}}
\DeclareFieldFormat[article,inbook,incollection,inproceedings,patent,thesis,unpublished]{title}{\mkbibemph{#1\isdot}}

\DeclareFieldFormat{journaltitle}{#1\isdot}

\usepackage{float}
\usepackage{graphicx}
\usepackage{outlines}
\usepackage{amsfonts}
\usepackage{amsmath}
\usepackage{amsthm} 
\usepackage{amssymb}
\usepackage{xfrac} 
\usepackage{nicefrac}
\usepackage{array}
\usepackage{tabulary}
\usepackage{comment}
\usepackage{pdfpages}
\usepackage{url}
\usepackage{comment}
\usepackage{pdfpages}
\usepackage{lscape}
\usepackage{parskip}
\usepackage{tikz-cd}
\usetikzlibrary{calc,shapes.geometric} 
\usepackage[font={footnotesize}]{caption}
\usepackage[font={footnotesize}]{subcaption}
\usepackage{titlesec}
\usepackage[hidelinks]{hyperref}
\usepackage{listings}
\usepackage{mdframed}
\usepackage{xcolor}
\usepackage{mathtools}
\usepackage{booktabs}
\usepackage[plainruled]{algorithm2e}
\usepackage{setspace}
\usepackage{tocloft} 
\usepackage{csquotes} 
\usepackage{epigraph}
\usepackage{indentfirst}
\setlength{\parindent}{14pt}
\usepackage{prettyref}
\newcommand{\pref}{\prettyref}
\newrefformat{fig}{Figure~\ref{#1}}
\newrefformat{tab}{Table~\ref{#1}}
\newrefformat{def}{Definition~\ref{#1}}
\newrefformat{prop}{Proposition~\ref{#1}}
\newrefformat{cor}{Corollary~\ref{#1}}
\newrefformat{rem}{Remark~\ref{#1}}
\newrefformat{obs}{Observation~\ref{#1}}
\newrefformat{ex}{Example~\ref{#1}}
\newrefformat{lem}{Lemma~\ref{#1}}
\newrefformat{sec}{\S\ref{#1}}
\newrefformat{subsec}{\S\ref{#1}}
\usepackage[colorinlistoftodos, textsize=small]{todonotes} 
\usepackage{enumitem}
\setlist[itemize]{topsep=0.3em,leftmargin=1.9em}
\setlist[enumerate]{topsep=0.2em,leftmargin=2.0em}
\newcommand\Item[1][]{%
  \ifx\relax#1\relax  \item \else \item[#1] \fi
  \abovedisplayskip=0pt\abovedisplayshortskip=0pt~\vspace*{-\baselineskip}}

\newcommand{\NN}{\mathbb{N}} 
\newcommand{\ZZ}{\mathbb{Z}} 
\newcommand{\QQ}{\mathbb{Q}} 
\newcommand{\RR}{\mathbb{R}} 
\newcommand{\CC}{\mathbb{C}} 
\newcommand{\hCC}{\widehat{\mathbb{C}}} 
\newcommand{\hCA}{\widehat{\mathbb{A}}} 
\newcommand{\DD}{\mathbb{D}} 
\newcommand{\HH}{\mathbb{H}} 
\newcommand*{\medcup}{\mathbin{\scalebox{1.2}{\ensuremath{\cup}}}} 

\newcommand{\shortminus}{\scalebox{0.7}[1.0]{\( - \)}}
\newcommand{\shortplus}{\raisebox{.1565475463\height}{\scalebox{0.7}{\( + \)}}}
\providecommand{\keywords}[1]{\noindent\small\textbf{\textit{Keywords---}} #1} 
\theoremstyle{plain}
\newtheorem{theorem}{Theorem}[section]
\newtheorem{lemma}[theorem]{Lemma}
\newtheorem{proposition}[theorem]{Proposition}
\newtheorem{corollary}[theorem]{Corollary}


\providecommand{\customgenericname}{}
\newcommand{\newcustomtheorem}[2]{%
  \newenvironment{#1}[1]
  {%
   \renewcommand\customgenericname{#2}%
   \renewcommand\theinnercustomgeneric{##1}%
   \innercustomgeneric
  }
  {\endinnercustomgeneric}
}
\newcustomtheorem{customthm}{Theorem}
\newcustomtheorem{customcor}{Corollary}
\theoremstyle{definition}
\newtheorem{definition}[theorem]{Definition}
\newtheorem*{definition*}{Definition}
\newtheorem{remark}[theorem]{Remark}
\newtheorem*{remark*}{Remark}

\newtheorem*{observation*}{Observation}

\newtheorem{example}[theorem]{Example}
\newcustomtheorem{customdef}{Definition}

\newcustomtheorem{customproof}{Proof}
\usepackage{fancyhdr}
\pagestyle{fancy}

\fancyhead{}
\chead{\ifthenelse{\isodd{\value{page}}}{\small\textsc{Dynamics of projectable functions: Towards an atlas of wandering domains}}{\small\textsc{R. Florido and N. Fagella}}}
\lfoot{}
\cfoot{\thepage}
\rfoot{}
\numberwithin{equation}{section}
\makeatletter
    \def\@algocf@capt@plainruled{above}
    \renewcommand{\algocf@caption@plainruled}{%
    \vskip\AlCapSkip%
    \box\algocf@capbox%
    \vskip 8\algoheightrule}
    \titleformat{\section}{\Large\bfseries}{\thesection}{0.5em}{}
\makeatother
\DontPrintSemicolon

%
%
\begin{document}
\pagenumbering{arabic}
\title{\vspace{-5.5mm}\LARGE\scshape Dynamics of projectable functions:\\ Towards an atlas of wandering domains\\ for a family of Newton maps\vspace{-2.00mm}}
\author{Robert Florido\thanks{e-mail: \href{mailto:robert.florido@ub.edu}{\textit{robert.florido@ub.edu}}} $^1$ \ $\cdot$ \ Núria Fagella\thanks{This work is supported by the (a) Spanish State Research Agency, through the Severo Ochoa and Mar\'ia de Maeztu Program for Centers and Units of Excellence in R\&D (CEX2020-001084-M), and PID2020-118281GB-C32; (b) Generalitat de Catalunya through the grants 2017SGR1374 and ICREA Academia 2020.} $^{1,2}$}
\date{\vspace{-1.5mm}
{\small $^1$Departament de Matem\`atiques i Inform\`atica, Universitat de Barcelona, Barcelona, Spain}\\%
{\small $^2$Centre de Recerca Matem\`atica, Barcelona, Spain}\\[2ex]%
September 4, 2023} 
\maketitle
\vspace{-1.25mm}
\begin{abstract}
\vspace{-0.75mm}
We present a one-parameter family $F_\lambda$ of transcendental entire functions with zeros, whose Newton's method yields wandering domains, coexisting with the basins of the roots of $F_\lambda$. Wandering domains for Newton maps of zero-free functions have been built before by, e.g., Buff and R\"uckert \cite{Buff2006} based on the lifting method. This procedure is suited to our Newton maps as members of the class of projectable functions (or maps of the cylinder), i.e. transcendental meromorphic functions $f(z)$ in the complex plane that are semiconjugate, via the exponential, to some map $g(w)$, which may have at most a countable number of essential singularities. 

In this paper we make a systematic study of the general relation (dynamical and otherwise) between $f$ and $g$, and inspect the extension of the logarithmic lifting method of periodic Fatou components to our context, especially for those $g$ of finite-type. We apply these results to characterize the entire functions with zeros whose Newton's method projects to some map $g$ which is defined at both $0$ and $\infty$. The family $F_\lambda$ is the simplest in this class, and its parameter space shows open sets of $\lambda$-values in which the Newton map exhibits wandering or Baker domains, in both cases regions of initial conditions where Newton's root-finding method fails.
\end{abstract}
\vspace{-2.00mm}
\keywords{Newton's methods; Transcendental meromorphic maps; Projectable functions; Lifting of Fatou components; Pseudoperiodic points; Wandering domains.}
\vspace{2.0mm}
%
\section{Introduction}
\label{sec:1_Intro}

The iteration theory of meromorphic functions has been a primary focus of recent research in complex dynamics, investigating the possible extension of celebrated theorems in rational dynamics and the occurrence of new phenomena (see \cite{Bergweiler1993} for a comprehensive survey on the field). \textit{Transcendental meromorphic functions} $f:\CC\to\hCC$ are holomorphic except for isolated poles on the complex plane that may accumulate at the essential singularity at $\infty$. They naturally arise, for example, from the popular Newton's root-finding method applied to entire functions. In order to study the long-term behavior of arbitrary points under iteration, we split the \textit{Riemann sphere} $\hCC:=\CC\cup\{\infty\}$ into two completely invariant sets: the \textit{Fatou set} $\mathcal{F}(f)$ as the maximal open set on which the family of iterates $\{f^n\}_{n\in\NN}$ is defined and normal (or equicontinuous); and the \textit{Julia set} $\mathcal{J}(f):=\hCC\backslash \mathcal{F}(f)$, its chaotic complement. If $U$ is a \textit{Fatou component} of $f$, i.e. a connected component of $\mathcal{F}(f)$, then $f^n(U)$ is contained in a Fatou component $U_n$ for each $n\in\NN$, and $U_1\backslash f(U)$ contains at most two points \cite{Herring1998}. If $U_n\neq U_m$ for all $n\neq m$, then $U$ is called a \textit{wandering component} (or \textit{wandering domain}); otherwise $U$ is eventually $p$-\textit{periodic}, where $p\geq 1$ is the smallest such that $U_{k+p}=U_k$ for some $k\in\NN$.

\vspace{-1mm}
It is well-known that Newton's method $N_F(z):=z-\frac{F(z)}{F'(z)}$, where $F:\CC\to\CC$ is an entire function, may fail to converge to the roots of $F$, i.e. to the (attracting) fixed points of $N_F$. This happens not only if the initial condition $z_0$ is chosen in the Julia set, but also if some iterate of $z_0$ falls into a periodic cycle of Fatou components not containing the roots of $F$ as showcased in \cite{Curry1983}, or even into a chain of wandering domains. Although there are several conditions that rule out the existence of wandering domains in this context (see e.g. \cite{Baranski2020} and \cite[\S 4]{Ruckert2008}), explicit Newton's methods with wandering domains were given in \cite{Bergweiler1994} and \cite{Buff2006}. However, these examples were associated to zero-free functions $F$, so that there were indeed no roots to be found. In the present work we display the first, to our knowledge, explicit families of Newton maps which show that wandering domains and attracting invariant basins can, and often do, coexist. Our construction is based on the logarithmic lifting method due to Herman \cite{Herman1985}. More precisely, we shall build these wandering domains by lifting (via an exponential map) certain periodic Fatou components of a function $g$, which is semiconjugate through the exponential to a Newton's method. This leads to the following class of meromorphic functions in which the use of such technique makes sense. Denote by $\exp_\tau(z):=e^{\sfrac{2\pi i z}{\tau}}$ the exponential of period $\tau\in\CC^*:=\CC\backslash\{0\}$, and by $S_0(\tau):=\left\{z:-\frac{1}{2}<\operatorname{Re}\frac{z}{\tau}\leq\frac{1}{2}\right\}$ its \textit{(fundamental) period strip}.

\vspace{-0.4mm}
\begin{definition}[Projectable functions]
    \label{def:A_proj}
    Let $f:\CC\to\hCC$ be a transcendental meromorphic function. We say that $f$ is \textit{projectable} via $\exp_\tau$ if there exists a function $g$, its \textit{exponential projection}, satisfying
    \vspace{-2.25mm}
    \begin{equation}\label{eq:Projectable}
        g \circ \exp_\tau = \exp_\tau \circ f
        \vspace{-0.8mm}
    \end{equation}
    whenever defined, where $\tau\in\CC^*$.
\end{definition}

\vspace{-0.4mm}
By the preliminary change of variable $z\mapsto \tau z$, we assume without loss of generality that $\tau=1$. Given $g$, the function $f$ is unique up to an integer, and it is called a \textit{logarithmic lift} of $g$. Here we point out that $\exp_1$ induces a conformal isomorphism from the \textit{(extended) cylinder} $\hCA := \CC/\ZZ \cup \{\pm i\infty \}$ onto $\hCC$, which sends the upper end $+i\infty$ to $0$, and the lower end $-i\infty$ to $\infty$. Thus we may say that projectable functions $f$ quotient down to \textit{meromorphic maps of the cylinder}, whose domain of definition will be specified later. This naturally raises several questions about the nature of $f$ and the dynamical relationship with its projection via $\exp_1$. 

We wish to address such questions from a general point of view, and therefore we start by identifying the structure of projectable functions via the exponential. For this purpose, in \pref{sec:2_Pseudo} we transfer to the complex plane the notion of (simply and doubly) \textit{pseudoperiodic maps} in the sense of Arnol'd \cite{Arnold1999} (see \pref{def:Pseudoperiodic}). This is in line with the work of Brady \cite{Brady1970} who studied doubly pseudoperiodic functions, as a generalization of the Weierstrass $\zeta$-function. Pseudoperiodic maps turn out to be the sum of a linear and a periodic map, and characterize projectable functions as in the following theorem.
Recall that the set of all periods of a non-constant periodic meromorphic function on $\CC$ forms either a $1$-dimensional lattice $\tau_1\ZZ$ (the \textit{simply periodic} ones, e.g. $e^{\sfrac{2\pi i z}{\tau_{\scriptstyle 1}}}$), or a $2$-dimensional lattice $\tau_1\ZZ+\tau_2\ZZ$ (the \textit{doubly periodic} or \textit{elliptic} ones, e.g. the Weierstrass $\wp$-function), where $\tau_1,\tau_2\in\CC^*$ have non-real ratio, say $\sfrac{\tau_2}{\tau_1}\in\HH^+:=\{z:\operatorname{Im}{z}>0\}$ (see e.g. \cite{Ahlfors1979}).

\vspace{-0.5mm}
\begin{customthm}{A}[Form of projectable functions]
    \label{thm:A_FormProj}
    The class of projectable functions $f$ via $\exp_1$ coincides with the class of non-affine pseudoperiodic maps such that $f(z+1) = f(z) + \ell$ for all $z$, with $\ell\in\ZZ$. They can be written uniquely as
    \vspace{-0.15mm}
    \begin{equation}
        \label{eq:ProjPeriodic}
        f(z) = \ell z + \Phi(e^{2\pi i z}),
        \vspace{0.25mm}
    \end{equation}
    where $\Phi$ is a non-constant meromorphic function in $\CC^*$. Furthermore, $f$ is doubly pseudoperiodic (i.e. we also have $f(z+\tau)=f(z)+\eta_\tau$ for some $\tau\in\HH^+$ and $\eta_\tau\in\CC$) if and only if
    \vspace{-0.5mm}
    \begin{equation}
        \label{eq:ProjUnique}
        \Phi(e^{2\pi i z}) =  \frac{\ell\tau-\eta_{\footnotesize\tau}}{2\pi i}\Big(\zeta{(z)}-2\hspace{0.3mm}\zeta{(\sfrac{1}{2})}\hspace{0.4mm} z\Big)+E(z),
        \vspace{-1.5mm}
    \end{equation}
    where $\zeta$ is the Weierstrass $\zeta$-function with respect to $\ZZ+\tau\ZZ$, and $E$ is a doubly periodic function with periods $1$ and $\tau$. In particular, $f$ is also projectable via $\exp_\tau$ when $\eta_\tau=L\tau$ for some $L\in\ZZ$.
\end{customthm}

Hence, any projectable function via $\exp_1$ can be written as the sum of a linear map and a periodic function $\Phi\circ\exp_1$ as above, which is either simply periodic, doubly periodic, or a linear combination of those, with $1$ as a period. We refer to \cite{Hawkins2022} for an example of the dynamics of a doubly pseudoperiodic function, where two different directions of projection exist. In the case of entire projectable functions, the dynamics of their exponential projections $g$, which are holomorphic self-maps of $\CC^*$, has been widely studied, especially when both $0$ and $\infty$ are essential singularities of $g$ (see e.g. \cite{Bergweiler1995j,Keen1988,Kotus1987,Makienko1992,Marti-Pete2016}), following the early work of R\"adstrom \cite{Radstrom1953}. In other words, these projectable functions project down to holomorphic branched coverings of $\CC^*$, which may not be well-defined at $0$ or $\infty$ (i.e. the ends of the cylinder). However, in the non-entire case, the map $g$ is not going to be defined at all points of $\CC^*$.

In this regard, given a projectable function $f$, in \pref{sec:3_ExpProjection} we start the study of its exponential projection $g$ by showing that poles of $f$ correspond via $\exp_1$ to essential singularities of $g$ in $\CC^*$ (see \pref{prop:PolesToEssential}). Thus $g$ belongs to \textit{Bolsch's class} $\mathbf{K}$ \cite{Bolsch1997} of meromorphic functions with countably many essential singularities, which is the smallest class that contains all transcendental meromorphic maps and is closed under composition. We denote by $\mathcal{D}(g):=\hCC\backslash\mathcal{E}(g)$ the domain of definition of $g$, where $\mathcal{E}(g)$ is the set of essential singularities. We can show that both $0$ and $\infty$ (the omitted values of the exponential) are in $\mathcal{D}(g)$ if and only if $f$ belongs to the following class of (non-entire) projectable functions via $\exp_1$ (see \pref{prop:Regular0inf}).

\begin{definition}[Class $\mathbf{R}_\ell$]
    \label{def:classR}
    Denote by $\mathbf{R}_\ell$, $\ell\in\ZZ$, the class of meromorphic functions of the form
    \vspace{-0.75mm}
    \begin{equation}
        f(z)=\ell z + R(e^{2\pi i z}),
        \vspace{-0.75mm}
    \end{equation}
    where $R$ is a non-constant rational map such that $R(0)\neq \infty$ and $R(\infty)\neq \infty$. To be precise,
    \vspace{-1mm}
    \begin{equation}
        \label{eq:defRclassRl}
        R(w) = \frac{a_n w^n + \dots + a_0}{b_m w^m + \dots + b_0}
        \vspace{-1mm}
    \end{equation}
    is the ratio of two coprime polynomials with $m\geq \max{\{n,1\}}$, and $a_n, b_m, b_0\in\CC^*$, i.e. $R^{-1}(\infty)\neq \emptyset$.
\end{definition}

Given $f\in\mathbf{R}_\ell$, its exponential projection $g:\hCC\hspace{0.2mm}\backslash \exp_1 (R^{-1}(\infty))\to\hCC$ is written as
\vspace{-0.75mm}
\begin{equation}
    g(w)=w^\ell e^{2\pi i R(w)},
    \vspace{-0.75mm}
\end{equation}
for which $0$ and $\infty$ are fixed points if $\ell>0$, a cycle of period $2$ if $\ell<0$, or omitted values if $\ell=0$. Clearly, each essential singularity of $g$ has as many preimages as the degree of $R$, counted with multiplicity, which we call \textit{(essential) poles} of $g$ (see \pref{def:BolschClass}). Observe that $g$ would be a transcendental meromorphic map, i.e. $\#\mathcal{E}(g)=1$, as long as $f\in\mathbf{R}_\ell$ with exactly one pole in the period strip $S_0$ of $\exp_1$. In general, this is also possible just for some projectable entire functions (see \pref{rem:EntireCase}). In our pursuit of wandering domains for Newton's methods with fixed points, the class $\mathbf{R}_\ell$ is going to be central as will become clear later.

As an example, Newton's method of $\sin{\pi z}$ is in class $\mathbf{R}_1$, with $R(w)=\frac{i}{\pi}\frac{w-1}{w+1}$, while the Arnol'd standard map lies outside of $\mathbf{R}_\ell$ since $\{0,\infty\}\subset R^{-1}(\infty)$ (see \pref{ex:standardWeierstrass}, and \cite{Fagella1999}). Any $f\in\mathbf{R}_\ell$ may seen as a map defined on the extended cylinder $\hCA$, which is holomorphic outside of the canonical projection of $f^{-1}(\infty)$ on $\CC/\ZZ$. 
These functions correspond to pseudoperiodic analogues of the (periodic) maps $R\circ \exp_1$ which were studied in \cite{Baranski1995} (concerning the dimension of Julia sets), although here, as in \cite[\S~6]{Kotus2008}, we allow the set of singularities of the inverse function to intersect the Julia set.

\textit{Singular values} play a pivotal role in complex dynamics. For a given $g\in\mathbf{K}$, these are points $v$ in the range of $g$ for which some branch of its inverse $g^{-1}$ fails to be defined in any neighborhood of $v$. They are either \textit{critical values}, \textit{asymptotic values}, or limit points of those. The critical value set $\mathcal{CV}(g)$ consists of images of \textit{critical points} of $g$ which, in Bolsch's class, correspond not only to points $c\in\mathcal{D}(g)$ such that $g'(c)=0$, but also to multiple preimages of an essential singularity. The asymptotic value set $\mathcal{AV}(g)$ corresponds to those $v\in\hCC$ for which there is an \textit{asymptotic path} $\gamma:[0,1)\to\mathcal{D}(g)$ such that $\gamma(t)\to \hat{w}\in\mathcal{E}(g)$ and $g(\gamma(t))\to v$ as $t\to 1$. We denote the set of singular values of $g$ by \vspace{-0.2mm}
\begin{equation}
    \label{eq:Intro_SingularValues}
    \mathcal{S}(g):=\overline{\mathcal{CV}(g)\cup \mathcal{AV}(g)}, \vspace{-0.1mm}
\end{equation}
where the closure is taken in $\hCC$. Then, $g:\hCC\hspace{0.3mm} \backslash\hspace{-0.1mm}  \left( \mathcal{E}(g)\medcup g^{-1}(\mathcal{S}(g)) \right) \to \hCC\hspace{0.2mm} \backslash\hspace{0.2mm} \mathcal{S}(g)$ is a covering map. The relevance of $\mathcal{S}(g)$ becomes clear in close relation to the different types of periodic Fatou components.

The well-known classification of periodic Fatou components for meromorphic functions also hold for maps in Bolsch's class $\mathbf{K}$, and beyond (see \cite{Baker2001}, and references therein). Recall that a periodic point $w_0\in\mathcal{D}(g)$, as well as the cycle to which it belongs, is called \textit{attracting}, \textit{indifferent}, or \textit{repelling} if the modulus of its multiplier (i.e. $|(g^p)'(w_0)|$) is less than, equal to, or greater than $1$, respectively, where $p\geq 1$ is the smallest natural such that $g^p(w_0)=w_0$. In particular, $w_0$ is said to be \textit{parabolic} if the multiplier is $e^{2\pi i \rho}$ with $\rho\in\QQ$, while it is of \textit{Siegel} type if $\rho\notin\QQ$ and a local linearization is possible. Given that $\mathcal{F}(g)=\mathcal{F}(g^p)$, it is enough to classify an invariant Fatou component of $g\in\mathbf{K}$: it can be either a basin of attraction of an attracting or parabolic fixed point, a \textit{Siegel disk} or \textit{Herman ring} on which $g$ is conformally conjugate to an irrational rotation of a disk or annulus, respectively (called \textit{rotation domains}), or a \textit{Baker domain} on which the iterates of $g$ tend to an essential singularity of $g$ (see \cite[Thm.~C]{Baker2001}). It is known that any cycle of basins of attraction (of an attracting or parabolic cycle) must contain at least one singular value, and all boundary components of a cycle of Siegel disks or Herman rings are in the closure of forward orbits of values in $\mathcal{S}(g)$ \cite[Lem.~10]{Baker2001}.

In our context, we emphasize that a projectable function $f(z)$ has, in general, infinitely many poles and singular values accumulating at $\infty$, that is, $f$ lies outside of the so-called Eremenko-Lyubich class $\mathcal{B}$ \cite{Eremenko1992}, as $\mathcal{S}(f)\cap\CC$ is not bounded. This is always the case if $f$ is not $1$-periodic, due to pseudoperiodicty: $f(z+k)=f(z)+\ell k$ for all $k\in\ZZ$. The crucial point in our discussion is that, by the global change of coordinates $w(z):=e^{2\pi i z/\tau}$, we transfer the analysis to a Bolsch's function $g(w)$ with simpler dynamics. In particular, this occurs when $g$ is a \textit{finite-type} map (i.e. $\#\mathcal{S}(g)<\infty$), since $g$ has no wandering components nor Baker domains (see e.g. \cite{Baker2001, Eremenko1992}). The correspondence between critical and asymptotic values of $f$ and $g$ (\pref{rem:projCV} and \pref{prop:projAV}; see also \pref{prop:projC} and \pref{fig:3_projAV}) leads to the following result.

\begin{customthm}{B}[Projections of finite-type]
    \label{thm:B_FiniteType}
    Let $f$ be a projectable function via $\exp_1$, written as $f(z)=\ell z + \Phi(e^{2\pi i z})$ for some $\ell\in\ZZ$ and $\Phi$ meromorphic in $\CC^*$, $g$ its exponential projection, and $\widetilde{S}_0:=\left\{z:-\frac{\ell}{2}<\operatorname{Re}z\leq\frac{\ell}{2}\right\}$. Then $g$ is of finite-type if and only if one of the following holds:
    \begin{enumerate}[label = (\roman*)] 
        \vspace{-1.5mm}
        \item (Non-1-periodic case) $\ell\neq 0$ with $\#\big( \mathcal{S}(f) \cap \widetilde{S}_0\big)<\infty$ and, in addition, $\eta_\tau\in\QQ$ in the case that $f$ is doubly pseudoperiodic with $f(z+\tau)=f(z)+\eta_\tau$ for some $\tau\in\HH^+$.
        \vspace{-1.0mm}
        \item (1-periodic case) $\ell=0$ with $\#\exp_1\hspace{-0.2mm}\left( \mathcal{S}(\Phi)\backslash\{\infty\}\right)\hspace{-0.2mm}<\hspace{-0.2mm}\infty$.
        \vspace{-1.0mm}
    \end{enumerate}
    Furthermore, $\mathcal{S}(g)\cap\CC^*=\exp_1\hspace{-0.2mm}\left( \mathcal{S}(f)\backslash\{\infty\}\right)$ in case \textit{(i)}, $\mathcal{S}(g)\cap\CC^*=\exp_1\hspace{-0.2mm}\left( \mathcal{S}(\Phi)\backslash\{\infty\}\right)$ in case \textit{(ii)}, and $\{0,\infty\}\subset \mathcal{AV}(g)$ for both of them. Additionally, $0$ (resp. $\infty$) belongs to $\mathcal{CV}(g)$ if and only if $f$ is simply pseudoperiodic with $|\ell|\geq2$ and $g^{-1}(0)$ (resp. $g^{-1}(\infty)$) is outside of $\mathcal{E}(\Phi)\cup \Phi^{-1}(\infty)$.
\end{customthm}

Observe that for such a $g$ of finite-type, the corresponding projectable function $f$ may have infinitely many critical values, but none in the period strip $S_0$ of $\exp_1$, as e.g. the double standard map $f(z)=2z+1-\frac{1}{\pi}\sin{2\pi z}$, with $\mathcal{CV}(f)=2\ZZ+1$. The periodic case of this theorem includes all doubly periodic functions $f$ (since they are known to have finitely many critical values and no asymptotic values), and even functions $f$ which are not of finite-type (see \pref{ex:InfinitelyCinS1}). Note also that projections $g$ of functions in the class $\mathbf{R}_\ell$ are all of finite-type, with finitely many essential poles and critical values, and $0$ and $\infty$ as asymptotic values (\pref{cor:classRsv}), although the converse is not true (consider e.g. projections of the Arnol'd family). This, together with the control of the multipliers of the points at $0$ and $\infty$, makes $\mathbf{R}_\ell$ (with $\ell\in\ZZ^*$) an excellent class of meromorphic maps to deliver Baker and wandering domains by the lifting method (see \pref{sec:4_Lifting}).

Based on Herman's idea \cite{Herman1985} (detailed by Baker \cite[\S 5]{Baker1984} in the entire case), we produce wandering domains for a (meromorphic) projectable functions $f$, by lifting those periodic Fatou components of the projection $g$ which are not the image under $\exp_1$ of a periodic component of $\mathcal{F}(f)$. In general, this can be done by identifying the following class of points of $f$, all of which project via $\exp_1$ to periodic points of $g$ (\pref{lem:characPseudoPoint}). %
\begin{definition}[Pseudoperiodic points]
    \label{def:PseudoperiodicPoint}
    Let $f$ be a projectable function via $\exp_1$. We say that $z^*\in\CC\backslash f^{-1}(\infty)$ is a \textit{pseudoperiodic point} of \textit{type} ($p,\sigma$) of $f$ if, for some $p\geq 1$ and $\sigma\in\ZZ$, 
    \vspace{-0.95mm}
    \begin{equation}
        f^p(z^*) = z^* + \sigma.  
        \vspace{-1.05mm}
    \end{equation} 
    It is said to be of \textit{minimal type}, or \textit{$(p,\sigma)$-pseudoperiodic}, if $p\geq 1$ is the smallest natural with this property.
\end{definition}

In our case, to relate the dynamics of $f$ and $g$,  we rely on a theorem by Zheng \cite[Cor.~3.1]{Zheng2005}, based on Bergweiler's result \cite{Bergweiler1995j} in the entire setting. It essentially states that the Fatou and Julia sets of $f$ and $g$ are in correspondence via the exponential, that is,
\vspace{-0.55mm}
\begin{equation}
        \label{eq:expFatouJulia}
        \exp_1 \mathcal{F}(f) = \mathcal{F}(g)\cap \CC^*, \qquad \exp_1\hspace{-0.2mm}\left( \mathcal{J}(f) \hspace{0.3mm} \backslash \hspace{0.3mm} \{\infty\} \right) = \mathcal{J}(g)\cap \CC^*. \vspace{-0.55mm}
\end{equation}
Therefore, a Fatou component, say $U$, of $f$ projects under $\exp_1$ to a Fatou component, say $V$, of $g$, with $V\cap\CC^*=\exp_1 U$. Conversely, the component $V\subset\mathcal{F}(g)$ lifts via $\exp_1$ to $\{U+k\}_{k\in\ZZ}\subset\mathcal{F}(f)$, that is, either to infinitely many distinct Fatou components of $f$, or to only one (see \pref{lem:FillBaker}). 

Clearly, $U$ and $V$ do not need to be of the same type. On the one hand, we can build \textit{escaping wandering domains} $U$ (those for which $\infty$ is the only limit function of $\{f^n|_U^{}\}_{n\in\NN}$), which may be bounded or unbounded, by detecting appropriate pseudoperiodic points of $f$ (see \pref{cor:WDviaPseudoperiodicity}). Hence, we construct wandering domains by lifting via $\exp_1$ some periodic component $V\subset\mathcal{F}(g)$, without leaving the family of projectable functions $f$ under consideration, in contrast to the usual procedure of adding an integer to $f$ (one can think on Newton maps, depending on a parameter). Recall that the latter consists in changing the choice of the logarithmic lift of $g$, which turns a periodic component of $f$ directly into a wandering domain of $f+\sigma$, $\sigma\in\ZZ^*$.

On the other hand, we may produce Baker domains of $f$ by lifting certain periodic components $V\subset\mathcal{F}(g)$ related somehow to the points at $0$ and $\infty$ (i.e. the projection of the upper and lower ends of $\CC/\ZZ$, respectively), especially for those projections $g$ of finite-type (see \pref{thm:logFatouJulia}). Examples \ref{ex:MeroStandard} and \ref{ex:LiftParabolicBR} (see also \pref{fig:Blaschke_fHR}) illustrate the different possibilities that may occur; see \pref{subsec:PeriodicCase} for the case where $f$ is periodic.

Finally in \pref{sec:5_Newton} we apply the general theory for projectable functions developed in sections \ref{sec:3_ExpProjection} and \ref{sec:4_Lifting} to the special case of Newton's methods, our original motive. We start by characterizing Newton maps in class $\mathbf{R}_\ell$ with (attracting) fixed points, which are the natural candidates for our constructions. 
\begin{customthm}{C}[Newton's methods in class $\mathbf{R}_\ell$ with fixed points]
    \label{thm:C_AtlasBWD}
    Let $F$ be an entire function with zeros, and $\ell\in\ZZ$. Its Newton map $N_F$ is in class $\mathbf{R}_\ell$ if and only $\ell=1$ and \vspace{-0.6mm}
    \begin{equation}
        \label{eq:ThmC_ExpressionF}
        F(z) = e^{\Lambda z} \Psi(e^{2\pi i z}), \quad \mbox{ with } \quad \Psi(w) = w^{m_0} P(w) e^{Q(w)+\widetilde{Q}(\sfrac{1}{w})}, \vspace{-0.3mm}
    \end{equation}
    where $\Lambda\in\CC$, $m_0\in\ZZ$, and $P$, $Q$, $\widetilde{Q}$ are polynomials with $P(0)\neq 0$ and $P^{-1}(0)\cap\CC^*\neq \emptyset$. In addition, $\Lambda \neq -2\pi i (m_0+\deg{P})$ if $Q$ is constant, and  $\Lambda \neq -2\pi i m_0$ if $\widetilde{Q}$ is constant.
\end{customthm}

This provides uniparametric families of projectable Newton maps $N_\Lambda\in\mathbf{R}_1$ of entire functions satisfying $F(z+1)=e^{\Lambda} F(z)$ for all $z$, which take the form \vspace{-1.25mm}
\begin{equation}
    \label{eq:NewtonProjR1}
    N_\Lambda(z) = z+R_\Lambda(e^{2\pi i z}), \quad \mbox{ where } \quad R_\Lambda(w) = -\frac{\Psi(w)}{\Lambda\Psi(w)+2\pi i w \Psi'(w)}.        \vspace{-0.75mm}
\end{equation} 
As a Newton map in class $\mathbf{R}_1$, the points at $0$ and $\infty$ are not poles of the rational map $R_\Lambda$ (see \pref{lem:5_rationalRnewton}), and $N_\Lambda$ has $\tilde{p}$ distinct fixed points (roots of $F$) and finitely many poles in a period strip of $\exp_1$; indeed \vspace{-0.5mm}
\begin{equation}
    \#\exp_1 \big(N_\Lambda^{-1}(\infty)\big) = \#\hspace{0.2mm} R_\Lambda^{-1}(\infty) = \tilde{p}+\deg{Q}+\deg{\widetilde{Q}}. \vspace{-0.25mm}
\end{equation}

For convenience we consider the parameter $\lambda:=\Lambda + \pi i (2m_0+\deg{P})$. Applying our results in \pref{sec:4_Lifting} into this framework, we obtain Baker and wandering domains for families of Newton maps for different values of $\lambda$, which coexist with the attracting invariant basins of $N_\lambda$, as desired (see \pref{cor:C_BakerWandering}).
The simplest cases are Newton's methods with exactly one superattracting fixed point and a simple pole in each period strip of $\exp_1$. It can be seen that those $N_\lambda$ are conjugate to a member of the following family (see \pref{prop:ConjugationNewtonPole}).
\begin{definition}[Pseudotrigonometric family $\mathbf{N}_\lambda$]
    \label{def:classN}
    The \textit{pseudotrigonometric family} $\mathbf{N}_\lambda$ consists of the Newton maps of $F_\lambda(z) = e^{\lambda z} \sin{\pi z}$, $\lambda\in\CC\backslash\{\pm \pi i\}$, which are of the form
    \vspace{-0.5mm}
    \begin{equation}
        N_{\lambda}(z) = z + M_\lambda\big( e^{2\pi i z} \big), \quad \mbox{ where } \quad M_\lambda(w) = -\frac{\hspace{2mm} w-1}{\left(\lambda+\pi i \right)w-\left(\lambda-\pi i \right)}.
    \end{equation}
    \vspace{-5mm}
\end{definition}

The name refers to the fact that $N_0(z)=z-\frac{1}{\pi}\tan{\pi z}$ is a pseudoperiodic analogue of the tangent map (of period $1$). 
The exponential projection $g_\lambda(w)$ of $N_\lambda$ has a unique essential singularity at $B_\lambda:=\frac{\lambda-\pi i}{\lambda+\pi i}$, and only one free critical point at $C_\lambda:=B_\lambda^2$. For each value of $\lambda$, $B_\lambda$ may be placed at $\infty$ via $M_\lambda$ (see \pref{rem:logarithmicSing}), i.e. $g_\lambda$ is conjugate to a transcendental meromorphic map with two finite asymptotic values (as the tangent map), a unique free critical point and a superattracting fixed point at the origin (as the quadratic map), which degenerates to an entire map for $\lambda = \pm \pi i$. Despite its simplicity, this family turns out to exhibit a wide variety of interesting Newton dynamics which can be reflected in a one-dimensional parameter slice.

To this end, we inspect the set $\widetilde{\mathcal{M}}$ of parameters $\lambda$ in which the free critical point of $g_\lambda$ does not converge to $1$ (the non-white region in \pref{fig:BakerWDf_fixed}). This is equivalent to study the values of $\lambda$ for which the pseudotrigometric Newton's method $N_\lambda$ fails to converge to a root of $F_\lambda$ in some open set of initial conditions. This unveils components of $\mathcal{\widetilde{M}}$ in which the free critical point $C_\lambda$ is attracted to a periodic cycle other than $1$, whose immediate basin of attraction may lead to Baker and wandering domains of $N_\lambda$ by the lifting method, which in turn coexist with the infinitely many basins of the roots of $F_\lambda$ (see Examples \ref{ex:BakerWD_SuperCoexistence} and \ref{ex:WD_SuperCoexistence}). Of special interest in the parameter space for $g_\lambda$, and hence for $N_\lambda$ (see also Figures \ref{fig:MandelFrot2} and \ref{fig:MandelPlogD}), are the connected components of $\widetilde{\mathcal{M}}$ leading to wandering domains of different nature for our family of Newton maps (see \pref{rem:SubhypWD}). These and many other questions related to $\mathcal{\widetilde{M}}$ will be addressed in a future paper.

\vspace{-0.1mm}
\begin{figure}[H]
    \hspace{2.6mm}\hfill
    \begin{minipage}{0.4865\textwidth}
        \begin{tikzpicture}
    \node[anchor=south west,inner sep=0] (image) at (0,0) {\includegraphics[width=\linewidth]{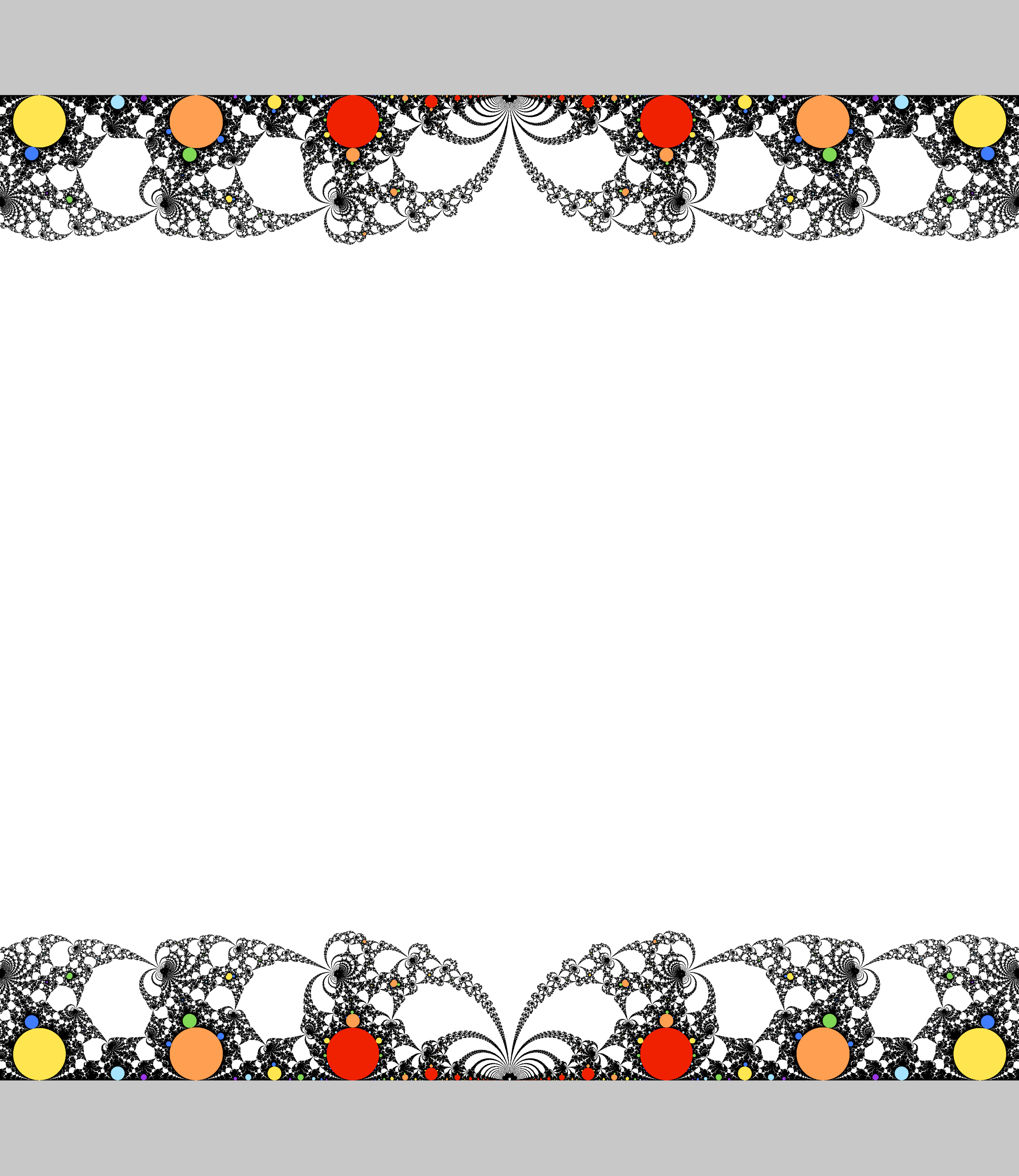}};
        \begin{scope}[x={(image.south east)},y={(image.north west)}]
        \coordinate (a) at ($ ({(0+3.25)/6.5},{(-pi+3.75)/7.5}) $);
        \coordinate (a2) at ($ ({(0+3.25)/6.5},{(pi+3.75)/7.5}) $);
        \coordinate (b) at ($ ({(-1+3.25)/6.5},{(-pi+3.75)/7.5}) $);
        \coordinate (b2) at ($ ({(1+3.25)/6.5},{(pi+3.75)/7.5}) $);
        \coordinate (c) at ($ ({(-1+3.25)/6.5},{(-sqrt(pi*pi-1)+3.75)/7.5}) $);
        
        \draw[line width=0.1pt, color = darkgray!80, dashed] (0.5,0.00) -- (0.5,1);
        \draw[line width=0.1pt, color = darkgray!80, dashed] (0,0.5) -- (1,0.5);
        \fill[darkgray!80] (a) circle (1pt) node[below]{\tiny\hspace{-0.6em}$\shortminus\pi i$};
        \fill[darkgray!80] (a2) circle (1pt) node[above]{\tiny$\pi i$};
        \fill[darkgray!80] (b) circle (1pt) node[below]{\tiny\hspace{-0.16em}$\shortminus 1 \hspace{0.1mm} \shortminus \hspace{0.1mm} \pi i$};
        \fill[darkgray!80] (b2) circle (1pt) node[above]{\tiny\hspace{0.45em}$1\hspace{0.1mm} \shortplus \hspace{0.1mm}\pi i$};
        \draw (c) node[color=darkgray!80] {\footnotesize$\ast$};
        \draw (0.485,0.565) node[draw=none,fill=none] {\small $\widetilde{\mathcal{M}}:= \Big\{ \lambda\in\CC\backslash\{\pm\pi i\}: \ \lim\limits_{n\to\infty} g_\lambda^n(C_\lambda) \neq 1 \Big\}$};
        \end{scope}
        \end{tikzpicture}
    \end{minipage}
    \hfill
    \begin{minipage}{0.49\textwidth}
        \begin{tikzpicture}
        \node[anchor=south west,inner sep=0] (image) at (0,0) {\includegraphics[width=0.95\linewidth]{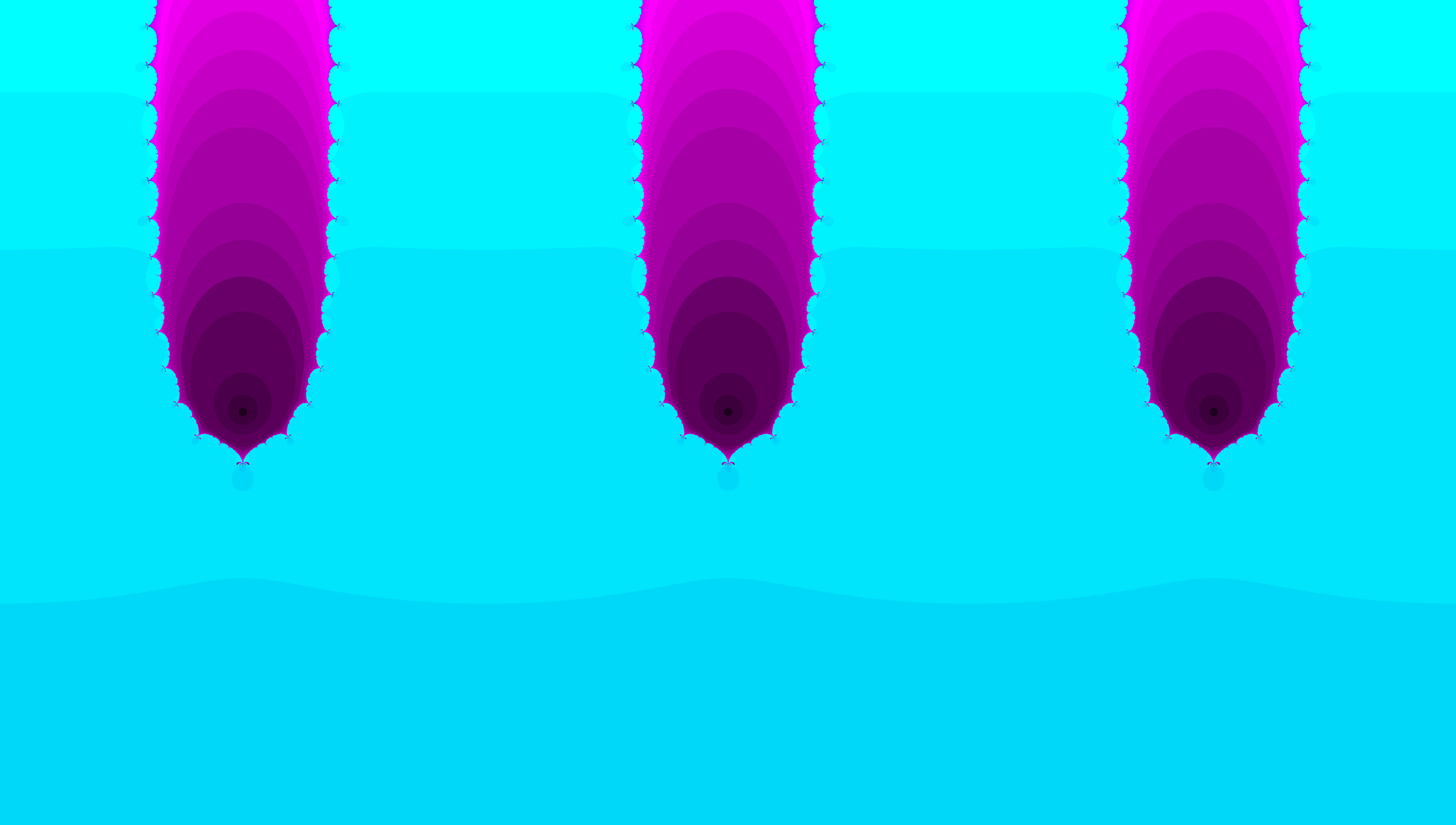}};
        \begin{scope}[x={(image.south east)},y={(image.north west)}]
        \coordinate (a) at ($ ({(0+1.5)/3},{(0+0.85)/1.7}) $);
        \coordinate (a2) at ($ ({(1+1.5)/3},{(0+0.85)/1.7}) $);
        \coordinate (a3) at ($ ({(-1+1.5)/3},{(0+0.85)/1.7}) $);
        \coordinate (b) at ($ ({(0+1.5)/3},{(-0.11+0.85)/1.7}) $);
        \coordinate (b2) at ($ ({(1+1.5)/3},{(-0.11+0.85)/1.7}) $);
        \coordinate (b3) at ($ ({(-1+1.5)/3},{(-0.11+0.85)/1.7}) $);
        \fill[white!80] (a) circle (1pt) node[above]{\tiny$0$};
        \fill[white!80] (a2) circle (1pt) node[above]{\tiny$1$};
        \fill[white!80] (a3) circle (1pt) node[above]{\tiny\hspace{-0.55em}$\shortminus 1$};
        \draw (0.09,0.06) node[draw=none,fill=none, color=white] {\tiny $\lambda=\shortminus 3\pi i$}; 
        \draw[line width=0.1pt, color = darkgray!80, dashed] ($({1/3},0)$) -- ($({1/3},1)$);
        \draw[line width=0.1pt, color = darkgray!80, dashed] ($({2/3},0)$) -- ($({2/3},1)$);
        \draw[line width=0.5pt,color=white,-stealth] ($ ({(-0.5+1.5)/3},{(0.36+0.85)/1.7}) $) to ($ ({(-0.5+1.5)/3},{(0.276484+0.85)/1.7}) $);
        \draw[line width=0.5pt,color=white,-stealth] ($ ({(0.5+1.5)/3},{(0.36+0.85)/1.7}) $) to ($ ({(0.5+1.5)/3},{(0.276484+0.85)/1.7}) $);
        \draw[line width=0.5pt,color=white,-stealth] ($ ({(+0.5+1.5)/3},{(-0.33+0.85)/1.7}) $) to ($ ({(+0.5+1.5)/3},{(-0.473163+0.85)/1.7}) $);
        \draw[line width=0.5pt,color=white,-stealth] ($ ({(-0.5+1.5)/3},{(-0.33+0.85)/1.7}) $) to ($ ({(-0.5+1.5)/3},{(-0.473163+0.85)/1.7}) $);
        \end{scope}
    \end{tikzpicture}
    \\ \\
        \begin{tikzpicture}
        \node[anchor=south west,inner sep=0] (image) at (0,0) {\includegraphics[width=0.95\linewidth]{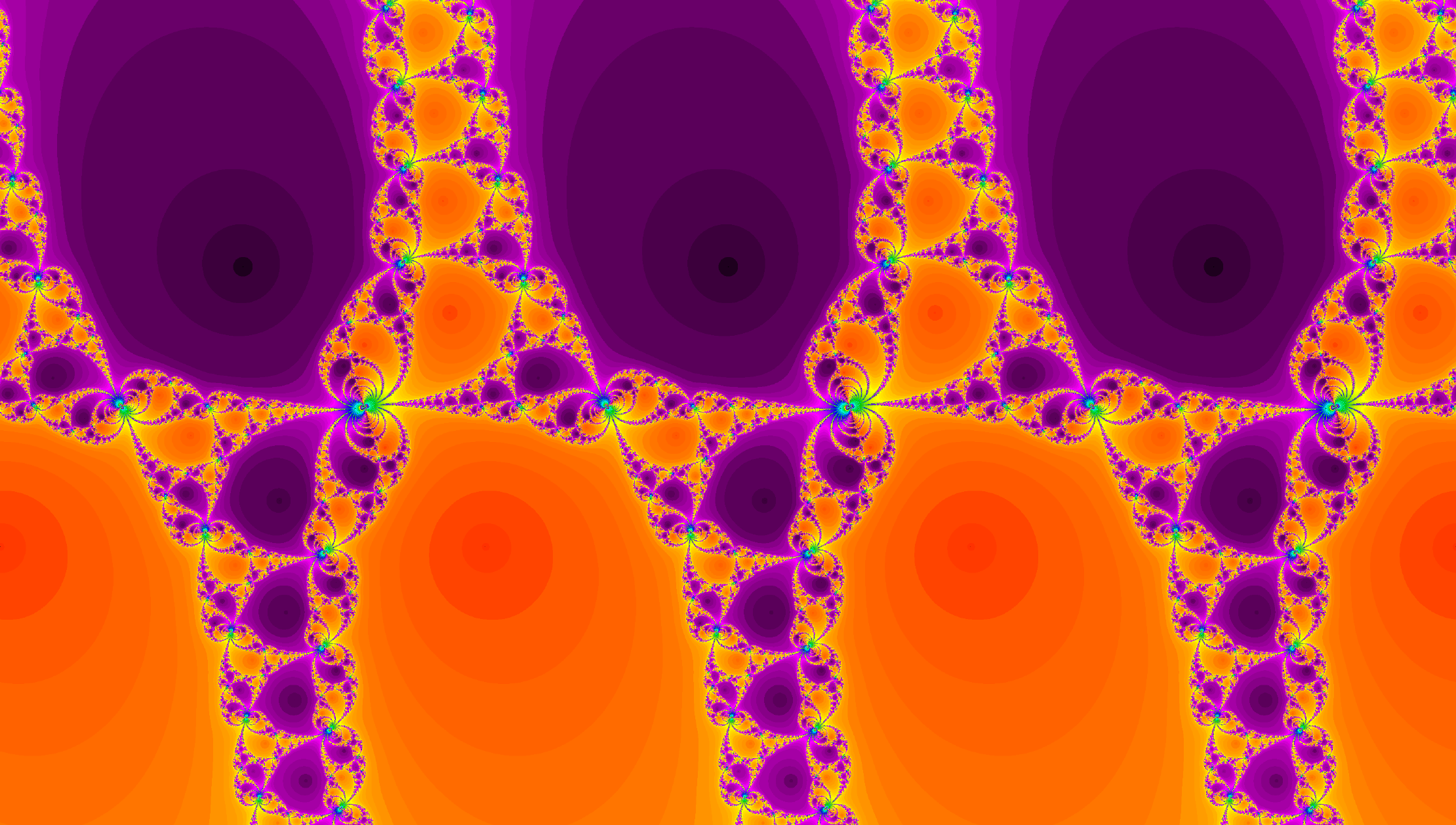}};
        \begin{scope}[x={(image.south east)},y={(image.north west)}]
            \coordinate (a) at ($ ({(0+1.5)/3},{(0+1.15)/1.7}) $);
            \coordinate (a2) at ($ ({(1+1.5)/3},{(0+1.15)/1.7}) $);
            \coordinate (a3) at ($ ({(-1+1.5)/3},{(0+1.15)/1.7}) $);
            \coordinate (b) at ($ ({(-0.5+1.5)/3},{(-0.57667+1.15)/1.7}) $);
            \coordinate (b2) at ($ ({(0.5+1.5)/3},{(-0.57667+1.15)/1.7}) $);
            \draw[line width=0.1pt, color = darkgray!80, dashed] ($({1/3},0)$) -- ($({1/3},1)$);
            \draw[line width=0.1pt, color = darkgray!80, dashed] ($({2/3},0)$) -- ($({2/3},1)$);
            \fill[white!80] (a) circle (1pt) node[above]{\tiny$0$};
            \fill[white!80] (a2) circle (1pt) node[above]{\tiny$1$};
            \fill[white!80] (a3) circle (1pt) node[above]{\tiny\hspace{-0.55em}$\shortminus 1$};
            \draw (0.14,0.06) node[draw=none,fill=none,color=white] {\tiny$\lambda=\shortminus 1 \hspace{0.2mm}\shortminus \hspace{0.2mm}i\sqrt{\pi^2\shortminus1}$};
            \draw[line width=0.5pt,color=white,-stealth] ($ ({(-0.574615+1+1.5)/3},{(-0.094429+1.15)/1.7}) $) to[bend right] (b2);
            \draw[line width=0.5pt,color=white,-stealth] ($ ({(0.24955+1+1.5)/3},{(-0.161072+1.15)/1.7}) $) to[bend right] (b2);
            \draw[line width=0.5pt,color=white,-stealth] ($ ({(-0.5+1.5)/3},{(-0.57667+1.15)/1.7}) $) to[bend right] (b2);
            \draw[line width=0.5pt,color=white,-stealth] ($ ({(-1.5+1.5)/3},{(-0.57667+1.15)/1.7}) $) to[bend right] (b);
            \draw[line width=0.5pt,color=white,-stealth] (b2) to[bend right] ($ ({(1.5+1.5)/3},{(-0.57667+1.15)/1.7}) $);
            \draw (b2) node[color=black,scale=1] {\tiny$\star$} node[color=black,below]{\tiny\hspace{0.41em}$z_{\scalebox{0.7}{1}}^{\raisebox{-0.15\height}{\scalebox{0.7}{*}}}$};
            \draw (b) node[color=black,scale=1] {\tiny$\star$} node[color=black,below]{\tiny\hspace{0.2em}$z_{\scalebox{0.7}{1}}^{\raisebox{-0.15\height}{\scalebox{0.7}{*}}}\hspace{0.1mm}\shortminus\hspace{0.1mm} 1$};
        \end{scope}
        \end{tikzpicture}
    \end{minipage}
    \hfill\vspace{-0.5mm}
    \caption{\textit{Left (parameter space of $g_\lambda$ or the pseudotrigonometric family $N_\lambda$)}: The set $\widetilde{\mathcal{M}}$ for which the free critical point $C_\lambda$ of $g_\lambda$ fails to converge to $1$. The color of each pixel $\lambda$ indicates the period $p$ of the cycle attracting $C_\lambda$ under iteration: red if $p=1$ and $\lim\limits_{n\to\infty} g_\lambda^n(C_\lambda) \notin\{0, \infty\}$ (gray otherwise, i.e. when $|\operatorname{Im}\lambda|>\pi$), orange if $p=2$, yellow if $p=3$, green if $p=4$, light blue if $p=5$, dark blue if $p=6$, purple if $p=7$, and black if higher; see also \pref{fig:MandelPlogD}. Range: $[-3.75,3.75]\times[-3.25,3.25]$.
    \textit{Right-top (dynamical plane of $N_{\lambda}$ for $\lambda=-3\pi i$)}: The superattracting basins of $k\in\ZZ$ (in purple) coexist with a simply-connected Baker domain (in blue); see \pref{ex:BakerWD_SuperCoexistence}. Range: $[-1.5,1.5]\times[-0.85,0.85]$. \textit{Right-bottom (dynamical plane of $N_{\lambda}$ for $\lambda=-1-\sqrt{\pi^2-1}$)}: The superattracting basins of $k\in\ZZ$ (in purple) coexist with a chain of simply-connected wandering domains (in orange) containing a pseudoperiodic point $z_1^*$ of type $(1,1)$; see \pref{ex:WD_SuperCoexistence}. Range: $[-1.5,1.5]\times[-1.15,0.55]$. The brightness of the blue, orange and purple colors indicate the speed of convergence to the fixed points of $g_\lambda$ at $\infty$, $e^{2\pi i z_1^*}$ and $1$, respectively (lighter if it requires more iterates). The dashed lines refer to the coordinate axes in the $\lambda$-plane, and to $\{\operatorname{Re}z=\pm\sfrac{1}{2}\}$ in the $z$-planes.} 
    \label{fig:BakerWDf_fixed}
\end{figure}

\vspace{-2.25mm}

\textbf{\normalsize Outline of the paper.} In \pref{sec:2_Pseudo} we derive the general form of pseudoperiodic maps to prove \pref{thm:A_FormProj} on the class of projectable functions $f$. In \pref{sec:3_ExpProjection} we analyze the projection of poles and singular values of $f$ via $\exp_1$ (see Propositions \ref{prop:PolesToEssential}, \ref{prop:projC} and \ref{prop:projAV}), to identify exponential projections $g$ of finite-type in Bolsch's class (\pref{thm:B_FiniteType}), including those from the class $\mathbf{R}_\ell$ (\pref{cor:classRsv}). The fundamentals of the lifting method in our setting are detailed in \pref{sec:4_Lifting}, starting with the notion of pseudoperiodic points. \pref{thm:logFatouJulia} is the keystone for our purposes, which delivers different types of Baker and wandering domains of $f$ in the non-periodic case (see Examples \ref{ex:MeroStandard} and \ref{ex:LiftParabolicBR}). In \pref{sec:5_Newton} we characterize the Newton maps in class $\mathbf{R}_\ell$ with fixed points (\pref{thm:C_AtlasBWD}), whose attracting basins, under the conditions of \pref{cor:C_BakerWandering}, coexist with Baker or wandering domains. We explore the one-parameter family $\mathbf{N}_\lambda$ of Newton maps, as the simplest one in this class, to unveil the atlas of wandering domains in \pref{fig:BakerWDf_fixed}, and conclude with some observations on the components of $\widetilde{\mathcal{M}}$.
%
\section{Pseudoperiodic maps and Proof of Thm. \ref{thm:A_FormProj}}
\label{sec:2_Pseudo}

The semiconjugacy relation $g\circ\exp_1=\exp_1\circ f$ determines the class of projectable functions (see \pref{def:A_proj}), as well as its iterates, by means of $g^n\circ\exp_1=\exp_1\circ f^n$, $n\in\NN$, whenever defined. The following lemma, which is fundamental for our discussion, can be easily proved by induction.

\begin{lemma}[Pseudoperiodicity]
\label{lem:ProjCond}
    Let $f$ be a projectable function via $\exp_1$. Then
    \begin{equation}
        \label{eq:projRelation}\vspace{-0.5mm}
        f(z+1) = f(z) + \ell
    \end{equation}
    for some $\ell\in\ZZ$, and every $z\in\CC$. Moreover, for any $n\in\NN$ and $k\in\ZZ$,
    \begin{equation}
        \label{eq:projCondition}\vspace{-0.5mm}
        f^n(z+k) = f^n(z) + \ell^n k. \vspace{-0.5mm}
    \end{equation}
\end{lemma}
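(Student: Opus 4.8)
The plan is to read off \eqref{eq:projRelation} from the $1$-periodicity of $\exp_1$ together with the discreteness of $\ZZ$, and then to bootstrap \eqref{eq:projCondition} by a double induction: first on $k$ for $n=1$, and then on $n$.

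For the first identity I would start from the semiconjugacy \eqref{eq:Projectable}. Since $\exp_1(z)=e^{2\pi i z}$ satisfies $\exp_1(z+1)=\exp_1(z)$, evaluating \eqref{eq:Projectable} at $z$ and at $z+1$ gives, on the nonempty open set $U:=\{z\in\CC : z,\,z+1\notin f^{-1}(\infty) \text{ and } g \text{ is defined at } \exp_1(z)\}$,
\[
\exp_1\big(f(z+1)\big)=g\big(\exp_1(z+1)\big)=g\big(\exp_1(z)\big)=\exp_1\big(f(z)\big),
\]
so that $\exp_1\big(f(z+1)-f(z)\big)=1$, i.e. $f(z+1)-f(z)\in\ZZ$ for every $z\in U$. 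The function $z\mapsto f(z+1)-f(z)$ is holomorphic on $U$ and takes values in the discrete set $\ZZ$, hence it is locally constant on $U$; fixing a component on which it equals some $\ell\in\ZZ$ and applying the identity theorem to the meromorphic function $f(z+1)-f(z)$ on $\CC$, I would conclude that $f(z+1)-f(z)\equiv\ell$ on all of $\CC$ (in particular $\ell$ is independent of the chosen component, and the poles of $f(\cdot+1)$ and of $f(\cdot)$ cancel). This proves \eqref{eq:projRelation}.

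I would then upgrade this to the $n=1$ case of \eqref{eq:projCondition}: for $k\ge1$ an immediate induction gives $f(z+k)=f\big((z+k-1)+1\big)=f(z+k-1)+\ell=\cdots=f(z)+\ell k$, while for $k=-j$ with $j\ge0$ one rewrites $f(z)=f\big((z-j)+j\big)=f(z-j)+\ell j$, so $f(z+k)=f(z)+\ell k$ in all cases (as an identity of meromorphic functions). Finally, assuming inductively that $f^{n}(z+k)=f^{n}(z)+\ell^{n}k$ for all $z\in\CC$ and all $k\in\ZZ$, I would apply the $n=1$ case at the point $f^{n}(z)$ with the integer $\ell^{n}k$ (here one uses $\ell,k\in\ZZ$, so $\ell^{n}k\in\ZZ$):
\[
f^{n+1}(z+k)=f\big(f^{n}(z+k)\big)=f\big(f^{n}(z)+\ell^{n}k\big)=f\big(f^{n}(z)\big)+\ell\cdot\ell^{n}k=f^{n+1}(z)+\ell^{n+1}k,
\]
which closes the induction and yields \eqref{eq:projCondition}.

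The only step requiring any care is the domain book-keeping in the first part: the semiconjugacy \eqref{eq:Projectable} is available a priori only where $g$ is defined, so $U$ is merely a nonempty open subset of $\CC$, and one must pass from ``$f(z+1)-f(z)$ is integer-valued on $U$'' to ``$f(z+1)-f(z)$ is constant on $\CC$'' via the identity theorem for meromorphic functions. Once \eqref{eq:projRelation} is in hand, the remainder is the routine induction anticipated by the statement of the lemma.
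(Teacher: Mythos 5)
Your argument is correct and follows essentially the same route the paper intends: the paper only remarks that the lemma "can be easily proved by induction," and your proof is exactly that — read off $f(z+1)-f(z)\in\ZZ$ from the semiconjugacy and the $1$-periodicity of $\exp_1$, promote it to a constant $\ell$ by the identity theorem, then induct on $k$ and $n$. Your extra care about the open set $U$ where the semiconjugacy is available is a reasonable (and harmless) refinement of the paper's implicit argument.
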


Hence, projectable functions correspond to meromorphic functions satisfying relation \pref{eq:projRelation}, which are occasionally called \textit{$\ell$-pseudoperiodic}, or \textit{periodic modulo an integer} $\ell$ (of period $1$), in the sense that $f(z+1)-f(z)=\ell$. In order to further characterize projectable functions, it is convenient to identify them as a special case of the more general class of \textit{pseudoperiodic functions}.

\begin{definition}[Pseudoperiodic functions] \label{def:Pseudoperiodic}
    Let $f:\CC\to\hCC$ be a meromorphic function, and $\Lambda$ a lattice in $\CC$. We say that $f$ is \textit{pseudoperiodic} with respect to $\Lambda$ if, for each $\tau\in \Lambda$, there exists a constant $\eta_\tau\in\CC$ (a \textit{pseudoperiod} of $f$) such that \vspace{-0.75mm} 
    \begin{equation} \label{eq:PseudoPeriodicity}
        f(z+\tau) = f(z) + \eta_\tau, \vspace{-0.75mm} 
    \end{equation}
    for all $z\in\CC$. It is said to be \textit{simply} or \textit{doubly} \textit{pseudoperiodic} if all its pseudoperiods are of the form $m\eta_{\footnotesize \tau_1}$ or $m\eta_{\footnotesize\tau_1}+n\eta_{\footnotesize\tau_2}$, respectively, for some $\tau_1,\tau_2\in\CC^*$ with $\operatorname{Im}\left[\sfrac{\tau_2}{\tau_1}\right]>0$, where $m,n\in\ZZ$.
\end{definition}

\begin{example}[Standard map and Weierstrass $\zeta$-function]
    \label{ex:standardWeierstrass}
    The extension of the Arnol'd \textit{standard family} of circle maps \cite{Arnold2009} to the complex plane, given by the entire function
    \begin{equation} \label{eq:ArnoldFamily}\vspace{-0.5mm}
    f_{\scriptsize{\alpha,\beta}}(z)=z+\alpha-\frac{\beta}{2\pi} \sin{2\pi z}
    \end{equation}
    with parameters $\alpha\in\RR$ and $\beta>0$, is an example of a simply pseudoperiodic function such that $\eta_{\footnotesize\tau_1}=\tau_1=1$, and thus projectable via $\exp_1$. The archetype of a doubly pseudoperiodic function is the so-called \textit{Weierstrass $\zeta$-function} with respect to $\Lambda:=\tau_1\ZZ+\tau_2\ZZ$, $\sfrac{\tau_2}{\tau_1}\in\HH^+$, given by
\begin{equation} \label{eq:WeierstrassZeta}\vspace{-0.5mm}
    \zeta(z) = \frac{1}{z} + \sum\limits_{\tau\in\Lambda^*} \left( \frac{1}{z-\tau} + \frac{1}{\tau} + \frac{z}{\tau^2} \right),
\end{equation}
where the sum runs over all non-zero lattice points. Its pseudoperiods, $\eta_{\tau_1}=2\zeta\left(\sfrac{\tau_1}{2}\right)$ and $\eta_{\tau_2}=2\zeta\left(\sfrac{\tau_2}{2}\right)$, satisfy the \textit{Legendre relation}: $\eta_{\tau_1}\tau_2-\eta_{\tau_2}\tau_1 = 2\pi i$. It may be written as $\zeta(z)=\frac{\eta_{\tau_1}}{\tau_1} z + \varphi_1(z)$, where $\varphi_1$ is a $\tau_1$-periodic map such that $\varphi_1(z+\tau_2)=\varphi(z)-\frac{2\pi i}{\tau_1}$ (see explicit form in \cite[\S 18]{Lang1987}).
\end{example}

In analogy to the classical theory of periodic functions, and following the description of doubly pseudoperiodic functions by Brady \cite{Brady1970}, we show that any pseudoperiodic meromorphic function on $\CC$ is either simply or doubly pseudoperiodic, and they can be written in a unique manner.

\begin{proposition}[Form of pseudoperiodic functions] \label{prop:PseudoForm}
    Let $f$ be a pseudoperiodic function. Then $f$ can be uniquely expressed either as the simply pseudoperiodic function with respect to $\tau_1 \ZZ$, $\tau_1\in\CC^*$,
    \begin{equation}
        f(z) = a z+\varphi(z)
    \end{equation}
    with pseudoperiod $\eta_{\tau_1}=a\tau_1$, where $a\in\CC$ and $\varphi$ is a simply periodic function of period $\tau_1$; or as the doubly pseudoperiodic function with respect to $\Lambda:=\tau_1\ZZ+\tau_2\ZZ$, $\sfrac{\tau_2}{\tau_1}\in\HH^+$,
    \begin{equation} 
        \label{eq:DoublyPseudo}
        f(z) = az+b\zeta(z)+E(z)
    \end{equation}
    with pseudoperiods $\eta_{\tau_1}=a\tau_1+2b\zeta(\sfrac{\tau_1}{2})$ and $\eta_{\tau_2}=a\tau_2+2b\zeta(\sfrac{\tau_2}{2})$, where $a,b\in\CC$, $\zeta$ is the Weierstrass $\zeta$-function with respect to $\Lambda$, and $E$ is a doubly periodic function of periods $\tau_1$ and $\tau_2$.
\end{proposition}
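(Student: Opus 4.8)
The plan is to adapt the classical Liouville-type argument for elliptic and simply periodic functions to the pseudoperiodic setting, exploiting that the derivative $f'$ of a pseudoperiodic function is genuinely periodic. First I would observe that differentiating \pref{eq:PseudoPeriodicity} kills the additive constants: $f'(z+\tau)=f'(z)$ for every $\tau\in\Lambda$, so $f'$ is a periodic meromorphic function with $\Lambda\subseteq\operatorname{Per}(f')$. By the dichotomy for periods of a non-constant periodic meromorphic function on $\CC$ recalled in the excerpt, the period set of $f'$ is either a rank-one lattice $\tau_1\ZZ$ or a rank-two lattice $\tau_1\ZZ+\tau_2\ZZ$ (the case $f'\equiv\text{const}$ forces $f$ affine, which is itself covered by the simply pseudoperiodic form with $\varphi$ constant). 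Correspondingly $\Lambda$ has rank one or two, and I split into the two cases.

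In the \textbf{simply pseudoperiodic case}, say $\Lambda=\tau_1\ZZ$ with pseudoperiod $\eta_{\tau_1}$, I set $a:=\eta_{\tau_1}/\tau_1$ and define $\varphi(z):=f(z)-az$. Then $\varphi(z+\tau_1)=f(z+\tau_1)-a(z+\tau_1)=f(z)+\eta_{\tau_1}-az-a\tau_1=\varphi(z)$, so $\varphi$ is $\tau_1$-periodic and meromorphic, giving the decomposition $f(z)=az+\varphi(z)$. Uniqueness: if $az+\varphi(z)=a'z+\varphi'(z)$ with $\varphi,\varphi'$ both $\tau_1$-periodic, then $(a-a')z=\varphi'(z)-\varphi(z)$ is $\tau_1$-periodic, forcing $a=a'$ and hence $\varphi=\varphi'$; and the relation $\eta_{\tau_1}=a\tau_1$ is forced by evaluating $f(z+\tau_1)-f(z)$.

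In the \textbf{doubly pseudoperiodic case}, $\Lambda=\tau_1\ZZ+\tau_2\ZZ$ with $\tau_2/\tau_1\in\HH^+$ and pseudoperiods $\eta_{\tau_1},\eta_{\tau_2}$. The key linear-algebra step is to choose $a,b\in\CC$ so that the function $E(z):=f(z)-az-b\zeta(z)$ is doubly periodic. Using $\zeta(z+\tau_j)=\zeta(z)+2\zeta(\tau_j/2)$ and \pref{eq:PseudoPeriodicity}, one computes $E(z+\tau_j)-E(z)=\eta_{\tau_j}-a\tau_j-2b\zeta(\tau_j/2)$, so I need $a\tau_j+2b\zeta(\tau_j/2)=\eta_{\tau_j}$ for $j=1,2$. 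This is a $2\times2$ linear system in $(a,b)$ whose coefficient matrix has determinant $2\big(\tau_1\zeta(\tau_2/2)-\tau_2\zeta(\tau_1/2)\big)$; by the Legendre relation $2\zeta(\tau_1/2)\tau_2-2\zeta(\tau_2/2)\tau_1=2\pi i$ (recorded in \pref{ex:standardWeierstrass}), this determinant equals $-\pi i\neq0$, so $a$ and $b$ are uniquely determined. Then $E$ is doubly periodic meromorphic with periods $\tau_1,\tau_2$, yielding \pref{eq:DoublyPseudo}, and the stated formulas for $\eta_{\tau_1},\eta_{\tau_2}$ are exactly the system just solved. Uniqueness follows as before: a competing decomposition gives $(a-a')z+(b-b')\zeta(z)=E'(z)-E(z)$ doubly periodic; since $\zeta$ itself is only pseudoperiodic (its $\tau_j$-increment is the nonzero constant $2\zeta(\tau_j/2)$), comparing $\tau_1$- and $\tau_2$-increments forces $(a-a')\tau_j+(b-b')\cdot 2\zeta(\tau_j/2)=0$ for $j=1,2$, and the same nonvanishing determinant gives $a=a'$, $b=b'$, hence $E=E'$.

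The \textbf{main obstacle} is the case distinction and, within the doubly pseudoperiodic branch, verifying that no rank-two $\Lambda$ can be absorbed into a simply pseudoperiodic form and conversely — i.e. that the two cases are genuinely the ones occurring. This is handled precisely by the rank dichotomy of $\operatorname{Per}(f')$: a doubly pseudoperiodic $f$ with $b\neq0$ has $f'=a+b\zeta'+E'$ with $\zeta'=-\wp$ genuinely doubly periodic, so $\operatorname{Per}(f')$ is a rank-two lattice, whereas $b=0$ collapses to the simply pseudoperiodic form; so the classification is exhaustive and the two normal forms are mutually exclusive unless $f$ is affine. Everything else is the routine periodicity bookkeeping sketched above, with the Legendre relation doing the one quantitative piece of work.
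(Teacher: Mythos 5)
Your proof is correct, and it reaches the paper's decomposition by a genuinely more self-contained route in the doubly pseudoperiodic case. The simply pseudoperiodic half coincides with the paper's argument (same $\varphi(z)=f(z)-az$ and the same uniqueness computation). For the doubly pseudoperiodic half, the paper simply invokes Brady's theorem on doubly pseudoperiodic functions and quotes the resulting form, whereas you rebuild it from scratch: you subtract $az+b\zeta(z)$, compute the increments $E(z+\tau_j)-E(z)=\eta_{\tau_j}-a\tau_j-2b\zeta(\tau_j/2)$, and solve the $2\times 2$ system for $(a,b)$, with the Legendre relation guaranteeing a nonzero determinant; the same determinant gives uniqueness. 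This buys independence from the external reference at essentially no extra cost, and your observation that the set of admissible translations equals $\operatorname{Per}(f')$ (since $f(z+\tau)-f(z)$ is constant iff $\tau$ is a period of $f'$) is a clean substitute for the paper's closing minimality argument, which instead shows that any additional pseudoperiod $\tau_3$ would produce a lattice vector of smaller modulus. Two small blemishes, neither fatal: the determinant is $-2\pi i$, not $-\pi i$ (what matters is only that it is nonzero); and your aside that ``$b=0$ collapses to the simply pseudoperiodic form'' is not quite right, since $f(z)=az+E(z)$ with $E$ a non-constant elliptic function is still doubly pseudoperiodic (with pseudoperiods $a\tau_1,a\tau_2$) and is covered by \pref{eq:DoublyPseudo} with $b=0$; the correct dichotomy is governed by the rank of $\operatorname{Per}(f')$, exactly as you use elsewhere.
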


\begin{proof}
    We distinguish cases in terms of the number of independent pseudoperiods of $f$. First, suppose that $f$ is simply pseudoperiodic with respect to $\tau_1\ZZ$, where $\tau_1$ is taken as the non-zero complex number of smallest modulus that satisfies the pseudoperiodicity condition \pref{eq:PseudoPeriodicity}. Let $\eta_{\tau_1}$ be the associated pseudoperiod, and consider the function $\varphi(z):=f(z)-az$, where $a=\frac{\eta_{\tau_1}}{\tau_1}$. Then,
    \begin{equation*}
        \varphi(z+\tau_1) = f(z+\tau_1)-az-\eta_{\tau_1} = \varphi(z)
    \end{equation*} 
    for all $z\in\CC$, so that $\varphi$ is simply periodic (with period $\tau_1$). To verify uniqueness, we may assume that $f(z) = \Tilde{a} z +\widetilde{\varphi}(z)$ for some $\Tilde{a}\in\CC$ and a non-constant $\tau_1$-periodic function $\widetilde{\varphi}$. It follows that $(a-\Tilde{a})\tau = 0$, and so $\widetilde{\varphi}=\varphi$, that is, the representation of $f$ in such form is unique.
    
    \noindent Now suppose that $f$ is doubly pseudoperiodic with respect to $\Lambda$, and choose $\tau_1$ and $\tau_2$ as the two non-zero complex numbers of smallest modulus, with $\operatorname{Im}[\sfrac{\tau_2}{\tau_1}]>0$, such that $\Lambda=\tau_1\ZZ + \tau_2\ZZ$. Denote by $\eta_{\tau_1}$ and $\eta_{\tau_2}$ the corresponding pseudoperiods satisfying \pref{eq:PseudoPeriodicity}. This case follows directly from \cite[Thm.~4.1.4]{Brady1970}, which leads to expression \pref{eq:DoublyPseudo} in the same fashion, by using Legendre's relation between the pseudoperiods of the Weierstrass $\zeta$-function and the periods of the elliptic function $E$.
    
    \noindent Finally, observe that if, in the latter case, $f$ had an additional pseudoperiod $\eta_{\tau_3}$ associated to some $\tau_3\in\CC^*\backslash\Lambda$, then, for some $m,n\in\ZZ$, the point $\tau_3-m\tau_1-n\tau_2$  would be a non-zero complex number of smaller modulus than $\tau_1$ or $\tau_2$ satisfying relation \pref{eq:PseudoPeriodicity}, contrary to our construction. Hence, such a point must lie in some vertex of $\Lambda$, and $\eta_{\tau_3}=m\eta_{\tau_1}+n\eta_{\tau_2}$, given that $\eta_{\tau}$ is $\ZZ$-linear in $\tau$. 
    $\hfill\square$
\end{proof}

\begin{remark}[Representation by exponentials]
    \label{rem:Fourier}
    The nonlinear term of any projectable function is a periodic function, which may be expanded as the quotient of two convergent Fourier series. Indeed, for any $1$-periodic meromorphic map $\varphi(z)$, defined in a region $\Omega$ invariant under translation by $\pm 1$, there exists a unique function $\Phi(w)$ which is meromorphic in $\exp_1\Omega := \{ e^{2\pi i z}: z\in\Omega \}$ such that
    \begin{equation}
        \varphi(z) = \Phi(e^{2\pi i z}).
    \end{equation}
    The poles of $\Phi(w)$ coincide with the image under $\exp_1$ of the poles of $\varphi(z)$, and are of the same multiplicity (see e.g. \cite[Thm.~4.7]{Markushevich1965}). The \textit{Laurent development} of $\Phi$ in an annulus $\{w:r<|w|<R\}$ in which $\Phi$ has no poles, where $0\leq r<R \leq \infty$, delivers the \textit{Fourier coefficients} of $\varphi$ in the horizontal strip $\{ z:\frac{-1}{\hspace{1mm}2\pi}\ln{R}<\operatorname{Im}z < \frac{-1}{\hspace{1mm}2\pi}\ln{r}\}$ via $w=e^{2\pi i z}$. Notice that $\Phi(w)=\varphi\left(\frac{1}{2\pi i} \log{w}\right)$ is well-defined since, although the logarithm has infinitely many values, they differ by integers.
\end{remark}

\begin{customproof}{of \pref{thm:A_FormProj}}
    \textup{Due to \pref{lem:ProjCond} projectable functions $f$ via $\exp_1$ naturally arise as the particular class of pseudoperiodic functions, whether simply or doubly pseudoperiodic, with an integer pseudoperiod $\ell$. In any case, by direct application of \pref{prop:PseudoForm} (see also \pref{ex:standardWeierstrass} on Weierstrass $\zeta$-function), $f$ is the sum of a linear map and a periodic function (with period $1$). Thus, \pref{rem:Fourier} asserts that $f(z)=\ell z + \Phi(e^{2\pi i z})$ for a meromorphic function $\Phi$ in $\CC^*$, which is non-constant as $f$ is transcendental. 
    Moreover, in the case that $f$ is doubly pseudoperiodic (with respect to $\ZZ+\tau\ZZ$), using the same notation as in \pref{eq:DoublyPseudo}, and given that $\eta_1=\ell$, we obtain
    \begin{equation}
        \ell = a+2b\zeta(\sfrac{1}{2}), \quad \eta_\tau = a\tau+2b\zeta(\sfrac{\tau}{2}).
    \end{equation}
    Therefore, $b=\frac{\ell\tau-\eta_\tau}{2\pi i}$ via the Legendre relation $\zeta(\sfrac{1}{2})\tau-\zeta(\sfrac{\tau}{2})=\pi i$. Taking into account that $\zeta(z)-2\zeta(\sfrac{1}{2})z$ is a $1$-periodic function (see \pref{ex:standardWeierstrass}), we obtain the expression \pref{eq:ProjUnique}. In this situation, $f$ would be also projectable via $e^{\sfrac{2\pi i z}{\tau}}$ if and only if $\eta_\tau=L\tau$ for some integer $L$.
    $\hfill\square$}
\end{customproof}
%
\section{The exponential projection and Proof of Thm. \ref{thm:B_FiniteType}}
\label{sec:3_ExpProjection}

\subsection{Essential singularities}

A projectable function $f$ has a unique essential singularity at $\infty$ due to its non-constant periodic part. The first step of our analysis is to determine the set of essential singularities of its exponential projection $g$, which we denoted by $\mathcal{E}(g)$.

\begin{proposition}[Projection of poles]
    \label{prop:PolesToEssential}
    Let $f$ be a projectable function via $\exp_1$, and $g$ its exponential projection.
    Then
    \begin{equation}
        \mathcal{E}(g)\cap \CC^* = \exp_1\left(f^{-1}(\infty)\right).
    \end{equation}
    If $f$ is non-entire, then $f$ has infinitely many poles, and $\mathcal{E}(g)\cap \CC^*\neq\emptyset$.
\end{proposition}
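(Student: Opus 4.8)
The plan is to exploit the semiconjugacy $g\circ\exp_1=\exp_1\circ f$ directly. First I would establish the inclusion $\exp_1\bigl(f^{-1}(\infty)\bigr)\subseteq \mathcal{E}(g)\cap\CC^*$. Let $z_0$ be a pole of $f$, and set $w_0:=\exp_1(z_0)\in\CC^*$. Pick a small disk $D$ around $z_0$ containing no other pole of $f$, on which $\exp_1$ restricts to a biholomorphism onto a neighborhood $V$ of $w_0$ (shrinking $D$ so it has diameter less than $1$). On $D\setminus\{z_0\}$ we have $f$ holomorphic with $f(z)\to\infty$ as $z\to z_0$, so $\exp_1\circ f$ on $D\setminus\{z_0\}$ takes values whose orbit under $z\to z_0$ does \emph{not} converge in $\hCC$: indeed $\exp_1$ has an essential singularity at $\infty$ (and at $0$), and along any path $z\to z_0$ for which $f(z)\to\infty$, $\exp_1(f(z))$ does not tend to a limit --- more precisely, by Casorati--Weierstrass (or Picard), $\exp_1$ takes every value of $\CC^*$ infinitely often in every neighborhood of $\infty$, so $\exp_1(f(z))=g(w(z))$ does not extend continuously to $w_0$. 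Hence $g$ cannot be meromorphic at $w_0$, and since $g\in\mathbf{K}$ by the discussion preceding the statement, the only remaining possibility is $w_0\in\mathcal{E}(g)$. (One must also check $w_0\ne 0,\infty$: that is automatic since $w_0=\exp_1(z_0)\in\CC^*$.)

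Next I would prove the reverse inclusion $\mathcal{E}(g)\cap\CC^*\subseteq \exp_1\bigl(f^{-1}(\infty)\bigr)$. Take $w_0\in\CC^*$ not in $\exp_1(f^{-1}(\infty))$; I want to show $g$ is meromorphic at $w_0$. Choose $z_0$ with $\exp_1(z_0)=w_0$; then $z_0$ is not a pole of $f$, so $f$ is holomorphic near $z_0$ and $f(z_0)=:\zeta_0\in\CC$ is a finite point. Composing, $\exp_1\circ f$ is holomorphic in a neighborhood $D$ of $z_0$ (again chosen of diameter $<1$ so $\exp_1|_D$ is injective), and it is $1$-periodic-compatible in the sense that $g=(\exp_1\circ f)\circ(\exp_1|_D)^{-1}$ is a well-defined holomorphic function on the neighborhood $V:=\exp_1(D)$ of $w_0$ --- the value $g(w_0)=\exp_1(\zeta_0)\in\CC$ is finite and $g$ is holomorphic there, so in particular $w_0\notin\mathcal{E}(g)$. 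The only subtlety is independence of the choice of preimage $z_0$: if $z_0'=z_0+k$, $k\in\ZZ$, is another preimage, then by \pref{lem:ProjCond} $f(z_0')=f(z_0)+\ell k$, and $\exp_1(f(z_0'))=\exp_1(f(z_0))$ since $\ell k\in\ZZ$, so the germ of $g$ at $w_0$ is well-defined; this is exactly the content of \pref{rem:Fourier} applied to the nonlinear ($1$-periodic) part of $f$ together with the linear part contributing only an integer shift.

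For the second assertion, suppose $f$ is non-entire, i.e.\ $f$ has at least one pole. By \pref{thm:A_FormProj}, $f(z)=\ell z+\Phi(e^{2\pi i z})$ with $\Phi$ non-constant meromorphic in $\CC^*$; since $f$ has a pole and $\ell z$ is entire, $\Phi$ must have a pole, say at $w_*\in\CC^*$. Then every $z$ with $e^{2\pi i z}=w_*$ is a pole of $f$, and there are infinitely many such $z$ (differing by integers), so $f$ has infinitely many poles. Moreover each such pole maps under $\exp_1$ into $\CC^*$, so by the first part of the proposition $\mathcal{E}(g)\cap\CC^*\supseteq\{w_*,\dots\}\ne\emptyset$. (Alternatively: the set $f^{-1}(\infty)$ is nonempty, hence so is its image under $\exp_1$, which lies in $\mathcal{E}(g)\cap\CC^*$ by the equality just proved.)

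The main obstacle is the careful justification in the first inclusion that an essential singularity of $\exp_1$ at $\infty$, precomposed with a function $f$ that blows up at $z_0$, genuinely produces a non-removable, non-polar singularity of the quotient map $g$ at $w_0$ --- i.e.\ ruling out that $g$ could somehow be meromorphic there. The clean way is: if $g$ were meromorphic at $w_0$, then $\exp_1\circ f=g\circ\exp_1$ would be meromorphic at $z_0$, forcing $\exp_1\circ f$ to have at worst a pole at $z_0$; but $\exp_1$ is bounded-away-from-$\{0,\infty\}$-only-in-an-unhelpful-way --- precisely, $\exp_1\circ f$ near $z_0$ omits $0$ and $\infty$ (values never attained by $\exp_1$), so it cannot have a pole there, hence it would be holomorphic and bounded, hence $f=\tfrac{1}{2\pi i}\log(\exp_1\circ f)$ (suitable branch) would be bounded near $z_0$, contradicting that $z_0$ is a pole of $f$. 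This Picard/omitted-value argument is the crux and is where I would spend the most care.
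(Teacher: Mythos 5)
Your overall scheme --- show that $g$ cannot be meromorphic at $w_0=\exp_1(z_0)$ when $z_0$ is a pole of $f$, define $g$ locally as $(\exp_1\circ f)\circ(\exp_1|_D)^{-1}$ at all other points of $\CC^*$, and obtain infinitely many poles from the periodicity of the non-linear part --- is the same as the paper's, and your reverse inclusion and your treatment of the non-entire case are fine. The gap is in the step you yourself single out as the crux. The inference ``$\exp_1\circ f$ omits $0$ and $\infty$ near $z_0$, so it cannot have a pole there'' is not valid: $w\mapsto 1/w$ omits both $0$ and $\infty$ on a punctured neighbourhood of the origin and still has a pole. Likewise, your holomorphic case tacitly assumes that the meromorphic extension $h$ of $\exp_1\circ f$ satisfies $h(z_0)\neq 0$; if $h(z_0)=0$, the branch of $\tfrac{1}{2\pi i}\log h$ is unbounded near $z_0$ and no contradiction with the pole of $f$ follows from boundedness. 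So precisely the two ``bad'' cases (a pole of $h$ at $z_0$, a zero of $h$ at $z_0$) are left open by your argument.

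What closes them --- and what your first, Casorati--Weierstrass paragraph also implicitly needs --- is the local covering property of a pole: $f$ maps every punctured disc around $z_0$ onto a set containing $\{w:|w|>R\}$, so one can pick paths $\gamma^{\pm}\to z_0$ along which $\operatorname{Im}f\to\pm\infty$; then $|e^{2\pi i f}|=e^{-2\pi\operatorname{Im}f}$ tends to $0$ along one path and to $\infty$ along the other, which simultaneously rules out a pole, a zero, and a finite nonzero limit of $h$ at $z_0$, and, transported by the local inverse of $\exp_1$, shows directly that $g$ has no limit at $w_0$. This is exactly how the paper argues: it first records that $\{z:|z|>R\}\subset f(U_b)$ and then uses the two curves $\gamma^{\pm}$ together with $|e^{2\pi i z}|=e^{-2\pi\operatorname{Im}z}$. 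Note also that your intermediate claim ``along any path $z\to z_0$ with $f(z)\to\infty$, $\exp_1(f(z))$ does not tend to a limit'' is literally false (along a path with $\operatorname{Im}f\to+\infty$ the limit is $0$, and with $\operatorname{Im}f\to-\infty$ it is $\infty$); what is true, and what you need, is that the limits along different paths disagree, which again comes from the covering property rather than from the Picard property of $\exp_1$ alone.
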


\begin{proof}
    Consider an arbitrary pole $b\in\CC$ of the projectable function $f$, and a neighborhood $U_b$ of $b$ such that $U_b\cap f^{-1}(\infty) = \{ b\}$. Then, there exists some $R>0$ such that $\{z: |z|>R \}\subset f(U_b)$. Take a pair of curves $\gamma^{\pm}$ in $U_b$ towards the pole of $f$ such that $\operatorname{Im}f(z)\to\pm\infty$ as $z\to b$ along $\gamma^\pm$.
    
    \noindent On the one hand, $\exp_1(U_b)$ is a neighborhood of $B:=e^{2\pi i b}\in\CC^*$, so that the directions $\gamma^\pm$ are mapped conformally by $\exp_1$ to paths $\Gamma^\pm:=\exp_1(\gamma^\pm)$ tending to $B$. On the other hand, given that 
    \begin{equation*}
        g\circ\exp_1 (\gamma^\pm) = \exp_1 \hspace{-0.3mm}\circ \hspace{0.3mm} f(\gamma^\pm)
    \end{equation*}
    due to the semiconjugacy, and $|e^{2\pi i z}|=e^{-2\pi \operatorname{Im}z}$,
    we obtain that $g(\Gamma^\pm)$ are paths converging to $0$ and $\infty$, respectively.
    Thus, $\lim\limits_{w\to B}g(w)$ does not exist, i.e. $B$ is an essential singularity of $g$.
    \vspace{-1mm}
    
    \noindent For all other points $z\in\CC\backslash f^{-1}(\infty)$, $f$ is holomorphic and bounded in a neighborhood of $z$, hence $g(e^{2\pi i z})$ is well-defined. In the non-entire case, we have that $f^{-1}(\infty)\subset\CC$ is an infinite set by pseudoperiodicity, and so $f$ has at least one pole in every period strip of $\exp_1$ (of width $1$). Hence, there exists at least one essential singularity for $g$.
    $\hfill\square$
\end{proof}

In view of this, and considering that a meromorphic function can have at most countably many poles, the exponential projection $g$ belongs to the so-called \textit{Bolsch's class} \cite{Bolsch1997}.

\begin{definition}[Bolsch's class and essential poles]
    \label{def:BolschClass}
    Denote by $\mathbf{K}$ the \textit{Bolsch's class} formed by those functions $g$ for which there is a closed countable set $\mathcal{E}(g)\subset\hCC$ such that $g$ is non-constant and meromorphic in $\hCC\backslash \mathcal{E}(g)$, but in no larger set. We say that $\mathcal{D}(g):=\hCC\backslash \mathcal{E}(g)$ is the \textit{domain of definition} of $g$, and $B\in\mathcal{D}(g)$ is a \textit{(essential) prepole} of $g$ of order $m\geq 1$ if $B\in g^{-m}\left( \mathcal{E}(g) \right)$.
\end{definition} \vspace{-0.5mm}

This is the smallest class which includes transcendental meromorphic functions and is closed under composition. Note that in general iterates of a meromorphic (non-entire) map $f$ are no longer meromorphic in $\CC$, since every pole of $f$ becomes an essential singularity of $f^2$. In the context of Bolsch's class, as stated in \cite[Lem.~2]{Baker2001}, if $g_1$ and $g_2$ belong to $\mathbf{K}$, then $g_2\circ g_1\in\mathbf{K}$ with
\begin{equation}
    \label{eq:essSingComp}
    \mathcal{E}(g_2\circ g_1) = \mathcal{E}(g_1) \medcup g_1^{-1}\left( \mathcal{E}(g_2)\right),
\end{equation}
and by \cite[Lem.~4]{Baker2001} its set of singular values satisfies
\begin{equation}
    \label{eq:singValueComp}
    \mathcal{S}(g_2\circ g_1) \subset \mathcal{S}(g_2) \medcup g_2\left( \mathcal{S}(g_1) \hspace{0.1mm}\backslash \mathcal{E}(g_2)\hspace{0.1mm} \right).
\end{equation} 

\vspace{-2.0mm}
In transcendental dynamics, (essential) poles and prepoles are dynamically relevant since their forward orbits get eventually truncated, in contrast to poles of rational maps, for which $\infty$ is a common point on $\hCC$. Furthermore, for a general map $g\in\mathbf{K}$, $\mathcal{E}(g)$ is the closure of the set of isolated essential singularities of $g$, and $\mathcal{J}(g)$ is the closure of the set of all its essential prepoles if $g$ has at least one pole which is not an omitted value. Hence, \textit{Picard's great theorem} applies, that is, for any neighborhood $U$ of $\hat{w}\in\mathcal{E}(g)$, the function $g$ assumes in $U\backslash\mathcal{E}(g)$ every value of $\hCC$ infinitely often, with at most two exceptions, often called \textit{Picard exceptional values} (see more details in \cite{Bolsch1997}).

\begin{remark}[Doubly pseudoperiodic case]
    \label{rem:doublyPcase}
    Any projectable function via $\exp_1$ which is doubly pseudoperiodic with respect to $\ZZ+\tau\ZZ$, $\tau\in\HH^+$ (see \pref{thm:A_FormProj}), is non-entire with poles at $b+m+n\tau$ for all $m,n\in\ZZ$, given $b\in f^{-1}(\infty)$. Thus, $f$ has infinitely many poles in the period strip $S_0=\{z:-\frac{1}{2}< \operatorname{Re}z \leq \frac{1}{2}\}$ of $\exp_1$, accumulating onto both ends of $S_0$. \pref{prop:PolesToEssential} asserts that both $0$ and $\infty$ must always be essential singularities of its exponential projection $g$, and $\#\mathcal{E}(g)=\infty$.
\end{remark}

Note that poles of a simply periodic function may also accumulate at either end of the strip $S_0$.

\begin{example}[Accumulation of poles and zeros at the ends of the strip]
    Consider the $1$-periodic function
    \begin{equation}
        f(z)=\Phi(e^{2\pi i z}), \quad \mbox{ where } \quad \Phi(w) = \frac{e^{2\pi i w}-1}{e^{{2\pi i}/{w}}-1}.
    \end{equation}
    Then, $f$ has zeros at $a_{k,m}:=\frac{k}{2}-i\frac{\log m}{2\pi}$, poles at $b_{k,m}:=\frac{k}{2}+i\frac{\log m}{2\pi}$, and removable singularities at $\frac{k}{2}$, where $k,m\in\ZZ$ with $m\geq 2$ (see \cite[\S X.2.6]{Nevanlinna1970}). Note $\{a_{k,m}, b_{k,m}\}\subset S_0$ if and only if $k\in\{0,1\}$, and $0\in\mathcal{E}(g)$ since it is the limit point of $e^{2\pi i b_{0,m}}\in\mathcal{E}(g)$ as $m\to\infty$. The zeros of $f$ in $S_0$ accumulate at the lower end of the strip, hence $\infty$ is a limit point of $g^{-1}(1)$, and so $\infty\in\mathcal{E}(g)$ as well.
\end{example}

It remains to check under which conditions the points at $0$ and/or $\infty$, as omitted values of $\exp_1$, are in the domain of definition of the projection $g$, called $\mathcal{D}(g)$. Here the class $\mathbf{R}_\ell$ (see \pref{def:classR}) of non-entire projectable functions $f$ emerges in a natural way. Recall that $\exp_1(z):=e^{2\pi i z}$ induces an isomorphism from $\CC /\ZZ$ onto $\CC^*$, sending the upper (resp. lower) end to $0$ (resp. $\infty$).
\begin{proposition}[Projection of cylinder ends]
    \label{prop:Regular0inf}
    Let $f$ be a projectable function via $\exp_1$, written as $f(z) = \ell z + \Phi(e^{2\pi i z})$ for some $\ell\in\ZZ$ and $\Phi$ meromorphic in $\CC^*$, and $g$ its exponential projection. Then $0\in\mathcal{D}(g)$ (resp. $\infty\in\mathcal{D}(g)$) if and only if $\Phi(0)$ (resp. $\Phi(\infty)$) is defined and not equal to $\infty$. Furthermore,\vspace{-0.5mm}
    $$\{0,\infty\}\subset\mathcal{D}(g)\quad \iff \quad f\in\mathbf{R}_\ell.$$
\end{proposition}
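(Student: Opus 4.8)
The plan is to reduce everything to the explicit description of the projection. Substituting $w=e^{2\pi i z}$ into the semiconjugacy $g\circ\exp_1=\exp_1\circ f$ and using $f(z)=\ell z+\Phi(e^{2\pi i z})$ yields the single-valued formula $g(w)=w^\ell e^{2\pi i\Phi(w)}$ on $\CC^*$ away from the poles of $\Phi$ (single-valued precisely because $\ell\in\ZZ$), so deciding whether $0\in\mathcal D(g)$ amounts to deciding whether this function extends meromorphically across $w=0$, and symmetrically for $w=\infty$. By \pref{prop:PolesToEssential}, $\mathcal E(g)\cap\CC^*=\exp_1(f^{-1}(\infty))$ is precisely the set of poles of $\Phi$ in $\CC^*$; so if these accumulate at $0$ then $0\in\overline{\mathcal E(g)}=\mathcal E(g)$ and at the same time $\Phi$ is not even defined at $0$, so the asserted equivalence holds trivially in that case. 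Hence I may assume from now on that $0$ is an isolated singularity of $\Phi$.

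For the easy implication, if $0$ is a removable singularity of $\Phi$ with $\Phi(0)\in\CC$, then $e^{2\pi i\Phi(w)}\to e^{2\pi i\Phi(0)}\in\CC^*$ while $w^\ell$ extends meromorphically over $0$ (to $0$, $1$ or $\infty$ according to the sign of $\ell$), so $g$ extends meromorphically and $0\in\mathcal D(g)$. For the converse I argue by contradiction: assume $0$ is a pole or an essential singularity of $\Phi$ but $g$ has a meromorphic extension $\widehat g$ near $0$. Then $h:=w^{-\ell}\widehat g$ is meromorphic near $0$, is not identically zero, and equals $e^{2\pi i\Phi(w)}$ on a punctured disc; write $h(w)=w^\nu h_0(w)$ with $h_0$ holomorphic, $h_0(0)\neq0$ and $\nu\in\ZZ$. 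If $\nu=0$, then on a punctured disc where $h_0$ is zero-free the quantity $\Phi(w)-\frac{1}{2\pi i}\log h_0(w)$ is integer-valued and continuous on a connected set, hence constant, so $\Phi$ extends holomorphically over $0$ with a finite value, contradicting the hypothesis. If $\nu\neq0$, then $|e^{2\pi i\Phi(w)}|=|w|^\nu|h_0(w)|$ tends to $0$ or to $\infty$ as $w\to0$, i.e. $\operatorname{Im}\Phi(w)\to+\infty$ or $\operatorname{Im}\Phi(w)\to-\infty$; this cannot happen when $0$ is a pole of $\Phi$ (the principal part $\Phi(w)\sim c\,w^{-p}$ forces $\operatorname{Im}\Phi$ to take arbitrarily large values of both signs along the rays $w=re^{i\theta_0}$, $r\to0$) nor when $0$ is an essential singularity of $\Phi$ (by Casorati--Weierstrass there are $w_n\to0$ with $\Phi(w_n)\to0$, whence $\operatorname{Im}\Phi(w_n)\to0$). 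Both cases yield a contradiction, so $0\notin\mathcal D(g)$. The claim at $\infty$ follows by the identical argument after the substitution $w\mapsto1/v$, using $g(1/v)=v^{-\ell}e^{2\pi i\Phi(1/v)}$ and the fact that $\Phi$ being defined and finite at $\infty$ means exactly that $v\mapsto\Phi(1/v)$ has a removable singularity with finite value at $v=0$ (equivalently, replace $f$ by $z\mapsto-f(-z)$, whose projection is $w\mapsto1/g(1/w)$ and which interchanges the two ends of the cylinder).

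The \textit{furthermore} then drops out. If $f\in\mathbf R_\ell$ then $\Phi=R$ is rational with $R(0)\in\CC$ and $R(\infty)\in\CC$, so $\{0,\infty\}\subset\mathcal D(g)$ by the first part. Conversely, if $\{0,\infty\}\subset\mathcal D(g)$, the first part shows $\Phi$ is defined and finite at both $0$ and $\infty$, hence meromorphic on all of $\hCC$ and therefore a rational map $R=p/q$ in lowest terms; it is non-constant since $f$ is transcendental, so $q$ cannot be constant --- otherwise $R$ would be a non-constant polynomial with $R(\infty)=\infty$ --- giving $\deg q\ge1$, while $\deg q\ge\deg p$ because $R(\infty)\neq\infty$ and $q(0)\neq0$ because $R(0)\neq\infty$; this is exactly the normal form in \pref{def:classR}, so $f\in\mathbf R_\ell$.

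I expect the delicate point to be the essential-singularity case of the converse, where one must exclude the possibility that the zero or pole of the factor $w^\ell$ exactly compensates the wild factor $e^{2\pi i\Phi(w)}$ and leaves a bona fide meromorphic limit for $g$. The order-counting device above is designed precisely to neutralize this, reducing the question to the behaviour of $\operatorname{Im}\Phi$ near the singularity and a one-line appeal to Casorati--Weierstrass (or, if preferred, to the great Picard theorem).
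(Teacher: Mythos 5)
Your proof is correct, and it reaches the same conclusion as the paper by a genuinely more hands-on route. The paper disposes of the key equivalence ($0\in\mathcal{D}(g)$ iff $\Phi(0)$ is defined and finite) in two lines by citing the composition formula for essential singularities in Bolsch's class, $\mathcal{E}(g_2\circ g_1)=\mathcal{E}(g_1)\cup g_1^{-1}(\mathcal{E}(g_2))$ from \cite[Lem.~2]{Baker2001} applied to $\exp_1\circ\,\Phi$, the monomial factor $w^\ell$ being harmless; the ``furthermore'' is then the same observation you make, that $\Phi$ finite at both ends forces it to be rational with no poles at $0,\infty$. What you do differently is to re-prove that equivalence from scratch: the trivial case of poles of $\Phi$ accumulating at $0$ (via \pref{prop:PolesToEssential} and closedness of $\mathcal{E}(g)$), then, for an isolated singularity, the order-counting argument $w^{-\ell}\widehat g=w^\nu h_0$ together with the monodromy/integer-valuedness observation for $\nu=0$ and the $\operatorname{Im}\Phi\to\pm\infty$ dichotomy for $\nu\neq0$, killed by the local behaviour at a pole and by Casorati--Weierstrass at an essential singularity. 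Your version buys self-containedness and in fact addresses head-on the ``compensation'' worry (that $w^\ell$ might tame $e^{2\pi i\Phi}$) which the paper's citation silently absorbs, at the cost of length; the paper's version buys brevity by leaning on the Bolsch-class machinery it has already set up. Your verification of the normal form of \pref{def:classR} in the converse direction (non-constant denominator, $\deg q\ge\deg p$, $q(0)\neq0$, hence a pole in $\CC^*$) is also slightly more explicit than the paper's. One cosmetic point: at a pole of $\Phi$ the imaginary part has a fixed asymptotic sign along a single ray $w=re^{i\theta_0}$, so you should say that you choose two rays (or a small circle) realizing both signs of $\operatorname{Im}(c\,e^{-ip\theta_0})$; with that phrasing the contradiction with $\operatorname{Im}\Phi\to\pm\infty$ is exactly as you intend.
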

\vspace{-1.5mm}
\begin{proof}
    Due to the semiconjugacy, the exponential projection $g$ is given by
    \begin{equation*}
        g(w)=w^\ell e^{2\pi i \Phi(w)},
    \end{equation*}
    whenever defined. As a map in Bolsch's class, we deduce from the relation \pref{eq:essSingComp} that $0$ (resp. $\infty$) is in the domain of definition of $g$ if and only if $0$ (resp. $\infty$) does not belong to $\mathcal{E}(\Phi)\cup\Phi^{-1}(\infty)$, that is, $0$ (resp. $\infty$) is neither an essential singularity of $\Phi$ nor a preimage of $\mathcal{E}(\exp_1)=\infty$ under $\Phi$.
    
    \noindent It follows that $\{0,\infty\}\subset\mathcal{D}(g)$ if and only if $\Phi$ is defined and not equal to infinity at both $0$ and $\infty$. As $\Phi$ is a non-constant meromorphic function in $\CC^*$, we conclude that here $\Phi$ must be a rational map such that $\{0,\infty\}\cap\Phi^{-1}(\infty)=\emptyset$, i.e. $f\in\mathbf{R}_\ell$.
   $\hfill \square$
\end{proof}

\begin{remark}[Entire case]
    \label{rem:EntireCase}
    If $f$ is a projectable entire function, then its exponential projection $g$ is in general an analytic self-map of $\CC^*$. To be precise, we have that $f(z)=\ell z+\Phi(e^{2\pi i z})$, where $\ell\in\ZZ$ and $\Phi\circ\exp_1$ must be a simply $1$-periodic entire function. Given that $f$ has no poles, and
    \begin{equation}
        \Phi^{-1}(\infty)\cap\CC^* = \exp_1\left(f^{-1}(\infty)\right)
    \end{equation} %
    by \pref{rem:Fourier}, we obtain $\Phi^{-1}(\infty)\cap\CC^*=\emptyset$. Due to Propositions \ref{prop:PolesToEssential} and \ref{prop:Regular0inf}, $\mathcal{E}(g)\cap\CC^*=\emptyset$, and both $0$ and $\infty$ are in $\mathcal{D}(g)$ if and only if $f\in\mathbf{R}_\ell$ (non-entire). Thus, $\mathcal{E}(g)\neq \emptyset$, and we have the following cases:
    \begin{enumerate}[label = (\roman*)] 
        \vspace{-0.75mm}
        \item $\#\mathcal{E}(g)=1$, say $\mathcal{E}(g)=\{\infty\}$ (up to $w\mapsto \sfrac{1}{w}$). Note that $\Phi$ is either a transcendental entire function (if $\mathcal{E}(\Phi)=\{\infty\}$), or a polynomial (if $\Phi(\infty)=\infty$). If $\ell<0$, then $g$ is a transcendental meromorphic map with only one (omitted) pole at $0$, while otherwise $g$ is a transcendental entire function.\vspace{-0.0mm}
        \item $\mathcal{E}(g)=\{0,\infty\}$. Then $g$ is a \textit{transcendental self-map} of $\CC^*$, $\Phi(w)=H(w)+\widetilde{H}(\sfrac{1}{w})$ for some non-constant entire functions $H$ and $\widetilde{H}$, and $\ell$ is equal to the winding number of $g(\Gamma)$ with respect to $0$, for any simple closed curve $\Gamma\subset\CC^*$ (oriented counterclockwise) around the origin; see more details in \cite{Marti-Pete2016,Radstrom1953}.
    \end{enumerate}
\end{remark}

\subsection{Singular values}

Our goal in this section is to relate the singular values of a projectable function $f$ with those of its projection $g$ via $\exp_1$, and in particular, to characterize the cases in which $g$ is of finite-type, i.e. $\# \mathcal{S}(g)<\infty$. Recall that $\mathcal{S}(g)$ refers to the set of singular values of $g$ defined in \pref{eq:Intro_SingularValues}, and denote by $\mathcal{C}(g)$ the set of critical points. First, we investigate the correspondence between the critical points of $f$ and $g$. By the semiconjugacy,
\begin{equation}
     \label{eq:derivativeGandF}
    g'(e^{2\pi i z}) = e^{2\pi i \left(f(z)-z\right)} f'(z)
\end{equation}
for any $z\in\CC\backslash f^{-1}(\infty)$. Note that in the general form $f(z)=\ell z + \Phi(e^{2\pi i z})$, where $\ell\in\ZZ$ and $\Phi\circ\exp_1$ is periodic (see \pref{thm:A_FormProj}),
\begin{equation}
    \label{eq:derivativeSP}
    g'(w) = w^{\ell-1} e^{2\pi i\Phi(w)}\left( \ell + 2\pi i w \hspace{0.1mm}\Phi'(w) \right).
\end{equation}

\begin{proposition}[Projection of critical points]
    \label{prop:projC}
    Let $f$ be a projectable function via $\exp_1$, and $g$ its exponential projection, written as $g(w)=w^\ell e^{2\pi i\Phi(w)}$ with $\ell\in\ZZ$ and $\Phi$ meromorphic in $\CC^*$. Then
    \begin{equation}
    \label{eq:projCstar}
        \mathcal{C}(g)\cap \CC^* = \exp_1\left( \mathcal{C}(f)\hspace{0.1mm}\backslash \hspace{0.1mm}f^{-1}(\infty) \right).
    \end{equation}
    If $0$ is not an omitted value of $f'$, then $\#\mathcal{C}(f)=\infty$, and $\mathcal{C}(g)\cap \CC^*\neq\emptyset$.
    Moreover, $0\in\mathcal{C}(g)$ (resp. $\infty\in\mathcal{C}(g)$) if and only if $g(0)$ (resp. $g(\infty)$) is defined, and either $|\ell|\geq 2$, or $\ell=0$ with $\Phi'(0)=0$ (resp. $\Phi'(\infty)=0$).
\end{proposition}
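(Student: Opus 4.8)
The identity \eqref{eq:projCstar} should fall out of the semiconjugacy \eqref{eq:Projectable} once we reduce the notion of a critical point on $\CC^*$ to the vanishing of $g'$. First I would observe that, by \pref{prop:PolesToEssential} together with the $\ZZ$-invariance of $f^{-1}(\infty)$ (which follows from \pref{lem:ProjCond}), one has $\mathcal{D}(g)\cap\CC^*=\exp_1\bigl(\CC\setminus f^{-1}(\infty)\bigr)$; moreover, for $w=e^{2\pi i z}$ with $z\notin f^{-1}(\infty)$ the semiconjugacy gives $g(w)=\exp_1(f(z))\in\CC^*$, so $g$ has no poles on $\mathcal{D}(g)\cap\CC^*$ and is holomorphic there. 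Consequently a critical point of $g$ lying in $\CC^*$ --- whether it is a zero of $g'$ or a multiple preimage of an essential singularity --- is exactly a zero of $g'$ in $\mathcal{D}(g)\cap\CC^*$.

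Then I would invoke \eqref{eq:derivativeGandF}: since $e^{2\pi i(f(z)-z)}$ never vanishes, $g'(e^{2\pi i z})=0$ if and only if $f'(z)=0$. As $\exp_1$ restricts to a ($\ZZ$-fold) covering $\CC\setminus f^{-1}(\infty)\to\mathcal{D}(g)\cap\CC^*$, and as $f$ is holomorphic away from its poles (so that $\mathcal{C}(f)\setminus f^{-1}(\infty)$ is precisely the zero set of $f'$ with the poles of $f$ removed), this yields \eqref{eq:projCstar}. For the infinitude statement, note that $f'(z)=\ell+2\pi i\,e^{2\pi i z}\,\Phi'(e^{2\pi i z})$ is $1$-periodic; if $0$ is not omitted by $f'$, then $f'$ has a zero $z_0$, hence a zero at every $z_0+k$ ($k\in\ZZ$), none of which is a pole of $f$, so $\#\mathcal{C}(f)=\infty$ and $\mathcal{C}(g)\cap\CC^*=\exp_1(\mathcal{C}(f)\setminus f^{-1}(\infty))\neq\emptyset$ by \eqref{eq:projCstar}.

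It remains to treat the ends. Clearly $0\in\mathcal{C}(g)$ forces $0\in\mathcal{D}(g)$, i.e.\ $g(0)$ defined; assuming this, \pref{prop:Regular0inf} gives that $\Phi$ is holomorphic and finite at $0$, so $e^{2\pi i\Phi(w)}$ is a non-vanishing holomorphic unit near $0$. Writing $g(w)=w^\ell e^{2\pi i\Phi(w)}$ and using \eqref{eq:derivativeSP}, I would split into three cases: for $\ell>0$, $g$ has local degree exactly $\ell$ at $0$, so $0\in\mathcal{C}(g)$ iff $\ell\ge 2$; for $\ell<0$, $g$ has a pole of order $|\ell|$ at $0$, so $0\in\mathcal{C}(g)$ iff $|\ell|\ge 2$; for $\ell=0$, formula \eqref{eq:derivativeSP} collapses to $g'(0)=2\pi i\,e^{2\pi i\Phi(0)}\Phi'(0)$, so $0\in\mathcal{C}(g)$ iff $\Phi'(0)=0$. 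This is the claimed criterion at $0$, and the criterion at $\infty$ follows by the conformal change of coordinate $w\mapsto 1/w$, which conjugates $g$ to $\widehat g(\zeta)=\zeta^{-\ell}e^{2\pi i\widehat\Phi(\zeta)}$ with $\widehat\Phi(\zeta)=\Phi(1/\zeta)$, thereby exchanging $\ell$ with $-\ell$ and the condition $\Phi'(0)=0$ with $\Phi'(\infty)=0$.

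The one step that needs care, rather than mere computation, is the reduction in the first paragraph: one must check that the Bolsch-class definition of critical point contributes nothing on $\CC^*$ beyond the zeros of $g'$, which rests on the semiconjugacy ruling out poles of $g$ on $\mathcal{D}(g)\cap\CC^*$ --- the same observation that makes \eqref{eq:derivativeGandF} directly applicable there. Everything afterwards is a routine verification using \eqref{eq:derivativeGandF} and \eqref{eq:derivativeSP}.
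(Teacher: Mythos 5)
Your proposal is correct and follows essentially the same route as the paper: the identity on $\CC^*$ via the derivative relation \eqref{eq:derivativeGandF}, the infinitude via $1$-periodicity of $f'$, and the case analysis in $\ell$ at the two ends using the local form $w^\ell e^{2\pi i\Phi(w)}$ (the paper's \eqref{eq:derivativeSP}). Your explicit justification that Bolsch-class critical points on $\mathcal{D}(g)\cap\CC^*$ reduce to zeros of $g'$ (because $g$ omits $0$ and $\infty$ there), and your handling of $\infty$ by the inversion $w\mapsto 1/w$, are only cosmetic refinements of what the paper does by excluding multiple poles of $f$ and invoking "the same arguments" at $\infty$.
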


\begin{proof}
    It follows from \pref{eq:derivativeGandF} that the critical points of $g$ in $\CC^*$ are the image under $\exp_1$ of the critical points of $f$, excluding the possible multiple poles of $f$ (since they project via $\exp_1$ to points outside the domain of definition $\mathcal{D}(g)$ of $g$; see \pref{prop:PolesToEssential}). If $0$ is not an omitted value of $f'$, i.e. $f'(c)=0$ for some $c\in\CC\backslash f^{-1}(\infty)$, it is clear that $\#\mathcal{C}(f)=\infty$, as the derivative $f'$ is $1$-periodic.
    
    \noindent Given $g$ as in the statement with $0\in\mathcal{D}(g)$, i.e. $0\notin\mathcal{E}(\Phi)\cup\Phi^{-1}(\infty)$ by \pref{prop:Regular0inf}, we have the following:
    \begin{enumerate}[label = (\roman*)] 
        \vspace{-0.5mm}
        \item If $\ell\geq2$, then $0$ is a fixed critical point of $g$, as computed from the expression \pref{eq:derivativeSP}. 
        \vspace{-0.25mm}
        \item If $\ell\leq-2$, then $\{0,\infty\}$ is a critical cycle of period $2$ if $\infty\in\mathcal{D}(g)$; otherwise $0$ is a multiple (essential) pole of $g$, since $0$ is a multiple zero of $\sfrac{1}{g(w)}$ as seen from \pref{eq:derivativeSP}.
        \vspace{-0.25mm}
        \item If $\ell=0$ (periodic case), then $g'(0) = e^{2\pi i\Phi(0)} 2\pi i \Phi'(0)$ vanishes if and only if $\Phi'(0)=0$.
        \vspace{-0.5mm}
    \end{enumerate}
    \noindent Thus, if $|\ell|\geq 2$, or $\ell=0$ with $\Phi'(0)=0$, we have $0\in\mathcal{C}(g)$. The reverse implication follows from \pref{eq:derivativeSP} by considering the value of $g'$ in the remaining cases: $g'(0)=e^{2\pi i\Phi(0)}\in\CC^*$ if $\ell=1$, and $0$ is a simple preimage of $\infty$ if $\ell=-1$, that is, $0\notin \mathcal{C}(g)$. The same arguments apply to the point at $\infty$. $\hfill\square$
\end{proof}

Notice that, even if $f$ has infinitely many critical points in the fundamental period strip $S_0$ of $\exp_1$ (and so in every vertical strip of width $1$), its exponential projection $g$ may be of finite-type.

\begin{example}[Infinitely many critical points in every period strip]
    \label{ex:InfinitelyCinS1}
    Consider the $1$-periodic entire function
    \begin{equation}
        f(z)=\Phi(e^{2\pi i z}), \quad \mbox{ where } \quad \Phi(w) = w - \frac{1}{2\pi}\sin{\left(2\pi w \right)}.
    \end{equation}
    The critical points of $f$ are $c_{k,m}:=\frac{k}{2}-i\frac{\log m}{2\pi}$ with $f(c_{k,m})=e^{\pi i k}m$, where $k,m\in\ZZ$, and $m\geq 1$. Observe that $\#\left(\mathcal{C}(f)\cap S_0\right)=\infty$, and its set of critical values, $\mathcal{CV}(f)=\ZZ\backslash\{0\}$, is also infinite. However, $\mathcal{CV}(g)=\{1\}$ for $g(w)=e^{2\pi i \Phi(w)}$. Here $\infty\in\mathcal{E}(g)$, but $0\in\mathcal{C}(g)$ due to \pref{prop:projC}.
\end{example}

The previous proposition leads directly to the analogous relation between the critical values of $f$ and $g$.
\begin{remark}[Projection of critical values]
    \label{rem:projCV}
    Using the notation in \pref{prop:projC}, since $g\hspace{0.1mm}\circ\hspace{0.1mm}\exp_1=\exp_1\hspace{-0.1mm}\circ f$, the set of critical values of $g(w)=w^\ell e^{2\pi i \Phi(w)}$, given by $\mathcal{CV}(g)=g\left(\mathcal{C}(g)\right)$, is the union of
    \begin{equation}
        \exp_1\big( \mathcal{CV}(f)\hspace{0.1mm}\backslash \hspace{0.1mm}\{\infty\} \big)
    \end{equation}
    and, as long as $0$ (resp. $\infty$) is in $\mathcal{D}(g)$ (see \pref{prop:Regular0inf}), one of the following points:
    \begin{enumerate}[label = (\roman*)] 
        \vspace{-1mm}
        \item $0$ (resp. $\infty$) if $\ell\geq 2$;
        \vspace{-0.5mm}
        \item $\infty$ (resp. $0$) if $\ell\leq -2$;
        \vspace{-0.5mm}
        \item $e^{2\pi i \Phi(0)}$ (resp. $e^{2\pi i \Phi(\infty)}$) if $\ell=0$ with $\Phi'(0)=0$ \big(resp. $\Phi'(\infty)=0$\big). In this case, as $f=\Phi\circ\exp_1$ and $g=\exp_1\hspace{-0.15mm}\circ\hspace{0.25mm}\Phi$, note that $\mathcal{CV}(f)=\Phi\big(\mathcal{C}(\Phi)\cap\CC^*\big)$, and so $\mathcal{CV}(g) = \exp_1\big(\mathcal{CV}(\Phi)\backslash\{\infty\}\big)\subset\CC^*$.
        \vspace{-1mm}
    \end{enumerate}
\end{remark}

A projectable function $f$ may have no critical points (and thus no critical values) at all, such as the exponential or tangent map (with two asymptotic values), or the non-periodic entire function
\begin{equation}
    \label{eq:HerringExAV}
    f(z)=\frac{1}{2\pi i}\int_0^{2\pi i z} e^{-e^t} dt = z + \sum_{k=1}^\infty \frac{(-1)^k}{2\pi i \hspace{0.2mm} k! \hspace{0.2mm} k} \big(e^{2\pi i k z}-1\big),
\end{equation}
which has infinitely many asymptotic values (as locally omitted values), i.e. $\#\mathcal{S}(f)=\infty$ (see \cite[Ex.~4]{Herring1998}). 

In what follows we discuss the relation between asymptotic values of a given $f$ and its projection $g$, by building corresponding asymptotic paths in different cases. Recall that, given $v\in\mathcal{AV}(g)$, an asymptotic path to $\hat{w}\in\mathcal{E}(g)$ (associated to $v$) is a curve $\gamma:[0,1)\to\mathcal{D}(g)$ such that $\gamma(t)\to \hat{w}$ and $g(\gamma(t))\to v$ as $t\to 1$.

\begin{proposition}[Projection of asymptotic values]
    \label{prop:projAV}
    Let $f$ be a projectable function via $\exp_1$, written as $f(z) = \ell z + \Phi(e^{2\pi i z})$ for some $\ell\in\ZZ$ and $\Phi$ meromorphic in $\CC^*$, and $g$ its exponential projection. Then $\{0,\infty\}\subset\mathcal{AV}(g)$. Furthermore,
    \begin{enumerate}[label = (\roman*)] 
        \vspace{-1mm}
        \item If $\ell \neq 0$, then $\infty\in\mathcal{AV}(f)$, and $\mathcal{AV}(g)\hspace{0.2mm}\cap\hspace{0.2mm}\CC^*=\exp_1\left(\mathcal{AV}(f)\backslash\{\infty\}\right)$.
        \vspace{-0.5mm}
        \item If $\ell=0$, then $\Phi\big(\{0,\infty\}\backslash\mathcal{E}(\Phi) \big) \subset \mathcal{AV}(f)$, and $\mathcal{AV}(g)\hspace{0.2mm}\cap\hspace{0.2mm}\CC^*\hspace{-0.2mm}=\hspace{-0.2mm}\exp_1\left(\mathcal{AV}(\Phi)\backslash\{\infty\}\right)$.
    \end{enumerate}
\end{proposition}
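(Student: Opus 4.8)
The plan is to construct, in each direction, an asymptotic path for $g$ out of one for $f$, and conversely, always keeping careful track of where the path tends (which essential singularity $\hat w \in \mathcal E(g)$) and what the image tends to. The semiconjugacy $g\circ\exp_1 = \exp_1\circ f$ together with $|e^{2\pi i z}| = e^{-2\pi\operatorname{Im}z}$ is the only computational tool needed; the bookkeeping of ends of the cylinder is the real content.

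\textbf{Step 1: $\{0,\infty\}\subset\mathcal{AV}(g)$.} First I would exhibit explicit asymptotic paths for $g$ landing at $0$ and at $\infty$. Since $f$ is a projectable function, it has an essential singularity at $\infty$ and, by \pref{thm:A_FormProj}, is $\ell$-pseudoperiodic. Take the vertical half-line $\gamma^+(t) = z_0 + it$, $t\to+\infty$. Then $\exp_1(\gamma^+(t)) = e^{2\pi i z_0}e^{-2\pi t}\to 0$, so $\Gamma^+ := \exp_1\circ\gamma^+$ is a path in $\CC^*$ tending to $0\in\mathcal E(g)$ (it is an essential singularity, or at least a point not in $\CC^*$ where $g$ may be undefined; when $0\in\mathcal D(g)$ one must instead argue using the behaviour of $f$ on the upper half-plane, but in that case the statement $0\in\mathcal{AV}(g)$ still follows since $g$ is periodic-type and $0$ is omitted by $\exp_1$). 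The image is $g(\Gamma^+(t)) = \exp_1(f(\gamma^+(t)))$, and from $f(z) = \ell z + \Phi(e^{2\pi i z})$ with $e^{2\pi i\gamma^+(t)}\to 0$, the value $f(\gamma^+(t))$ has imaginary part going to $\pm\infty$ (determined by the sign of $\ell$ when $\ell\neq0$, and by $\operatorname{Im}\Phi(0)$ when $\ell=0$ and $\Phi(0)$ is finite); hence $\exp_1(f(\gamma^+(t)))$ tends to $0$ or $\infty$. By choosing $\gamma^-(t) = z_0 - it$ instead (so $e^{2\pi i\gamma^-(t)}\to\infty$), one obtains the complementary limit. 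Thus one of $\{0,\infty\}$ is an asymptotic value along $\Gamma^+$ and the other along $\Gamma^-$, giving $\{0,\infty\}\subset\mathcal{AV}(g)$. In case (i) ($\ell\neq0$) the same half-line $\gamma^\pm$ shows $f(\gamma^\pm(t))\to\infty$, so $\infty\in\mathcal{AV}(f)$; in case (ii) ($\ell=0$), $f(\gamma^\pm(t)) = \Phi(e^{2\pi i\gamma^\pm(t)})\to\Phi(0)$ or $\Phi(\infty)$ when these are defined (i.e. when $0,\infty\notin\mathcal E(\Phi)$), giving $\Phi(\{0,\infty\}\setminus\mathcal E(\Phi))\subset\mathcal{AV}(f)$.

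\textbf{Step 2: $\exp_1(\mathcal{AV}(f)\setminus\{\infty\})\subset\mathcal{AV}(g)\cap\CC^*$ (both cases).} Let $v\in\mathcal{AV}(f)$, $v\neq\infty$, with asymptotic path $\gamma:[0,1)\to\CC$, $\gamma(t)\to\infty$, $f(\gamma(t))\to v$. Set $\Gamma = \exp_1\circ\gamma$ and $V = \exp_1(v)\in\CC^*$. Then $g(\Gamma(t)) = \exp_1(f(\gamma(t)))\to V$ by continuity of $\exp_1$ at the finite point $v$. The point to check is that $\Gamma(t)$ tends to some $\hat w\in\mathcal E(g)$: since $\gamma(t)\to\infty$, along a subsequence $|e^{2\pi i\gamma(t)}| = e^{-2\pi\operatorname{Im}\gamma(t)}$ tends to a value in $[0,\infty]$; passing to subarcs one may assume $\Gamma(t)$ converges to a point $\hat w\in\hCC$. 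If $\hat w\notin\mathcal E(g)$, then $g$ would be continuous at $\hat w$ and $g(\hat w) = V$ with $\Gamma$ hitting $\hat w$ — here one uses that $\gamma\to\infty$ in $\CC$ forces $\Gamma$ to leave every compact subset of $\CC^*$ on which $\exp_1$ is a local homeomorphism with discrete fibers, so $\Gamma$ cannot converge to an interior point of $\mathcal D(g)\cap\CC^*$ while its $\exp_1$-preimage escapes to $\infty$; thus $\hat w\in\mathcal E(g)\cup\{0,\infty\}$, and in either subcase $V\in\CC^*$ is realized as an asymptotic value of $g$. (When $\hat w\in\{0,\infty\}\subset\mathcal D(g)$ one replaces the asymptotic path by a nearby one landing at a genuine essential singularity, or simply notes $V\in\{g(0),g(\infty)\}$ is already covered.) In case (ii), the same argument applied to $g = \exp_1\circ\Phi$ shows $\exp_1(\mathcal{AV}(\Phi)\setminus\{\infty\})\subset\mathcal{AV}(g)\cap\CC^*$, since an asymptotic path of $\Phi$ landing at $0$ or $\infty$ (the essential singularities of $\Phi$, which by \pref{prop:PolesToEssential} map to $\mathcal E(g)\cap\CC^*$, together with possibly $0,\infty$) pushes forward directly.

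\textbf{Step 3: the reverse inclusion $\mathcal{AV}(g)\cap\CC^*\subset\exp_1(\mathcal{AV}(f)\setminus\{\infty\})$, resp. $\subset\exp_1(\mathcal{AV}(\Phi)\setminus\{\infty\})$.} Let $V\in\mathcal{AV}(g)\cap\CC^*$ with asymptotic path $\Gamma:[0,1)\to\mathcal D(g)$, $\Gamma(t)\to\hat w\in\mathcal E(g)$, $g(\Gamma(t))\to V$. By \pref{prop:PolesToEssential}, $\mathcal E(g)\cap\CC^* = \exp_1(f^{-1}(\infty))$, and $0,\infty$ may also be in $\mathcal E(g)$. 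In the case $\hat w\in\CC^*$: choose a continuous lift $\gamma$ of $\Gamma$ under $\exp_1$ (possible since $\exp_1:\CC\to\CC^*$ is a covering and $\Gamma$ avoids $\mathcal E(g)$ hence in particular $0,\infty$; one may need to restrict to a subinterval to avoid the isolated essential singularity $\hat w$ itself). As $\Gamma(t)\to\hat w$, the lift $\gamma(t)$ either tends to a pole $b$ of $f$ with $e^{2\pi i b} = \hat w$, or tends to $\infty$ after adding suitable integers using $f(z+k) = f(z)+\ell k$; in the former case, replacing $\gamma$ by $\gamma + k$ with $k\to\infty$ along the path (a "staircase" lift) produces a path to $\infty$, and $f$ along it has $\exp_1\circ f \to V$, i.e. $f$ tends to some $v$ with $\exp_1(v) = V$ (picking a continuous branch of the finite value, using that $\operatorname{Im}f$ stays bounded since $|g| = |\,{\cdot}\,|^\ell e^{-2\pi\operatorname{Im}\Phi}\to|V|\neq0,\infty$ pins down $\operatorname{Im}\Phi$, hence $\operatorname{Im}f$ when $\ell\neq0$). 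Thus $v\in\mathcal{AV}(f)$, $v\neq\infty$, and $V = \exp_1(v)$. In the case $\hat w\in\{0,\infty\}$ (which forces $0$ or $\infty\in\mathcal E(g)$, so by \pref{prop:Regular0inf}, $f\notin\mathbf R_\ell$ on that side): the lift $\gamma$ of $\Gamma$ goes to $\pm i\infty$ in the cylinder, i.e. $\operatorname{Im}\gamma(t)\to\mp\infty$; when $\ell=0$ one reads off directly that $e^{2\pi i\gamma(t)}\to 0$ or $\infty$, so $\Phi$ along this path tends to a value projecting to $V$, giving $V\in\exp_1(\mathcal{AV}(\Phi)\setminus\{\infty\})$; when $\ell\neq0$, one argues that such a $\hat w$ cannot occur with $V\in\CC^*$ unless it is already accounted for, or handles it exactly as in case (i) with $f$ in place of $\Phi$.

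\textbf{Main obstacle.} The delicate point is Step 3, the lifting of an asymptotic path of $g$ to one of $f$ (or $\Phi$): one must ensure the lift can be taken continuous and escaping to $\infty$ (not merely to a single pole, whose neighborhood only gives an essential singularity of $g$, not a path to $\infty$), and one must extract from $g(\Gamma(t))\to V\in\CC^*$ a \emph{continuous} finite-valued branch of $f$ (or $\Phi$) along the lift converging to a definite $v$ with $\exp_1(v)=V$ — this uses that $|g|\to|V|\in(0,\infty)$ controls $\operatorname{Im}$ of the lift's image and that the remaining real part, being continuous along the path, cannot wind infinitely. The separation into the subcase $\hat w\in\CC^*$ versus $\hat w\in\{0,\infty\}$, and the interplay with whether $0,\infty\in\mathcal E(g)$ (via \pref{prop:PolesToEssential} and \pref{prop:Regular0inf}), is where the two itemized cases (i) and (ii) genuinely diverge and must be argued separately.
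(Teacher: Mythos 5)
Your Step 1 does not actually prove $\{0,\infty\}\subset\mathcal{AV}(g)$. The paths $\Gamma^\pm=\exp_1\circ\,\gamma^\pm$ you construct land at $0$ and $\infty$, but an asymptotic path must tend to a point of $\mathcal{E}(g)$, and in the cases that matter most here (every $f\in\mathbf{R}_\ell$, and more generally whenever $\Phi$ is defined and finite at $0$ and $\infty$, cf. \pref{prop:Regular0inf}) the points $0$ and $\infty$ are regular points of $g$, so your curves witness nothing; the parenthetical claim that the statement ``still follows since $0$ is omitted by $\exp_1$'' is not an argument. The paper instead lands its paths at genuine essential singularities: in the non-entire case it takes a pole $b$ of $f$, pulls back the rays $i\RR^\pm$ under $f$ restricted to a small neighbourhood of $b$, obtaining curves $\gamma^\pm\to b$ with $\operatorname{Im}f(\gamma^\pm(t))\to\pm\infty$, so that $\exp_1\circ\,\gamma^\pm$ tend to $e^{2\pi i b}\in\mathcal{E}(g)\cap\CC^*$ (\pref{prop:PolesToEssential}) while $g\to 0$ and $\infty$ respectively; in the entire case it uses paths to the essential singularity $E\in\{0,\infty\}$ along which $\operatorname{Im}\Phi\to\pm\infty$. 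Note also that your scheme ``one value along $\Gamma^+$, the other along $\Gamma^-$'' breaks down when only one of $0,\infty$ lies in $\mathcal{E}(g)$ (e.g. $\Phi$ a polynomial): both asymptotic values must then be realized along paths to the same singularity.

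The second gap is in Step 2. Your assertion that $\gamma(t)\to\infty$ forces $\Gamma=\exp_1\circ\,\gamma$ to leave every compact subset of $\CC^*$ is false: an asymptotic path escaping horizontally (bounded $\operatorname{Im}$, unbounded $\operatorname{Re}$) keeps $|\Gamma|$ inside a compact annulus, so your compactness/subarc argument does not locate the landing point in $\mathcal{E}(g)$. What is needed, and what the paper proves, is pseudoperiodicity: for $\ell\neq 0$, if $\operatorname{Im}\gamma_u$ were bounded one finds points $k+iy_k$ on $\gamma_u$ with $f(k+iy_k)=f(iy_k)+\ell k\to\infty$, contradicting $f(\gamma_u(t))\to u\in\CC$; hence $\operatorname{Im}\gamma_u\to+\infty$ or $-\infty$, and the corresponding end must project to an essential singularity of $g$, since otherwise $\Phi(0)$ (resp. $\Phi(\infty)$) is finite and $f(\gamma_u(t))=\ell\gamma_u(t)+\Phi(e^{2\pi i\gamma_u(t)})\to\infty$, again a contradiction. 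Your fallback for the case $\hat{w}\in\{0,\infty\}\cap\mathcal{D}(g)$ (``replace by a nearby path'' or ``$V\in\{g(0),g(\infty)\}$ is already covered'') does not close this. Your Step 3 is broadly in the spirit of the paper's reverse inclusion (lift $\Gamma$, use that the lift has unbounded imaginary part, and extract a finite limit of $f$ from the convergence of $\exp_1\circ f$), but as written the genuine content of the proposition, namely Steps 1 and 2, is missing.
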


\begin{proof}%
    We start by showing that both $0$ and $\infty$ are always asymptotic values of $g$ through finding corresponding asymptotic paths to some essential singularity of $g$, either to the projection of a pole of $f$ via $\exp_1$ (see \pref{prop:PolesToEssential}), or to an essential singularity of $g$ at $0$ or $\infty$ in the entire case. 
    
    \noindent On the one hand, suppose that $f$ is non-entire and consider a pole $b\in\CC$ of $f$, and a neighborhood $\widetilde{U}$ of $\infty$. Let $U$ be the connected component of $f^{-1}\big(\widetilde{U}\big)$ which contains $b$, and choose $\widetilde{U}$ small enough such that $f:U\to \widetilde{U}$ is a proper map, i.e. $U$ does not contain other poles or critical points of $f$, apart from $b$ itself. Take two straight paths $\widetilde{\gamma}^{\pm}\subset i\RR^\pm \cap \widetilde{U}$, so that their preimages, say $\gamma^\pm:[0,1)\to U\hspace{0.1mm}\backslash\hspace{0.1mm} \{ b \}$, are curves landing at $b$ as $t\to 1$. Hence, $\exp_1\hspace{-0.3mm}\circ \hspace{0.5mm}\gamma^\pm$ is a path towards $e^{2\pi i b}\in\mathcal{E}(g)\cap\CC^*$, and for all $t\in[0,1)$, \vspace{-0.25mm}
    \begin{equation}
        \label{eq:projAV_semiconj}
        g\big(\exp_1\hspace{-0.1mm}\circ\hspace{0.3mm} \gamma^\pm(t)\big) = \exp_1\big( f\circ \gamma^\pm(t)\big) = \exp_1\hspace{-0.1mm}\circ\hspace{0.3mm}\widetilde{\gamma}^\pm(t).
        \vspace{-0.25mm}
    \end{equation}
    Given that $\operatorname{Im}\widetilde{\gamma}^\pm(t)\to\pm\infty$ as $t\to 1$, we conclude from \pref{eq:projAV_semiconj} that $0$ and $\infty$ are asymptotic values of $g$ associated to the paths $\exp_1\hspace{-0.3mm}\circ \hspace{0.5mm} \gamma^+(t)$ and $\exp_1\hspace{-0.3mm}\circ \hspace{0.5mm} \gamma^-(t)$, respectively. On the other hand, assume $f$ to be entire, so that, by \pref{rem:EntireCase}, $\mathcal{E}(g)\cap\CC^*=\emptyset$ but $g$ has at least one essential singularity at $E\in\{0,\infty\}$. In fact, we have either that $E\in\mathcal{E}(\Phi)$, or $\Phi(E)=\infty$ (see \pref{prop:Regular0inf}). Then, it is clear that we can find a pair of paths $\Gamma^\pm:[0,1)\to\mathcal{D}(g)$ such that $\Gamma^\pm(t)\to E$ and $\operatorname{Im}\Phi(\Gamma^\pm(t))\to\pm \infty$ as $t\to 1$, and so, as $g(w)=w^\ell e^{2\pi i \Phi(w)}$, $\Gamma^\pm$ are the asymptotic paths that we were looking for.
    
    \noindent In order to relate the asymptotic values of $g$ in $\CC^*$ to those of $f$, we split the discussion into two cases:
    \begin{enumerate}[label = (\roman*)] 
        \vspace{-0.5mm}
        \item If $\ell\neq 0$, we first show that $\infty\in\mathcal{AV}(f)$. Consider a path $\gamma_\infty:[0,1)\to\CC\backslash f^{-1}(\infty)$ to $\infty$ which is invariant under translation by $1$, and let $t^*\in[0,1)$ be such that $\gamma_\infty(t^*)=\gamma_\infty(0)+1$ (see \pref{fig:3_projAV}). Due to pseudoperiodicity, note that
        \begin{equation}
            \label{eq:3_proofAVpseudo}
            f(\gamma_\infty(t)+k)=f(\gamma_\infty(t) )+\ell k
        \end{equation}
        for all $t\in[0,1)$. From \pref{eq:3_proofAVpseudo} we obtain that $\left\{f\big( \gamma_\infty([0,t^*])+k \big)\right\}_{k\in\NN}$ is a sequence of compact sets (on the curve $f\circ\gamma_\infty$) converging to $\infty$ as $k\to\infty$, and thus $f(\gamma_\infty(t))\to\infty$ as $t\to 1$.
        
        \vspace{-1.5mm}
        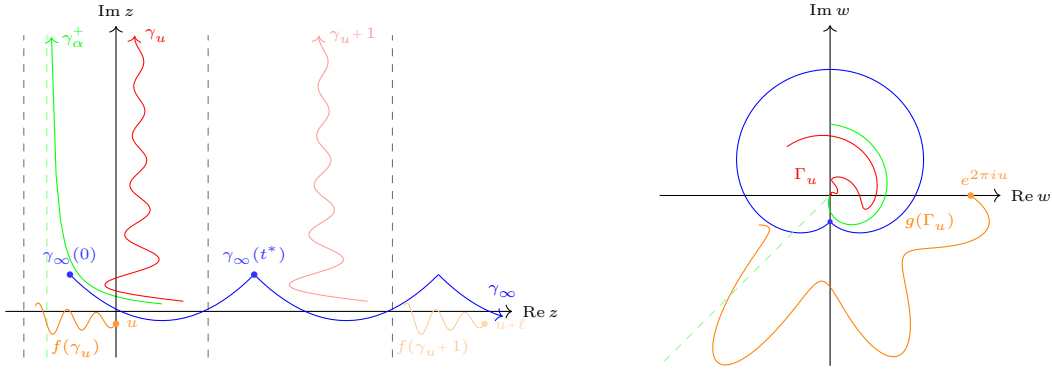
\begin{figure}[H]
        \hspace{2.6mm}\hfill
        \begin{tikzpicture}[scale=2.45]
        \draw[line width=0.1pt, color = darkgray!80, dashed] (0.5,-0.25) -- (0.5,1.5);
        \draw[line width=0.1pt, color = darkgray!80, dashed] (-0.5,-0.25) -- (-0.5,1.5);
        \draw[line width=0.1pt, color = darkgray!80, dashed] (1.5,-0.25) -- (1.5,1.5);
        \draw[line width=0.1pt, color = green!50, dashed] (-0.375,-0.25) -- (-0.375,1.5);
        \draw[->,color=green,domain=-0.25:0.35] plot (-\x,{0.038/(0.375-\x)-0.02}) node[right] {\tiny$\gamma_\alpha^{\shortplus}$};
        \draw[->] (-0.6,0) -- (2.15,0) node[right] {\tiny$\operatorname{Re}z$};
        \draw[->] (0,-0.25) -- (0,1.55) node[above] {\tiny$\operatorname{Im}z$};
        \draw[color=blue,domain=-0.25:0.75] plot (\x,{\x*\x-0.5*\x+1/80});
        \draw[color=blue,domain=0.75:1.75] plot (\x,{(\x-1)*(\x-1)-0.5*(\x-1)+1/80});
        \draw[->,color=blue,domain=1.75:2.1] plot (\x,{(\x-2)*(\x-2)-0.5*(\x-2)+1/80}) node[above=1.25mm] {\tiny$\gamma_\infty^{}$};
        \fill[blue!80] (-0.25,0.2) circle (0.5pt) node[above]{\tiny$\gamma_\infty^{}(0)$};
        \fill[blue!80] (0.75,0.2) circle (0.5pt) node[above]{\tiny$\gamma_\infty^{}(t^*)$};
        \draw[samples=65,color=orange,domain=-0.4375:0] plot (\x,{-1.05*sin(40*(\x+0.695) r)/(40*(\x+0.695)) - 0.05}) node[below left=1.05mm]{\tiny$f(\gamma_u^{})$};
        \draw[->, samples=265,color=red,domain=0.055:1.5] plot ({0.85*sin(23.5*(\x+0.047) r)/(23.5*(\x+0.047))+0.125},\x) node[right]{\tiny$\gamma_u^{}$};
        \fill[orange!80] (0,-0.0672512) circle (0.5pt) node[right]{\tiny$u$};
        \draw[samples=65,color=orange!40,domain=-0.4375:0] plot ({\x+2},{-1.05*sin(40*(\x+0.695) r)/(40*(\x+0.695)) - 0.05}) node[below left=1.0mm]{\hspace{0.5mm}\tiny$\qquad f(\gamma_u^{}\shortplus\hspace{0.5mm}1)$};
        \draw[->, samples=265,color=red!40,domain=0.055:1.5] plot ({0.85*sin(23.5*(\x+0.047) r)/(23.5*(\x+0.047))+0.125+1},\x) node[right]{\tiny$\gamma_u^{}\shortplus\hspace{0.5mm}1$};
        \fill[orange!30] (2,-0.0672512) circle (0.5pt) node[right]{\tiny$u\hspace{0.1mm}\shortplus\hspace{0.1mm}\ell$};
        \end{tikzpicture}
        \hfill
        \begin{tikzpicture}[scale=1.225]
        \draw[->] (-1.85,0) -- (1.85,0) node[right] {\tiny$\operatorname{Re}w$};
        \draw[->] (0,-1.85) -- (0,1.85) node[above] {\tiny$\operatorname{Im}w$};
        \draw[samples=165,color=blue,domain=-0.25:0.75] plot ({exp(-2*pi*(\x*\x-0.5*\x+1/80))*cos(2*pi*\x r)},{exp(-2*pi*(\x*\x-0.5*\x+1/80))*sin(2*pi*\x r)});
        \draw[samples=665,color=red,domain=0.055:1.5] plot ({exp(-2*pi*\x)*cos(2*pi*(0.85*sin(23.5*(\x+0.047) r)/(23.5*(\x+0.047))+0.125) r)},{exp(-2*pi*\x)*sin(2*pi*(0.85*sin(23.5*(\x+0.047) r)/(23.5*(\x+0.047))+0.125) r)}) node[above left=0.25mm] {\tiny$\Gamma_u^{}$};
        \draw[samples=165,color=green,domain=-0.25:0.35] plot ({exp(-2*pi*(0.038/(0.375-\x)-0.02)))*cos(-2*pi*\x r)},{exp(-2*pi*(0.038/(0.375-\x)-0.02)))*sin(-2*pi*\x r)});
        \draw[line width=0.1pt, color = green!50, dashed, domain=-1.8:0] plot (\x,\x);
        \draw[samples=265,color=orange,domain=-0.4375:0] plot ({exp(-2*pi*(-1.05*sin(40*(\x+0.695) r)/(40*(\x+0.695)) - 0.05)))*cos(2*pi*\x r)},{exp(-2*pi*(-1.05*sin(40*(\x+0.695) r)/(40*(\x+0.695)) - 0.05)))*sin(2*pi*\x r)}) node[below left=1.25mm] {\tiny$g(\Gamma_u^{})$};
        \fill[blue!80] ({exp(-2*pi*0.2)*cos(2*pi*0.75 r)},{exp(-2*pi*0.2))*sin(2*pi*0.75 r)}) circle (0.8pt);
        \fill[orange!80] ({exp(-2*pi*(-0.0672512))*cos(2*pi*0 r)},{exp(-2*pi*(-0.0672512)))*sin(2*pi*0 r)}) circle (1pt) node[above]{\quad\tiny$e^{2\pi i u}$};
        \end{tikzpicture}
        \hfill\vspace{-0.5mm}
        \caption{Correspondence between asymptotic paths of $f$ (\textit{left}) and its projection $g$ (\textit{right}) via $\exp_1$ for the proof of \pref{prop:projAV}. The green dashed line refers to $\operatorname{Re}z=\frac{\alpha}{2\pi}$, $\alpha\in(-\pi,\pi]$, and the gray ones to $\operatorname{Re}z=\frac{1}{2}+k$, $k\in\ZZ$.}
        \label{fig:3_projAV}
        \end{figure}

        \vspace{-2.0mm}
        \noindent Next we consider an asymptotic path $\gamma_u$ to $\infty$ associated to some $u\in\mathcal{AV}(f)\cap\CC$. We claim that $\operatorname{Im}\gamma_u(t)$ is unbounded. To see this, suppose that $|\operatorname{Im}\gamma_u(t)|\leq M$ for some $M\in\RR^+$ and all $t\in[0,1)$, and so $\operatorname{Re}\gamma_u(t)\to\pm\infty$ as $t\to 1$. Then, there would be points $z_k:=k+i y_k\in\gamma_u$ for some unbounded sequence of integers $k$ and $|y_k|\leq M$, and by pseudoperiodicity, \vspace{-0.4mm}
        \begin{equation}
            \label{eq:3_proofAVsequence}
            f(z_k)= f(i y_k) + \ell k.
            \vspace{-0.4mm}
        \end{equation}
        Assuming without loss of generality that $i y_k\notin f^{-1}(\infty)$, the relation \pref{eq:3_proofAVsequence} implies that $f(z_k)\to\infty$ as $|k|\to\infty$, which contradicts that $\gamma_u$ is an asymptotic path for $u\in\CC$. 
        
        \noindent Furthermore, in our situation, we assert that either $\operatorname{Im}\gamma_u(t)\to +\infty$ with $0\in\mathcal{E}(g)$, or $\operatorname{Im}\gamma_u(t)\to -\infty$ with $\infty\in\mathcal{E}(g)$, as $t\to 1$. Indeed, if this were not the case, then $\Phi(0)$ or $\Phi(\infty)$, respectively, would be defined and not equal to $\infty$ (see \pref{prop:Regular0inf}), so that the explicit expression of $f$ leads to \vspace{-0.3mm}
        \begin{equation*}\vspace{-0.3mm}
            f(\gamma_u(t))=\ell \gamma_u(t) + \Phi(e^{2\pi i \gamma_u(t)})\to \infty \vspace{-0.3mm}  
        \end{equation*}
        as $t\to 1$, in contradiction with $u\neq\infty$. Hence, we conclude that $e^{2\pi i u}\in\mathcal{AV}(g)\cap\CC^*$, since $\Gamma_u:=\exp_1\hspace{-0.3mm}\circ\hspace{0.4mm} \gamma_u$ is a corresponding asymptotic path to either $0\in\mathcal{E}(g)$ in the first case, or $\infty\in\mathcal{E}(g)$ in the second. 
        
        \noindent The reverse inclusion follows from the fact that an asymptotic path $\Gamma:[0,1)\to\mathcal{D}(g)$ associated to $v\in\mathcal{AV}(g)\cap\CC^*$, can only converge to $0$ or $\infty$ as $t\to 1$. In fact, if this were not true, then any lift of $\Gamma$ via $\exp_1$ would tend to $b\in f^{-1}(\infty)$ as $t\to 1$ (see \pref{prop:PolesToEssential}), in contradiction with $\lim\limits_{t\to 1}g(\Gamma(t))\in\CC^*$. 
        Then, every curve $\gamma\subset\exp_1^{-1}\Gamma$ has unbounded imaginary part. Given that $g(\Gamma(t))=e^{2\pi i f(\gamma(t))}$ for all $t\in[0,1)$, any such $\gamma$ is an asymptotic path of $f$ associated to some $u\in\CC$ such that $v=e^{2\pi i u}$.
        \item If $\ell=0$, i.e. $f$ is $1$-periodic with $f=\Phi\circ\exp_1$ and $g=\exp_1\hspace{-0.3mm}\circ \hspace{0.5mm}\Phi$, then the relation \pref{eq:singValueComp} implies that \vspace{-0.3mm}
        \begin{equation}
            \label{eq:ProjAV_periodicL}
            \mathcal{AV}(f) \subset \mathcal{AV}(\Phi)\hspace{0.2mm}\cup\hspace{0.2mm} \Phi\big(\{0,\infty\}\backslash\mathcal{E}(\Phi) \big), \quad \mbox{ and } \quad  \mathcal{AV}(g)\subset \{0,\infty\}\hspace{0.2mm}\cup\hspace{0.2mm} \exp_1\left( \mathcal{AV}(\Phi)\backslash\{\infty\} \right).\vspace{-0.3mm}
        \end{equation}
        First, notice that $\Phi(0)$ (resp. $\Phi(\infty)$), whenever defined, is an asymptotic value of $f(z)=\Phi(e^{2\pi i z})$ along any path in $\HH^+$ (resp. $\HH^{-}$) with unbounded imaginary part. For any $\alpha\in(-\pi,\pi]$, consider the curves $\gamma_\alpha^\pm:[0,1)\to\CC$ with
        \begin{equation*}
            \operatorname{Im}\gamma_\alpha^\pm(t)\to \pm\infty, \quad \mbox{ and } \quad  \operatorname{Re}\gamma_\alpha^\pm(t)\to \frac{\alpha}{2\pi},
        \end{equation*}
        as $t\to 1$ (see \pref{fig:3_projAV}). Since $\exp_1\circ\hspace{0.4mm}\gamma_\alpha^{+}$ and $\exp_1\circ\hspace{0.4mm}\gamma_\alpha^{-}$ land at $0$ and $\infty$, respectively, in a direction at an angle $\alpha$ with the positive real axis, we deduce that all possible asymptotic values of $\Phi$ are attainable by $f$ along $\gamma_\alpha^\pm$ for appropriate values of $\alpha$, and equality holds in \pref{eq:ProjAV_periodicL}.

        \noindent The remaining inclusion for $g(w)=e^{2\pi i \Phi(w)}$ follows from $\{0,\infty\}\subset\mathcal{AV}(g)$ (as shown at the beginning of the proof), and the fact that, for any $u\in\mathcal{AV}(\Phi)\cap\CC$, $e^{2\pi i u}$ is clearly an asymptotic value of $g$.
        $\hfill\square$
    \end{enumerate}
\end{proof}\vspace{-2mm}

Finally we proceed to prove \pref{thm:B_FiniteType} to discern those exponential projections $g$ of finite-type.

\begin{customproof}{of \pref{thm:B_FiniteType}}
    \textup{We split the class of projectable functions $f$ via $\exp_1$, written as $f(z)=\ell z + \Phi(e^{2\pi i z})$ with $\ell\in\ZZ$ and $\Phi$ meromorphic in $\CC^*$ (see \pref{thm:A_FormProj}), into three disjoint cases:
    \begin{enumerate}[label = (\roman*)] 
        \vspace{-1.05mm}
        \item[\textit{(i-a)}] If $f$ is simply pseudoperiodic with $\ell\neq 0$, i.e. $\Phi\circ\exp_1$ is a simply $1$-periodic function, then for any critical point $c$, we have $\{c+k\}_{k\in\ZZ}\subset \mathcal{C}(f)$, as $f'$ is periodic. By pseudoperiodicity, \vspace{-0.4mm}
        \begin{equation*}
            f(c+k)=f(c)+\ell k \neq f(c) \vspace{-0.4mm} 
        \end{equation*}
        for all $k\in\ZZ^*$, and so $\{f(c+k)\}_k$ is a collection of distinct critical values, except if $c$ is a pole. This, together with \pref{rem:projCV}, shows that $\#\mathcal{CV}(g)<\infty$ if and only if $f$ has finitely many critical values in the strip $\widetilde{S}_0 =\{z:-\frac{\ell}{2}<\operatorname{Re}z\leq \frac{\ell}{2} \}$, as all other critical values of $f$ differ by an integer multiple of $\ell$ from those, i.e. $\exp_1\hspace{-0.3mm}\big( \mathcal{CV}(f)\backslash\{\infty\} \big) = \exp_1\hspace{-0.3mm}\big( \mathcal{CV}(f)\cap\widetilde{S}_0 \big)$. The same argument applies to $\mathcal{AV}(g)$ in view of \pref{prop:projAV}, by considering instead the image under $\exp_1$ of the sequences $\{f(\gamma+k)\}_k$ for asymptotic paths $\gamma$ associated to any $u\in\mathcal{AV}(f)\cap\CC$.
            \vspace{-0.0mm}
        \item[\textit{(i-b)}] If $f$ is doubly pseudoperiodic with $\ell\neq 0$ and, say, $f(z+\tau)=f(z)+\eta_\tau$ for some $\tau\in\HH^+$ and $\eta_\tau\in\CC$, then, given $c\in\mathcal{C}(f)$, we see that $\{c+k+m\tau\}_{k,m\in\ZZ}\subset \mathcal{C}(f)$, since $f'$ is doubly periodic. Moreover, \vspace{-0.3mm}
        \begin{equation}
            \label{eq:proofThmB_doublyPseudo}
            f(c+k+m\tau)=f(c)+\ell k + \eta_\tau m,
            \vspace{-0.6mm}
        \end{equation}
        which is a critical value different from $f(c)$ for all $k,m\in\ZZ^*$, provided that $c\in\CC\backslash f^{-1}(\infty)$. Hence, using \pref{eq:proofThmB_doublyPseudo}, we deduce from \pref{rem:projCV} that the set \vspace{-0.4mm}
        \begin{equation*}
            \mathcal{CV}(g)\hspace{0.2mm}\cap\hspace{0.2mm}\CC^*=\exp_1\hspace{-0.2mm}\left(\mathcal{CV}(f)\backslash\{\infty\}\right) = \big\{e^{2\pi i f(c)}e^{2\pi i \eta_\tau m}: c\in\mathcal{C}(f),m\in\ZZ\big\} \vspace{-0.4mm}
        \end{equation*}
        is finite if and only if $\eta_\tau\in\QQ$, and also $\#\big(\mathcal{CV}(f)\cap\widetilde{S}_0\big)<\infty$ as argued above. The same follows for the set $\mathcal{AV}(g)$ from \pref{eq:proofThmB_doublyPseudo}, proceeding in terms of the projection via $\exp_1$ of asymptotic paths of $f$.
        \item[\textit{(ii)}] If $\ell=0$, i.e. $f$ is $1$-periodic and $g=\exp_1\hspace{-0.3mm}\circ \hspace{0.4mm}\Phi$, the statement follows from \pref{rem:projCV} and \pref{prop:projAV}.
        \vspace{-3.5mm}
    \end{enumerate}
    In all cases, $\{0,\infty\}\subset\mathcal{AV}(g)$ due to \pref{prop:projAV}. From Propositions \ref{prop:Regular0inf} and \ref{prop:projC}, we obtain that $0\in\mathcal{C}(g)$ if and only if $0\notin\mathcal{E}(\Phi)\cup\Phi^{-1}(\infty)$ and either $|\ell|\geq 2$, or $\ell=0$ with $\Phi'(0)=0$. Notice that the first condition does not hold if $f$ were doubly pseudoperiodic (see \pref{rem:doublyPcase}), and in the $1$-periodic case, $g(0)\in\CC^*$ when defined. In the remaining cases, as $g(w)=w^\ell e^{2\pi i \Phi(w)}$, we conclude that $0\in\mathcal{D}(g)$ and either $g(0)=0\in\mathcal{CV}(g)$ if $\ell\geq 2$, or $g(0)=\infty\in\mathcal{CV}(g)$ if $\ell\leq -2$. We can argue similarly for the point at $\infty$.
    $\hfill\square$} 
\end{customproof}

The following corollary for projections $g$ of functions in the class $\mathbf{R}_\ell$ (see \pref{def:classR}), which is central to our study, follows directly from Propositions \ref{prop:PolesToEssential} and \ref{prop:Regular0inf} on $\mathcal{E}(g)$, and \pref{rem:projCV} and \pref{prop:projAV} on $\mathcal{S}(g)$. Recall that the (non-entire) periodic part $R(e^{2\pi i z})$ of $f\in\mathbf{R}_\ell$ (where $R$ is a non-constant rational map) has finite limits as $\operatorname{Im}z\to\pm\infty$. In this situation,  note that $\mathcal{AV}(f)=\{\infty\}$ in the non-periodic case, while $\mathcal{AV}(f)=\{R(0),R(\infty)\}\subset\CC$ in the $1$-periodic case ($\ell=0$).
\begin{corollary}[Projections of functions in class $\mathbf{R}_\ell$]
    \label{cor:classRsv}
    Let $f\in\mathbf{R}_\ell$, written as $f(z)=\ell z + R(e^{2\pi i z})$, $\ell\in\ZZ$ and $R$ as a non-constant rational map with $R(0)\neq\infty$ and $R(\infty)\neq\infty$, and $g$ its exponential projection. Then $g$ is a finite-type map with $\mathcal{AV}(g) = \{0,\infty\}$, and $\mathcal{E}(g)\hspace{-0.2mm}=\hspace{-0.2mm}\exp_1\hspace{-0.4mm}\left(f^{-1}(\infty)\right)$. Moreover,
    \begin{enumerate}[label = (\roman*)] 
        \vspace{-1.35mm}
        \item If $\ell = 1$, then $0$ and $\infty$ are fixed points of $g$ with $g'(0)=e^{2\pi i R(0)}$ and $g'(\infty)=e^{-2\pi i R(\infty)}$, while both are fixed critical points if $\ell\geq 2$.
        \item If $\ell = -1$, then $\{0,\infty\}$ is a $2$-cycle with multiplier $e^{2\pi i \left(R(\infty)-R(0)\right)}$, while it is critical if $\ell\leq -2$.
        \vspace{0.15mm}
        \item If $\ell = 0$, then $0$ (resp. $\infty$) is a critical point of $g$ if and only if $R'(0)=0$ (resp. $R'(\infty)=0$).
        \vspace{-0.05mm}
    \end{enumerate}
\end{corollary}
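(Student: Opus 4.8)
The statement is essentially an assembly of the four cited results applied to the explicit functions $g(w)=w^\ell e^{2\pi i R(w)}$, together with one routine local computation at the ends of the cylinder. I would first dispose of $\mathcal{E}(g)$: since $f\in\mathbf{R}_\ell$, \pref{prop:Regular0inf} gives $\{0,\infty\}\subset\mathcal{D}(g)$, so $\mathcal{E}(g)\subset\CC^*$, and then \pref{prop:PolesToEssential} yields $\mathcal{E}(g)=\mathcal{E}(g)\cap\CC^*=\exp_1(f^{-1}(\infty))$; this set is finite because the rational map $R$ has only finitely many poles in $\CC^*$, equivalently $f$ has finitely many poles in a period strip of $\exp_1$. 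I would also record from \pref{eq:derivativeSP} the formula $g'(w)=w^{\ell-1}e^{2\pi i R(w)}\bigl(\ell+2\pi i w R'(w)\bigr)$.

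Next, the finite-type assertion and $\mathcal{AV}(g)=\{0,\infty\}$. By \pref{prop:projAV} one always has $\{0,\infty\}\subset\mathcal{AV}(g)$. For the reverse inclusion I would split on $\ell$: if $\ell\neq0$ then, as noted just before the corollary, $\mathcal{AV}(f)=\{\infty\}$ (the periodic part $R(e^{2\pi i z})$ has the finite limits $R(0),R(\infty)$ at the cylinder ends but is dominated by $\ell z$), so \pref{prop:projAV} gives $\mathcal{AV}(g)\cap\CC^*=\exp_1(\mathcal{AV}(f)\setminus\{\infty\})=\emptyset$; if $\ell=0$ then $\mathcal{AV}(g)\cap\CC^*=\exp_1(\mathcal{AV}(R)\setminus\{\infty\})=\emptyset$ because a rational map has no asymptotic values. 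Hence $\mathcal{AV}(g)=\{0,\infty\}$. For the critical values, on $\mathcal{D}(g)\cap\CC^*$ the factors $w^{\ell-1}$ and $e^{2\pi i R(w)}$ are nonzero and finite, so by the derivative formula the critical points of $g$ in $\CC^*$ are exactly the zeros there of the rational function $\ell+2\pi i w R'(w)$; this function is not identically zero (for $\ell\neq0$ its vanishing would force $R'(w)\sim-\ell/(2\pi i w)$, a simple pole, impossible for the derivative of a rational map; for $\ell=0$ it would force $R$ constant), so $\mathcal{C}(g)\cap\CC^*$ is finite. Adding the at most two further critical points among $\{0,\infty\}$ supplied by \pref{prop:projC}, $\mathcal{C}(g)$ and hence $\mathcal{CV}(g)$ are finite (cf. \pref{rem:projCV}). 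Since $\mathcal{S}(g)=\overline{\mathcal{CV}(g)\cup\mathcal{AV}(g)}$ is the closure of a finite set, $g$ is of finite-type.

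Finally, the behavior at the ends. Evaluating $g(w)=w^\ell e^{2\pi i R(w)}$ at $0$, and at $\infty$ in the chart $u=1/w$ where $1/g(1/u)=u^{\ell}e^{-2\pi i R(1/u)}$, shows that $g(0)=0$, $g(\infty)=\infty$ when $\ell\geq1$, while $g(0)=\infty$, $g(\infty)=0$ when $\ell\leq-1$; thus for $\ell\geq1$ both points are fixed and for $\ell\leq-1$ they form a $2$-cycle. The multipliers: for $\ell=1$ one reads off $g'(0)=e^{2\pi i R(0)}$ from the derivative formula, and in the chart $u=1/w$ the derivative of $u\,e^{-2\pi i R(1/u)}$ at $u=0$ is $e^{-2\pi i R(\infty)}$; for $\ell=-1$, writing $g(w)=e^{2\pi i R(w)}/w$ gives the identity $g^2(w)=w\,e^{2\pi i(R(g(w))-R(w))}$ near $0$, and letting $w\to0$ (so $g(w)\to\infty$) yields the cycle multiplier $e^{2\pi i(R(\infty)-R(0))}$. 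For $|\ell|\geq2$, the factor $w^{\ell-1}$ (resp. the analogous chart computation at $\infty$) shows via \pref{prop:projC} that $0$ and $\infty$ are critical points of $g$, so the fixed points (resp. the $2$-cycle) are critical. For $\ell=0$, $g'(w)=2\pi i R'(w)\,e^{2\pi i R(w)}$, so $g'(0)=0$ iff $R'(0)=0$, and in the chart $u=1/w$ one likewise gets vanishing derivative at $\infty$ iff $R'(\infty)=0$, again matching \pref{prop:projC}. The only steps requiring genuine care are the coordinate change at $\infty$ (getting the right power of $u$) and the computation of the period-$2$ multiplier; everything else is bookkeeping with the cited statements.
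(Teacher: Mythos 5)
Your assembly is correct and is exactly the route the paper intends: the text states that the corollary "follows directly" from Propositions \ref{prop:PolesToEssential} and \ref{prop:Regular0inf} (for $\mathcal{E}(g)$), \pref{rem:projCV} and \pref{prop:projAV} (for $\mathcal{S}(g)$), together with the observation that $\mathcal{AV}(f)=\{\infty\}$ for $\ell\neq 0$ and $\mathcal{AV}(f)=\{R(0),R(\infty)\}$ for $\ell=0$, and gives no further proof. Your added routine computations (finiteness of $\mathcal{C}(g)\cap\CC^*$ from $g'(w)=w^{\ell-1}e^{2\pi i R(w)}(\ell+2\pi i wR'(w))$, the chart change at $\infty$, and the period-two multiplier $e^{2\pi i(R(\infty)-R(0))}$) are accurate and simply make explicit what the paper leaves implicit.
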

%
\section{Lifting periodic Fatou components}
\label{sec:4_Lifting}

A good number of the explicit examples of entire functions with Baker or wandering domains come from Herman's idea \cite{Herman1985} on lifting periodic Fatou components via an appropriate branch of the logarithm (see e.g. \cite{Baker1984, Rippon2008}). This method is indeed applicable to any projectable function $f$ (see \pref{thm:A_FormProj}), which may be meromorphic. Here we infer the iterative behavior of $f$ from that of its exponential projection $g$; especially from those $g$ of finite-type (see \pref{thm:B_FiniteType}).

The lifting procedure in our context grounds on a theorem by Bergweiler \cite{Bergweiler1995j} in the entire case, who proved that the dynamical partition of $\CC$ into the Fatou and Julia sets is preserved via $\exp_1$ (see \cite{Keen1988}, too). 
This result was extended to Bolsch's class (and beyond) by Zheng \cite[Cor. 3.1]{Zheng2005}, building on the fact that, for a transcendental map $g\in\mathbf{K}$ with at least one non-omitted essential pole (see \pref{def:BolschClass}, and \cite{Bolsch1997}),\vspace{-0.5mm}
\begin{equation}
    \label{eq:generalMeroJulia}
    \mathcal{J}(g) = \overline{\bigcup\limits_{n=0}^\infty g^{-n}(\mathcal{E}(g))} = \overline{\bigcup\limits_{n=1}^\infty \left(\mathcal{E}(g^n)\backslash \mathcal{E}(g^{n-1})\right)}, \vspace{-0.5mm} 
\end{equation}
where $\mathcal{E}(g^n)$ denotes the set of essential singularities of $g^n$ (let $\mathcal{E}(g^0):=\emptyset$).

To be precise, the dynamical relation between $f$ and $g$ is given by: $\exp_1 \mathcal{F}(f) = \mathcal{F}(g)\cap \CC^*$, as stated in \pref{eq:expFatouJulia}, taking into account that $0$ and/or $\infty$ (the omitted values of $\exp_1$) may be or not be in the Fatou set of $g$.
Equivalently, given that the inverse image operation commutes with complements, we have that
\begin{equation}
    \label{eq:logFatouJulia}
    \exp_1^{-1} \hspace{-0.2mm}\left(\mathcal{F}(g) \cap \CC^* \right) = \mathcal{F}(f), \quad \exp_1^{-1}\hspace{-0.2mm}\left( \mathcal{J}(g)\cap \CC^* \right) = \mathcal{J}(f) \hspace{0.3mm} \backslash \hspace{0.3mm} \{\infty\}.
    \vspace{-1mm}
\end{equation}

In this section we reinforce the fact that a Fatou component of $f$ which comes from the lift of a periodic component $V\subset\mathcal{F}(g)$, needs not to be of the same type. In particular, we shall give conditions which makes such a $V$ deliver a wandering or Baker domains of $f$ by lifting. For this purpose, the location of the points at $0$ and $\infty$ with respect to $V$, which is encoded by the following standard notion, is going to be crucial. The \textit{fill} of a set $A\subset\hCC$, denoted by $\operatorname{fill}(A)$, is the union of $A$ and all bounded components of $\CC\backslash A$.

\begin{remark}[Fill of a Fatou component]
    \label{rem:FillFatou}
    Consider a function $g$ in Bolsch's class, and view a Fatou component $V$ of $g$ as a domain in $\hCC\backslash \mathcal{E}(g)$. Observe that if $\infty\in V$, then $\operatorname{fill}(V)=\hCC$. However, if $V\subset\CC$, then $\operatorname{fill}(V)$ is a simply-connected domain in $\CC$ (as $V$ is open). In this case, we have that $w\in\operatorname{fill}(V)$ if and only if there is a Jordan curve $\Gamma\subset V$ such that $w\in\mathrm{int}(\Gamma)$, where $\mathrm{int}(\Gamma)$ is the bounded component of $\CC\backslash\Gamma$.
\end{remark}

In this context, the following lemma provides a simple way to single out those Fatou components $V$ of $g$ which do not lift via $\exp_1$ to infinitely many distinct Fatou components of $f$.
\begin{lemma}[Lifting components surrounding $0$]
    \label{lem:FillBaker}
    Let $f$ be a projectable function via $\exp_1$, and $g$ its exponential projection. Suppose $V$ is a component of $\mathcal{F}(g)$, and $U$ is a component of $\exp_1^{-1}(V\cap\CC^*)$. Then
    $$0\in\operatorname{fill}(V)\quad \iff \quad U+k=U, \mbox{ for all } k\in\ZZ.$$
    In this case, $U$ is an unbounded component of $\mathcal{F}(f)$ which is equal to $\exp_1^{-1}(V\cap\CC^*)$.
\end{lemma}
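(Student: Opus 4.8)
The plan is to read everything through the covering map $\exp_1\colon\CC\to\CC^*$, whose deck group is $\ZZ$ acting by $z\mapsto z+1$, and to translate the combinatorial condition $U+k=U$ into a winding condition for $V$ about the origin, and that in turn into the language of $\operatorname{fill}$. Put $\Omega:=\exp_1^{-1}(V\cap\CC^*)$. By \pref{eq:logFatouJulia} we have $\exp_1^{-1}(\mathcal{F}(g)\cap\CC^*)=\mathcal{F}(f)$, and since $V\cap\CC^*\subset\mathcal{F}(g)\cap\CC^*$ the set $\Omega$ is open and contained in $\mathcal{F}(f)$; it is moreover a union of full Fatou components of $f$ (if a component $W$ of $\mathcal{F}(f)$ meets $\Omega$, then $\exp_1(W)$ is a connected subset of $\mathcal{F}(g)\cap\CC^*$ meeting $V$, hence contained in $V$, so $W\subset\Omega$). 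Thus every connected component of $\Omega$, in particular the given $U$, is a Fatou component of $f$. Also $V\cap\CC^*$ is connected, being an open connected subset of the surface $\hCC\setminus\mathcal{E}(g)$ with at most the two points $0,\infty$ deleted, and $\Omega$ is invariant under every integer translation because $\exp_1$ has period $1$; hence $z\mapsto z+k$ permutes the components of $\Omega$, and $S:=\{k\in\ZZ: U+k=U\}$ is a subgroup of $\ZZ$.

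The core step is to identify $S$. The restriction $\exp_1\colon\Omega\to V\cap\CC^*$ is a covering map, and standard covering‑space theory puts the components of $\Omega$ in bijection with the cosets in $\pi_1(\CC^*)=\ZZ$ of the subgroup $H:=\operatorname{im}\big(\pi_1(V\cap\CC^*)\to\pi_1(\CC^*)\big)$, the translation action of $\ZZ$ moving a component by translating its coset; hence the stabiliser of every component is $H$, that is $S=H$. Since $H$ is exactly the set of winding numbers about $0$ realised by loops in $V\cap\CC^*$, the condition $U+k=U$ for all $k\in\ZZ$ is equivalent to $H=\ZZ$, equivalently to the existence of a loop in $V\cap\CC^*$ of winding number $\pm1$ about $0$.

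It remains to match this with $0\in\operatorname{fill}(V)$. If $0\in\operatorname{fill}(V)$ and $V\subset\CC$, then \pref{rem:FillFatou} provides a Jordan curve $\Gamma\subset V$ with $0\in\mathrm{int}(\Gamma)$; as $0\notin\Gamma$ and $\Gamma$ is bounded, $\Gamma\subset V\cap\CC^*$ and $\Gamma$ winds once about $0$. If instead $\infty\in V$, then $\infty\in\mathcal{D}(g)$ and openness of $V$ yields $\{|w|>R\}\subset V$ for some $R>0$, so $\{|w|=R+1\}\subset V\cap\CC^*$ winds once about $0$. Conversely, if $\gamma$ is a loop in $V\cap\CC^*$ winding once about $0$, then, the index function being $0$ on the unbounded component of $\CC\setminus|\gamma|$ and $0\notin|\gamma|$, the point $0$ lies in a bounded component of $\CC\setminus|\gamma|$, i.e.\ $0\in\operatorname{fill}(|\gamma|)$; and $\operatorname{fill}$ is monotone ($A\subset B$ implies $\operatorname{fill}(A)\subset\operatorname{fill}(B)$, since each component of $\CC\setminus B$ sits inside a component of $\CC\setminus A$), so $0\in\operatorname{fill}(|\gamma|)\subset\operatorname{fill}(V)$. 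Finally, when $U+k=U$ for all $k$, the set $\exp_1(U)$ is open and closed in the connected set $V\cap\CC^*$, hence equals it, and then any $w\in\Omega$ satisfies $\exp_1(w)=\exp_1(w')$ for some $w'\in U$, so $w=w'+k\in U+k=U$; thus $U=\Omega=\exp_1^{-1}(V\cap\CC^*)$, a single Fatou component of $f$, and it is unbounded since it is non‑empty and invariant under all integer translations. (The loop needed for the forward direction is obtained from this picture: joining $z_0\in U$ to $z_0+1\in U$ by a path $\beta$ in $U$, the curve $\exp_1\circ\beta$ is a loop in $V\cap\CC^*$ winding once about $0$.)

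I expect the main obstacle to be precisely this topological bridge — passing between "$U+k=U$ for all $k$'' and "$0\in\operatorname{fill}(V)$'' — rather than anything dynamical: the covering‑space bookkeeping giving $S=H$, and the Jordan‑curve implication supplied by \pref{rem:FillFatou}, are routine, and the single point that genuinely needs care is the monotonicity of $\operatorname{fill}$, used to upgrade "some loop in $V\cap\CC^*$ surrounds $0$'' to "$0\in\operatorname{fill}(V)$''.
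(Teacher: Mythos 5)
Your argument is correct and follows essentially the same route as the paper's proof: both directions come down to producing a closed curve in $V\cap\CC^*$ surrounding the origin (via the Jordan-curve characterization of $\operatorname{fill}$ in \pref{rem:FillFatou}) whose preimage under $\exp_1$ is a $\ZZ$-translation-invariant curve in $U$, which is exactly the paper's lifting/projection of curves. You merely package this in covering-space language (stabilizer of a component equals the image of $\pi_1(V\cap\CC^*)$ in $\pi_1(\CC^*)$) and make explicit a few standard details, such as the monotonicity of $\operatorname{fill}$ and the identification $U=\exp_1^{-1}(V\cap\CC^*)$, that the paper leaves implicit.
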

\begin{proof}
    Recall that $\exp_1^{-1}(V\cap\CC^*)=\{U+k\}_{k\in\ZZ}^{}\subset\mathcal{F}(f)$ due to the relation \pref{eq:logFatouJulia}. Notice that the statement is clear when $\infty$ (resp. $0$) lies in $V$, as $U$ must then contain a lower (resp. upper) half-plane. 
    
    \noindent In the remaining cases, $\operatorname{fill}(V)\subset\CC$, and it follows from \pref{rem:FillFatou} that $0\in\mathrm{fill}(V)$ if and only if there is a simple closed curve $\Gamma\subset V\cap\CC^*$ such that $0\in \mathrm{int}(\Gamma)$. Hence, $\gamma:=\exp_1^{-1}\Gamma$ consists of a single (simple) curve such that $\gamma+k=\gamma$ for all $k\in\ZZ$. Since $\gamma$ is connected and belongs to $U$, we conclude that $U$ is an unbounded Fatou component of $f$, and actually $U=\exp_1^{-1}(V\cap\CC^*)$ . 
    
    \noindent Conversely, if $U+k=U$ for any $k\in\ZZ$, then take a simple curve $\gamma\subset U$ which is invariant under translation by $\pm 1$, and so projects via $\exp_1$ to a closed curve in $V\cap\CC^*$ which surrounds the origin.  
    $\hfill\square$
\end{proof}

\subsection{Pseudoperiodic points}

The key players for the detection and tracking of wandering domains are going to be the pseudoperiodic points of $f$ (of type $(p,\sigma)$; see \pref{def:PseudoperiodicPoint}). These may be identified as \textit{periodic points $z^*$ modulo an integer $\sigma$}, in the sense that $f^p(z^*)-z^*=\sigma$ for some $p\geq 1$. The following lemma is straightforward from the fact that $g^p\circ \exp_1 = \exp_1 \circ f^p$, and the explicit relation between the iterates of $f$, which in the general form $f(z)=\ell z +\Phi(e^{2\pi i z})$ (as stated in \pref{thm:A_FormProj}), is given by \vspace{-1mm}
\begin{equation}
    \label{eq:4_RelationIteratesF}
    f^{n}(z) = \ell^{n} z + \sum_{j=0}^{n-1} \ell^{\scriptstyle n-1-j} \hspace{0.2mm}\Phi\big( e^{2\pi i f^j(z)}\big)
    \vspace{-0.5mm}
\end{equation}
for every $n\geq 1$. Recall that by a $p$-periodic point we mean a point of minimal period $p$, and also that a $(p,\sigma)$-pseudoperiodic point refers to one of minimal type $(p,\sigma)$.

\begin{lemma}[Characterization of pseudoperodic points]
    \label{lem:characPseudoPoint}
    Let $f$ be a projectable function via $\exp_1$, written as $f(z)=\ell z + \Phi(e^{2\pi i z})$ with $\ell\in\ZZ$ and $\Phi$ meromorphic in $\CC^*$, and $g$ its exponential projection. Then $z^*$ is a $(p,\sigma)$-pseudoperiodic point of $f$, where $p\geq 1$ and $\sigma\in\ZZ$, if and only if $e^{2\pi i z^*}$ is a $p$-periodic point of $g$. In this case, $z^*$ is a solution of
    \vspace{-1.0mm}
    \begin{equation}\label{eq:characPseudo}
        (\ell^p-1) z + \sum_{j=0}^{p-1} \ell^{\scriptstyle p-1-j} \hspace{0.2mm}\Phi\hspace{-0.4mm}\left( g^j(e^{2\pi i z})\right) = \sigma.
    \end{equation}
\end{lemma}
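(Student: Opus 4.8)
\emph{Proof plan.} The plan is to transport the iteration through the semiconjugacy $g^{n}\circ\exp_1=\exp_1\circ f^{n}$, keeping track of the fact that $\exp_1$ is $1$-periodic (so $\exp_1^{-1}$ is determined only up to an additive integer) and that, by \pref{prop:PolesToEssential}, a point $b\in\CC$ is a pole of $f$ precisely when $e^{2\pi i b}$ is an essential singularity of $g$. Write $w^{*}:=e^{2\pi i z^{*}}$. For the forward implication I would start from $f^{p}(z^{*})=z^{*}+\sigma$ as in \pref{def:PseudoperiodicPoint}; since this value is finite, the iterates $z^{*},f(z^{*}),\dots,f^{p}(z^{*})$ are all finite and none of them is a pole, so the semiconjugacy may be iterated at $z^{*}$ to give $g^{j}(w^{*})=e^{2\pi i f^{j}(z^{*})}$ for $0\le j\le p$ (with \pref{prop:PolesToEssential} ensuring these values lie in $\mathcal D(g)$). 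In particular $g^{p}(w^{*})=e^{2\pi i(z^{*}+\sigma)}=w^{*}$, so the minimal period of $w^{*}$ divides $p$; and if $g^{q}(w^{*})=w^{*}$ for some $1\le q<p$, then $e^{2\pi i f^{q}(z^{*})}=e^{2\pi i z^{*}}$ forces $f^{q}(z^{*})-z^{*}\in\ZZ$, contradicting the minimality of $p$ in \pref{def:PseudoperiodicPoint}. Hence $w^{*}$ is a $p$-periodic point of $g$.

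For the converse I would assume that $w^{*}$ is a $p$-periodic point of $g$, so that $g^{j}(w^{*})\in\mathcal D(g)$ for $0\le j\le p$, and prove by induction on $j$ that $f^{j}(z^{*})$ is finite and $e^{2\pi i f^{j}(z^{*})}=g^{j}(w^{*})$. In the inductive step, the identity $g^{j}(w^{*})=e^{2\pi i f^{j}(z^{*})}$ together with $g^{j}(w^{*})\in\mathcal D(g)$ forces, via \pref{prop:PolesToEssential}, that $f^{j}(z^{*})\notin f^{-1}(\infty)$, so $f^{j+1}(z^{*})$ is finite and the semiconjugacy propagates the identity one further step. Taking $j=1$ then shows $z^{*}\notin f^{-1}(\infty)$, and taking $j=p$ gives $e^{2\pi i f^{p}(z^{*})}=w^{*}=e^{2\pi i z^{*}}$, whence $f^{p}(z^{*})=z^{*}+\sigma$ for a unique $\sigma\in\ZZ$; thus $z^{*}$ is a pseudoperiodic point of type $(p,\sigma)$, and its minimality transfers back by the same dichotomy used above (an equality $f^{q}(z^{*})=z^{*}+\sigma'$ with $1\le q<p$ would make $w^{*}$ have period $q$).

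Finally, the explicit equation \eqref{eq:characPseudo} would follow by specializing the formula \eqref{eq:4_RelationIteratesF} for the iterates to $n=p$, substituting $e^{2\pi i f^{j}(z^{*})}=g^{j}(e^{2\pi i z^{*}})$ on the right-hand side and $f^{p}(z^{*})=z^{*}+\sigma$ on the left, and collecting the $z^{*}$-terms. I expect the only genuinely delicate point to be the induction in the converse direction, namely verifying that the forward orbit of $z^{*}$ never meets a pole of $f$ (equivalently, that the orbit of $w^{*}$ never leaves $\CC^{*}$, nor $\mathcal D(g)$, before returning to $w^{*}$); this is exactly what \pref{prop:PolesToEssential} is there to supply, and once it is in hand the remainder is routine bookkeeping with the semiconjugacy and the integer ambiguity of the logarithm.
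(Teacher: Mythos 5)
Your proof is correct and follows essentially the same route as the paper, which states the lemma as straightforward from the iterated semiconjugacy $g^{p}\circ\exp_1=\exp_1\circ f^{p}$ together with the iterate formula \eqref{eq:4_RelationIteratesF}. Your write-up merely makes explicit the routine bookkeeping the paper leaves implicit, namely using \pref{prop:PolesToEssential} to check that the orbit of $z^{*}$ avoids poles of $f$ (equivalently, that the orbit of $e^{2\pi i z^{*}}$ stays in $\mathcal{D}(g)\cap\CC^{*}$) and transferring minimality of $p$ in both directions.
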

\vspace{-1.5mm}
Note that if $z^*$ is actually $q$-periodic, then $e^{2\pi i z^*}$ is a $p$-periodic point of $g$, where $p|q$, but the reverse is not true. Observe here that we may have $p<q$, which indicates that $z^*$ is indeed pseudoperiodic of minimal type $(p,\sigma)$ for some $\sigma\in\ZZ^*$; consider e.g. $f(z)=-z-\sin{2\pi z}$, for which $z^*=\shortminus\frac{1}{2}$ is $2$-periodic but $(1,1)$-pseudoperiodic. We are interested in the limiting behavior of pseudoperiodic points under iteration, so we next specify the orbit of those (and of their translates by an integer) according to the pseudoperiod of $f$.

\begin{proposition}[Iterates of pseudoperiodic points and of their translates]
    \label{prop:IteratesPseudo}
    Let $f$ be a projectable function via $\exp_1$, and $\ell\in\ZZ$ such that $f(z+1)=f(z)+\ell$. Suppose $z^*$ is a pseudoperiodic point of $f$ of minimal type $(p,\sigma)$, where $p\geq 1$ and $\sigma\in\ZZ$, and let $z^*_k:=z^*+k$, $k\in\ZZ$. Then,
    \begin{enumerate}[label = (\roman*)] 
        \vspace{-1mm}
        \item If $|\ell|\geq 2$, the points $\{z^*_k\}_{k}^{}$ are pseudoperiodic of minimal type $(p,\sigma+(\ell^p-1)k)$, and for all $m\in\NN$,\vspace{-0.5mm}
        \begin{equation}
            \label{eq:statement_IteratesPseudo}
            f^{mp}(z^*_k)=z^*_k+(\ell^{mp}-1)(k-\delta), \quad \mbox{ where } \quad  \delta:=\frac{\sigma}{1-\ell^{{}^p}}. \vspace{-0.5mm}
        \end{equation}
        In particular, $f^{mp}(z_k^*)\to\infty$ as $m\to\infty$, except for $z^*_\delta$ if $\delta\in\ZZ$ (which is $p$-periodic).
        \item($\ell^p$=\hspace{0.4mm}1 case) If $\ell=1$, or $\ell=\shortminus 1$ with $p$ even, the points $\{z^*_k\}_{k}^{}$ are pseudoperiodic of minimal type $(p,\sigma)$, and for all $m\in\NN$,
        \begin{equation}
            \label{eq:statement2_IteratesPseudo}
            f^{mp}(z^*_k)=z^*_k+m\sigma.
        \end{equation}
        In particular, $f^{mp}(z_k^*)\to\infty$ as $m\to\infty$, unless $\sigma=0$ (in which case each $z_k^*$ is $p$-periodic).
        \item($\ell^p$=\hspace{0.4mm}$\shortminus$1 case) If $\ell=-1$ with $p$ odd, the points $\{z^*_k\}_{k}^{}$ are pseudoperiodic of minimal type $(p,\sigma-2k)$. \\ In particular, each $z_k^*$ is $2p$-periodic, except for $z^*_{\sfrac{\sigma}{2}}$ if $\frac{\sigma}{2}\in\ZZ$ (which is $p$-periodic).
        \item(1-periodic case) If $\ell=0$, the point $z^*_\sigma$ is $p$-periodic, and $f(z^*_k)=z^*_\sigma$ for all $k\in\ZZ$.
    \end{enumerate}
\end{proposition}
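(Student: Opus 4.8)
The plan is to let the translation identity $f^{n}(z+k)=f^{n}(z)+\ell^{n}k$ of \pref{lem:ProjCond} do all the work: I would first derive one closed formula for $f^{mp}(z^*_k)$ valid in every case and then read off each assertion by specializing the value of $\ell^{p}$. Note first that every translate is a genuine pseudoperiodic point, since $f(z^*_k)=f(z^*)+\ell k\neq\infty$ gives $z^*_k\in\CC\backslash f^{-1}(\infty)$. Because $z^*_k=z^*+k$, the identity reduces everything to $f^{mp}(z^*)$, which I would compute by iterating $f^{p}(z^*)=z^*+\sigma$ and pulling the integer $\sigma$ through $f^{p}$ via the same identity ($f^{2p}(z^*)=f^{p}(z^*+\sigma)=f^{p}(z^*)+\ell^{p}\sigma$, and so on; at each stage $\sigma\sum_{j<m}\ell^{jp}\in\ZZ$, so the identity applies). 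A short induction then gives
\[
 f^{mp}(z^*)\;=\;z^*+\sigma\sum_{j=0}^{m-1}\ell^{jp}\;=\;
 \begin{cases}
 z^*+\sigma\,\dfrac{\ell^{mp}-1}{\ell^{p}-1} & \text{if }\ell^{p}\neq1,\\[2ex]
 z^*+m\sigma & \text{if }\ell^{p}=1,
 \end{cases}
\]
so that $f^{mp}(z^*_k)=z^*+\ell^{mp}k+\sigma\frac{\ell^{mp}-1}{\ell^{p}-1}$ when $\ell^{p}\neq1$; a direct rearrangement with $\delta:=\sigma/(1-\ell^{p})$ turns this into $z^*_k+(\ell^{mp}-1)(k-\delta)$, which is exactly \pref{eq:statement_IteratesPseudo}, whereas for $\ell^{p}=1$ it reads $z^*_k+m\sigma$, which is \pref{eq:statement2_IteratesPseudo}.

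Next I would settle the type of the translates and its minimality. The identity gives $f^{p}(z^*_k)=f^{p}(z^*)+\ell^{p}k=z^*_k+\bigl(\sigma+(\ell^{p}-1)k\bigr)$, so $z^*_k$ is pseudoperiodic of type $\bigl(p,\sigma+(\ell^{p}-1)k\bigr)$, which specializes to $(p,\sigma+(\ell^{p}-1)k)$ in case (i), to $(p,\sigma)$ in case (ii) (where $\ell^{p}=1$), and to $(p,\sigma-2k)$ in case (iii) (where $\ell^{p}=-1$). That $p$ is still minimal for $z^*_k$ I would get by running the identity backwards: if $f^{q}(z^*_k)=z^*_k+\tau$ for some $\tau\in\ZZ$ and $q<p$, then $f^{q}(z^*)=f^{q}(z^*_k)-\ell^{q}k=z^*+\bigl(\tau+(1-\ell^{q})k\bigr)$ with $\tau+(1-\ell^{q})k\in\ZZ$, contradicting minimality of the type $(p,\sigma)$ of $z^*$.

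Finally I would extract the asymptotic and periodicity statements case by case. In (i), $|\ell|\geq2$ forces $|\ell^{mp}-1|\to\infty$, so $|f^{mp}(z^*_k)-z^*_k|=|\ell^{mp}-1|\,|k-\delta|\to\infty$ whenever $k\neq\delta$, while if $\delta\in\ZZ$ the choice $k=\delta$ makes $\sigma+(\ell^{p}-1)\delta=0$, i.e. $f^{p}(z^*_\delta)=z^*_\delta$, so $z^*_\delta$ is periodic of minimal period $p$. In (ii), $f^{mp}(z^*_k)=z^*_k+m\sigma\to\infty$ unless $\sigma=0$, when $z^*_k$ is $p$-periodic. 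In (iii), $\ell^{p}=-1$, so $f^{2p}(z^*_k)=f^{p}\bigl(z^*_k+(\sigma-2k)\bigr)=f^{p}(z^*_k)+\ell^{p}(\sigma-2k)=z^*_k$: every $z^*_k$ is $2p$-periodic, it is $p$-periodic precisely when $\sigma-2k=0$, i.e. $k=\sigma/2\in\ZZ$ (and then of minimal period $p$), and otherwise its minimal period is a divisor of $2p$ that is neither $<p$ nor $=p$, hence equals $2p$. In (iv), $\ell=0$ makes $f(z+1)=f(z)$, i.e. $f$ is genuinely $1$-periodic, so all translates share the common image $f(z^*_k)=f(z^*)$ and $f^{p}(z^*_k)=f^{p}(z^*)=z^*+\sigma=z^*_\sigma$, whence $z^*_\sigma$ is $p$-periodic, of minimal period $p$ by the backward argument above (now with $\ell^{q}=0$ for $q\geq1$).

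I expect the only step that is not pure bookkeeping to be the final point in case (iii): passing from ``$2p$-periodic but not $p$-periodic'' to ``minimal period exactly $2p$'' needs the elementary number-theoretic fact that, for odd $p$, the only divisor of $2p$ strictly larger than $p$ is $2p$ itself; combined with the backward-minimality argument of the second step, this pins the period down. Everything else follows mechanically from \pref{eq:projCondition}.
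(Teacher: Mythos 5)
Your proposal is correct and follows essentially the same route as the paper's proof: the translation identity $f^{n}(z+k)=f^{n}(z)+\ell^{n}k$ of \pref{lem:ProjCond} plus induction gives $f^{mp}(z^*+k)=z^*+\ell^{mp}k+\sigma\sum_{j=0}^{m-1}\ell^{jp}$, the cases are then read off from the value of the geometric sum according to $\ell^p$, and minimality of the type of the translates is obtained by translating back to $z^*$, exactly as in the paper. Your only departure is cosmetic care: you spell out the divisor argument showing the minimal period is exactly $2p$ in case (iii) and the minimality in case (iv), points the paper leaves implicit.
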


\begin{proof}
    Since $f^p(z^*)=z^*+
    \sigma$, \pref{lem:ProjCond} and induction lead to the pseudoperiodicity relation
    \vspace{-1mm}
    \begin{equation}
        \label{eq:proofPseudo_Iterates}  
        f^{mp}(z^*+k) = z^* + \ell^{mp} k + \sigma G_m, \quad\hspace{0.4mm} \mbox{ where } \quad G_m:=\sum_{j=0}^{m-1} \ell^{j p}, \vspace{-1mm}  
    \end{equation}
    for all $k\in\ZZ$, $m\in\NN$ (let $G_0:=0$). We distinguish cases based on the sum of the geometric series $G_n$:
    \begin{enumerate}[label = (\roman*)] 
        \vspace{-1mm}
        \item If $|\ell|\geq 2$, then it follows from \pref{eq:proofPseudo_Iterates} that, for each $z_k^*=z+k$,
        \vspace{-1mm}
        \begin{equation}
            \label{eq:proofPseudo_IteratesL2}  
            f^{mp}(z^*_k) = z^*_k + (\ell^{mp}-1) k + \sigma G_m, \quad \mbox{ where } \quad G_m=\frac{\ell^{mp}-1}{\ell^p-1}, \hspace{1.2mm}\qquad \vspace{-1mm} 
        \end{equation}
        which is equivalent to \pref{eq:statement_IteratesPseudo} because $\sigma G_m=(1-\ell^{mp})\delta$ for all $m$. Notice that $|G_m|\to\infty$ as $m\to\infty$, and we deduce from \pref{eq:proofPseudo_IteratesL2} that $z_k^*$ is $p$-periodic if and only if $(\ell^{mp}-1)k+\sigma G_m = 0$ for some $k\in\ZZ$. 
        Hence, $z_k^*$ escapes to $\infty$ under iteration, unless $k=\delta\in\ZZ$.
        \item If $\ell=1$, or $\ell=-1$ with $p$ even (i.e. $\ell^p=1$), then $G_m = m$. Thus, the expression \pref{eq:statement2_IteratesPseudo} on the iterates of $z_k^*$ under $f$ comes directly from \pref{eq:proofPseudo_Iterates}, and the conclusion follows as before.
        \item If $\ell=-1$ with $p$ odd (i.e. $\ell^p=-1$), then, for any $s\in\NN$, $G_{2s+1}=1$ while $G_{2s}=0$, that is, 
        \vspace{-0.5mm}
        \begin{equation*}
            \label{eq:proofPseudo_IteratesL3}  
            f^{(2s+1)p}(z^*_k)=z^*_k+\sigma-2k \quad \mbox{ and } \quad f^{2sp}(z^*_k)=z^*_k,\hspace{0.6mm} \vspace{-0.5mm} 
        \end{equation*}
        due to \pref{eq:proofPseudo_Iterates}.
        Observe that $z^*_k$ is $2p$-periodic (indeed $(p,\sigma-2k)$-pseudoperiodic), unless $k=\frac{\sigma}{2}\in\ZZ$.
        \item If $\ell=0$, i.e. $f$ is $1$-periodic, then clearly, for each $m\geq 1$, $G_m=1$ and so $f^{mp}(z^*_k)=z^*_\sigma$ for all $k$ by \pref{eq:proofPseudo_Iterates}.
        \vspace{-4.0mm}
    \end{enumerate}
    \noindent Moreover, note that in all cases the pseudoperiodic points $z_k^*$ are of minimal type. If this were not true, say $z_k^*$ is $(\tilde{p},\tilde{\sigma})$-pseudoperiodic for some $k\in\ZZ^*$, $\tilde{p}<p$ and $\tilde{\sigma}\in\ZZ$, then \pref{lem:ProjCond} would imply that \vspace{-0.5mm}
    \begin{equation*}
        f^{\tilde{p}}(z_k^*-k) = z_k^*+\tilde{\sigma}-\ell^{\tilde{p}} k,
        \vspace{-0.5mm}
    \end{equation*}
    in contradiction with $z^*=z_k^*-k$ being of minimal type $(p,\sigma)$.
    $\hfill\square$
\end{proof}

Observe that if we are able to identify a pseudoperiodic (but non-periodic) point $z^*$ of $f$ (see \pref{lem:characPseudoPoint}) in $\mathcal{F}(f)$, then $z^*$ (and all but at most one of its translates by an integer) turns out to lie in a escaping Fatou component in the cases \textit{(i)} and \textit{(ii)} of \pref{prop:IteratesPseudo}. This, together with \pref{lem:FillBaker}, leads directly to the following result which gives a sufficient condition for $f$ to have indeed a wandering domain (containing $z^*$).

\begin{corollary}[Wandering domains through pseudoperiodic points]
    \label{cor:WDviaPseudoperiodicity}
    Let $f$ be a projectable function via $\exp_1$, and $\ell\in\ZZ$ such that $f(z+1)=f(z)+\ell$. Suppose $z^*$ is a $(p,\sigma)$-pseudoperiodic point of $f$ with $\sigma\in\ZZ^*$ (and $p\geq 1$), and is contained in a Fatou component $U$ of $f$. If $0\notin\operatorname{fill}(\exp_1 U)$ and either $|\ell|\geq 2$, or $\ell^p=1$, then $U$ is a escaping wandering domain.
\end{corollary}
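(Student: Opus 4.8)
The plan is to transfer everything to the exponential projection $g$: identify the periodic Fatou component of $g$ onto which $U$ maps, use it to control the integer translates of $U$, and then combine this with \pref{prop:IteratesPseudo} to obtain both the wandering and the escaping behaviour. First I would set up the projection: let $V$ be the Fatou component of $g$ with $V\cap\CC^*=\exp_1 U$. Since $z^*\in U$, the point $w^*:=e^{2\pi i z^*}$ lies in $V\cap\CC^*$, and by \pref{lem:characPseudoPoint} it is a $p$-periodic point of $g$; hence $g^p(V)\subseteq V$, so $V$ is periodic (of period dividing $p$) and contains the periodic point $w^*$ in its interior. A periodic point lying in the Fatou set is attracting/superattracting or of Siegel type, so $V$ is either the immediate attracting basin of $w^*$ or the Siegel disk centred at $w^*$; in particular $V$ is not a Baker domain, and every subsequential limit function $\psi$ of $\{g^{mp}|_V^{}\}$ maps $V$ into $V$ ($\psi\equiv w^*$ in the attracting case, $\psi$ a rotation of $V$ in the Siegel case).

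Next I would exploit the hypothesis $0\notin\operatorname{fill}(\exp_1 U)$. It rules out $0\in V$ and $\infty\in V$ (if either held, $\exp_1 U=V\cap\CC^*$ would contain a punctured disk about $0$ and hence have $\{0\}$ as a bounded complementary component, or it would have bounded complement; this is the content of the two easy cases in the proof of \pref{lem:FillBaker}), so $V\subset\CC^*$, $\exp_1 U=V$ and $0\notin\operatorname{fill}(V)$. Consequently the translates $\{U+k\}_{k\in\ZZ}$ are pairwise distinct Fatou components of $f$: if $U=U+d$ for some $d\in\ZZ^*$, a path in $U$ joining a point to its translate by $d$ would project under $\exp_1$ to a loop in $\exp_1 U\subset\CC^*$ of winding number $d\neq 0$ about the origin, whence $0$ would lie in a bounded component of $\CC\backslash\exp_1 U$, contradicting the hypothesis (cf. \pref{rem:FillFatou}).

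For the wandering statement: since $\sigma\in\ZZ^*$ and we are in case \textit{(i)} ($|\ell|\geq 2$) or case \textit{(ii)} ($\ell^p=1$) of \pref{prop:IteratesPseudo}, there are integers $j_m$ with $|j_m|\to\infty$ such that $f^{mp}(z^*)=z^*+j_m$. Because $V$ is periodic of period dividing $p$, we have $\exp_1(f^{mp}(U))=g^{mp}(V\cap\CC^*)\subseteq V\cap\CC^*$, so the component $U_{mp}\subseteq\mathcal F(f)$ containing $f^{mp}(U)$ is one of the translates $U+k$; and since $z^*+j_m=f^{mp}(z^*)\in U_{mp}\cap(U+j_m)$, distinctness forces $U_{mp}=U+j_m$. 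As $|j_m|\to\infty$ these are infinitely many pairwise distinct components, so $\{U_n\}_{n\in\NN}$ is infinite and $U$ is a wandering domain.

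The step I expect to be the main obstacle is the \emph{escaping} conclusion, because \pref{prop:IteratesPseudo} only governs the subsequence $n=mp$. Fix $z\in U$ and, by normality, pass to a subsequence along which $g^{mp}|_V\to\psi$; then $\exp_1(f^{mp}(z))=g^{mp}(\exp_1 z)\to\psi(\exp_1 z)\in V\subset\CC^*$. Lifting this convergence through $\exp_1$ and using $U_{mp}=U+j_m$ to pick out the correct $\ZZ$-translate, one obtains $f^{mp}(z)=\xi+j_m+o(1)$ with $\xi=\xi(z)\in U$ fixed, so $f^{mp}|_U\to\infty$ locally uniformly as $m\to\infty$. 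For a general index $n=mp+r$ with $0\leq r<p$, the map $f^r$ is holomorphic on $U\subset\mathcal F(f)$ and pseudoperiodicity gives $f^r(w+N)=f^r(w)+\ell^r N$ for $N\in\ZZ$; since $\ell\neq 0$ in both cases \textit{(i)}, \textit{(ii)}, this yields $f^n(z)=f^r(\xi+o(1))+\ell^r j_m\to\infty$ (locally uniformly). Hence $\infty$ is the only limit function of $\{f^n|_U^{}\}_{n\in\NN}$, i.e. $U$ is an escaping wandering domain. The points needing care here are the normal‑family bookkeeping (that each $\psi$ sends $V$ into $V\subset\CC^*$, which is where the exclusion of Baker and parabolic components enters) and making all of the above convergences locally uniform on compact subsets of $U$.
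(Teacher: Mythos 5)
Your argument is correct and takes essentially the same route the paper intends: the corollary is stated there as a direct consequence of \pref{prop:IteratesPseudo} together with \pref{lem:FillBaker} (no separate proof is written out), namely the escape of $z^*$ under $f^{mp}$ when $\sigma\neq 0$ combined with the fill condition, which forces the translates $U+k$ to be pairwise distinct and hence $f^{mp}(U)\subset U+j_m$ with $|j_m|\to\infty$. Your extra bookkeeping — identifying $V$ as an attracting basin or Siegel disk so that the limit functions of $g^{mp}|_V^{}$ stay inside $V\subset\CC^*$, and then treating the indices $n=mp+r$ via pseudoperiodicity — correctly supplies the ``escaping'' (only limit function $\infty$) detail that the paper leaves implicit.
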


\subsection{Lifting components of projections of finite-type}

Zheng in \cite[Thm.~3.3]{Zheng2005} analyzed the connection between the types of Fatou components of $f$ and $g$ in our context, showing e.g. that if $f$ does not have wandering domains, then $g$ does not either. In the opposite direction we have the following theorem. Here we concentrate on (non-periodic) functions $f$ whose projection $g$ is of finite-type (detailed in \pref{thm:B_FiniteType}), and so $g$ has no Baker domains nor wandering components (see e.g. \cite[Thm. E, F]{Baker2001}). The special case where $f$ is periodic, is going to be clarified at the end of the section. 
But first let us recall that the basin of attraction of an attracting $p$-periodic point $w_0$ of $g$ is defined as \vspace{-0.5mm}
\begin{equation}
    \label{eq:4_basinAttraction}
    \mathcal{A}(w_0):=\left\{ w: g^{mp}(w)\to w_0 \quad \mbox{ as } \quad m\to\infty \right\}, \vspace{-0.5mm}
\end{equation}
and the (Fatou) component containing $w_0$ is called its \textit{immediate basin}, denoted by $\mathcal{A}^*(w_0)$. For a set $A\subset\hCC$, we denote by $\partial A$ its boundary in $\hCC$, and by $\overline{A}$ its closure in $\hCC$ as done previously.

Similarly, by the \textit{immediate basin of attraction} of a parabolic $\tilde{p}$-periodic point $\widetilde{w}_0$ of $g$, we understand the union of Fatou components $V$ in $\mathcal{A}(\widetilde{w}_0)$ for which $\widetilde{w}_0\in\partial V$. We remark that if $\big( (g^{\tilde{p}})'(\widetilde{w}_0) \big)^q = 1$ for some $q\in\NN^*$, then the number of attracting (invariant) petals for $g^q$ at $\widetilde{w}_0$, in the sense of the \textit{Leau-Fatou flower theorem}, is an integer multiple of $q$, and the map $g$ permutes these petals in cycles of length $q$ (see more details in \cite[\S 10]{Milnor2006}). Hence, any component $V\subset\mathcal{A}^*(\widetilde{w}_0)$ is indeed a Fatou component of period $\tilde{p}\hspace{0.2mm} q$. This is going to be important when lifting parabolic basins in the proof of cases \textit{(2-i)} and \textit{(2-ii)}  that follows.

\begin{theorem}[Lifting periodic components]
    \label{thm:logFatouJulia}
    Let $f$ be a non-periodic projectable function via $\exp_1$, written as $f(z)=\ell z + \Phi(e^{2\pi i z})$ for some $\ell\in\ZZ^*$ and $\Phi$ meromorphic in $\CC^*$, and $g$ its exponential projection. 
    Suppose $g$ is a finite-type map, $V$ is a $p$-periodic component of $\mathcal{F}(g)$, and $U$ is a connected component of $\exp_1^{-1}(V\cap\CC^*)$. 
    Then $U$ is a $p$-periodic Baker domain of $f$ if and only if one of the following holds:
    \begin{enumerate}[label = (\arabic*)] 
    \vspace{-1mm}
        \item $0\in \operatorname{fill}(V)$.
        \item $\{0,\infty\}\cap\partial\operatorname{fill}(V)\neq\emptyset$ and either
        \begin{enumerate}[label = (\roman*)]
            \item($p$\hspace{0.2mm}=\hspace{0.4mm}1 case) $\ell=1$ with $\Phi(0)=0$ (resp. $\Phi(\infty)=0$) and $g^m|_V^{}\to 0$ (resp. $\infty$) as $m\to\infty$; or \vspace{0.5mm}
            \item($p$\hspace{0.2mm}=\hspace{0.4mm}2 case) $\ell=-1$ with $\Phi(0)=\Phi(\infty)$ and $g^{2m}|_V^{}\to 0$ or $\infty$ as $m\to\infty$.
        \end{enumerate}
    \end{enumerate}\vspace{-0.5mm}
    Alternatively, either
    \begin{enumerate}[label = (\alph*)] 
    \vspace{-1mm}
        \item $U$ is a component of $\mathcal{F}(f)$ of period $p$, or $2p$ (which is only possible if $\ell^p$=\hspace{0.4mm}$\shortminus$1), of the same type as $V$; or
        \item $U$ is a wandering domain with $f^p(U)\subset U+\sigma$, $\sigma\in\ZZ^*$, which is unbounded if $\{0,\infty\}\cap\partial\operatorname{fill}(V)\neq\emptyset$.
    \end{enumerate}
\end{theorem}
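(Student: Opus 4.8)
The plan is to reduce everything to the dynamics of $g$ on $V$, which—since $g$ is of finite type—is fully understood: $V$ is the immediate basin of an attracting or parabolic $p$-cycle, or a Siegel disk, or a Herman ring, so $g^{mp}|_V$ either converges locally uniformly to a point $w_0\in\overline V$ or is conjugate to an irrational rotation, and in no case is $V$ a wandering or Baker component.

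\textbf{Step 1: the integer drift.} I would first note that $U$ is a Fatou component of $f$ (via \pref{eq:logFatouJulia} and the covering property of $\exp_1$), and that from $g^p(V)\subseteq V$ and $\exp_1\circ f^p=g^p\circ\exp_1$ the connected set $f^p(U)$ lies in $\exp_1^{-1}(V\cap\CC^*)$, whose components are exactly the translates $\{U+k\}_{k\in\ZZ}$ (the deck group $\ZZ$ acts transitively on them). Hence $f^p(U)\subseteq U+\sigma$ for a unique $\sigma\in\ZZ$, and iterating with \pref{lem:ProjCond} gives $f^{np}(U)\subseteq U+\sigma G_n$, $G_n=\sum_{j=0}^{n-1}\ell^{jp}$ (the notation of \pref{prop:IteratesPseudo}). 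Therefore $U$ is periodic iff $\sigma G_n=0$ for some $n$, which forces $\sigma=0$ (period $p$, since moreover $g^q(V)\neq V$ for $0<q<p$), or $\ell^p=-1$ (then $G_2=0$, period $2p$); otherwise ($\sigma\neq0$ with $|\ell|\ge2$ or $\ell^p=1$) the $U+\sigma G_n$ are distinct and $U$ is wandering, in fact escaping since $\exp_1(f^{np}U)=g^{np}(V\cap\CC^*)$ stays in $V$ while $\sigma G_n\in\ZZ$ tends to $\infty$.

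\textbf{Step 2: the case $0\in\operatorname{fill}(V)$.} By \pref{lem:FillBaker}, $U=\exp_1^{-1}(V\cap\CC^*)$ is a single unbounded $\ZZ$-invariant component, hence $p$-periodic. I would rule out every other type: using $f^{mp}(z_0+k)=f^{mp}(z_0)+\ell^{mp}k$ for $z_0\in U$, the limit functions of $\{f^{mp}|_U\}$ cannot be a finite constant, excluding attracting and parabolic basins; and $U$ cannot be a rotation domain, because the lift of the rotation of $V$ around $0$ is, in the natural coordinate, the translation $z\mapsto z+\theta$ with $\theta\notin\QQ$, so $f^{np}|_U\to\infty$. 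Equivalently: $g^{mp}|_V\to w_0$ or rotates; if $w_0\in\CC^*$ the displayed relation forces incompatible limits for $f^{mp}(z_0)$ and $f^{mp}(z_0+1)$, so $w_0\in\{0,\infty\}$ and $\operatorname{Im} f^{np}|_U\to\pm\infty$. In all surviving cases $U$ is a $p$-periodic Baker domain, so (1) $\Rightarrow$ Baker.

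\textbf{Step 3: the case $0\notin\operatorname{fill}(V)$.} Then no loop in $V$ winds around $0$, so $\exp_1|_U\colon U\to V$ is a conformal isomorphism ($0,\infty\notin V$) conjugating $f^p|_U$ to $g^p|_V$ after translation by $\sigma$. If $\sigma=0$ and $g^{mp}|_V$ converges to a point of $\overline V\cap\CC^*$ or is a rotation, the conjugacy gives case (a) with period $p$; if $\sigma\neq0$ we are in the trichotomy of Step 1, yielding case (b) or, when $\ell^p=-1$, case (a) with period $2p$; and in the wandering case $U$ is unbounded whenever $0$ or $\infty\in\partial\operatorname{fill}(V)$, because then $\overline V$ meets $0$ (or $\infty$) and $\exp_1^{-1}(V)$ reaches arbitrarily high (or low) in the $z$-plane. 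It remains to identify when $U$ is a Baker domain, which can happen here only if $g^{mp}|_V\to0$ or $\infty$ with $\sigma=0$: expanding $f(z)=\ell z+\Phi(e^{2\pi iz})$ near the relevant end shows that $f$ (for $\ell=1$), resp. $f^2$ (for $\ell=-1$), is the identity plus exponentially small terms exactly when $\Phi(0)=0$, resp. $\Phi(0)=\Phi(\infty)$; a parabolic fixed point of $g$ at $0$ of multiplier $e^{2\pi i a/b}$ forces its petal-component to have period $b$ and the lift to drift by $a$ per $b$ iterates, so no drift $\iff\Phi(0)=0\iff$ multiplier $1\iff p=1$, which is (i), and the analogous count for the parabolic $2$-cycle $\{0,\infty\}$—whose petals necessarily have even period, so $\ell^p=1$—gives (ii); conversely, under (2) the petal lifts to a $p$-periodic $U$ with $\operatorname{Im} f^{mp}|_U\to\pm\infty$. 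Combined with Step 2 and the fact that for $|\ell|\ge2$ the points $0,\infty$ are superattracting (so cannot lie outside $\operatorname{fill}(V)$ when they attract $V$), this proves the ``iff'' and exhibits (a)/(b) in the remaining cases.

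\textbf{Main obstacle.} The delicate point is that $\exp_1|_U$ conjugates $f^p|_U$ to $g^p|_V$ while \emph{not} preserving the Fatou-component type: the type changes exactly when $g^{mp}|_V$ converges to $0$ or $\infty$, because $\exp_1$ sends these regular points of $g$ to the essential singularity of $f$. Verifying that this ``escape to an end'' survives the lift precisely when the integer drift $\sigma$ vanishes, and matching the vanishing of $\sigma$ with the sharp conditions $\Phi(0)=0$ / $\Phi(0)=\Phi(\infty)$ (so that, e.g., $\Phi(0)\in\ZZ\setminus\{0\}$ or a non-integer rational produces an escaping wandering domain rather than a Baker domain), together with the period parity of the $2$-cycle petals that excludes a spurious ``$2p$-periodic Baker'' case, is where the real work lies.
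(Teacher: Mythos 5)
Your proposal is correct and follows essentially the same route as the paper's proof: the dichotomy on $0\in\operatorname{fill}(V)$ via \pref{lem:FillBaker}, the integer drift $\sigma$ and the sums $G_n$ from \pref{lem:ProjCond} and \pref{prop:IteratesPseudo} to separate periodic from wandering lifts, and the asymptotics of $f$ near the ends together with the petal-period count at the parabolic points $0,\infty$ to pin down conditions (2-i)/(2-ii), with your conformal-isomorphism packaging of the case $0\notin\operatorname{fill}(V)$ playing the role of the paper's pseudoperiodic-point and multiplier argument in its Cases 3--4. One small slip: in Step 2, when $g^{mp}|_V^{}\to w_0\in\CC^*$ with $0\in\operatorname{fill}(V)$, your pseudoperiodicity relation only excludes finite constant limit functions (so $U$ is still a Baker domain, but with orbits drifting horizontally at bounded imaginary part); it does not yield "$w_0\in\{0,\infty\}$" nor "$\operatorname{Im}f^{np}|_U^{}\to\pm\infty$" — this does not affect the conclusion of that step.
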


\begin{proof} %
    Let $U_k:=U+k$, $k\in\ZZ$, and note that $f(z+1)=f(z)+\ell$ for all $z$, where $\ell\neq 0$. Given that $g^p(V)\subset V$, it is clear that $U_k\subset \mathcal{F}(f)$ and  $f^p(U)\subset U_\sigma$ for some $\sigma\in\ZZ$, due to the relation \pref{eq:logFatouJulia} via $\exp_1$. 

    \noindent We are going to prove this theorem by considering the following collection of mutually exclusive cases, which cover all the possibilities for the $p$-periodic component $V\subset\mathcal{F}(g)$. We proceed in terms of the location of $0$ or $\infty$ with respect to the fill of $V$, and the limit function of $\{g^{mp}|_V^{}\}_{m\in\NN}$. \vspace{0.5mm}

    \textbf{\textit{Case 1.}} \textit{$0\in\operatorname{fill}(V)$}: \vspace{-0.5mm}
    
    \noindent It follows from \pref{lem:FillBaker} that $U$ is an unbounded Fatou component of $f$ (of period $p$) which is invariant under translation by $\pm 1$. In this situation, $U$ must be a Baker domain (case \textit{(1)} of the theorem); otherwise:
    \begin{enumerate}[label = (\roman*)] 
        \vspace{-1mm} 
        \item If $U$ is a component of the immediate basin of attraction of an attracting or parabolic periodic point $z^*$, then $z^*\in \overline{U}\cap\CC$ with $f^p(z^*)=z^*$ and $f^{mp}|_U^{}\to z^*$, as $m\to\infty$. By pseudoperiodicty (see \pref{lem:ProjCond}), we have that %
        \begin{equation}
            \label{eq:proof4_case1pseudo}
            f^{mp}(z+k) = f^{mp}(z) + \ell^{mp} k, 
        \end{equation}
        for all $z\in U$, $k\in\ZZ$. Hence, if $|\ell|\geq 2$, then $|f^{mp}(z+k)|\to\infty$ as $m\to\infty$ for any $k\neq 0$, contradicting that $z+k\in U$. The same expression gives that for $z\in U$, $f^{mp}(z+k)$ tends, as $m\to\infty$, to $z^*+k$ if $\ell^p=1$, or to the $2$-cycle $\{z^*+k,z^*-k\}$ if $\ell^p=-1$, which is also a contradiction for $k\neq 0$.
        \item If $U$ is a rotation domain, then there is a simple closed curve $\gamma\subset U$ which is invariant under $f^p$. Again by the pseudoperiodicity relation \pref{eq:proof4_case1pseudo}, for any $z\in \gamma$, $|f^{mp}(z+k)|\to\infty$ as $m\to\infty$ if $|\ell|\geq 2$ and $k\neq 0$, in contradiction with $\gamma$ being $f^p$-invariant. Moreover, if $\ell^p=1$ (resp. $\ell^p=-1$), then $\gamma+k\subset U$ is invariant under $f^p$ (resp. $f^{2p}$); a contradiction since $\{\gamma+k\}_k$ is a sequence of non-nested loops in $U$.
        \vspace{0.5mm}
    \end{enumerate}

    \textbf{\textit{Case 2.}} \textit{$0\notin\operatorname{fill}(V)$, $\{0,\infty\}\cap\partial\operatorname{fill}(V)\neq \emptyset$, and $g^{mp}|_V^{}\to 0$ (resp. $\infty$)}: \vspace{-0.5mm}
    
    \noindent Notice that $\infty\notin\operatorname{fill}(V)$ (see \pref{rem:FillFatou}), and $U_k\cap U_j\neq\emptyset$ for all $k\neq j$, due to \pref{lem:FillBaker}. Since $V$ cannot be a Baker domain (as $g$ is of finite-type), and $g^{mp}(w)\to 0$ (resp. $g^{mp}(w)\to \infty$) for all $w\in V$, as $m\to\infty$, we deduce that $0$ (resp. $\infty$) must be a parabolic periodic point of $g$, which lies outside of $\mathcal{E}(\Phi)\cup\Phi^{-1}(\infty)$ by \pref{prop:Regular0inf}. Hence, $V$ shall be a component of the immediate basin of attraction of $0$ (resp. $\infty$).

    \noindent This is only possible if $\ell=\pm 1$; otherwise $0$ (resp. $\infty$) would be a critical point of $g$ due to \pref{prop:projC}. Recall that $g(w)=w^\ell e^{2\pi i \Phi(w)}$, and consider each case separately:
    \begin{enumerate}[label = (\roman*)] 
        \vspace{-1mm} %
        \item If $\ell=1$, then $0$ (resp. $\infty$) is a fixed point of $g$ with $g'(0)=e^{2\pi i \Phi(0)}$ (resp. $g'(\infty)=e^{-2\pi i \Phi(\infty)}$), and $\Phi(0)\in\QQ$ (resp. $\Phi(\infty)\in\QQ$) because the fixed point needs to be of parabolic type. Denote by $\Upsilon$ the attracting axis of the petal contained in $V$, in which the iterates converge to $0$ (resp. $\infty$) tangentially to $\Upsilon$, at an angle, say, $\alpha\in (-\pi,\pi]$ with respect to the positive real axis. Assume, without loss of generality, that $\upsilon:=\{\operatorname{Re}z=\sfrac{\alpha}{2\pi}\}\subset\exp_1^{-1}\Upsilon$ intersects $U$ (\pref{fig:3_projAV} can serve as guidance for this situation).
        
        From the explicit relation \pref{eq:4_RelationIteratesF}, the asymptotic behavior of $f^{mp}$ near the upper (resp. lower) end of $\CC/\ZZ$, is given by \vspace{-0.5mm}
        \begin{equation}
            \label{eq:4_asymptotic21}
            f^{mp}(z)\sim z + mp\hspace{0.1mm}\Phi(0) \qquad \big(\mbox{resp. } f^{mp}(z)\sim z + mp\hspace{0.1mm}\Phi(\infty) \big),
            \vspace{0.25mm}
        \end{equation}
        as $\operatorname{Im}z\to +\infty$ (resp. $-\infty$). Hence, if $\Phi(0)=0$ (resp. $\Phi(\infty)=0$), it follows from \pref{eq:4_asymptotic21} that, for $z\in U_k$, \vspace{-0.5mm}
        \begin{equation*}
            \operatorname{Re} f^{mp}(z)\to \frac{\alpha}{2\pi}+k, \quad \mbox{ as } \quad m\to\infty,
        \end{equation*}
        given that $U_k$ crosses the vertical line $\upsilon+k$, $k\in\ZZ$. In this case, $g'(0)=1$ (resp. $g'(\infty)=1$), and so the petal in $V$ is invariant under $g$, i.e. $p=1$. Then, for any $k$, $\operatorname{Im}f^{mp}|_{U_k}^{}\to +\infty$ (resp. $-\infty$) tangentially to $\nu+k$, as $m\to\infty$, and we obtain that $U_k$ is a Baker domain. This proves case \textit{(2-i)} of the theorem.
        
        \noindent However, if $\Phi(0)\neq 0$ (resp. $\Phi(\infty)\neq 0$), we have from \pref{eq:4_asymptotic21} that $|\operatorname{Re}f^{mp}(z)|\to\infty$, as $m\to\infty$, for $z\in U$ with large imaginary part. Since each $U_k$ is asymptotically contained in the strip $\big\{z: |\operatorname{Re}z-k-\frac{\alpha}{2\pi}|< \frac{1}{2} \big\}$ (as $\upsilon+k\subset U_k$), we conclude that $U$ is a (unbounded) wandering domain of $f$.
        \item If $\ell=-1$, then, as $0$ (resp. $\infty$) is in the domain of definition of $g$, for which $g(0)=\infty$ (resp. $g(\infty)=0$), we have that $\{0,\infty\}$ is a parabolic $2$-cycle, so that $p$ is even, and the cases $g^{mp}|_V^{}\to 0$ and $g^{mp}|_V^{}\to \infty$ can be treated as one and the same. Furthermore, $f\in\mathbf{R}_{-1}$ and $\Phi(0)-\Phi(\infty)\in\QQ$ (see \pref{cor:classRsv}).

        \noindent Now, given that $\ell^p=1$ (as $p$ is even), the relation \pref{eq:4_RelationIteratesF} yields the asymptotics \vspace{-0.5mm}
        \begin{equation}
            \label{eq:4_asymptotics22}
            f^{mp}(z)\sim z \pm\frac{mp}{2}\big(\Phi(\infty)-\Phi(0)\big), \quad \mbox{ as } \quad \operatorname{Im}z\to \pm\infty.
        \end{equation}
        As in the previous subcase, by lifting via $\exp_1$ the axis $\Upsilon$ of the attracting petal in $V$ (which is invariant under $g^p$) and using \pref{eq:4_asymptotics22} to infer the real part of $f^{mp}(z)$ for $z\in U$ (with $|\operatorname{Im}z|$ large enough), as $m\to\infty$, we see that $U$ is a Baker domain if $\Phi(0)=\Phi(\infty)$. This corresponds to case \textit{(2-ii)} of the theorem, since $(g^2)'(0)=1$, i.e. $p=2$. As before, if $\Phi(0)\neq\Phi(\infty)$, $U$ is wandering and unbounded.
        \vspace{0.5mm}
    \end{enumerate}

    \textbf{\textit{Case 3.}} \textit{$0\notin\operatorname{fill}(V)$, $\{0,\infty\}\cap\partial\operatorname{fill}(V)\neq \emptyset$, $g^{mp}|_V^{}\not\to 0$ and $g^{mp}|_V^{}\not\to\infty$}: \vspace{-0.5mm}
    
    \noindent Observe that $\{0,\infty\}\cap\operatorname{fill}(V)=\emptyset$, and $U$ is unbounded. Recall that $V$ cannot be a Baker domain of $g$, so that we have the following possibilities:
    \begin{enumerate}[label = (\roman*)] 
        \vspace{-1mm} 
        \item $V$ is in the immediate basin of attraction of a $p$-periodic point $w^*$, and hence $w^*\in\overline{V}\cap\CC^*$, say $w^*=e^{2\pi i z^*}$ (as both $0$ and $\infty$ are not limit functions of $\{g^{mp}|_V^{}\}_{m\in\NN}$). Note that each $z_k^*:=z^*+k$, $k\in\ZZ$, is a $(p,\sigma_k^{})$-pseudoperiodic point of $f$, where $\{\sigma_k^{}\}_k^{}\subset\ZZ$, and suppose, without loss of generality, that $z^*\in \overline{U}\cap\CC$. 
        
        \noindent Given that $g^{mp}(w)\to w^*$ for all $w\in V$, as $m\to\infty$, the semiconjugacy yields that $f^{mp}(z)\to z_k^*\in \overline{U}_k\cap\CC$ for all $z\in U_k$, and from the relation \pref{eq:derivativeGandF} between the derivatives of $f$ and $g$, we have that, for $k\in\ZZ$, \vspace{-0.5mm}
        \begin{equation}
            \label{eq:4_multipliersUV}
            (f^p)'(z_k^*)=(g^p)'(e^{2\pi i z^*}). \vspace{-0.5mm}
        \end{equation}
        Thus, by inspection of the cases in \pref{prop:IteratesPseudo}, we assert that $z_k^*$ is either $p$-periodic, or $2p$-periodic ($\ell^p$=\hspace{0.4mm}$\shortminus$1 case), or it escapes to $\infty$ under iteration. In other words, $U_k$ can be either a Fatou component of $f$ of period $p$, or $2p$, of the same type as $V$ due to \pref{eq:4_multipliersUV}, or an escaping wandering domain in the latter situation. These belong to the cases \textit{(a)} and \textit{(b)} of the theorem, respectively.
        \item $V$ is a rotation domain, and thus there is a simple closed curve $\Gamma\subset V\cap\CC^*$ which is invariant under $g^p$. Since $0\notin\operatorname{fill}(V)$, i.e. $0\notin \operatorname{int}(\Gamma)$, we have that $\exp_1^{-1}\Gamma$ is a (non-nested) sequence of disjoint loops $\gamma_k:=\gamma+k$, $k\in\ZZ$, chosen such that $\gamma\subset U$. Then, each $\gamma_k$ belongs to the component $U_k\subset\mathcal{F}(f)$, which is of the same connectivity as $V$, and is mapped to $\gamma_{k+\sigma_k^{}}$ by $f^p$, where $\sigma_k^{}\in\ZZ$, due to the semiconjugacy. 
        
        \noindent Therefore, we may consider any $\gamma_k$ as a pseudoperiodic object of minimal type $(p,\sigma_k^{})$, and apply \pref{prop:IteratesPseudo} in analogy to the previous case, so that the same conclusion follows.
        \vspace{0.5mm}
    \end{enumerate}
    
    \textbf{\textit{Case 4.}} \textit{$0\notin\operatorname{fill}(V)$, $\{0,\infty\}\cap\partial\operatorname{fill}(V)= \emptyset$}: \vspace{-0.5mm}
    
    \noindent Following exactly the same arguments as in \textit{Case 3}, we obtain the same options for $U\subset\mathcal{F}(f)$, but here $U$ must be bounded, since $\{0,\infty\}\cap\overline{\operatorname{fill}(V)}=\emptyset$. Hence, these also lead to the cases \textit{(a)} and \textit{(b)} of the theorem.
    
    \noindent Finally, notice that the compilation of possibilities, as indicated in each case, gives the statement.
    $\hfill\square$
\end{proof}

The following is an example which is interesting in its own right, in the spirit of Arnol'd family. It illustrates the case \textit{(1)} of \pref{thm:logFatouJulia}, since we build different kinds of Baker domains (we refer to \cite{Fagella2006} for their classification into three types) by lifting periodic Fatou components $V$ with $0\in\operatorname{fill}(V)$.

\begin{example}[Meromorphic standard family]
    \label{ex:MeroStandard}
    Consider the non-entire function $f$ of the form
    \begin{equation}
        f(z) = z + \alpha - \frac{\beta}{4\pi i}\left( B_a(e^{2\pi i z})-\frac{1}{B_a(e^{2\pi i z})} \right), \quad \mbox{ where } \quad B_a(w) = \frac{w-a}{1-\overline{a}w}
    \end{equation}
    for some $a\in\DD^*$, $\alpha\in[0,1)$ and $\beta>0$ (note that for $a=0$, $f$ degenerates to the entire standard map $f(z)=z+\alpha-\frac{\beta}{2\pi}\sin{2\pi z}$; see \pref{ex:standardWeierstrass}). Its exponential projection $g$ via $\exp_1$ may be written as
    \begin{equation}
        g(w)=w e^{2\pi i R(w)}, \quad \mbox{ where } \quad R(w) = \alpha - \frac{\beta}{4\pi i} \frac{(1-\overline{a}^2)w^2 - 2 w\operatorname{Im}{a}-(1-a^2)}{(1-\overline{a} w)(w-a)}.
    \end{equation}
    Notice that $B_a$ is a finite Blaschke product, and hence it preserves the unit circle $\partial\DD$ and orientation. For all $\theta\in\RR$, $f(\theta) = \theta+\alpha-\frac{\beta}{2\pi}\operatorname{Im}B_a(e^{2\pi i \theta})\in\RR$, which implies that $g(\partial\DD)\subset\partial\DD$. Since $R(0)=\alpha-\frac{\beta}{4\pi i}\frac{1-a^2}{a}\neq\infty$ and $R(\infty)=\overline{R(0)}$ (as $\alpha,\beta\in\RR$), $f\in\mathbf{R}_1$, and \pref{cor:classRsv} asserts that $\mathcal{E}(g)=\{a,1/\overline{a}\}$, and
    \begin{equation}
        \label{eq:MeroStandard_derivatives0inf}
        g'(0) = e^{2\pi i \alpha} e^{\beta (a-1/a)/2}, \qquad g'(\infty) = \overline{g'(0)}.
    \end{equation}
    Here we choose the real parameters $a=\frac{1}{2}$, $\beta=\frac{1}{4}$, and $\alpha$ such that the restriction of $g$ to $\partial\DD$ is a real-analytic diffeomorphism (of topological degree one) with rotation number equal to the golden mean $\rho=\frac{\sqrt{5}-1}{2}$ (numerically, we find that $\alpha \approx 0.61783128$). Since $\rho$ is Diophantine, a theorem due to Herman and Yoccoz (see e.g. \cite[\S 15]{Milnor2006}) yields that $\partial\DD$ is contained in a Herman ring $V\subset\mathcal{F}(g)$. Note from \pref{eq:MeroStandard_derivatives0inf} that $0$ (resp. $\infty$) is an attracting fixed point of $g$, and denote by $V^+$ (resp. $V^-$) its immediate basin of attraction.

    \noindent Now observe that the origin is in the fill of $V$ and $V^\pm$, and hence \pref{lem:FillBaker} shows that their lifts via $\exp_1$ correspond to invariant Baker domains of $f$, say $U$ and $U^\pm$ (see \pref{fig:Blaschke_fHR}). It can be checked that $U$ is a hyperbolic Baker domain containing $\RR$, and $U^+$ (resp. $U^-$) is a doubly parabolic Baker domain containing an upper (resp. lower) half-plane, analogously to \cite[\S5]{Baranski2001} (using the terminology in \cite{Fagella2006}).

    \noindent The finite-type map $g$ has exactly two pairs of symmetric critical points with respect to $\partial\DD$, which belong to the positive real axis. Numerically, each boundary component of the Herman ring $V$ seems to contain one of them ($c^+\approx 0.81135$ and $c^-\approx 1.23251$, whose orbits are dense in $\partial V$ as displayed in \pref{fig:Blaschke_fHR}), in which case $g$ would have no other periodic Fatou components, since $V^\pm$ must enclose the two remaining critical points.
    
    \begin{figure}[H] 
        \centering
        \hfill
        \begin{tikzpicture}[cross/.style={path picture={ 
\draw[white!80]
(path picture bounding box.south) -- (path picture bounding box.north) (path picture bounding box.west) -- (path picture bounding box.east);
}}, cross3/.style={path picture={ 
\draw[white!80]
(path picture bounding box.south west) -- (path picture bounding box.north west) (path picture bounding box.west) -- (path picture bounding box.east);
}}, cross2/.style={path picture={ 
\draw[white!80]
(path picture bounding box.south east) -- (path picture bounding box.north east) (path picture bounding box.west) -- (path picture bounding box.east);
}}]
        \node[anchor=south west,inner sep=0] (image) at (0,0) {\includegraphics[width = 0.602\textwidth]{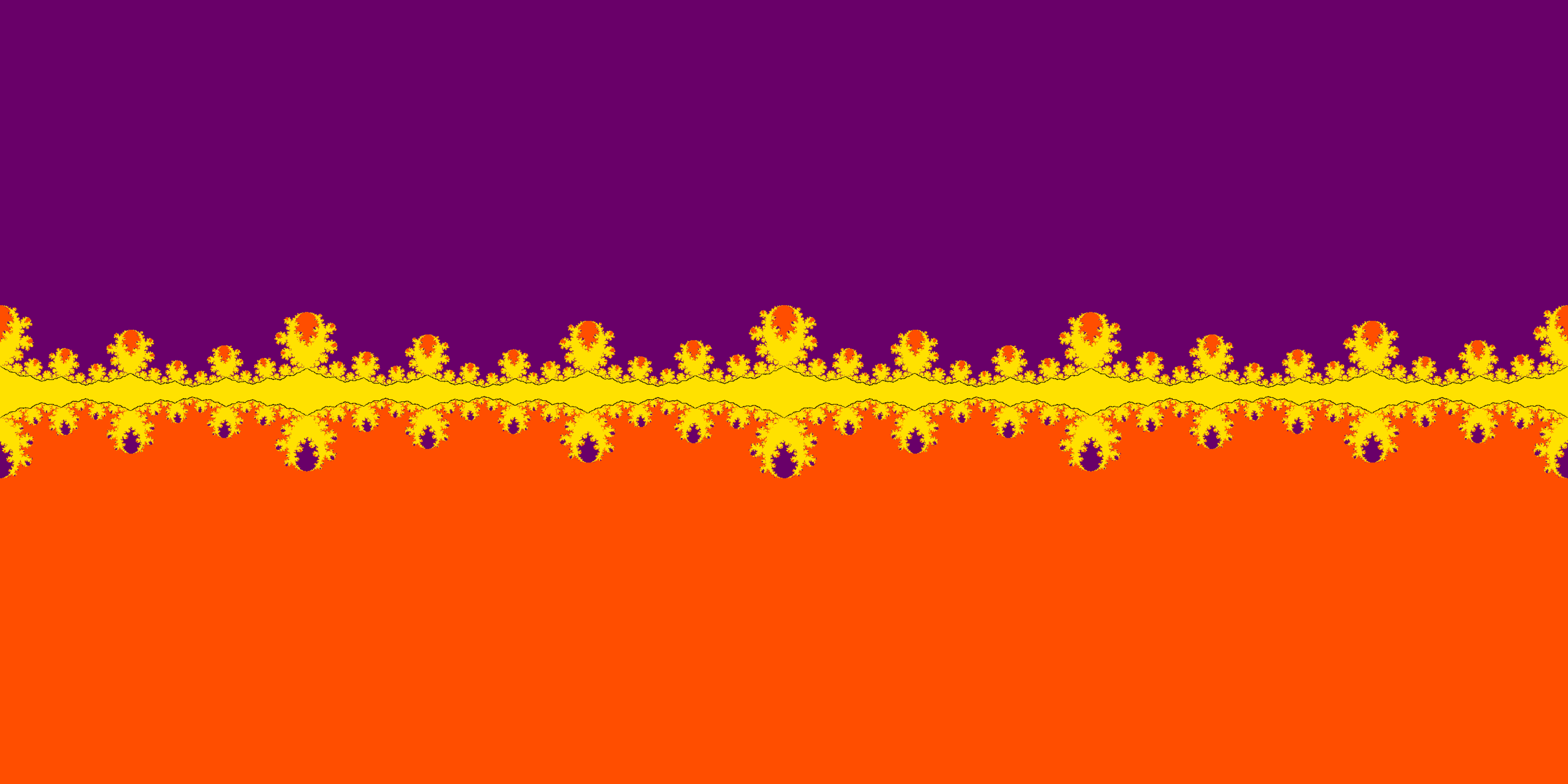}};
        \begin{scope}[x={(image.south east)},y={(image.north west)}]
        \coordinate (a) at ($ ({(0+1)/2},{((-ln(0.5))/(2*pi)+0.5)/1}) $);
        \coordinate (a2) at ($ ({(-0.00155+1+1)/2},{((-ln(0.5))/(2*pi)+0.5)/1}) $);
        \coordinate (a3) at ($ ({(0.0014-1+1)/2},{((-ln(0.5))/(2*pi)+0.5)/1}) $);
        \coordinate (b) at ($ ({(0+1)/2},{((-ln(2))/(2*pi)+0.0008+0.5)/1}) $);
        \coordinate (b2) at ($ ({(-0.00155+1+1)/2},{((-ln(2))/(2*pi)+0.0008+0.5)/1}) $);
        \coordinate (b3) at ($ ({(0.0014-1+1)/2},{((-ln(2))/(2*pi)+0.0008+0.5)/1}) $);
        \draw (0.0435,0.9394) node[draw=none,fill=none, color=white] {\scriptsize $U^{\shortplus}$};
        \draw (0.0435,0.0706) node[draw=none,fill=none, color=white] {\scriptsize $U^{\shortminus}$};
        \draw[line width=0.32pt,cross,color=white!80] (a) circle (1.1pt);
        \draw[line width=0.32pt,cross2,color=white!80] ([shift=(90:1.1pt)]a2) arc (90:270:1.1pt);
        \draw[line width=0.32pt,cross3,color=white!80] ([shift=(-90:1.1pt)]a3) arc (-90:90:1.1pt);
        \draw[line width=0.32pt,cross,color=white!80] (b) circle (1.1pt);
        \draw[line width=0.32pt,cross2,color=white!80] ([shift=(90:1.1pt)]b2) arc (90:270:1.1pt);
        \draw[line width=0.32pt,cross3,color=white!80] ([shift=(-90:1.1pt)]b3) arc (-90:90:1.1pt);
        \draw[line width=0.1pt, color = white!80, dashed] ($({1/4},0)$) -- ($({1/4},1)$);
        \draw[line width=0.1pt, color = white!80, dashed] ($({3/4},0)$) -- ($({3/4},1)$);
        \draw[dashed,line width=0.02pt,color=darkgray!80,-stealth] ($ ({(-1+1)/2},{(0+0.5)/1}) $) to ($ ({(-0.825+1)/2},{(0+0.5)/1}) $);
        \draw[dashed,line width=0.02pt,color=darkgray!80,-stealth] ($ ({(-0.825+1)/2},{(0+0.5)/1}) $) to ($ ({(0.175+1)/2},{(0+0.5)/1}) $);
        \draw[dashed,line width=0.02pt,color=darkgray!80] ($ ({(0.175+1)/2},{(0+0.5)/1}) $) to ($ ({(1+1)/2},{(0+0.5)/1}) $);
        \end{scope}
        \end{tikzpicture} 
        \hfill 
        \begin{tikzpicture}[cross/.style={path picture={ 
\draw[white!80]
(path picture bounding box.south) -- (path picture bounding box.north) (path picture bounding box.west) -- (path picture bounding box.east);
}}]
        \node[anchor=south west,inner sep=0] (image) at (0,0) {\includegraphics[width = 0.302\textwidth]{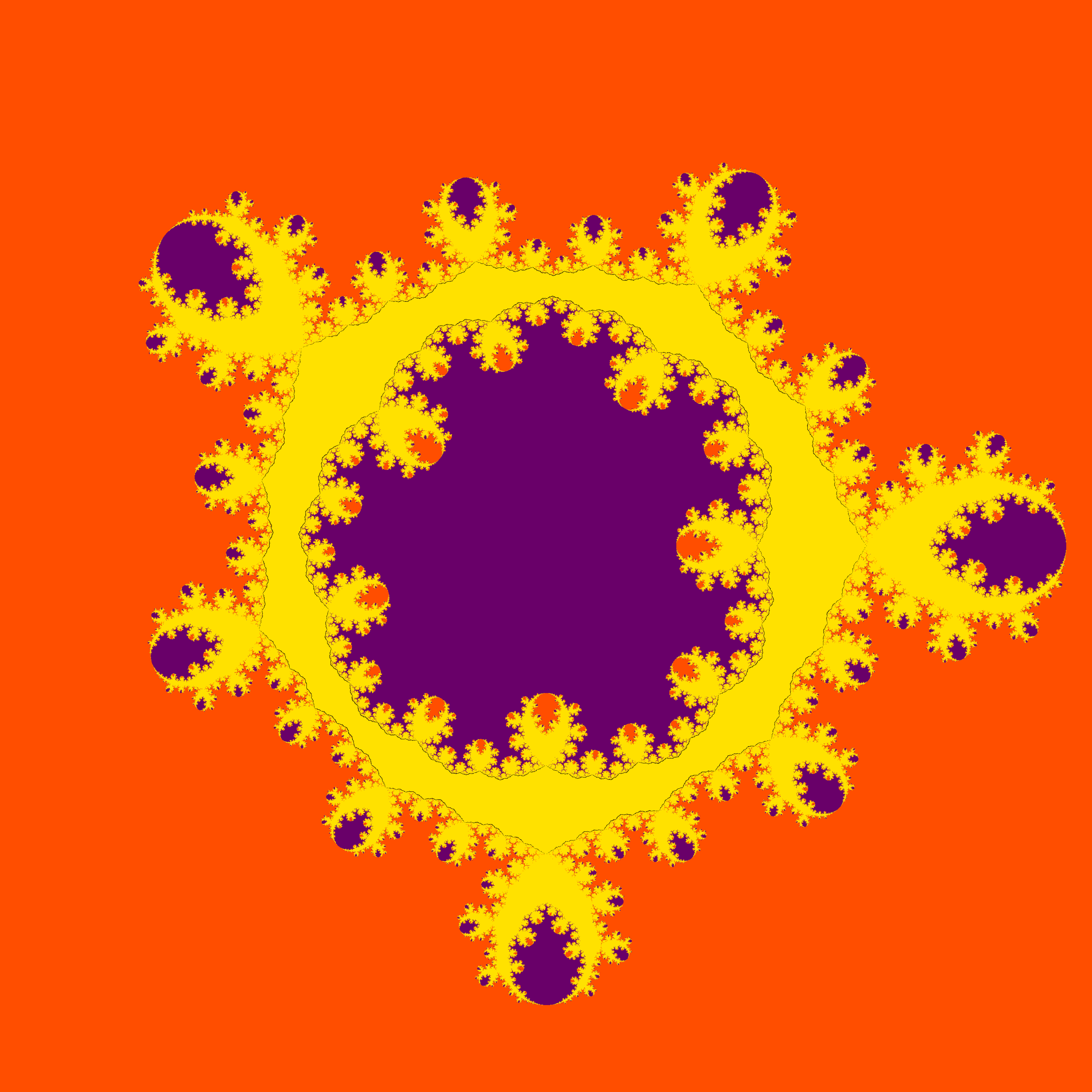}};
        \begin{scope}[x={(image.south east)},y={(image.north west)}]
        \coordinate (a) at ($ ({(0.5-0.001+2.1)/4.2},{(0+2.1)/4.2}) $);
        \coordinate (b) at ($ ({(2-0.0033+2.1)/4.2},{(0+2.1)/4.2}) $);
        \fill[white!80] (0.5,0.5) circle (0.75pt);
        \draw[line width=0.32pt,cross,color=white!80] (a) circle (1.1pt);
        \draw[line width=0.32pt,cross,color=white!80] (b) circle (1.1pt);
        \draw[dashed,line width=0.02pt,color=darkgray!80,-stealth] ([shift=(61.5:0.238095)]0.5,0.5) arc (61.5:423:0.238095);
        \end{scope}
        \end{tikzpicture}
        \hfill \vspace{-0.5mm}
        \caption{\textit{Left (dynamical plane of the meromorphic standard map $f$)}: Three invariant Baker domains in purple, yellow and orange, containing an upper half-plane, the real axis (dashed line) and a lower half-plane, respectively; using the same parameters as in \pref{ex:MeroStandard}. The vertical dashed lines refer to $\{\operatorname{Re}z=\pm\sfrac{1}{2}\}$. Range: $[-1,1]\times[-0.5,0.5]$.        \textit{Right (dynamical plane of its projection $g$)}: The immediate basin of attraction in purple (resp. orange) of the fixed point at $0$ (resp. $\infty$), and the Herman ring (in yellow) containing the unit circle, lift to the Baker domains of $f$ (same colors). The white dot is the origin, and $\oplus$ at $\sfrac{1}{2}$ and $2$ refer to the projections of the poles of $f$. Range: $[-2.1,2.1]^2$. }
        \label{fig:Blaschke_fHR}
    \end{figure}
    \vspace{-3.5mm}
    \noindent Although the dynamical plane of $f$ shows a close similarity to the one of the Arnol'd standard map near the real line (compare \pref{fig:Blaschke_fHR} with, e.g., \cite[Fig. 4]{Baranski2001}), here we do not observe the so-called Devaney hairs near the essential singularity \cite{Devaney1986}. Indeed, it looks like $\mathcal{I}(f)\cap \mathcal{J}(f)$ has no unbounded continua (apart from $\partial U$), where $\mathcal{I}(f)$ denotes the \textit{escaping set} of $f$ (i.e. the set of points $z\in\CC$ for which $\{f^n(z)\}_{n\in\NN}$ is defined and $f^n(z)\to\infty$ as $n\to\infty$), in contrast to the situation for many transcendental meromorphic maps (see \cite{Bergweiler2008}). It seems plausible that this sort of baldness property holds for a wide range of functions in the class $\mathbf{R}_\ell$.
\end{example}

Next we give some examples of Baker and wandering domains which arise from the lifting of periodic components $V$ with $0\notin\operatorname{fill}(V)$. These lie in a family of Newton maps of entire functions $F_\beta$ (without roots), introduced in \cite[\S5]{Buff2006} to show that a direct non-logarithmic singularity of $F_\beta^{-1}$ over $0$ does not need to induce Baker domains of the Newton map (often called virtual immediate basins in this context); see e.g. \cite{Bergweiler2008} for the classification (and examples) of singularities of the inverse function of meromorphic maps following Iversen.

\begin{example}[Buff-R\"uckert's family of Newton maps]
    \label{ex:LiftParabolicBR}
    For $\beta>0$, consider the one-parameter family of Newton's methods for the zero-free entire function $F_\beta^{}(z)=\exp{\hspace{-0.2mm}\left(-\frac{z}{\beta}-\frac{1}{2\pi i\beta}e^{2\pi i z}\right)}$, given by \vspace{-1mm}
    \begin{equation}
        N_\beta^{}(z) = z + \frac{\beta}{e^{2\pi i z}+1}.
        \vspace{-0.5mm}
    \end{equation}
    The dynamics of $N_\beta$ modulo $1$ is analyzed in \cite[\S 5]{Buff2006} through its projection $g_\beta^{}$ via $\exp_1$, written as \vspace{-0.5mm}
    \begin{equation}
        g_\beta^{}(w) = w e^{2\pi i R_\beta(w)}, \quad \mbox{ where } \quad R_\beta^{}(w) = \frac{\beta}{w+1}.
        \vspace{-0.5mm}
    \end{equation}
    Notice that $R_\beta^{}(\infty)=0$ and $R_\beta^{}(0)=\beta$, so that $N_\beta^{}\in\mathbf{R}_1$, and $\mathcal{E}(g_\beta^{})=\{-1\}$ (see \pref{cor:classRsv}). Moreover, $g_\beta^{}$ has a parabolic fixed at $\infty$, and another fixed point at $0$, with multipliers $g_\beta'(\infty)=1$ and $g_\beta'(0)=e^{2\pi i \beta}$.

    \noindent If we denote by $V^{\shortminus}$ the immediate basin of attraction of $\infty$, then it follows from \pref{thm:logFatouJulia}, case \textit{(2-i)}, that $N_\beta$ has infinitely many invariant Baker domain $U_k:=U+k$, $k\in\ZZ$, since $0\notin\operatorname{fill}(V^{-})$ (see also \pref{lem:FillBaker}). This agrees with \cite[Thm. 4.1]{Buff2006} as each $U_k$ is induced by a logarithmic singularity of $F_\beta^{-1}$ over $0$ along the asymptotic path $\gamma_k^{-}:=\left\{\operatorname{Re}z=\frac{1}{4}+k\right\}\cap\HH^{-}$, so $U_k$ is of doubly parabolic type, using again the terminology in \cite{Fagella2006}. As an explicit example, for $\beta^*=\sfrac{2i}{\pi}$, as shown in \cite[Ex. 7.3]{Baranski2016}, the Newton map \vspace{-0.5mm}
    \begin{equation*}
        N_{\beta^*}(z) = z + \frac{i}{\pi} + \frac{1}{\pi} \tan{\pi z} \vspace{-0.5mm}
    \end{equation*}
    has a completely invariant Baker domain $U^+$ of infinite degree which contains $\HH^+$ (note that $|g_{\beta^*}'(0)|<1$), and infinitely many Baker domains $U_k$ (of degree $2$), each one containing the vertical half-line $\gamma_k^{-}$, $k\in\ZZ$.

    \noindent Furthermore, observe that if $\beta\in\QQ^*$, i.e. $0$ is a parabolic fixed point of $g_\beta$, then its immediate basin of attraction, say $V^+$, lifts via $\exp_1$ to a chain of unbounded wandering domains of $N_\beta$ due to \pref{thm:logFatouJulia}, as $0\in\partial\operatorname{fill}(V^+)$ and $R_\beta(0)\neq 0$. Nonetheless, if $\beta$ is a Brjuno number, then $g_\beta$ has a Siegel disk about $0$, which lifts to a (simply parabolic) Baker domain. Note that these components of $N_\beta$ are now induced by a direct singularity of $F_\beta^{-1}$ over $0$ (along the real axis) which is not of logarithmic type since, although $0$ is an omitted value of $F_\beta$, the function has infinitely many critical points on $\RR^+$.
\end{example}

Before we turn our attention to projectable Newton maps of entire functions (with roots) in \pref{sec:5_Newton}, we conclude this section with some remarks about the case left aside in \pref{thm:logFatouJulia}, namely when $f$ is $1$-periodic ($\ell=0$).

\subsubsection*{The periodic case}
\label{subsec:PeriodicCase}

Recall that a meromorphic function $f$ which is $1$-periodic (and so projectable via $\exp_1$) may be written as $f(z)=\Phi(e^{2\pi i z})$, where $\Phi$ is meromorphic in $\CC^*$ (see \pref{rem:Fourier}). Moreover, both $0$ and $\infty$ are omitted values of its exponential projection, $g(w)=e^{2\pi i \Phi(w)}$, which is of finite-type if and only if $\exp_1\hspace{-0.2mm}\left( \mathcal{S}(\Phi)\backslash\{\infty\}\right)$ is a finite set (\pref{thm:B_FiniteType}). In this situation, note that $f$ does not need to be of finite-type (see \pref{ex:InfinitelyCinS1}).

\begin{proposition}[$1$-periodic case]
    \label{prop:periodicLifting}
    Let $f$ be a $1$-periodic function, and $g$ its exponential projection via $\exp_1$. If $g$ is a finite-type map, then $f$ has no wandering components nor Baker domains.
\end{proposition}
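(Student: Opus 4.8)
The plan is to leverage the dynamical correspondence $\exp_1\mathcal{F}(f)=\mathcal{F}(g)\cap\CC^*$ from \pref{eq:expFatouJulia} together with the structural fact that in the $1$-periodic case ($\ell=0$) the function $f$ factors as $f=\Phi\circ\exp_1$ and $g=\exp_1\circ\,\Phi$, so that $\exp_1$ is a \emph{semiconjugacy in the forward direction only once}: after one application of $f$, every point $z$ has image $f(z)=\Phi(e^{2\pi i z})$, hence $f^{n+1}(z)=\Phi\big(g^n(e^{2\pi i z})\big)$ for all $n\geq 0$. First I would record this identity and its consequence that, given a Fatou component $U$ of $f$, the set $V:=\exp_1 U$ is contained in a $p$-periodic Fatou component of $g$ (which exists and is not a wandering domain nor a Baker domain, since $g$ is of finite-type, by \cite[Thm.~E, F]{Baker2001}); and conversely, since $g$ has no Baker domains, all periodic components of $g$ are (pre)periodic to attracting, parabolic or rotation domains whose limit behavior stays in $\CC^*$ (away from the omitted values $0,\infty$).

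The key step is then to show $U$ is neither wandering nor a Baker domain. For the Baker case: if $f^n|_U\to\infty$, then $\exp_1 f^n|_U=g^{n-1}(\exp_1 f|_U)$ would have to accumulate at $0$ or $\infty$, i.e. $V':=\exp_1 f(U)$ would be in a Fatou component of $g$ whose iterates tend to an omitted value of $g$; but $0,\infty\notin\mathcal{D}(g)$ are not in any Fatou component, and a finite-type $g$ has no Baker domains, while attracting/parabolic basins and rotation domains of $g$ have finite limit functions in $\CC^*$ — contradiction. For the wandering case: suppose $U_1,U_2,\dots$ are the forward images of $U=U_1$ under $f$, all distinct. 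Their $\exp_1$-images $V_n:=\exp_1 U_n$ lie in Fatou components $\widetilde V_n$ of $g$; since $g$ has no wandering domains, the sequence $\{\widetilde V_n\}_{n\geq 2}$ is eventually periodic, say $\widetilde V_{n+p}=\widetilde V_n$ for $n\geq N$. The heart of the argument is to deduce that $U$ itself is then eventually periodic. This is where the $1$-periodicity is essential: because $\ell=0$, \pref{lem:ProjCond} gives $f^n(z+k)=f^n(z)$, i.e. the lift of a Fatou component $\widetilde V$ of $g$ via $\exp_1$ is a \emph{single} component of $\mathcal{F}(f)$ — there is no ambiguity by integer translation, unlike the $\ell\neq 0$ case. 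Hence $\exp_1^{-1}(\widetilde V_n\cap\CC^*)$ is one Fatou component of $f$, which must be $U_n$ (as $U_n\subset\exp_1^{-1}(\widetilde V_n\cap\CC^*)$ and the latter is connected). Since $\widetilde V_{N}=\widetilde V_{N+p}$ forces $U_N=U_{N+p}$, the component $U$ is eventually $p$-periodic, contradicting the assumption that it is wandering.

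The main obstacle I anticipate is the careful bookkeeping in showing that $\exp_1^{-1}(\widetilde V\cap\CC^*)$ is connected (equivalently, a single Fatou component of $f$) when $\ell=0$: one has to rule out that $\widetilde V$ surrounds the origin, which would make the preimage disconnected into infinitely many translates — but \pref{lem:FillBaker} handles exactly this, giving that $0\in\operatorname{fill}(\widetilde V)$ iff the lift is translation-invariant, hence a single unbounded component, while if $0\notin\operatorname{fill}(\widetilde V)$ the lift splits into $\{U+k\}_{k\in\ZZ}$; yet in the latter situation the factorization $f=\Phi\circ\exp_1$ still sends \emph{all} of $\{U+k\}$ to the same image under $f$, so for the purpose of tracking forward orbits they collapse to one component after a single step of $f$. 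Thus in either subcase the forward $f$-orbit of components is eventually periodic, and no wandering domain can occur. A secondary technical point is to confirm that the finite-type hypothesis on $g$ (rather than on $f$, which may fail to be finite-type as in \pref{ex:InfinitelyCinS1}) genuinely suffices — but this is precisely because the no-wandering-domains and no-Baker-domains theorems are invoked for $g$, not for $f$, and $g$ is finite-type by hypothesis.
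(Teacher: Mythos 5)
Your wandering-domain half is essentially correct, though by a different route than the paper: the paper shows directly that a wandering component $U$ of $f$ would project to a wandering component of $g$ (using that $1$-periodicity forces $f^{n+1}(U)\cap\big(f^n(U)+k\big)=\emptyset$ for all $n,k$), whereas you argue backwards from the eventual periodicity of the $g$-components, the dichotomy of \pref{lem:FillBaker}, and the collapse of all integer translates after one application of $f$. That works. One slip: your second paragraph asserts that for $\ell=0$ the lift of a component of $\mathcal{F}(g)$ is always a \emph{single} component of $\mathcal{F}(f)$, which is false (cf.\ \pref{ex:LiftAttractingA}, where an attracting basin lifts to infinitely many components); your third paragraph corrects this, so the argument as a whole stands.

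The Baker-domain half, however, has a genuine gap. The step ``if $f^n|_U\to\infty$ then $\exp_1 f^n|_U$ must accumulate at $0$ or $\infty$'' is not justified: since $|e^{2\pi i f^n(z)}|=e^{-2\pi\operatorname{Im}f^n(z)}$, an orbit can escape to $\infty$ with bounded imaginary part while its projection stays in a compact annulus of $\CC^*$, and nothing in your argument excludes this. The supporting claim ``$0,\infty\notin\mathcal{D}(g)$'' is also false in general: by \pref{prop:Regular0inf} these points lie in $\mathcal{D}(g)$ whenever $\Phi$ is finite there; the fact you actually need is that $0$ and $\infty$ are \emph{omitted values} of $g=\exp_1\circ\,\Phi$, hence can never be periodic, so they cannot be limits of orbits in attracting or parabolic basins. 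To close the argument you must transfer the limit behaviour of the finite-type map $g$ back through $\Phi$ using the identity $f^{n+1}=\Phi\circ g^n\circ\exp_1$ that you recorded: if the component $V$ containing $\exp_1 U$ is an attracting or parabolic basin, the limit cycle lies in $\CC^*\cap\mathcal{D}(g)$, where $\Phi$ is finite (poles of $\Phi$ are essential singularities of $g$, hence in $\mathcal{J}(g)$), so $f^n$ has finite limit functions on $U$; if $V$ is a rotation domain, orbits remain on compact invariant curves on which $\Phi$ is bounded (the paper instead argues with the invariant curves, split according to whether $0\in\operatorname{fill}(V)$, where $1$-periodicity of $f^p$ gives the contradiction). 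Without this transfer step, your contradiction rests entirely on the unjustified accumulation claim, so as written the Baker case is not proved.
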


\begin{proof}
    Consider a Fatou component $U$ of $f$, and let $U_k:=U+k$, $k\in\ZZ$. Given that $f(z+k)=f(z)$ for all $z$, we have that $f(U_k)\subset f(U)$ for all $k$, and $V=\exp_1 U$ is a component of $\mathcal{F}(g)$ due to the relation \pref{eq:expFatouJulia}.
    
    \noindent First, if $U$ is assumed to be wandering, then we see that, for all $n\in\NN$, $k\in\ZZ$, \vspace{-0.75mm}
    \begin{equation}
        \label{eq:4_periodicWD}
        f^{n+1}(U)\cap \left(f^n(U)+k\right) = \emptyset.
        \vspace{-0.75mm}
    \end{equation}
    If this were not true, then there exists $\sigma\in\ZZ$ such that $f^{N+1}(U)\subset f^{N}(U)+\sigma$ for some $N\geq 0$, and so $f^{N+2}(U)\subset f^{N+1}(U)$ by periodicity; a contradiction. Hence, it follows from \pref{eq:4_periodicWD} that $g^n(V)\cap g^m(V)=\emptyset$ for all $n\neq m$, i.e. $V=\exp_1 U$ would be also wandering, which is in contradiction with $g$ being of finite-type.

    \noindent Now, suppose that $U$ is a $p$-periodic Baker domain, and thus $f^{mp+j}(z)\to\infty$ for some $j\in\{0,1,\dots,p-1\}$ and all $z\in U$, as $m\to\infty$. We show that this is not possible by arguing in terms of the type of the periodic component $V=\exp_1 U$ of $\mathcal{F}(g)$ (of the same period $p$, as $f(z+k)=f(z)$ for all $k\in\ZZ$):
    \begin{enumerate}[label = (\roman*)] 
        \vspace{-0.5mm}
        \item If $V$ is the immediate basin of attraction of a $p$-periodic point point $w^*$, then $w^*\in\CC^*$, as $g(0)$ and $g(\infty)$, if defined (see \pref{prop:Regular0inf}), belong to $\CC^*$. Due to the semiconjugacy, as $m\to\infty$, $f^{mp+j}(z)\to z^*$ for some $z^*\in\CC$ such that $e^{2\pi i z^*}=g^j(w^*)$, $j\in\{0,1,\dots,p-1\}$; a contradiction.
        \item If $V$ is a rotation domain, then there exists a $p$-periodic Jordan curve $\Gamma\subset V$, and $g^p$ is conformally conjugate on $V$ to an irrational rotation of a disk (resp. annulus). 
        
        \noindent On the one hand, if $0\in\operatorname{fill}(V)$, then $U$ contains an unbounded curve $\gamma:=\exp_1^{-1}\Gamma$ (which is invariant under translation by $\pm 1$; see \pref{lem:FillBaker}), and $f^p$ is conformally conjugate on $U$ to an horizontal translation of a horizontal strip (resp. half-plane). This is a contradiction, as $f^p(z+1)=f^p(z)$ for all $z$.

        \noindent On the other hand, if $0\notin\operatorname{fill}(V)$, then the lift of $\Gamma$ via $\exp_1$ is a collection of disjoint loops $\gamma_k:=\gamma+k$, $k\in\ZZ$, say $\gamma\subset U$, and every $U_k$ is a disk (resp. annulus). Hence, by periodicity, $f^p$ maps every $\gamma_k$ to $\gamma_\sigma$ for some $\sigma\in\ZZ$, that is, $\gamma_\sigma$ is $f^p$-invariant, which is in contradiction with $U$ being escaping. 
        $\hfill\square$
    \end{enumerate}
\end{proof}

It can be derived that for a $p$-periodic component $V$ of $\mathcal{F}(g)$, any connected component $U$ of $\exp_1^{-1}(V\hspace{0.1mm}\cap\hspace{0.1mm}\CC^*)$ is a Fatou component of $f$ of the same type as $V$, which is either $p$-periodic and invariant under translation by $\pm 1$ if $0\in\operatorname{fill}(V)$ (see \pref{lem:FillBaker}), or eventually $p$-periodic otherwise, since $f(U+k)\subset f(U)$ for all $k\in\ZZ$. Both cases occur already for the sine family as shown in the following example by lifting, respectively, a doubly or simply connected attracting basin. The connectivity of periodic Fatou components in Bolsch's class $\mathbf{K}$ is known to be $1$, $2$ or $\infty$; however, note that in contrast to meromorphic functions in $\CC$, an invariant doubly-connected Fatou component of $g\in\mathbf{K}$ does not need to be a Herman ring (see more details in \cite{Bolsch1999}).

\begin{example}[Sine family]
    \label{ex:LiftAttractingA}
    Consider the entire function $f_\beta(z)=\frac{\beta}{2\pi}\sin{2\pi z}$, $\beta\in\RR^*$, which is $1$-periodic. Its exponential projection $g_\beta$ (via $\exp_1$) is given by \vspace{-1mm}
    \begin{equation}
        g_\beta^{}(w)=e^{2\pi i \Phi_\beta(w)}, \quad \mbox{ where } \quad \Phi_\beta(w) = \frac{\beta}{4\pi i}\Big(w-\frac{1}{w}\Big).
    \end{equation}
    Note that $g_\beta$ is a transcendental self-map of $\CC^*$ (with $\Phi_\beta(0)=\Phi_\beta(\infty)=\infty$; see \pref{rem:EntireCase}), for which the unit circle is invariant, and the fixed point at $1$ has multiplier $g_\beta'(1)=\beta$. The critical points of $g_\beta$ are $\pm i$.

    \noindent On the hand, it is known that, for $0<\beta<1$, the Fatou set of $g_\beta$ consists of a single doubly-connected component $V$ containing the unit circle and $\pm i$, with $\{0,\infty\}\subset\partial V$ (see \cite[Thm. 2]{Baker1987}). Hence, as $0\in\operatorname{fill}(V)$, the only Fatou component of $f_\beta$ is the (unbounded) basin of the attracting fixed point at $0$, due to \pref{eq:logFatouJulia}.

    \noindent On the other hand, if we choose $\beta=-\frac{\pi}{2}$, then the fixed point of $g_\beta$ at $1$ becomes repelling, and $\{+i,-i\}$ is a superattracting $2$-cycle. Then, the immediate basin $V^\pm$ of attraction of $\pm i$ lifts via $\exp_1$ to an infinite collection of disjoint Fatou components $U_k^\pm:=U^\pm+k$, $k\in\ZZ$, chosen such that $\pm\frac{1}{4}\in U^\pm$. As $f_\beta\big(\pm \frac{1}{4}\big)=\mp \frac{1}{4}$, we obtain that $\{U^+,U^-\}$ is a $2$-periodic cycle of immediate attracting basins, and $f_\beta(U_k^\pm)\subset U^{\mp}$ for all $k$.
\end{example}
%
\section{Projectable Newton maps and Proof of Thm. \ref{thm:C_AtlasBWD}}
\label{sec:5_Newton}

The meromorphic function in \pref{ex:LiftParabolicBR} (introduced by Buff and R\"uckert \cite{Buff2006}) is a first instance of a Newton's root-finding method (in class $\mathbf{R}_1$) with wandering domains obtained by the lifting method; however, as the Newton map of a zero-free function, there are no fixed points to search for. In this section, building on \pref{cor:WDviaPseudoperiodicity} and \pref{thm:logFatouJulia}, we present a broad class of explicit (projectable) Newton's methods with fixed points whose attracting invariant basins do often coexist with Baker and escaping wandering domains.

For this purpose, we first characterize those Newton's methods (with fixed points) in the class $\mathbf{R}_\ell$ (see \pref{def:classR}). We claim in \pref{thm:C_AtlasBWD} that they are the Newton maps $N_F:\CC\to\hCC$ of the entire functions \vspace{-0.4mm}
\begin{equation}
    \label{eq:5intro_FzNewtonR1}
    F(z)= e^{(\Lambda+2\pi i m_0)z} \hspace{0.1mm} P(e^{2\pi i z}) \hspace{0.3mm} e^{Q(e^{2\pi i z})+\widetilde{Q}(e^{-2\pi i z})},
\end{equation}
where $\Lambda\in\CC$, $m_0\in\ZZ$, $P$, $Q$ and $\widetilde{Q}$ are polynomials with $P(0)\neq 0$, and $P$ has zeros in $\CC^*$. Additionally, $\Lambda \neq -2\pi i (m_0+\deg{P})$ if $Q$ is constant, and $\Lambda \neq -2\pi i m_0$ if $\widetilde{Q}$ is constant. 

If we consider that the polynomial $P$ has $M$ distinct roots $\{A_j\}_{j=1}^M$ (all in $\CC^*$), each of multiplicity $m_j$, and $\alpha\in\CC^*$ is its leading coefficient, then we may write \vspace{-0.5mm}
\begin{equation}
    \label{eq:5intro_PolynP}
    P(w) = \alpha\prod\limits_{j=1}^M (w-A_j)^{m_j}, \quad \mbox{ and } \quad \deg{P}=\sum_{j=1}^M m_j. \vspace{-0.5mm}
\end{equation}
Moreover, if we define $\widetilde{P}(w) := (w-A_1)\cdots (w-A_M)$, it follows from the product rule that the logarithmic derivative of the polynomial $P$ in \pref{eq:5intro_PolynP} gives that \vspace{-0.5mm}
\begin{equation}
    \frac{P'(w)}{P(w)} = \sum_{j=1}^M \frac{m_j}{w-A_j}, \quad \mbox{ and } \quad \frac{P'(w) \widetilde{P}(w)}{P(w)} = \sum_{j=1}^M m_j \prod_{k\neq j} (w-A_k). \vspace{-0.5mm}
\end{equation}

Denote by $p$, $q$, $\tilde{p}$, $\tilde{q}$ the degrees of $P$, $Q$, $\widetilde{P}$, $\widetilde{Q}$, respectively. Hence, the Newton map of the entire function $F$ in \pref{eq:5intro_FzNewtonR1} takes the form $N_\Lambda(z) = z + R_\Lambda(e^{2\pi i z})$, where $R_\Lambda$ is the quotient of two coprime polynomials:
\begin{equation}
    \label{eq:NewtonProjR1PQ}
    R_\Lambda(w) = -\frac{ w^{\tilde{q}} \widetilde{P}(w)}{(\Lambda+2\pi i m_0)\hspace{0.1mm} w^{\tilde{q}} \widetilde{P}(w) + 2\pi i \hspace{0.1mm}w^{\tilde{q}+1}\Big(\frac{P'(w) \widetilde{P}(w)}{P(w)}+\widetilde{P}(w) Q'(w)\Big) - 2\pi i \hspace{0.1mm} \widetilde{P}(w) \hspace{0.1mm} w^{\tilde{q}-1} \widetilde{Q}'(\sfrac{1}{w})}.
\end{equation}
Expanding the polynomials in \pref{eq:NewtonProjR1PQ} in powers of $w$, we obtain that the degree of the numerator of $R_\Lambda$ is $\tilde{p}+\tilde{q}$, while the degree of its denominator turns out to be $\tilde{p}+q+\tilde{q}$; note that $\deg\big(\sfrac{P'(w)\widetilde{P}(w)}{P(w)}\big)=\tilde{p}-1$, and $\deg\big( w^{\tilde{q}-1} \widetilde{Q}'(\sfrac{1}{w}) \big)=\tilde{q}-1$. Therefore, $\deg{R_\Lambda}=\tilde{p}+q+\tilde{q}$, and the following lemma is straightforward.
\begin{lemma}[Rational map $R_\Lambda$]
    \label{lem:5_rationalRnewton}
    Using the notation in \pref{thm:C_AtlasBWD}, consider the rational map $R_\Lambda$ in \pref{eq:NewtonProjR1PQ} with $\Lambda\in\CC$, $m_0\in\ZZ$, and let $p:=\deg{P}$, $q:=\deg{Q}$, $\tilde{q}:=\deg{\widetilde{Q}}$. Then, if $q>0$ (resp. $\tilde{q}>0$), we have that 
    \begin{equation}
        \label{eq:RationalC_greater}
        R_\Lambda(\infty)=0 \quad \big(\hspace{0.4mm}\mbox{resp. } \ R_\Lambda(0)=0 \hspace{0.4mm}\big). 
    \end{equation}
    In the case that $q=0$ (resp. $\tilde{q}=0$), we have that 
    \begin{equation}
        \label{eq:RationalC_equal}
        R_\Lambda(\infty)=-\frac{1}{\Lambda+2\pi i (m_0+p)} \quad \bigg(\mbox{resp. } \ R_\Lambda(0)=-\frac{1}{\Lambda+2\pi i m_0}\bigg). \quad\quad 
    \end{equation}
\end{lemma}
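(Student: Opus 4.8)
The plan is to compute the behavior of $R_\Lambda$ at $\infty$ and at $0$ directly from the explicit formula \eqref{eq:NewtonProjR1PQ}, by comparing the degrees of numerator and denominator in the two cases $q>0$ versus $q=0$ (and symmetrically for $\tilde q$). As noted in the paragraph preceding the lemma, the numerator of $R_\Lambda$ has degree $\tilde p+\tilde q$ and the denominator has degree $\tilde p+q+\tilde q$; hence $R_\Lambda(\infty)$ is the limit of the ratio of leading terms. When $q>0$ the denominator strictly outgrows the numerator (its degree exceeds that of the numerator by $q\geq 1$), so $R_\Lambda(\infty)=0$, giving the first half of \eqref{eq:RationalC_greater}. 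The case $\tilde q>0$ is entirely symmetric after the substitution $w\mapsto 1/w$ (equivalently, inspecting the lowest-order terms in $w$ of both numerator and denominator): the lowest power of $w$ in the numerator of \eqref{eq:NewtonProjR1PQ} is $w^{\tilde q}$ (from $w^{\tilde q}\widetilde P(w)$, since $\widetilde P(0)\neq 0$ as all $A_j\in\CC^*$), while in the denominator the term $-2\pi i\,\widetilde P(w)\,w^{\tilde q-1}\widetilde Q'(1/w)$ contributes a $w^{\tilde q-1-(\tilde q-1)}=w^0$ term when $\tilde q\geq 1$ (here $\widetilde Q'(1/w)$ is a polynomial in $1/w$ of degree $\tilde q-1$ with nonzero leading coefficient), so the denominator has a nonzero constant term and the numerator vanishes at $0$, yielding $R_\Lambda(0)=0$.

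For \eqref{eq:RationalC_equal}, suppose $q=0$, i.e. $Q$ is constant and $Q'\equiv 0$. Then numerator and denominator of \eqref{eq:NewtonProjR1PQ} both have degree $\tilde p+\tilde q$, so $R_\Lambda(\infty)$ equals minus the ratio of their leading coefficients. The leading coefficient of the numerator $w^{\tilde q}\widetilde P(w)$ is $1$ (as $\widetilde P$ is monic of degree $\tilde p=M$, though one should track $\tilde p$ carefully: $\widetilde P(w)=(w-A_1)\cdots(w-A_M)$ has degree $M$, not $\deg P$; here the relevant quantity is still just "the monic polynomial with the distinct roots"). For the denominator, the term $(\Lambda+2\pi i m_0)w^{\tilde q}\widetilde P(w)$ contributes $(\Lambda+2\pi i m_0)$ to the top coefficient, and the term $2\pi i\,w^{\tilde q+1}\bigl(\frac{P'\widetilde P}{P}+\widetilde P Q'\bigr)$ contributes $2\pi i$ times the leading coefficient of $w\cdot\frac{P'(w)\widetilde P(w)}{P(w)}$ (the $\widetilde P Q'$ piece drops out since $Q'\equiv 0$); since $\frac{P'\widetilde P}{P}=\sum_j m_j\prod_{k\neq j}(w-A_k)$ has degree $\tilde p-1$ and leading coefficient $\sum_j m_j=\deg P=p$, the contribution is $2\pi i\,p$. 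The third term $-2\pi i\,\widetilde P(w)w^{\tilde q-1}\widetilde Q'(1/w)$ has degree $\tilde p+\tilde q-2<\tilde p+\tilde q$, so it does not affect the top coefficient. Hence the leading coefficient of the denominator is $\Lambda+2\pi i m_0+2\pi i p=\Lambda+2\pi i(m_0+p)$, and $R_\Lambda(\infty)=-1/(\Lambda+2\pi i(m_0+p))$, which is well-defined and finite precisely by the standing hypothesis $\Lambda\neq-2\pi i(m_0+p)$ from \pref{thm:C_AtlasBWD}. The case $\tilde q=0$ is symmetric: examine the constant ($w^0$) terms. With $\widetilde Q$ constant, $\widetilde Q'(1/w)\equiv 0$, so the denominator of \eqref{eq:NewtonProjR1PQ} becomes $(\Lambda+2\pi i m_0)\widetilde P(w)+2\pi i w\bigl(\frac{P'\widetilde P}{P}+\widetilde P Q'\bigr)$ while the numerator is $\widetilde P(w)$; evaluating at $w=0$ kills the $w$-factor in the second denominator term, leaving numerator $\widetilde P(0)$ and denominator $(\Lambda+2\pi i m_0)\widetilde P(0)$, and since $\widetilde P(0)=\prod_j(-A_j)\neq 0$ we get $R_\Lambda(0)=-1/(\Lambda+2\pi i m_0)$, finite by the hypothesis $\Lambda\neq-2\pi i m_0$.

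I do not expect a serious obstacle here — the statement is labelled "straightforward" in the text, and the whole proof is bookkeeping of leading and trailing coefficients in \eqref{eq:NewtonProjR1PQ}. The one point that requires a little care is keeping the degree of $\widetilde P$ (which is the number $M$ of \emph{distinct} roots of $P$, renamed $\tilde p$ in the running text just above the lemma) distinct from $\deg P$, and correctly identifying which terms of the denominator are "leading" (for $\infty$) versus "trailing" (for $0$) in each of the four cases; a clean way to organize this is to handle $R_\Lambda(\infty)$ and $R_\Lambda(0)$ by a single symmetry argument via $w\mapsto 1/w$, which swaps the roles of $Q$ and $\widetilde Q$. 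Finally, one should remark that all four formulas use that $P(0)\neq 0$ and $A_j\in\CC^*$, ensuring $\widetilde P(0)\neq 0$, and that the finiteness assertions in \eqref{eq:RationalC_equal} are exactly what the genericity hypotheses on $\Lambda$ in \pref{thm:C_AtlasBWD} guarantee.
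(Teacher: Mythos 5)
Your proposal is correct and takes essentially the same route as the paper, which presents the lemma as straightforward from exactly the leading/trailing-coefficient bookkeeping in \eqref{eq:NewtonProjR1PQ} that you spell out (degrees $\tilde p+\tilde q$ for the numerator and $\tilde p+q+\tilde q$ for the denominator, with the top coefficient $\Lambda+2\pi i(m_0+p)$ when $q=0$ and the nonzero constant term coming from $\widetilde Q'$ when $\tilde q\geq 1$). The only slip is harmless: the term $-2\pi i\,\widetilde P(w)\,w^{\tilde q-1}\widetilde Q'(\sfrac{1}{w})$ has degree $\tilde p+\tilde q-1$ rather than $\tilde p+\tilde q-2$, which is still strictly below $\tilde p+\tilde q$, so your conclusion is unaffected.
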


This is going to be used to prove \pref{thm:C_AtlasBWD} as follows, particularly to establish the conditions on $\Lambda$.

\begin{customproof}{of \pref{thm:C_AtlasBWD}}
    \textup{Suppose that the Newton map $N_F$ is in the class $\mathbf{R}_\ell$, that is, $N_F(z)=\ell z + R(e^{2\pi i z})$ for some $\ell\in\ZZ$ and a non-constant rational map $R$ with $\{0,\infty\}\cap R^{-1}(\infty) =\emptyset$ (see \pref{def:classR}), and let us find an expression for the entire function $F$. Observe that such a Newton map is transcendental, and so is $F$. The exponential projection of $N_F$ (via $\exp_1$) is given by $g(w)=w^\ell e^{2\pi i R(w)}$.}

    \noindent \textup{We shall first show that $\ell=1$. Let $\xi$ be a (attracting) fixed point of $N_F$, and $\mathcal{A}^*(\xi)$ its immediate basin. It is known that there is an invariant access to $\infty$ from $\mathcal{A}^*(\xi)$, represented by a curve $\gamma:[0,\infty)\to \mathcal{A}^*(\xi)$ that lands at $\infty$ with $\gamma(0)=\xi$ and, for $t\geq 1$, $N_F(\gamma(t))=\gamma(t-1)$ (see \cite{Mayer2006}).
    Note that $N_F\notin\mathbf{R}_0$, as $\infty\in\mathcal{AV}(f)$. Furthermore, if $\operatorname{Im}\gamma(t)\to +\infty$ (resp. $-\infty$), then the explicit form of the Newton map gives that, as $t\to\infty$,
    \begin{equation}
        \label{eq:proofThmC_1}
        \operatorname{Im}N_F(\gamma(t))\sim \ell\operatorname{Im}\gamma(t)+\operatorname{Im}R(0) \quad \big(\hspace{0.3mm}\mbox{resp. } \operatorname{Im}N_F(\gamma(t))\sim \ell\operatorname{Im}\gamma(t)+\operatorname{Im}R(\infty) \hspace{0.3mm}\big).
    \end{equation}
    Since $R$ is a rational map for which $0$ and $\infty$ are not poles, we deduce from \pref{eq:proofThmC_1} that, for large enough $t$, the iterate of $\gamma(t)$ moves further away from $\xi$ if $\ell\geq 2$, or lies outside $\gamma$ if $\ell\leq -1$, contradicting the fact that $\gamma$ is a $N_F$-invariant curve whose points converge to $\xi$ under iteration. Analogously, the same contradiction can be obtained if $\operatorname{Re}\gamma(t)$ were unbounded as $t\to\infty$. This proves that $\ell=1$.}

    \noindent \textup{It follows that $N_F(z)=z+R(e^{2\pi i z})$, that is, $\sfrac{F}{F'}=-R\circ\exp_1$. Since $N_F(z+1)=N_F(z)+1$, this leads to the difference equation $\frac{F(z+1)}{F'(z+1)}=\frac{F(z)}{F'(z)}$, which means that the logarithmic derivative operator $\sfrac{F'}{F}$ is $1$-periodic. By direct integration,
    \begin{equation*}
        F(z+1) = e^{\Lambda} F(z)
    \end{equation*}
    for some constant $\Lambda\in\CC$. Hence, $F(z)e^{-\Lambda z}$ is $1$-periodic, that is, $F(z)=e^{\Lambda z}\psi(z)$ for a periodic entire function $\psi$ of period $1$. Note that $\psi$ is non-constant since $N_F$ is transcendental (see e.g. \cite[Prop. 2.11]{Ruckert2007}).}
    
    \noindent \textup{In fact, $\psi(z)=\Psi(e^{2\pi i z})$, where $\Psi$ is an analytic function in $\CC^*$ for which $0$ or $\infty$ may be essential singularities or poles (see Remarks \ref{rem:Fourier} and \ref{rem:EntireCase}).
    Thus, the rational map $R$ is given by 
    \begin{equation}
        \label{eq:5_thmC_ratPhi}
        R(w)=-\frac{\Psi(w)}{\Lambda\Psi(w)+2\pi i w \Psi'(w)},
    \end{equation}
    with finitely many distinct roots in $\CC^*$, say $\{A_j\}_{j=1}^M$, each of multiplicity $m_j\geq 1$. These are exactly the roots of $\Psi$ which are different from $0$ and $\infty$, and therefore, we may write \vspace{-1.5mm}
    \begin{equation*}
        \Psi(w)=w^{m_0} P(w) e^{Q(w)+\widetilde{Q}(\sfrac{1}{w})}, \quad \mbox{ where } \quad P(w) = \alpha\prod_{j=1}^M (w-A_j)^{m_j},
        \vspace{-1.5mm}
    \end{equation*} 
    for some $m_0\in\ZZ$, $\alpha\in\CC^*$, and entire functions $Q$, $\widetilde{Q}$. Since $F(z)=e^{\Lambda z}\Psi(e^{2\pi i z})$ has zeros in $\CC$ by assumption, so does $P(w)$ in $\CC^*$. A direct computation shows that $R(w)$ in \pref{eq:5_thmC_ratPhi} takes the explicit form of the rational map \pref{eq:NewtonProjR1PQ}, where $\widetilde{P}(w) = (w-A_1)\cdots (w-A_M)$.
    Then, both $Q$ and $\widetilde{Q}$ must be polynomials, or otherwise $R$ would be transcendental as either $\lim\limits_{w\to 0} R(w)$ or $\lim\limits_{w\to \infty} R(w)$ would not be well-defined.}
    
    \noindent \textup{Finally, as $N_F\in\mathbf{R}_1$, we need some additional conditions on $\Lambda$ so that $R(\infty)\neq\infty$ and $R(0)\neq\infty$. If we denote by $p$, $q$, $\tilde{q}$ the degrees of the polynomials $P$, $Q$, $\widetilde{Q}$, then by \pref{lem:5_rationalRnewton} we must require that $\Lambda\neq -2\pi i (m_0+p)$ if $q=0$, and $\Lambda\neq -2\pi i m_0$ if $\tilde{q}=0$.}

    \noindent \textup{For the reverse direction, suppose that $P$, $Q$, $\widetilde{Q}$ are polynomials (with $P(0)\neq 0$ and $P^{-1}(0)\cap\CC^*\neq\emptyset$), $m_0\in\ZZ$, $\Lambda\in\CC$ satisfies the requirements stated in \pref{thm:C_AtlasBWD}, and $F$ is the entire function given by \pref{eq:ThmC_ExpressionF}. Then, by a straight computation, $\sfrac{-F(z)}{F'(z)}$ takes the form $R(e^{2\pi i z})$ with $R$ as the rational map in \pref{eq:NewtonProjR1PQ}.} 
    
    \noindent \textup{In this situation, to see that the Newton map of $F$ is in class $\mathbf{R}_\ell$ (note here that $\ell=1$), we shall verify that $R(\infty)\neq\infty$ and $R(0)\neq\infty$. It follows from \pref{lem:5_rationalRnewton}, together with the conditions on $\Lambda$, that $R(\infty)\in\CC^*$ if $Q$ is constant, while $R(\infty)=0$ otherwise, and $R(0)\in\CC^*$ if $\widetilde{Q}$ is constant, or else $R(0)=0$.
    $\hfill\square$
    }
\end{customproof}

This provides good candidates of projectable Newton maps to showcase the existence of wandering domains via the lifting procedure developed in \pref{sec:4_Lifting}. But first let us recall that if $N_F:=\operatorname{Id}-\frac{F}{F'}$ is the Newton map of an entire function $F$, then 
\begin{equation}
    N_F'(z) = \frac{F(z) F''(z)}{\big(F'(z)\big)^2},
\end{equation}
that is, the zeros of $F''$ which are neither roots of $F'$ nor $F$, are the \textit{free critical points} of $N_F$ (in the sense that they are not fixed points in general). \pref{tab:NewtonRoots} summarizes well-known relations between dynamically relevant points of $F$ and $N_F$, which can be easily extended to its projection $g$ via $\exp_1$ as an outcome of \pref{sec:3_ExpProjection}.

\begin{table}[h!]
    \centering
    \resizebox{0.82\textwidth}{!}{%
    \begin{tabular}{||c|c|c||} 
    \hline
    $F(z)$ & $N_{F}(z)$ & $g(w)$ \\ [0.5ex] 
    \hline\hline 
    \textit{Zero} of multiplicity $m$ &  \textit{Attracting fixed point} &  \textit{Attracting fixed point} \\ [0.5ex]
    $F(a)=0$ &  $N_F'(a)=\frac{m-1}{m}$ & $A=e^{2\pi i a}$, \ $g'(A)=\frac{m-1}{m}$ \\ [0.7ex]
    \hline
    \textit{Critical point} (not root of $F$) &  \textit{Pole} & \textit{Essential singularity} \\ [0.5ex]
    $F'(b)=0$ &  $N_F(b)=\infty$ & $B=e^{2\pi i b}$  \\ [0.7ex]
    \hline
    \textit{Inflection point} (not root of $F$, $F'$) &  \textit{Free critical point} & \textit{Free critical point} \\ [0.5ex]
    $F''(c)=0$  &  $N_F'(c)=0$ & $C=e^{2\pi i c}$, \ $g'(C)=0$ \\ [0.7ex]
    \hline
    \end{tabular}}\vspace{-0.5mm}
    \caption{Character of the roots of $F$ (of multiplicity $m\geq 1$), $F'$ and $F''$ as points of the Newton map $N_F(z)=z-\frac{F(z)}{F'(z)}$, and in the case that $N_F$ is projectable, of its exponential projection $g(w)$ via $w=e^{2\pi i z}$.}
    \label{tab:NewtonRoots}
\end{table}

It is well-known that all fixed points of the Newton map $N_F$ are attracting and roots of $F$, and $\infty$ is either an essential singularity of $N_F$, or a parabolic or repelling fixed point if $N_F$ is rational (see e.g. \cite{Ruckert2007}). In \cite{Mayer2006} it was shown that the immediate basin of attraction for every root of $F$ is simply-connected and unbounded, extending Przytycki's result \cite{Przytycki1989} in the rational case. Furthermore, in \cite{Baranski2014} it was proven that every Fatou component of $N_F$ is simply-connected, based on the absence of weakly repelling fixed points in the transcendental setting, which generalizes Shishikura's result \cite{Shishikura2009} in the rational case.

\subsection{Baker and wandering domains for Newton maps}
\label{sec:5.1_BDWDnewton}

In this context, our results in relation to the lifting method (see \pref{sec:4_Lifting}) can be applied to find Baker and wandering domains for one-parameter families of projectable Newton maps. This is displayed by the following corollary for Newton's methods in the class $\mathbf{R}_1$, which are specified by \pref{thm:C_AtlasBWD}. We consider, for convenience, the parameter $\lambda:=\Lambda+\pi i (2m_0+\deg{P})$, where $m_0\in\ZZ$ and $P(w)$ is a polynomial whose zeros are the projection of the fixed points of the Newton map of $F(z)$ under consideration, i.e. $P^{-1}(0)=\exp_1\hspace{-0.3mm}\big(F^{-1}(0)\big)$.

\begin{corollary}[Baker and wandering domains for Newton's methods in class $\mathbf{R}_1$]
\label{cor:C_BakerWandering}
Using the notation in \pref{thm:C_AtlasBWD} with $\Lambda=\lambda-\pi i (2m_0+\deg{P})$, let $N_\lambda$ be a Newton map in class $\mathbf{R}_1$ with fixed points, and $g_\lambda$ its exponential projection. Then, $N_\lambda$ has infinitely many attracting invariant basins, and a simply-connected 
    \begin{enumerate}[label = (\roman*)]
        \vspace{-1mm}
        \item invariant Baker domain for any $\lambda$ such that $|\operatorname{Im}\lambda|>\pi \deg{P}$ if both $Q$ and $\widetilde{Q}$ are constant, as well as for all $\lambda$ if $Q$ or $\widetilde{Q}$ is non-constant;
        \item escaping wandering domain for any $\lambda$ such that the projection via $\exp_1$ of a $(p,\sigma)$-pseudoperiodic point of $N_\lambda$ with $p\in\NN^*$ and $\sigma\in\ZZ^*$, as a $p$-periodic point of $g_\lambda$, is attracting, parabolic or of Siegel type.
    \end{enumerate}
\end{corollary}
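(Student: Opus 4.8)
The plan is to combine the structural facts about Newton maps in class $\mathbf{R}_1$ established in \pref{thm:C_AtlasBWD} with the lifting machinery of \pref{sec:4_Lifting}, specializing to $\ell=1$. First I would record the general picture: by \pref{thm:C_AtlasBWD} we have $N_\lambda(z)=z+R_\lambda(e^{2\pi i z})$ with $R_\lambda$ as in \pref{eq:NewtonProjR1PQ}, and its exponential projection is $g_\lambda(w)=w\,e^{2\pi i R_\lambda(w)}$, a finite-type map (by \pref{cor:classRsv}) with $0$ and $\infty$ as fixed asymptotic values, neither of which is a pole of $R_\lambda$. Since $F$ has zeros, $N_\lambda$ has infinitely many attracting fixed points (the roots of $F$, one in each period strip together with their integer translates), each with its simply-connected immediate basin by \cite{Mayer2006,Baranski2014}; this gives the ``infinitely many attracting invariant basins'' part directly.

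For part (i), the idea is to produce an invariant Fatou component $V$ of $g_\lambda$ with $0\in\operatorname{fill}(V)$ or with $0\in\partial\operatorname{fill}(V)$ satisfying the parabolic condition of case \textit{(2-i)} of \pref{thm:logFatouJulia}, and then lift it. Concretely, I would look at the fixed points $0$ and $\infty$ of $g_\lambda$. By \pref{cor:classRsv}(i), $g_\lambda'(0)=e^{2\pi i R_\lambda(0)}$ and $g_\lambda'(\infty)=e^{-2\pi i R_\lambda(\infty)}$. Using \pref{lem:5_rationalRnewton} and the substitution $\Lambda=\lambda-\pi i(2m_0+\deg P)$: if $\widetilde Q$ is non-constant then $R_\lambda(0)=0$, so $\infty$ (as the lower cylinder end) must be handled via the $\ell=1$, $\Phi(\infty)=0$ branch of case \textit{(2-i)} — wait, more carefully, $R_\lambda(0)=0$ makes $g_\lambda'(0)=1$, a parabolic fixed point at $0$ with $\Phi(0)=R_\lambda(0)=0$, so its immediate parabolic basin $V$ has $0\in\partial\operatorname{fill}(V)$ and lifts to an invariant Baker domain by case \textit{(2-i)}. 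Symmetrically if $Q$ is non-constant, $R_\lambda(\infty)=0$ gives a parabolic fixed point at $\infty$ yielding a Baker domain. If both $Q$ and $\widetilde Q$ are constant, then $R_\lambda(0)=-1/(\Lambda+2\pi i m_0)=-1/(\lambda-\pi i\deg P)$, so $g_\lambda'(0)=\exp(-2\pi i/(\lambda-\pi i\deg P))$; one checks $|g_\lambda'(0)|<1$ exactly when $\operatorname{Im}(-1/(\lambda-\pi i\deg P))<0$, and a short computation shows this holds whenever $\operatorname{Im}\lambda>\pi\deg P$, making $0$ an attracting fixed point whose (simply-connected) immediate basin $V^+$ contains $0$ in its fill, hence lifts to an invariant Baker domain by case \textit{(1)} of \pref{thm:logFatouJulia} and \pref{lem:FillBaker}; the symmetric computation at $\infty$ covers $\operatorname{Im}\lambda<-\pi\deg P$. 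Simple-connectedness of the resulting Baker domain follows from \cite{Baranski2014} (every Fatou component of a transcendental Newton map is simply connected), or alternatively from the fact that $0\in\operatorname{fill}(V)$ forces the lift to contain a line $\{\operatorname{Re}z=c\}$ and to be contained in a strip.

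For part (ii), the argument is shorter: suppose $z^*$ is a $(p,\sigma)$-pseudoperiodic point of $N_\lambda$ with $\sigma\neq 0$ such that $w^*:=e^{2\pi i z^*}$ is an attracting, parabolic, or Siegel $p$-periodic point of $g_\lambda$ (via \pref{lem:characPseudoPoint}). Then $w^*$ lies in a $p$-periodic Fatou component $V$ of $g_\lambda$ (the immediate basin, or the Siegel disk). Since $\sigma\neq 0$, the point $w^*\in\CC^*$ is neither $0$ nor $\infty$, so $\{0,\infty\}\not\subset\{w^*,\dots\}$; in particular $g_\lambda^{mp}|_V$ does not converge to $0$ or $\infty$, which rules out cases \textit{(1)} and \textit{(2)} of \pref{thm:logFatouJulia}. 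We are then in case \textit{(a)} or \textit{(b)}. By \pref{prop:IteratesPseudo} with $\ell=1$ (so $\ell^p=1$), the translate $z^*$ — being genuinely pseudoperiodic with $\sigma\neq 0$ — satisfies $N_\lambda^{mp}(z^*)=z^*+m\sigma\to\infty$, so $z^*$ does not lie in a periodic component; hence case \textit{(a)} is excluded and we land in case \textit{(b)}: the component $U\ni z^*$ is an escaping wandering domain, unbounded or bounded according to whether $\{0,\infty\}\cap\partial\operatorname{fill}(V)\neq\emptyset$. (Alternatively, \pref{cor:WDviaPseudoperiodicity} applies directly once one checks $0\notin\operatorname{fill}(\exp_1 U)$, which holds because $g_\lambda^{mp}|_V\not\to 0,\infty$ and $V$ surrounds neither end.) Simple-connectedness again follows from \cite{Baranski2014}.

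The main obstacle I anticipate is the bookkeeping in part (i) for the case $Q,\widetilde Q$ constant: one must pin down precisely the locus $|g_\lambda'(0)|<1$ (or $\le 1$ with the parabolic sub-case) in terms of $\operatorname{Im}\lambda$ after the affine reparametrization $\Lambda\mapsto\lambda$, and argue that the resulting attracting (or parabolic) basin $V^+$ at $0$ genuinely contains $0$ in its fill — not merely on its boundary — so that \pref{lem:FillBaker} produces a \emph{translation-invariant} lift and case \textit{(1)} of \pref{thm:logFatouJulia} applies rather than the more delicate boundary case. The boundary between these regimes is exactly $|\operatorname{Im}\lambda|=\pi\deg P$, which is why that inequality is strict in the statement; handling it cleanly may require invoking that $0$ is an attracting (not merely non-repelling) fixed point so that a full neighborhood of $0$ lies in $\mathcal{F}(g_\lambda)$, hence in $\operatorname{fill}(V^+)$. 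Everything else is a routine application of the already-proven \pref{thm:logFatouJulia}, \pref{prop:IteratesPseudo}, \pref{lem:characPseudoPoint}, and \pref{cor:classRsv}.
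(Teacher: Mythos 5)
Your argument follows the paper's own proof essentially step for step: the infinitely many attracting invariant basins come from the roots together with pseudoperiodicity, part (i) is obtained exactly as in the paper by computing the multipliers of $g_\lambda$ at $0$ and $\infty$ via \pref{lem:5_rationalRnewton} (attracting fixed point when $|\operatorname{Im}\lambda|>\pi\deg P$ and both $Q,\widetilde{Q}$ constant, parabolic with multiplier $1$ when $Q$ or $\widetilde{Q}$ is non-constant) and then lifting through cases \textit{(1)} and \textit{(2-i)} of \pref{thm:logFatouJulia}, and part (ii) uses \pref{lem:characPseudoPoint}, \pref{prop:IteratesPseudo} and \pref{cor:WDviaPseudoperiodicity}/\pref{thm:logFatouJulia} just as the paper does. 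The only wobbles --- the sign slip in your intermediate inequality for $|g_\lambda'(0)|<1$ (your final criterion $\operatorname{Im}\lambda>\pi\deg P$ is the correct one) and the fact that ruling out $0\in\operatorname{fill}(V)$ in part (ii) does not really follow from the limit of $g_\lambda^{mp}|_V^{}$ alone --- do not change the route, and on the latter point the paper's own proof is equally implicit.
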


\begin{proof}
    \noindent Since $N_\lambda$ has fixed points, as a Newton map, these must be attracting. Furthermore, there should be infinitely many of them because $N_\lambda$ is $1$-pseudoperiodic, i.e. $N_\lambda(z+k)=N_\lambda(z)+k$ for all $k\in\ZZ$. 
    
    \noindent The projection $g_\lambda$ is given by $g_\lambda(w)=w e^{2\pi i R_\lambda(w)}$, where, by \pref{thm:C_AtlasBWD}, $R_\lambda$ is a rational map of the form
    \begin{equation}
        \label{eq:5_corBDWD}
        R_\lambda(w)=-\frac{\Psi(w)}{\big(\lambda -\pi i (2m_0+\deg{P})\big)\Psi(w)+2\pi i w \Psi'(w)}, \quad \mbox{ with } \quad \Psi(w)=w^{m_0} P(w) e^{Q(w)+\widetilde{Q}(\sfrac{1}{w})},
    \end{equation}
    $m_0\in\ZZ$, and $P$, $Q$, $\widetilde{Q}$ as polynomials ($P$ has all zeros in $\CC^*$). Additionally, $\lambda\in\CC$ satisfies that $\lambda\neq -\pi i \deg{P}$ (resp. $\lambda\neq \pi i \deg{P}$) in the case that $Q$ (resp. $\widetilde{Q}$) is constant. Given that $N_\lambda\in\mathbf{R}_1$, we have that, as stated in \pref{cor:classRsv}, both $\infty$ and $0$ are fixed points of $g_\lambda$ with
    \begin{equation}
        \label{eq:CorC_dg0_infty}
        |g_\lambda'(\infty)|=e^{2\pi \operatorname{Im}{R_\lambda(\infty)}}, \quad \mbox{ and } \quad |g_\lambda'(0)|=e^{-2\pi \operatorname{Im}{R_\lambda(0)}}.
    \end{equation}
    \noindent To prove \textit{(i)}, we observe first that if both $Q$ and $\widetilde{Q}$ are constant, then from \pref{eq:CorC_dg0_infty} and the explicit expressions of $R_\lambda(\infty)$ and $R_\lambda(0)$ in \pref{eq:RationalC_equal}, with $\Lambda=\lambda-\pi i(2m_0+\deg{P})$, we obtain that
    \begin{equation}
        \label{eq:CorC_dg0_infty_QQconst}
        |g_\lambda'(\infty)|< 1 \iff \operatorname{Im}\lambda < - \pi\deg{P}, \qquad |g_\lambda'(0)|< 1 \iff \operatorname{Im}\lambda > \pi\deg{P}.
    \end{equation}
    Hence, for any $\lambda$ satisfying the first (resp. second) condition in \pref{eq:CorC_dg0_infty_QQconst}, we deduce that $\infty$ (resp. $0$) is an attracting fixed point of $g_\lambda$. In any case, if we denote by $V$ the immediate basin of attraction of $\infty$ (resp. $0$), then $V$ lifts via $\exp_1$ to an invariant Baker domain of $N_\Lambda$ by \pref{thm:logFatouJulia}, case \textit{(1)}.
    
    \noindent In the case that $Q$ (resp. $\widetilde{Q}$) is non-constant, it follows from \pref{lem:5_rationalRnewton} that $R_\lambda(\infty)=0$ (resp. $R_\lambda(0)=0$). Then, from \pref{eq:CorC_dg0_infty} the fixed point of $g_\lambda$ at $\infty$ (resp. $0$) is parabolic with multiplier $1$, and so there is at least one attracting petal attached to it, contained in an invariant component $V$ of $\mathcal{F}(g)$. The case \textit{(2-i)} of \pref{thm:logFatouJulia} gives that any connected component of $\exp_1^{-1}V$ is an invariant Baker domain of $N_\lambda$ for all $\lambda$.
    
    \noindent To see \textit{(ii)}, assume $z_\lambda^*$ to be a pseudoperiodic point of $N_\lambda$ of minimal type $(p,\sigma)$, with $p\geq 1$ and $\sigma\in\ZZ^*$, i.e.
    \begin{equation*}
        N_\lambda^{p}(z_\lambda^*)=z_\lambda^*+\sigma.
    \end{equation*}
    On the one hand, if $\lambda$ is such that $w_\lambda^*:=e^{2\pi i z_\lambda^*}$, as a $p$-periodic point of $g_\lambda$ (see \pref{lem:characPseudoPoint}), is attracting or of Siegel type, then $z_\lambda^*\in\mathcal{F}(N_\lambda)$, and the conclusion follows from \pref{cor:WDviaPseudoperiodicity}. 
    
    \noindent On the other hand, when $\lambda$ is such that $w_\lambda^*$ is parabolic, any component $V$ of its immediate basin of attraction lifts via $\exp_1$ to a chain of wandering domain $\{U+k\}_{k\in\ZZ}$ of $N_\lambda$ due to \pref{thm:logFatouJulia}; note that $0\notin \mathrm{fill}(V)$, $g_\lambda^{mp}(w)\to w_\lambda^*\in\CC^*$ for all $w\in V$, as $m\to\infty$, and $N_\lambda^{mp}(U)\subset U+m\sigma$ by \pref{prop:IteratesPseudo} ($\ell=1$ case).
    $\hfill\square$
\end{proof}

Observe that if $Q$ (resp. $\widetilde{Q}$) is non-constant, then a component $V$ of the immediate parabolic basin for $g_\lambda$ of $\infty$ (resp. $0$) lifts to infinitely many distinct Baker domains of the Newton map $N_\lambda$, as $0\notin\mathrm{fill}(V)$. 
Moreover, we point out that the internal dynamics of a wandering domain $U\subset\mathcal{F}(N_\lambda)$ built by this lifting procedure, relates to the type of the periodic component $V=\exp_1 U$ of $g_\lambda$. Given an appropriate pseudoperiodic (but non-periodic) point of $N_\lambda$, say $z^*\in\overline{U}$, if $V$ is an attracting (resp. parabolic) basin, the iterates of all points in $U$ accumulate about the forward orbit of $z^*$, which lies in the interior (resp. boundary) of $U$. However, if $V$ is a Siegel disk, points in $U$ travel along $\{N_\lambda^n(U)\}_{n\in\NN}^{}$ in a rotation-like behavior around the orbit of $z^*$ (as a moving center); see \cite{Benini2022} for a classification (and examples) of simply-connected wandering domains.

This result showcase the possible coexistence of wandering domains and attracting invariant basins for Newton's methods, as we were looking for. To make it explicit, in the following sections we inspect the dynamical and parameter planes for the simplest family of Newton maps (with fixed points) in this class.

\subsection{Pseudotrigonometric family: dynamical planes}
\label{sec:5.2_PseudoTrig}

It follows from \pref{thm:C_AtlasBWD} that any Newton map in class $\mathbf{R}_1$ is the Newton's method of a transcendental entire function
\begin{equation}
    \label{eq:5_FzNewtonR1}
    F(z)= e^{(\lambda-\pi i \deg{P}) z} \hspace{0.1mm} P(e^{2\pi i z}) \hspace{0.3mm} e^{Q(e^{2\pi i z})+\widetilde{Q}(e^{-2\pi i z})},
\end{equation}
considering $\lambda:=\Lambda+\pi i (2m_0+\deg{P})$ as in \pref{sec:5.1_BDWDnewton}, where $m_0\in\ZZ$ and $P$, $Q$, $\widetilde{Q}$ are polynomials. 

These Newton maps are of the form $N_\lambda(z) = z + R_\lambda(e^{2\pi i z})$, with $R_\lambda$ as the rational map in \pref{eq:NewtonProjR1PQ}, where $\widetilde{P}$ is a monic polynomial whose roots are simple and exactly those of $P$. It is easy to verify that $R_\lambda(\infty)\neq\infty$ and $R_\lambda(0)\neq\infty$ from \pref{lem:5_rationalRnewton}, assuming that $\lambda \neq -\pi i \deg{P}$ if $\deg{Q}=0$, and $\Lambda \neq \pi i \deg{P}$ if $\deg{\widetilde{Q}}=0$. Furthermore, counting with multiplicity, \vspace{0.25mm}
\begin{equation}
    \label{eq:5_PolesZerosRlambda}
    \#\big(R_\lambda^{-1}(0)\cap\CC^*)=\deg{\widetilde{P}}, \quad \mbox{ and } \quad \#\hspace{0.2mm} R_\lambda^{-1}(\infty) = \deg{\widetilde{P}}+\deg{Q}+\deg{\widetilde{Q}}, \vspace{0.25mm}
\end{equation} 
In general, such a $N_\lambda\in\mathbf{R}_1$ may have multiple poles, as it is the case, for example, of the Newton map of $\exp\big(z-\sfrac{e^{2\pi i z}}{\pi i} + \sfrac{e^{4\pi i z}}{4\pi i} \big)$ at all integers. In the following we give representatives for the simplest case, that is, when the Newton's method $N_\lambda$ has a unique pole of multiplicity one in the period strip of $\exp_1$, i.e. $\#\hspace{0.2mm} R_\lambda^{-1}(\infty) = 1$, and in particular, its exponential projection $g_\lambda$ is a transcendental meromorphic map (in the sense that $\#\hspace{0.2mm}\mathcal{E}(g_\lambda) = 1$). Recall that two entire functions have the same Newton map $N_F$ if and only if they differ by a multiplicative constant (see \cite[Prop. 2.8]{Ruckert2008}); indeed $F(z)=K \exp{\hspace{-0.2mm}\left(\int_0^z \frac{du}{u-N_F(u)}\right)}$ for $K\in\CC^*$.

\begin{proposition}[Newton's methods in class $\mathbf{R}_1$ with a simple pole in the period strip]
    \label{prop:ConjugationNewtonPole}
    Every Newton map in class $\mathbf{R}_1$ with exactly a simple pole in the period strip of $\exp_1$, is conjugate to either
    \begin{enumerate}[label = (\roman*)] 
        \vspace{-1.0mm}
        \item The Newton's method of $F_{\alpha,m}(z)=\big(e^{\alpha z} \sin{\pi z}\big)^m$, for some $\alpha\in\CC\backslash\{\pm\pi i\}$ and $m\in\NN^*$.
        \item The Newton's method of $F_\beta(z)= \exp{\hspace{-0.2mm}\left(-\frac{z}{\beta}-\frac{1}{2\pi i\beta} e^{2\pi i z}\right)}$, for some $\beta\in\CC^*$.
    \end{enumerate}
\end{proposition}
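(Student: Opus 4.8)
The plan is to reverse-engineer $F$ from the shape of the rational map attached to $N_F$, exactly as in the proof of \pref{thm:C_AtlasBWD}, and then to put the exponential data into normal form by affine conjugacies of $\CC$. So let $N_F\in\mathbf{R}_1$ have exactly one simple pole in the period strip of $\exp_1$, and write $N_F(z)=z+R(e^{2\pi i z})$ (legitimate because $\ell=1$ by hypothesis). Since $N_F\in\mathbf{R}_1$ also forces $R(0),R(\infty)\neq\infty$ (see \pref{def:classR}), every pole of $R$ lies in $\CC^*$, and the hypothesis says exactly that $\#R^{-1}(\infty)=1$ counted with multiplicity, so there is one pole and it is simple. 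Running the computation in the proof of \pref{thm:C_AtlasBWD} from the point where $\ell=1$ has been established (here this holds by hypothesis) — a computation that uses only that $R$ is rational, the assumption that $F$ has zeros entering there solely to force $\deg P\geq 1$ — gives $F(z)=e^{\Lambda z}\Psi(e^{2\pi i z})$ with $\Psi$ of the form \eqref{eq:ThmC_ExpressionF}, i.e. $\Psi(w)=c\,w^{m_0}P(w)e^{Q(w)+\widetilde Q(1/w)}$ with $c\in\CC^*$, $m_0\in\ZZ$, $Q,\widetilde Q$ polynomials, and $P(w)=\alpha\prod_{j=1}^M(w-A_j)^{m_j}$ for distinct $A_j\in\CC^*$ and $m_j\geq 1$, now allowing $M=0$ (the zero-free case). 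Multiplying $F$ by a nonzero constant leaves $N_F$ unchanged and $w^{m_0}=e^{2\pi i m_0 z}$ merges into the linear exponent, so we may take $F(z)=e^{\Lambda' z}P(e^{2\pi i z})e^{Q(e^{2\pi i z})+\widetilde Q(e^{-2\pi i z})}$ with $\Lambda':=\Lambda+2\pi i m_0$.

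Next I would read off the pole count: the degree computation recorded in \eqref{eq:5_PolesZerosRlambda} (see also \eqref{eq:NewtonProjR1PQ}) gives $\#R^{-1}(\infty)=\deg\widetilde P+\deg Q+\deg\widetilde Q=M+\deg Q+\deg\widetilde Q$, so the hypothesis forces $M+\deg Q+\deg\widetilde Q=1$, leaving exactly two possibilities. If $M=1$, then $Q$ and $\widetilde Q$ are constant (hence absorbed) and $P(w)=\alpha(w-A_1)^{m_1}$ with $m_1=\deg P$; conjugating $N_F$ by the translation $\phi(z)=z+\tfrac{1}{2\pi i}\operatorname{Log}A_1$ (so $N_{F\circ\phi}=\phi^{-1}\circ N_F\circ\phi$) replaces $F$ by a constant multiple of $e^{\Lambda' z}(e^{2\pi i z}-1)^{m_1}$, and since $(e^{2\pi i z}-1)^{m_1}=(2i)^{m_1}e^{m_1\pi i z}(\sin\pi z)^{m_1}$ this is a constant times $(e^{\alpha z}\sin\pi z)^{m_1}$ with $\alpha:=\tfrac{\Lambda'}{m_1}+\pi i$, so $N_F$ is conjugate to $N_{F_{\alpha,m_1}}$. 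To obtain $\alpha\neq\pm\pi i$ I would note that the class-$\mathbf{R}_1$ requirements $R(0)\neq\infty$ and $R(\infty)\neq\infty$, evaluated through \pref{lem:5_rationalRnewton} in this case ($\deg Q=\deg\widetilde Q=0$), amount to $\Lambda'\neq 0$ and $\Lambda'\neq-2\pi i m_1$, i.e. exactly $\alpha\neq\pi i$ and $\alpha\neq-\pi i$.

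In the other possibility $M=0$, whence $\deg Q+\deg\widetilde Q=1$; after conjugating by $z\mapsto -z$ if necessary — which replaces $N_F$ by $N_{F(-\,\cdot)}$ and exchanges the roles of $Q$ and $\widetilde Q$ via $w\mapsto 1/w$ — I may assume $\deg Q=1$ and $\widetilde Q$ constant, so $F$ is a constant times $e^{\Lambda' z}e^{\gamma e^{2\pi i z}}$ with $\gamma\in\CC^*$. Conjugating by a translation $\phi(z)=z+b$ with $e^{2\pi i b}=\tfrac{\Lambda'}{2\pi i\gamma}$ turns $F\circ\phi$ into a constant times $\exp\!\big(\Lambda'(z+\tfrac{1}{2\pi i}e^{2\pi i z})\big)$, and with $\beta:=-1/\Lambda'$ — which lies in $\CC^*$ because $N_F\in\mathbf{R}_1$ forces $R(0)\neq\infty$ and \pref{lem:5_rationalRnewton} gives $R(0)=-1/\Lambda'$ here — this is a constant times $F_\beta(z)=\exp(-\tfrac z\beta-\tfrac{1}{2\pi i\beta}e^{2\pi i z})$. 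Hence $N_F$ is conjugate to $N_{F_\beta}$, which closes the dichotomy.

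I expect the main friction to be not any single deduction but the bookkeeping: keeping precise track of the affine conjugacies and of everything that is absorbed (the constant $c$, the monomial $w^{m_0}$, the constant terms of $Q$ and $\widetilde Q$) so that the target is \emph{literally} the Newton map of $F_{\alpha,m}$ or of $F_\beta$ — which is legitimate since two entire functions with the same Newton map differ by a nonzero multiplicative constant, as recalled just before the statement — together with verifying that the two non-degeneracy inequalities on $\Lambda$ from \pref{thm:C_AtlasBWD} match exactly the stated parameter ranges $\alpha\in\CC\setminus\{\pm\pi i\}$ and $\beta\in\CC^*$.
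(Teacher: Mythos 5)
Your proposal is correct and follows essentially the same route as the paper: invoke the form from Theorem~\ref{thm:C_AtlasBWD} (allowing $P$ constant for the zero-free case), use the pole count $\#R^{-1}(\infty)=\deg\widetilde P+\deg Q+\deg\widetilde Q=1$ to split into the two cases, and normalize by affine conjugations to reach $F_{\alpha,m}$ or $F_\beta$, with the parameter restrictions coming from \pref{lem:5_rationalRnewton}. If anything, you are slightly more explicit than the paper about absorbing the leading coefficient of $Q$ via the translation and about matching the non-degeneracy conditions on $\Lambda$ with $\alpha\neq\pm\pi i$ and $\beta\in\CC^*$.
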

\begin{proof}
    Using the notation in \pref{thm:C_AtlasBWD}, any Newton map $N_F\in\mathbf{R}_1$ is the Newton's method of an entire function $F$ of the form \pref{eq:5_FzNewtonR1}. Note that $N_F$ may have no fixed points if we allow the case $P^{-1}(0)\cap\CC^*=\emptyset$.

    \noindent Since $N_F$ is required to have only one simple pole in $S_0=\{ z: -\frac{1}{2}<\operatorname{Re}z \leq \frac{1}{2} \}$ (the period strip of $\exp_1$), it follows from \pref{eq:5_PolesZerosRlambda} that $P$ has at most one root (which may be multiple). There are two possible cases:
    \begin{enumerate}[label = (\roman*)] 
        \vspace{-1.5mm}
        \item If $P$ has exactly one zero (which lies in $\CC^*$, as $P(0)\neq 0$), say $e^{2\pi i a_0}$ of multiplicity $m\geq 1$, then $\deg{P}=m$ and both $Q$ and $\widetilde{Q}$ must be constants due to \pref{eq:5_PolesZerosRlambda}. In this situation, without loss of generality, we may assume that $P(w)=(w-e^{2\pi i a_0})^m$, and \pref{thm:C_AtlasBWD} yields that
        \begin{equation}
            \label{eq:5_NewtonFFformM}
            F(z) = e^{(\lambda-\pi i)z} \left(e^{2\pi i z}-e^{2\pi i a_0}\right)^m, \quad \mbox{ and } \quad N_F(z) = z - \frac{e^{2\pi i z}-e^{2\pi i a_0}}{(\lambda+\pi i m)e^{2\pi i z}-(\lambda-\pi i m) e^{2\pi i a_0}}, 
        \end{equation}
        where $\lambda\in\CC\backslash\{\pm \hspace{0.2mm}\pi i m\}$. It can be checked that the Newton map in \pref{eq:5_NewtonFFformM} is conjugate, via $z\mapsto z-a_0$, to the Newton's method of $F_{\alpha,m}$ in the statement, with $\alpha=\sfrac{\lambda}{m}$.
        \item If $P$ is a (non-zero) constant, then either $\deg{Q}=1$ or $\deg{\widetilde{Q}}=1$ from \pref{eq:5_PolesZerosRlambda}, which may be assumed to be linear, as a multiplicative constant in $F$ do not alter the Newton map. Consider first that $Q(w)=w$, and so $\widetilde{Q}$ is constant, then the entire function $F$ in \pref{eq:5_FzNewtonR1} may be written as
        \begin{equation}
            \label{eq:5_NewtonBRform}
            F(z) = e^{\lambda z} \exp{\hspace{-0.3mm}\left(e^{2\pi i z}\right)}, \quad \mbox{ and } \quad N_F(z) = z - \frac{1}{\lambda+2\pi i\hspace{0.2mm} e^{2\pi i z}}, 
        \end{equation}
        where $\lambda\in\CC^*$ such that $N_F\in\mathbf{R}_1$. The meromorphic map $N_F$ is conjugate through $z\mapsto z-a_\lambda$, where $a_\lambda\in\CC$ satisfies that $e^{2\pi i a_\lambda}=\sfrac{\lambda}{2\pi i}$, to the Newton's method of $F_{\beta}$ in the statement, with $\beta=-\sfrac{1}{\lambda}$.
        
        \noindent The case $\deg{\widetilde{Q}}=1$ (and so $Q$ constant) follows in exactly the same manner, since the Newton map of $e^{\lambda z} \exp{\hspace{-0.3mm}\left(e^{-2\pi i z}\right)}$ is conjugate to the one in \pref{eq:5_NewtonBRform} via $z\mapsto -z+\sfrac{1}{2}$.
        $\hfill\square$
    \end{enumerate}
\end{proof}

On the one hand, observe that the Newton's methods from the case \textit{(ii)} of \pref{prop:ConjugationNewtonPole} exactly correspond to the family of meromorphic maps $N_\beta$ in \pref{ex:LiftParabolicBR}, which was introduced by Buff and R\"uckert \cite{Buff2006}, and their projections $g_\beta$ (via $\exp_1$) have a unique essential singularity at $-1$ for all $\beta\in\CC^*$.

On the other hand, the case \textit{(i)} of \pref{prop:ConjugationNewtonPole} delivers a family of Newton maps $N_{\alpha,m}$ with fixed points (at the integers) of multiplier $\frac{m-1}{m}$, which leads to the pseudotrigonometric family $\mathbf{N}_\lambda$ (see \pref{def:classN}) when $\alpha=\lambda$ and $m=1$, i.e. to the Newton's method of $F_\lambda(z)=e^{\lambda z} \sin{\pi z}$, $\lambda\in\CC\backslash\{\pm \pi i\}$.
\begin{remark}[Relaxed Newton's method]
    \label{rem:relaxedNewton}
    Following \cite[\S6.2]{Bergweiler1993}, it is of interest to notice that for $m\geq2$, the function $N_{\lambda,m}$ is the \textit{relaxed Newton's method} of $F_\lambda$ with relaxation factor $\frac{1}{m}\in \big(0,\frac{1}{2}\big]$, that is,
    \begin{equation}
        N_{\lambda,m}(z) = z-\frac{1}{m}\frac{F_\lambda(z)}{F_\lambda'(z)} = z - \frac{1}{m} \frac{\hspace{2mm} e^{2\pi i z}-1}{\left(\lambda+\pi i\right)e^{2\pi i z}-\left(\lambda-\pi i\right)}
    \end{equation}
    Due to \cite[Thm 2.2]{Bergweiler1994}, if $0$ is not an asymptotic value of $F_\lambda$, as $m$ increases, the complement of the union of the immediate basins of the roots of $F_\lambda$ (including all those regions of starting values for which Newton's method fails) shrinks, in the sense of the Lebesgue measure on $\hCC$. Notice that 
    \begin{equation}
        F_\lambda(z)=\frac{1}{2i}e^{(\lambda-\pi i)z}(e^{2\pi i z}-1), \quad \mbox{ with } \quad F_\lambda(z)\sim e^{(\lambda\mp \pi i)z} \mbox{ as } \operatorname{Im}z\to\pm\infty,
    \end{equation}
    and the asymptotic path $\gamma_0^{\pm}:[t_0,\infty)\to\CC$ such that $e^{(\lambda\pm \pi i) \gamma_0^\pm(t)}\to 0$ as $t\to\infty$, is given by $\gamma_0^\pm(t)=(-\overline{\lambda}\pm \pi i) t$. Hence, $0\in\mathcal{AV}(F_\lambda)$ if and only if $\operatorname{Im}\gamma_0^\mp(t)\to\pm\infty$ as $t\to\infty$, or equivalently, $\pm \operatorname{Im}\lambda > \pi$. Then, in the case that $|\operatorname{Im}\lambda|<\pi$, the relaxation of $N_\lambda$ trades speed of convergence for the ease of finding a good initial guess.
\end{remark}

In what follows we study the uniparametric family $\mathbf{N}_\lambda$ of pseudotrigonometric Newton's methods, as a representative of those Newton maps in class $\mathbf{R}_1$ with exactly one superattracting fixed point and a simple pole in the period strip of $\exp_1$. Let us highlight the dynamically important points of their projections $g_\lambda$.

\begin{lemma}[Projections of Newton's methods in the family $\mathbf{N}_\lambda$]
    \label{lem:PseudoTrig}
    Let $N_\lambda$ be a Newton map in the pseudo-trigonometric family $\mathbf{N}_\lambda$, $\lambda\in\CC\backslash\{\pm\pi i\}$. Then its exponential projection $g_\lambda$, which is given by
    \vspace{-1mm}
    \begin{equation}
        \label{eq:projNewtonMobius}
        g_\lambda(w)=w e^{2\pi i M_\lambda(w)}, \quad \mbox{ where } \quad M_\lambda(w) = -\frac{\hspace{2mm} w-1}{\left(\lambda+\pi i\right)w-\left(\lambda-\pi i\right)},\vspace{-0.5mm}
    \end{equation}
    has a unique essential singularity at $B_\lambda:=\frac{\lambda-\pi i}{\lambda+\pi i}$. Moreover,
    \begin{enumerate}[label = (\roman*)] 
        \vspace{-0.5mm}
        \item The set of fixed points of $g_\lambda$ consists of the points at $0$ and $\infty$, with multipliers \vspace{-0.5mm}
        \begin{equation}
            \label{eq:MultiProjAVG}
            g_\lambda'(0)=\exp{\left( \frac{2\pi i \ }{\pi i -\lambda} \right)} \quad \mbox{ and } \quad g_\lambda'(\infty)=\exp{\left( \frac{2\pi i}{\pi i+\lambda} \right)},
        \end{equation}
        and, for $\sigma\in\ZZ$, the points at 
        \vspace{-0.5mm}
        \begin{equation}
            \label{eq:ProjPseudoFixedG}
            w_{\sigma}^* := \frac{1+\left( \lambda-\pi i \right) \sigma}{1+\left( \lambda+\pi i \right) \sigma}, \quad \mbox{ with } \quad g_\lambda'(w_\sigma^*) = 1 - \Big( 1+ ( \lambda-\pi i)\sigma \Big) \Big( 1+( \lambda+\pi i)\sigma \Big).
        \end{equation}
        
        \vspace{-1.5mm}
        \item The set of singular values of $g_\lambda$ consists of the fixed asymptotic values at $0$ and $\infty$, the fixed critical point at $w_0^*=1$, and the image of the only free critical point at $C_\lambda:=B_\lambda^2$.
    \end{enumerate}
\end{lemma}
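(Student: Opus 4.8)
The plan is to exploit that $N_\lambda\in\mathbf{R}_1$, so that all the structural facts about $g_\lambda$ are supplied by \pref{cor:classRsv}, leaving only two explicit computations: one to pin down the fixed points and their multipliers, and one to pin down the critical points. First I would record that $N_\lambda(z)=z+M_\lambda(e^{2\pi i z})$ has the shape of \pref{def:classR} with $\ell=1$ and $R=M_\lambda$, where $M_\lambda$ is the Möbius map displayed; hence, by the semiconjugacy (or \pref{prop:Regular0inf}), its exponential projection is $g_\lambda(w)=w\,e^{2\pi i M_\lambda(w)}$. The unique pole of $M_\lambda$ is at $B_\lambda=\tfrac{\lambda-\pi i}{\lambda+\pi i}$, which lies in $\CC^*$ precisely because $\lambda\neq\pm\pi i$; by \pref{prop:PolesToEssential} (equivalently \pref{cor:classRsv}) this is the only essential singularity of $g_\lambda$, so $\mathcal{E}(g_\lambda)=\{B_\lambda\}$ and $g_\lambda$ is a transcendental meromorphic map.

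For part (i), a point $w\in\CC^*$ is fixed by $g_\lambda$ iff $e^{2\pi i M_\lambda(w)}=1$, i.e. $M_\lambda(w)\in\ZZ$; since $M_\lambda$ is a Möbius transformation, $M_\lambda(w)=\sigma$ has the single solution $w_\sigma^*=\tfrac{1+(\lambda-\pi i)\sigma}{1+(\lambda+\pi i)\sigma}$, obtained by clearing denominators, and together with the fixed points $0$ and $\infty$ provided by \pref{cor:classRsv}(i) these exhaust $\operatorname{Fix}(g_\lambda)$. The multipliers at $0$ and $\infty$ are $e^{2\pi i M_\lambda(0)}$ and $e^{-2\pi i M_\lambda(\infty)}$ by \pref{cor:classRsv}(i), and evaluating $M_\lambda(0)=-\tfrac{1}{\lambda-\pi i}$ and $M_\lambda(\infty)=-\tfrac{1}{\lambda+\pi i}$ gives the stated formulas. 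For $w_\sigma^*$ I would differentiate to get $g_\lambda'(w)=e^{2\pi i M_\lambda(w)}\bigl(1+2\pi i\,w\,M_\lambda'(w)\bigr)$, note that $M_\lambda'(w)=-\tfrac{2\pi i}{((\lambda+\pi i)w-(\lambda-\pi i))^2}$, and use the identity $(\lambda+\pi i)w_\sigma^*-(\lambda-\pi i)=\tfrac{2\pi i}{1+(\lambda+\pi i)\sigma}$ together with $e^{2\pi i M_\lambda(w_\sigma^*)}=1$; a short simplification yields $g_\lambda'(w_\sigma^*)=1-\bigl(1+(\lambda-\pi i)\sigma\bigr)\bigl(1+(\lambda+\pi i)\sigma\bigr)$, which for $\sigma=0$ equals $0$, reflecting that $w_0^*=1$ is superattracting.

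For part (ii), \pref{cor:classRsv} already gives $\mathcal{AV}(g_\lambda)=\{0,\infty\}$ with $0,\infty$ fixed, so it remains to locate the critical points. By \pref{prop:projC}/\pref{rem:projCV} (or directly from the factored form of $g_\lambda'$ above), the critical points in $\CC^*$ are the zeros of $1+2\pi i\,w\,M_\lambda'(w)$, i.e. of $((\lambda+\pi i)w-(\lambda-\pi i))^2+4\pi^2 w$. Expanding, this is a quadratic with leading coefficient $(\lambda+\pi i)^2$ and constant term $(\lambda-\pi i)^2$; computing its discriminant as $-16\pi^2\lambda^2$ and substituting $4\pi i\lambda$ back into the quadratic formula, the two roots simplify, via $(\lambda\pm\pi i)^2=\lambda^2\pm2\pi i\lambda-\pi^2$, to exactly $w=1$ and $w=\bigl(\tfrac{\lambda-\pi i}{\lambda+\pi i}\bigr)^2=B_\lambda^2=C_\lambda$. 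Since $g_\lambda'$ vanishes at neither $0$ nor $\infty$ (again \pref{cor:classRsv}(i)), $\mathcal{C}(g_\lambda)=\{1,C_\lambda\}$; as $g_\lambda(1)=1$ and $1=w_0^*$ is the superattracting fixed point, the single free critical point is $C_\lambda$, and therefore $\mathcal{S}(g_\lambda)=\overline{\mathcal{CV}(g_\lambda)\cup\mathcal{AV}(g_\lambda)}=\{0,\infty,1,g_\lambda(C_\lambda)\}$, as claimed.

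The only mildly delicate step is the quadratic computation in part (ii): one must verify that its two roots collapse precisely onto $1$ and $B_\lambda^2$ — so that there is exactly one free critical point — rather than onto a generic pair, and that neither root equals $B_\lambda$ nor $0,\infty$, all of which follow from the displayed identities and the standing hypothesis $\lambda\notin\{\pm\pi i\}$ (the coincidence $1=C_\lambda$, occurring exactly at $\lambda=0$, is the expected degeneration to the tangent map). The remaining bookkeeping — in particular that some $w_\sigma^*$ may degenerate to $0$ or $\infty$ for special $\lambda$ — is routine and does not affect the statement, since those points are already listed among the fixed points.
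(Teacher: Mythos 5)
Your proposal is correct, and its skeleton matches the paper's: both lean on \pref{cor:classRsv} for $\mathcal{E}(g_\lambda)=\{B_\lambda\}$, for the asymptotic values $\{0,\infty\}$ and for the multipliers $e^{2\pi i M_\lambda(0)}$, $e^{-2\pi i M_\lambda(\infty)}$, and both obtain $g_\lambda'(w_\sigma^*)$ from the factored derivative $g_\lambda'(w)=e^{2\pi i M_\lambda(w)}\bigl(1+2\pi i\,w\,M_\lambda'(w)\bigr)$ after observing that the fixed points in $\CC^*$ are exactly the solutions of $M_\lambda(w)=\sigma\in\ZZ$ (the paper phrases this via the pseudoperiodic-point characterization of \pref{lem:characPseudoPoint}, you phrase it as $e^{2\pi i M_\lambda(w)}=1$; these are the same observation). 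The one place where you take a genuinely different route is the identification of the critical points in part \textit{(ii)}: the paper never solves a quadratic, but instead reads everything off the Newton structure, using $N_\lambda'=F_\lambda F_\lambda''/(F_\lambda')^2$ and the explicit simple roots of $F_\lambda$, $F_\lambda'$, $F_\lambda''$ (namely $k$, $b_{\lambda,k}$ and $c_{\lambda,k}=2b_{\lambda,0}+k$), which project under $\exp_1$ to $1$, $B_\lambda$ and $C_\lambda=B_\lambda^2$ as in \pref{tab:NewtonRoots} and \pref{prop:projC}. You instead work intrinsically on $g_\lambda$ and solve $\bigl((\lambda+\pi i)w-(\lambda-\pi i)\bigr)^2+4\pi^2 w=0$; your computation is right (the expanded quadratic is $(\lambda+\pi i)^2w^2-2(\lambda^2-\pi^2)w+(\lambda-\pi i)^2$, with discriminant $-16\pi^2\lambda^2$ and roots $1$ and $B_\lambda^2$), and the nondegeneracy checks ($C_\lambda\neq B_\lambda$, $C_\lambda\notin\{0,\infty\}$, forced by $\lambda\neq\pm\pi i$) are exactly the ones that matter. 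The paper's route buys a conceptual explanation of why the free critical point is $B_\lambda^2$ (it is the projection of the inflection points of $F_\lambda$, which sit midway between consecutive critical points of $F_\lambda$), while yours is self-contained at the level of $g_\lambda$ and does not require computing the roots of $F_\lambda''$; both are complete, and your concluding assembly of $\mathcal{S}(g_\lambda)=\{0,1,g_\lambda(C_\lambda),\infty\}$ coincides with the paper's.
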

\begin{proof} %
    \noindent Recall that $N_\lambda$ is the Newton's method of the entire function $F_\lambda(z)=e^{\lambda z} \sin{\pi z}$, $\lambda\in\CC\backslash\{\pm\pi i\}$, which is given by $N_\lambda(z)=z+M_\lambda(e^{2\pi i z})$, where $M_\lambda$ is the M\"obius transformation in \pref{eq:projNewtonMobius}. For any $k\in \ZZ$, \vspace{-0.75mm}
    \begin{equation*}
        a_{k}:=k, \qquad b_{\lambda,k} := \frac{1}{2\pi i}\mathrm{Log}\left(\frac{\lambda-\pi i}{\lambda+\pi i}\right) + k, \qquad c_{\lambda,k} := 2 b_{\lambda,0}+k,
    \end{equation*}
    are the (simple) roots of $F_\lambda$, $F_\lambda'$, $F_\lambda''$, respectively, where $\mathrm{Log}$ denotes the principal branch of the logarithm. Then, as summarized in \pref{tab:NewtonRoots}, we have that, for all $k$, $e^{2\pi i a_k}=1$ is a superattracting fixed point of $g_\lambda$, $B_\lambda:=e^{2\pi i b_{\lambda,k}}=\frac{\lambda-\pi i}{\lambda+\pi i}$ is an essential singularity of $g_\lambda$, and $C_\lambda:=e^{2\pi i c_{\lambda,k}}=B_\lambda^2$ is a free critical point of $g_\lambda$.

    \noindent Given that $N_\lambda$ is in class $\mathbf{R}_1$, it follows from \pref{cor:classRsv} that  $\mathcal{E}(g_\lambda) =\{B_\lambda\}$, and the points at $0$ and $\infty$ are the asymptotic values of $g_\lambda$. They are also fixed points with multipliers $g_\lambda'(0)=e^{2\pi i M_\lambda(0)}$ and $g_\lambda'(\infty)=e^{-2\pi i M_\lambda(\infty)}$, respectively, which gives \pref{eq:MultiProjAVG} since \vspace{-0.75mm} 
    \begin{equation*}
        \lim_{\operatorname{Im}z\to\pm\infty} M_\lambda(e^{2\pi i z}) = \frac{1}{\pm\pi i-\lambda}.
    \end{equation*}
    We know that the projection via $\exp_1$ of $(1,\sigma)$-pseudoperiodic points of $N_\lambda$, $\sigma\in\ZZ$, are the fixed points of $g_\lambda$ (see \pref{lem:characPseudoPoint}), which we denote by $w_\sigma^*$. Hence, these satisfy the equation $M_\lambda(w_\sigma^*)=\sigma$, that is, they are of the form given in the statement of \textit{(i)}. Furthermore, notice that
    \vspace{-0.75mm}
    \begin{equation*}
        g_\lambda'(w) = e^{2\pi i M_\lambda(w)} \Big( 1+2\pi i w M_\lambda'(w) \Big), \quad \mbox{ where } \quad M_\lambda'(w)=\frac{-2\pi i}{\big(\left(\lambda+\pi i\right)w-\left(\lambda-\pi i \right)\big)^2}.
    \end{equation*}
    Since $M_\lambda'(w_\sigma^*)=\frac{-1}{2\pi i}\big(1+(\lambda+\pi i)\sigma\big)^2$, a straight computation leads to the multipliers of $w_\sigma^*$ in \pref{eq:ProjPseudoFixedG}. Observe that $w_\sigma^*$ may be the fixed point at infinity if $\lambda=-\pi i-\frac{1}{\sigma}$, in which case we already know that $g_\lambda'(w_\sigma^*)=1$.
    
    \noindent To see \textit{(ii)}, we note that \pref{rem:projCV} implies that the only critical values of $g_\lambda$ are the fixed point at $1$, and the image of $C_\lambda$. Therefore, as $\mathcal{AV}(g_\lambda)=\{0,\infty\}$, we conclude that $\mathcal{S}(g_\lambda)=\{0,1,g_\lambda(C_\lambda),\infty\}$.
    $\hfill\square$
\end{proof}

We observe from \pref{eq:MultiProjAVG} that one of the fixed asymptotic values of $g_\lambda$ (either the point at $0$ or $\infty$) is attracting if $|\operatorname{Im}\lambda|>\pi$. This is indeed the case in which $0$ is an asymptotic value of $F_\lambda$ (see \pref{rem:relaxedNewton}).

\begin{remark}[Logarithmic singularities]
    \label{rem:logarithmicSing}
    The projection $g_\lambda$ of a pseudotrigonometric Newton map is conjugate via $M_\lambda$ (which places the essential singularity at $\infty$ and the fixed points $w_\sigma^*$ at $\sigma\in\ZZ$) to the function 
    \begin{equation}
        \tilde{g}_\lambda(\zeta):=M_\lambda\circ g_\lambda \circ M_\lambda^{-1}(\zeta) = - \frac{\big(1+(\lambda-\pi i)\zeta\big) e^{2\pi i\zeta} - \big(1+(\lambda+\pi i)\zeta\big)}{(\lambda+\pi i)\big(1+(\lambda-\pi i)\zeta\big) e^{2\pi i\zeta} - (\lambda-\pi i)\big(1+(\lambda+\pi i)\zeta\big)}.
    \end{equation}
    The singular values of $\tilde{g}_\lambda$ are the fixed asymptotic values at $\frac{1}{\pi i \pm \lambda}$, the superattracting fixed point at $0$, and the free critical point at $\widetilde{C}_\lambda:=\frac{-2\lambda}{\lambda^2+\pi^2}$. Furthermore, its order of growth (in the sense of Nevanlinna \cite{Nevanlinna1970}) is $\rho(\tilde{g}_\lambda)=1$, as the sum and division of meromorphic functions do not increment the order (this is also the case for $F_\lambda$ and functions in class $\mathbf{R}_\ell$, given that their periodic part is the quotient of trigonometric polynomials). 
    
    \noindent Since $\tilde{g}_\lambda$ is a finite-type map of finite order, all singularities of $\tilde{g}_\lambda^{-1}$ over an asymptotic value are \textit{logarithmic} due to \cite[Cor. 1]{Bergweiler1995e}. This means that for $v\in\mathcal{AV}(\tilde{g}_\lambda)$, there exists $r>0$ and a component $\mathcal{U}_r$ of $g_\lambda^{\hspace{0.2mm}-1}(B(v,r)\backslash\{v\})$, where $B(v,r)$ denotes the open disk of center $v$ and radius $r$ in the spherical metric on $\hCC$, such that $U$ contains no preimages of $v$ and $f:\mathcal{U}_r\to B(v,r)\backslash\{v\}$ is a universal covering. The domain $\mathcal{U}_r$ is called a \textit{logarithmic tract}, and $\tilde{g}_\lambda = \exp\circ\hspace{0.2mm}\eta+v$ in $\mathcal{U}_r$ (or $\tilde{g}_\lambda=e^{-\eta}$ if $v=\infty$) for some conformal (i.e. one-one analytic) map $\eta$ from $\mathcal{U}_r$ onto the half-plane $\{z: \operatorname{Re}z < \ln{r}\}$; see more details in \cite{Bergweiler2008} and \cite[\S6]{Zheng2011}. The same holds for $g_\lambda$.
\end{remark}

Hence, in the situation where $0\in\mathcal{AV}(F_\lambda)$, as its critical values only accumulate at $\infty$, there is a logarithmic singularity of $F_\lambda^{-1}$ over $0$. This one induces a (doubly parabolic) Baker domain by a result of Buff and R\"uckert \cite[Thm. 4.1]{Buff2006}, as already mentioned in \pref{ex:LiftParabolicBR}. This agrees with our results as follows.  

\begin{example}[Coexistence of Baker domains and the basins of the roots]
   \label{ex:BakerWD_SuperCoexistence}
   It can be seen from \pref{cor:C_BakerWandering} that the (pseudotrigonometric) Newton map of $F_\lambda(z)=\frac{e^{(\lambda-\pi i)z}}{2i}(e^{2\pi i z}-1)$, has a Baker domain if $|\operatorname{Im}\lambda|>\pi$, alongside the infinitely many basins of the roots of $F_\lambda$. We recall that this follows from \pref{thm:logFatouJulia} since the fixed point at $\infty$ (resp. $0$) of the projection $g_\lambda$ is attracting if and only if $\operatorname{Im}\lambda<-\pi$ (resp. $\operatorname{Im}\lambda>\pi$). 
   
   \noindent As a particular example, for $\lambda^*=-3\pi i$, \pref{lem:PseudoTrig} asserts that
   \begin{equation}
       g_{\lambda^*}^{}(w)=w \exp{\left(\frac{w-1}{w-2}\right)}, \qquad g_{\lambda^*}'(0) = e^{\sfrac{1}{2}}, \quad \mbox{ and } \quad g_{\lambda^*}'(\infty) = e^{-1}.
   \end{equation}
   Moreover, the half-line $\gamma_0^+:=-i[t_0,\infty)$, where $t_0>\frac{\ln{2}}{2\pi}$ such that it avoids the critical points of $F_\lambda$ (i.e. the poles of the Newton's method), is an asymptotic path associated to $0\in\mathcal{AV}(F_{\lambda^*})$ (see \pref{rem:relaxedNewton}). As shown in \pref{fig:BakerWDf_fixed}, the immediate basin of attraction of $\infty$ of $g_{\lambda^*}^{}$, which contains the free critical point at $C_\lambda=4$, lifts via $\exp_1$ to a simply-connected invariant Baker domain $U$ of infinite degree for the Newton's method, as it contains a logarithmic tract of $F_\lambda$ over $0$ with $\gamma_0^+$ as its asymptotic path (see \pref{rem:logarithmicSing}).
   \vspace{-1.5mm}
    \begin{figure}[H] 
        \centering
        \hfill
        \begin{tikzpicture}[cross/.style={path picture={ 
            \draw[white!80]
            (path picture bounding box.south) -- (path picture bounding box.north) (path picture bounding box.west) -- (path picture bounding box.east);}
        }]
        \node[anchor=south west,inner sep=0] (image) at (0,0) {\includegraphics[width=0.526\linewidth]{BD2FishFc_x-1.5_1.5_y-0.85_0.85.pdf}};
        \begin{scope}[x={(image.south east)},y={(image.north west)}]
        \coordinate (a) at ($ ({(0+1.5)/3},{(0+0.85)/1.7}) $);
        \coordinate (a2) at ($ ({(1+1.5)/3},{(0+0.85)/1.7}) $);
        \coordinate (a3) at ($ ({(-1+1.5)/3},{(0+0.85)/1.7}) $);
        \coordinate (b) at ($ ({(0+1.5)/3},{(-0.11+0.85)/1.7}) $);
        \coordinate (b2) at ($ ({(1+1.5)/3},{(-0.11+0.85)/1.7}) $);
        \coordinate (b3) at ($ ({(-1+1.5)/3},{(-0.11+0.85)/1.7}) $);
        \fill[white!80] (a) circle (1pt) node[above]{\tiny$0$};
        \fill[white!80] (a2) circle (1pt) node[above]{\tiny$1$};
        \fill[white!80] (a3) circle (1pt) node[above]{\tiny\hspace{-0.55em}$\shortminus 1$};
        \draw[line width=0.42pt,cross,color=white!80] (b) circle (1.25pt);
        \draw[line width=0.42pt,cross,color=white!80] (b2) circle (1.25pt);
        \draw[line width=0.42pt,cross,color=white!80] (b3) circle (1.25pt);
        \draw (0.0435,0.0706) node[draw=none,fill=none, color=white] {\scriptsize $U$};
        \draw[line width=0.1pt, color = darkgray!80, dashed] ($({1/3},0)$) -- ($({1/3},1)$);
        \draw[line width=0.1pt, color = darkgray!80, dashed] ($({2/3},0)$) -- ($({2/3},1)$);
        \draw[line width=0.5pt,color=white,-stealth] ($ ({(-0.5+1.5)/3},{(0.36+0.85)/1.7}) $) to ($ ({(-0.5+1.5)/3},{(0.276484+0.85)/1.7}) $);
        \draw[line width=0.5pt,color=white,-stealth] ($ ({(0.5+1.5)/3},{(0.36+0.85)/1.7}) $) to ($ ({(0.5+1.5)/3},{(0.276484+0.85)/1.7}) $);
        \draw[line width=0.5pt,color=white,-stealth] ($ ({(+0.5+1.5)/3},{(-0.33+0.85)/1.7}) $) to ($ ({(+0.5+1.5)/3},{(-0.473163+0.85)/1.7}) $);
        \draw[line width=0.5pt,color=white,-stealth] ($ ({(-0.5+1.5)/3},{(-0.33+0.85)/1.7}) $) to ($ ({(-0.5+1.5)/3},{(-0.473163+0.85)/1.7}) $);
        \end{scope}
    
    \end{tikzpicture}\hfill
        \begin{tikzpicture}[cross/.style={path picture={ 
            \draw[white!80]
            (path picture bounding box.south) -- (path picture bounding box.north) (path picture bounding box.west) -- (path picture bounding box.east);}
        }]
        \node[anchor=south west,inner sep=0] (image) at (0,0) {\includegraphics[width = 0.360\textwidth]{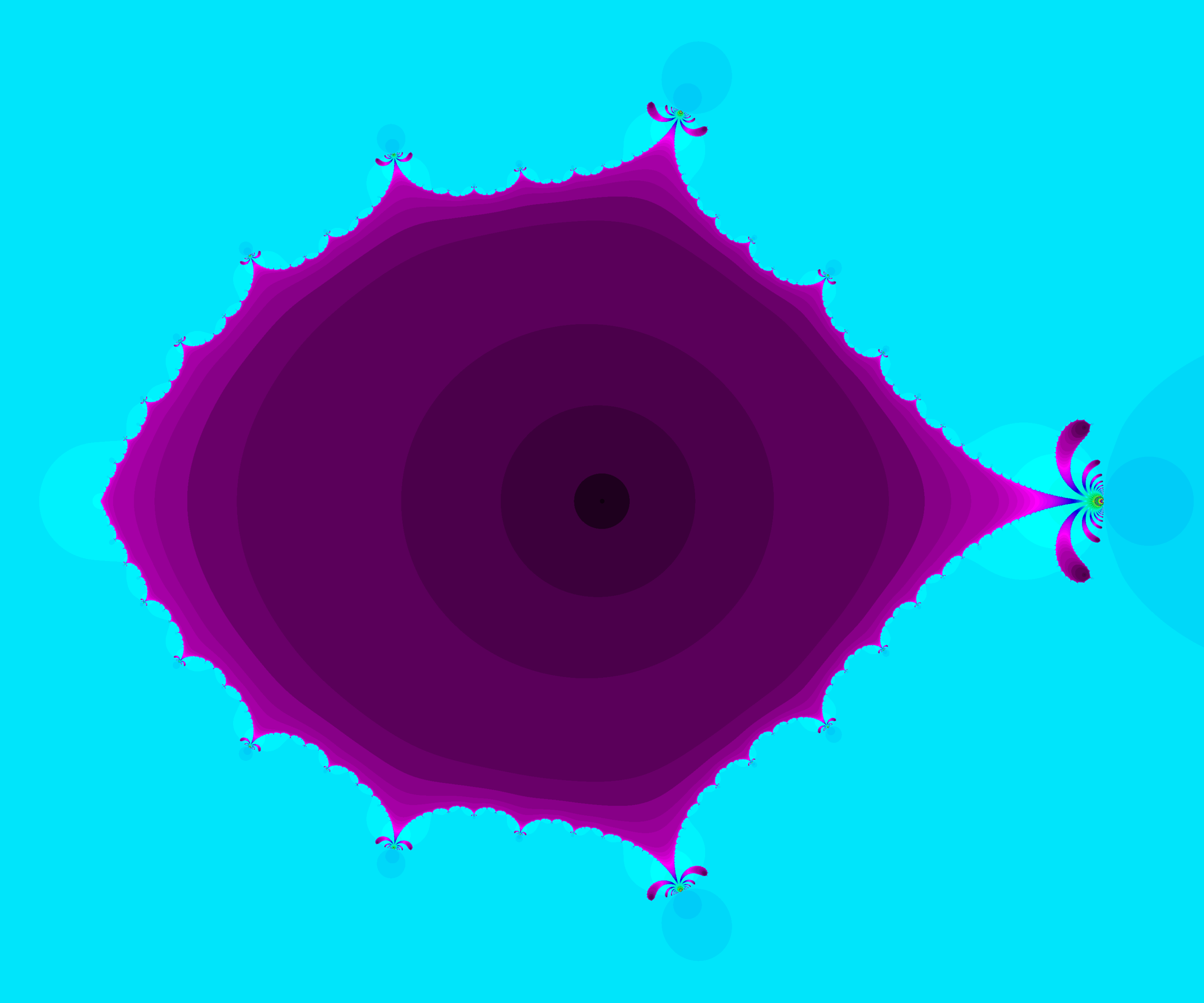}};
        \begin{scope}[x={(image.south east)},y={(image.north west)}]
        \coordinate (a) at ($ ({(1+0.2)/2.4},{(0+1)/2}) $);
        \coordinate (b) at ($ ({(2+0.2)/2.4},{(0+1)/2}) $);
        \fill[white!80] (a) circle (1.1pt) node[above]{\tiny$1$};
        \draw[line width=0.42pt,cross,color=white!80] (b) circle (1.4pt);
        \end{scope}
        
        \end{tikzpicture}\hfill\vspace{-1mm}
        \caption{\textit{Left (dynamical plane of $N_\lambda$ for $\lambda=-3\pi i$)}: The basins of the fixed points of $N_\lambda$ at the integers coexist with a simply-connected Baker domain $U$ (in blue) as in \pref{fig:BakerWDf_fixed}; see \pref{ex:BakerWD_SuperCoexistence}. Range: $[-1.5,1.5]\times[-0.85,0.85]$.
        \textit{Right (dynamical plane of its projection $g_\lambda$)}: The superattracting basin of $1$ (in purple), and the basin of the fixed point at $\infty$ (in blue) which lifts via $\exp_1$ to the Baker domain $U$. Range: $[-0.2,2.2] \hspace{-0.3mm}\times\hspace{-0.3mm} [-1,1]$. The white $\oplus$ at $2$ refers to the projection of the poles of $N_\lambda$, and the color palettes to the speed of convergence to these fixed points of $g_\lambda$.}
        \label{fig:PseudoBakerfg_fixed}
    \end{figure}
\end{example} 

\vspace{-3.5mm}
It is important to emphasize that the fixed points $w_\sigma^*$ of $g_\lambda$ in \pref{lem:PseudoTrig}, $\sigma\in\ZZ$, are the projections via $\exp_1$ of a $(1,\sigma)$-pseudoperiodic point $z_\sigma^*$ of the Newton's method $N_\lambda$, in the sense that $N_\lambda(z_\sigma^*)=z_\sigma^*+\sigma$, which escapes to $\infty$ under iteration unless $\sigma=0$ (see \pref{prop:IteratesPseudo}, $\ell=1$ case).

\begin{example}[Coexistence of wandering domains and the basins of the roots]
    \label{ex:WD_SuperCoexistence}
    Applying \pref{cor:C_BakerWandering} we explicitly identify escaping wandering domains in the family $\mathbf{N}_\lambda$, for those $\lambda\in\CC\backslash\{\pm\pi i\}$ such that the fixed point of the projection $g_\lambda$ at $w_\sigma^*$, for some $\sigma\in\ZZ^*$ (i.e. $w_\sigma^*\neq 1$), is attracting, parabolic or of Siegel type.

    \noindent In particular, $w_\sigma^*(\lambda)$ is a superattracting fixed point of $g_\lambda$ if its multiplier \pref{eq:ProjPseudoFixedG} vanishes, which occurs for \vspace{-0.55mm}
    \begin{equation}
        \label{eq:WD_fixedOmega1}
        \lambda_\sigma^\pm:=\frac{-1}{\sigma}\pm i \sqrt{\pi^2-\frac{1}{\sigma^2}}, \quad \mbox{ and so } \quad w_\sigma^*(\lambda_\sigma^\pm) = 1 + 2\pi\sigma^2 \left( -\pi \pm \sqrt{\pi^2-\frac{1}{\sigma^2}} \hspace{0.2mm}\right). \vspace{-1.05mm}
    \end{equation}
    In this case, the Newton map $N_{\lambda_\sigma^\pm}$ has infinitely many wandering domains $U_k$, $k\in\ZZ$, which emerge as the lift via $\exp_1$ of the immediate basin of $w_\sigma^*$ of $g_{{\lambda_\sigma^\pm}}$, $\sigma\in\ZZ^*$, where $w_\sigma^*$ is indeed the free critical $C_{\lambda_\sigma^\pm}$. The chain of wandering components contains the pseudofixed points $z_\sigma^*$ such that $e^{2\pi i z_\sigma^*}=w_\sigma^*$, with $N_{\lambda}^n(U_k)\subset U_k+n\sigma$, $n\in\NN$, and coexists with the infinitely many basins of the roots of $F_\lambda$, as displayed in \pref{fig:PseudoWDfg_fixed} for $\lambda=\lambda_1^-$.
    \vspace{-1.5mm}
    \begin{figure}[H] 
        \centering
        \hfill
        \begin{tikzpicture}[cross/.style={path picture={ 
            \draw[white!80]
            (path picture bounding box.south) -- (path picture bounding box.north) (path picture bounding box.west) -- (path picture bounding box.east);}
        }]
        \node[anchor=south west,inner sep=0] (image) at (0,0) {\includegraphics[width=0.526\linewidth]{fNfamily2WD1c_x-1.5_1.5_y-1.15_0.55.pdf}};
        \begin{scope}[x={(image.south east)},y={(image.north west)}]
            \coordinate (a) at ($ ({(0+1.5)/3},{(0+1.15)/1.7}) $);
            \coordinate (a2) at ($ ({(1+1.5)/3},{(0+1.15)/1.7}) $);
            \coordinate (a3) at ($ ({(-1+1.5)/3},{(0+1.15)/1.7}) $);
            \coordinate (b) at ($ ({(-0.5+1.5)/3},{(-0.57667+1.15)/1.7}) $);
            \coordinate (b2) at ($ ({(0.5+1.5)/3},{(-0.57667+1.15)/1.7}) $);
            \draw[line width=0.1pt, color = darkgray!80, dashed] ($({1/3},0)$) -- ($({1/3},1)$);
            \draw[line width=0.1pt, color = darkgray!80, dashed] ($({2/3},0)$) -- ($({2/3},1)$);
            \draw[line width=0.48pt,cross,color=white!80] ($ ({(-0.75+1.5)/3},{(-0.28831336+1.15)/1.7}) $) circle (1.65pt);
            \draw[line width=0.48pt,cross,color=white!80] ($ ({(0.25+1.5)/3},{(-0.28831336+1.15)/1.7}) $) circle (1.65pt);
            \draw[line width=0.48pt,cross,color=white!80] ($ ({(1.25+1.5)/3},{(-0.28831336+1.15)/1.7}) $) circle (1.65pt);
            \fill[white!80] (a) circle (1pt) node[above]{\tiny$0$};
            \fill[white!80] (a2) circle (1pt) node[above]{\tiny$1$};
            \fill[white!80] (a3) circle (1pt) node[above]{\tiny\hspace{-0.55em}$\shortminus 1$};
            \draw (0.3935,0.0690) node[draw=none,fill=none, color=white] {\scriptsize $U_{\shortminus 1}^{}$};
            \draw (0.7175,0.0745) node[draw=none,fill=none, color=white] {\scriptsize $U_{0}^{}$};
            \draw[line width=0.5pt,color=white,-stealth] ($ ({(-0.574615+1+1.5)/3},{(-0.094429+1.15)/1.7}) $) to[bend right] (b2);
            \draw[line width=0.5pt,color=white,-stealth] ($ ({(0.24955+1+1.5)/3},{(-0.161072+1.15)/1.7}) $) to[bend right] (b2);
            \draw[line width=0.5pt,color=white,-stealth] ($ ({(-0.5+1.5)/3},{(-0.57667+1.15)/1.7}) $) to[bend right] (b2);
            \draw[line width=0.5pt,color=white,-stealth] ($ ({(-1.5+1.5)/3},{(-0.57667+1.15)/1.7}) $) to[bend right] (b);
            \draw[line width=0.5pt,color=white,-stealth] (b2) to[bend right] ($ ({(1.5+1.5)/3},{(-0.57667+1.15)/1.7}) $);
            \draw (b2) node[color=black,scale=1] {\tiny$\star$} node[color=black,below]{\tiny\hspace{0.41em}$z_{\scalebox{0.7}{1}}^{\raisebox{-0.15\height}{\scalebox{0.7}{*}}}$};
            \draw (b) node[color=black,scale=1] {\tiny$\star$} node[color=black,below]{\tiny\hspace{0.2em}$z_{\scalebox{0.7}{1}}^{\raisebox{-0.15\height}{\scalebox{0.7}{*}}}\hspace{0.1mm}\shortminus\hspace{0.1mm} 1$};
        \end{scope}
        \end{tikzpicture}\hfill
        \begin{tikzpicture}[cross/.style={path picture={ 
            \draw[white!80]
            (path picture bounding box.south) -- (path picture bounding box.north) (path picture bounding box.west) -- (path picture bounding box.east);}
        }]
        \node[anchor=south west,inner sep=0] (image) at (0,0) {\includegraphics[width = 0.360\textwidth]{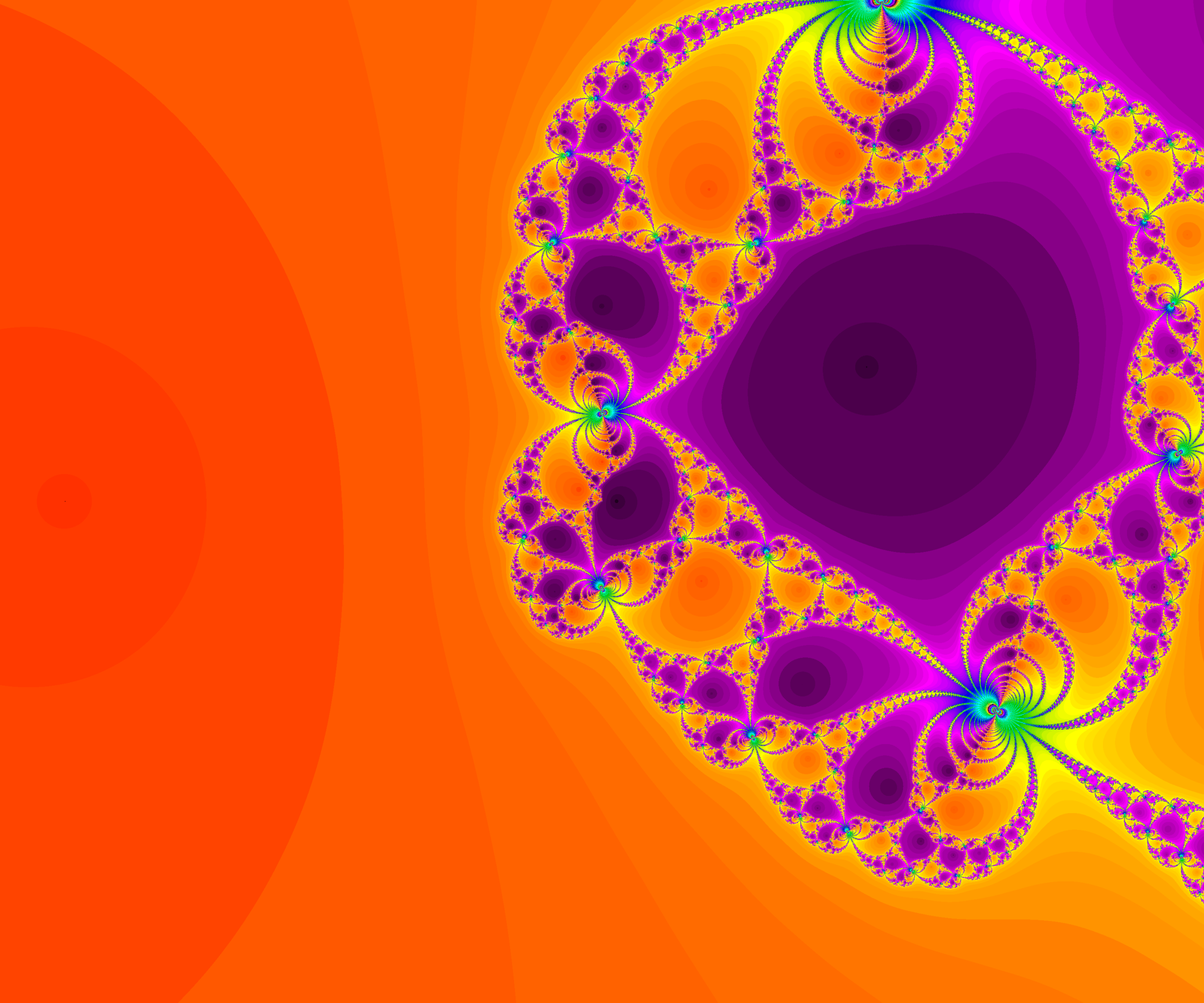}};
        \begin{scope}[x={(image.south east)},y={(image.north west)}]
        \coordinate (a) at ($ ({(1+42)/84},{(0+35)/70}) $);
        \coordinate (a1) at ($ ({(-37.45171656+42)/84},{(0+35)/70}) $);
        \coordinate (b) at ($ ({(0+42)/84},{(6.11978+35)/70}) $);
        \fill[white!80] (a) circle (0.8pt) node[right]{\hspace{-0.5mm}\tiny$1$};
        \draw[line width=0.48pt,cross,color=white!80] (b) circle (1.65pt);
        \draw (a1) node[color=black,scale=1] {\tiny$\star$} node[color=black,below]{\tiny\hspace{0.41em}$w_{\scalebox{0.7}{1}}^{\raisebox{-0.15\height}{\scalebox{0.7}{*}}}$};
        \draw[line width=0.48pt,color=white,-stealth] ($ ({(-1.61469+42)/84},{(0.817809+35)/70}) $) to[bend left] (a1);
        \draw[line width=0.48pt,color=white,-stealth] ($ ({(0.00777889+42)/84},{(2.75121+35)/70}) $) to[bend right] (a1);
        \end{scope}
        \end{tikzpicture}
        \hfill\vspace{-1mm}
        \caption{\textit{Left (dynamical plane of $N_\lambda$ for $\lambda=-1-\sqrt{\pi^2-1}$)}: The basins of the fixed points of $N_\lambda$ at $k\in\ZZ$ coexist with a chain of simply-connected wandering domains $U_k$ (in orange) such that $z_1^*+k\in U_k$ as in \pref{fig:BakerWDf_fixed}, where $z_1^*$ projects to the superattracting fixed point $w_1^*(\lambda_1^{\shortminus})\approx -37.45$ of $g_\lambda$ from \pref{ex:WD_SuperCoexistence}. Range: $[-1.5,1.5]\times[-1.15,0.55]$.
        \textit{Right (dynamical plane of its projection $g_\lambda$)}: The superattracting basin of $1$ (in purple), and the immediate basin of attraction of $w_1^*$ (in orange) which lifts to the wandering domains $U_k$. Range: $[-42,42] \hspace{-0.3mm}\times\hspace{-0.3mm} [-35,35]$. The white $\oplus$ refers to the projection of the poles of $N_\lambda$, and the color palettes to the speed of convergence to the fixed points of $g_\lambda$.} 
        \label{fig:PseudoWDfg_fixed}
    \end{figure}
\end{example}
\vspace{-3.5mm}

For the sake of completeness, we illustrate the dynamical planes in a case when the free critical point of the projection $g_\lambda$ lies in the basin of the superattracting fixed point at $1$ (i.e. the image under $\exp_1$ of the zeros of $F_\lambda$), and hence, as $g_\lambda$ is of finite-type, there are no other Fatou components.

\begin{example}[Basins of the roots as the only Fatou components]
    \label{ex:Newton0_tan}
    The free critical point of $g_\lambda$ coincides with the fixed point at $1$ if and only if $\lambda=0$. In this case, the pseudotrigonometric Newton map $N_0$ is precisely the Newton's method of $\sin{\pi z}$, given by 
    \begin{equation}
        \label{eq:N0_tan}
        N_0(z) = z-\frac{1}{\pi}\tan{\pi z}=z-\frac{1}{\pi i} \frac{e^{2\pi i z}-1}{e^{2\pi i z}+1},
    \end{equation}
    and its fixed points at $k\in\ZZ$ are critical points of $N_0$ of multiplicity two (i.e. $N_0''(k)=0$ but $N_0'''(k)\neq 0$).
    
    \noindent As detailed in \cite[Ex. 7.2]{Baranski2016}, the only periodic Fatou components of $\mathcal{F}(N_0)$ are the infinitely many immediate basins of attraction $U_k$ of the fixed points at $k\in\ZZ$, with $\operatorname{deg}N_0|_{U_k}^{}=3$. Moreover, each $U_k$ has two distinct accesses to $\infty$, and $\partial U$ contains exactly two accessible poles of $N_0$ as depicted in \pref{fig:Superfg_tan}. Observe that all the basins $U_k$ project to the superattracting basin of $1$ of $g_0$, and the lines $\{\operatorname{Re}z=\sfrac{1}{2}+k\}_{k\in\ZZ}\subset\mathcal{J}(N_0)$ are sent via $\exp_1$ to the negative real axis (e.g. the prepole of $N_0$ of order $2$ near the point $0.5-0.3816i$, is sent to an essential prepole of $g_0$ close to $-11$), showing the distortion of lengths by the exponential. \vspace{-1.0mm}
    \begin{figure}[H] 
        \centering\hfill
        \begin{tikzpicture}[cross/.style={path picture={ 
            \draw[white!80]
            (path picture bounding box.south) -- (path picture bounding box.north) (path picture bounding box.west) -- (path picture bounding box.east);}
        }]
        \node[anchor=south west,inner sep=0] (image) at (0,0) {\includegraphics[width = 0.561\textwidth]{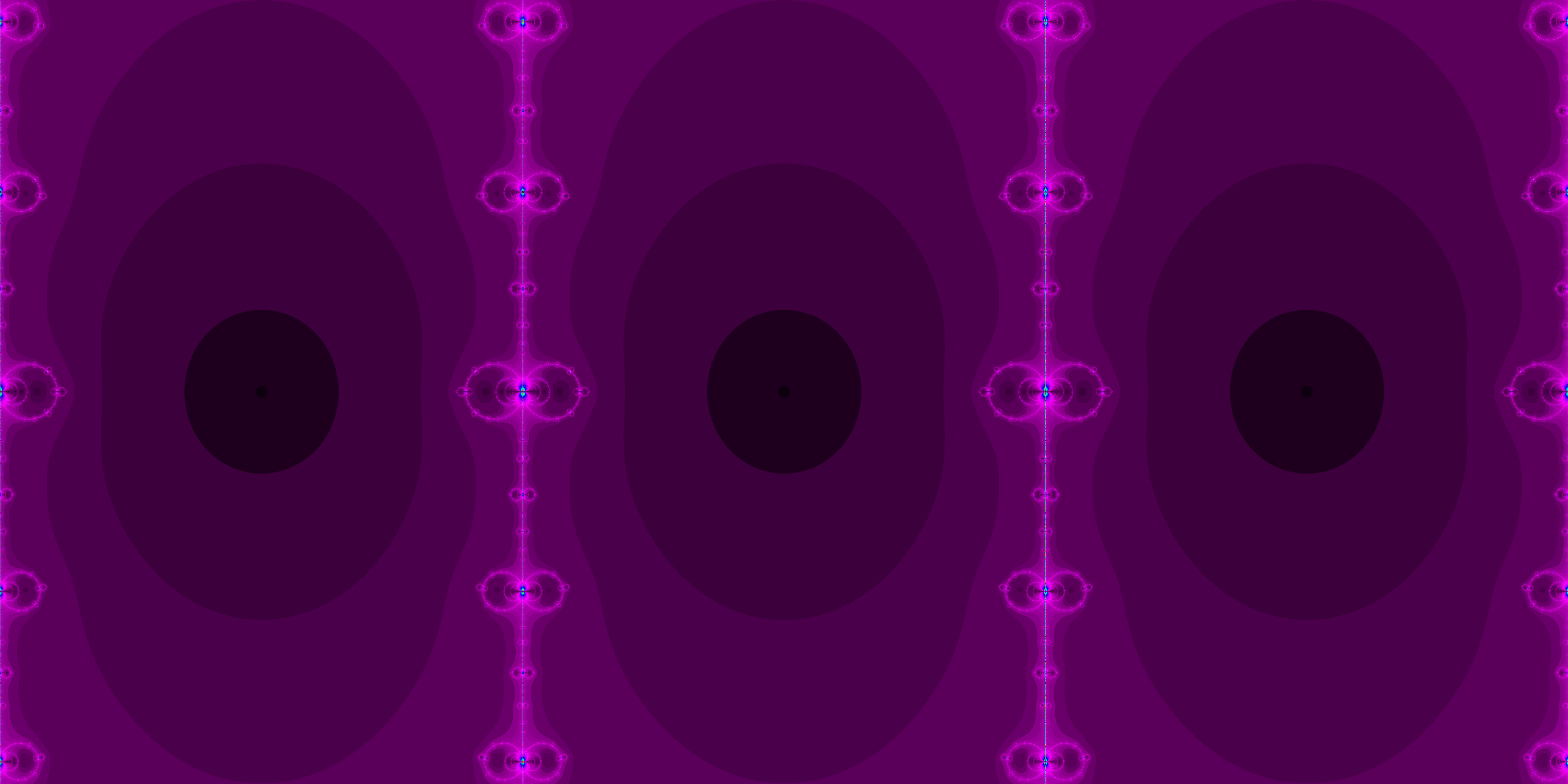}};
        \begin{scope}[x={(image.south east)},y={(image.north west)}]
        \coordinate (am1) at ($ ({(-1+1.5)/3},{(0+0.75)/1.5}) $);
        \coordinate (a0) at ($ ({(0+1.5)/3},{(0+0.75)/1.5}) $);
        \coordinate (a1) at ($ ({(1+1.5)/3},{(0+0.75)/1.5}) $);
        \fill[white!80] (a0) circle (1pt) node[above]{\tiny$0$};
        \fill[white!80] (a1) circle (1pt) node[above]{\tiny$1$};
        \fill[white!80] (am1) circle (1pt) node[above]{\tiny\hspace{-0.55em}$\shortminus 1$};
        \draw[line width=0.48pt,cross,color=white!80] ($ ({(-1.5+1.5)/3},{(0+0.75)/1.5}) $) circle (1.45pt);
        \draw[line width=0.48pt,cross,color=white!80] ($ ({(-0.5+1.5)/3},{(0+0.75)/1.5}) $) circle (1.45pt);
        \draw[line width=0.48pt,cross,color=white!80] ($ ({(0.5+1.5)/3},{(0+0.75)/1.5}) $) circle (1.45pt);
        \draw[line width=0.48pt,cross,color=white!80] ($ ({(1.5+1.5)/3},{(0+0.75)/1.5}) $) circle (1.45pt);
        \draw (0.177,0.0690) node[draw=none,fill=none, color=white] {\scriptsize $U_{\shortminus 1}^{}$};
        \draw (0.505,0.0720) node[draw=none,fill=none, color=white] {\scriptsize $U_{0}^{}$};
        \draw (0.837,0.0745) node[draw=none,fill=none, color=white] {\scriptsize $U_{1}^{}$};
        \end{scope}
        \end{tikzpicture}
        \hspace{2.7mm}\hfill
        \begin{tikzpicture}[cross/.style={path picture={ 
            \draw[white!80]
            (path picture bounding box.south) -- (path picture bounding box.north) (path picture bounding box.west) -- (path picture bounding box.east);}
        }]
        \node[anchor=south west,inner sep=0] (image) at (0,0) {\includegraphics[width = 0.357\textwidth]{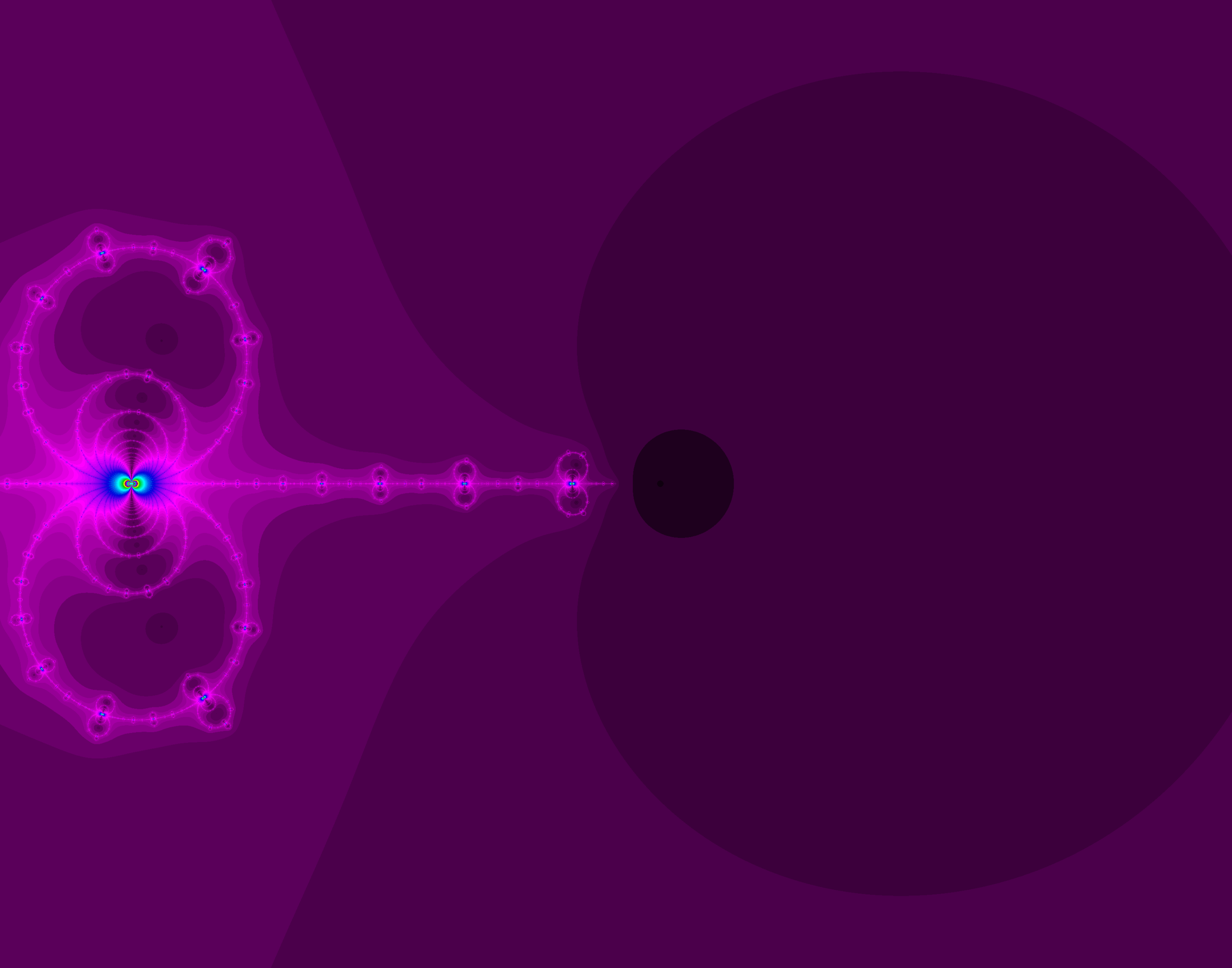}};
        \begin{scope}[x={(image.south east)},y={(image.north west)}]
        \coordinate (a) at ($ ({(1+14)/28},{(0+11)/22}) $);
        \coordinate (b) at ($ ({(-1+14)/28},{(0+11)/22}) $);
        \fill[white!80] (a) circle (0.9pt) node[right]{\tiny$1$};
        \draw[line width=0.48pt,cross,color=white!80] (b) circle (1.2pt);
        \end{scope}
        \end{tikzpicture}
        \hfill\vspace{-1mm}
        \caption{\textit{Left (dynamical plane of $N_\lambda$ for $\lambda=0$)}: The Fatou set of $N_0$ consists only of the superattracting basins $U_k$ of the fixed points at $k\in\ZZ$, with $\{\operatorname{Re}z=\sfrac{1}{2}+k\}_{k\in\ZZ}\subset\mathcal{J}(N_0)$; see \pref{ex:Newton0_tan}. Range: $[-1.5,1.5]\times[-0.75,0.75]$.
        \textit{Right (dynamical plane of its projection $g_\lambda$)}: The superattracting basin of $1$, and the essential singularity $\oplus$ at $-1$. Range: $[-14,14] \hspace{-0.3mm}\times\hspace{-0.3mm} [-11,11]$. The purple colors matches the speed of convergence to $1$ by $g_{0}$ (the lightest covers $\mathcal{J}(g_0)$).}
        \label{fig:Superfg_tan}
    \end{figure}
\end{example}
\vspace{-2.0mm}

\subsection{Atlas of wandering domains: \texorpdfstring{$\lambda$}{l}-plane}
\label{sec:5.3_AtlasWD}

The goal of this final section is to present some numerical observations and remarks on the parameter space of our pseudotrigonometric family $\mathbf{N}_\lambda$ of Newton maps (outside of the Eremenko-Lyubich class $\mathcal{B}$). It is convenient to transfer our analysis to their projections $g_\lambda$ via $\exp_1$, as they compose a one-parameter family of finite-type maps with a unique essential singularity and only one free critical orbit (see \pref{lem:PseudoTrig}).

It is of interest to identify the values of $\lambda$ for which the free critical point of $g_\lambda$, given by $C_\lambda=\big(\frac{\lambda-\pi i}{\lambda+\pi i}\big)^2$, does not converge to the superattracting fixed point at $1$ by iteration, since the corresponding Newton maps $N_\lambda$ exhibit other types of Fatou components besides the basins of attraction of the fixed points, such as Baker or wandering domains, which are clear obstructions to root-finding. We denote this set of parameters by \vspace{-0.25mm}
\begin{equation}
    \widetilde{\mathcal{M}}:= \Big\{ \lambda\in\CC\hspace{0.2mm}\backslash\{\pm\pi i\}: \ \lim_{n\to\infty} g_\lambda^n(C_\lambda) \neq 1 \Big\}, \vspace{-0.75mm}
\end{equation}
which is symmetric with respect to both the real and imaginary axes, as $g_{\shortminus\lambda}^{}\big(\frac{1}{w}\big) = \frac{1}{g_{\lambda}^{}(w)}$, and $g_{\shortminus\overline{\lambda}}^{}(\overline{w}) = \overline{g_{\lambda}^{}(w)}$. This is shown in \pref{fig:BakerWDf_fixed}, where we color each $\lambda$ according to the period of the cycle which attracts $C_\lambda$.

Notice that $g_\lambda$ is never hyperbolic (nor topologically hyperbolic) since at least one of its asymptotic values (at $0$ or $\infty$) lies in the Julia set of $g_\lambda$ for all $\lambda$. Nonetheless, we may say that $g_\lambda$ is \textit{subhyperbolic} if the forward orbit of any singular value of $g_\lambda$ is either finite or converges to an attracting periodic cycle, in analogy to the rational case (see \cite[\S19]{Milnor2006}). This allows singular values to be in $\mathcal{J}(g_\lambda)$ only if they are eventually periodic.

A connected component of the subhyperbolic locus of the family $\{g_\lambda^{}\}_{\lambda}^{}$ is called a \textit{subhyperbolic component}. Throughout such a component, the subhyperbolic maps $g_\lambda$ are structurally stable, which means that, roughly speaking, the qualitative dynamics of $g_\lambda$ does not change as we perturb $\lambda$; in particular, the period of the attracting cycle to which $C_\lambda$ converges under iteration is constant (see more details in \cite{Astorg2021,Fagella2021}).

\pref{fig:BakerWDf_fixed} shows notable similarities between the components of $\widetilde{\mathcal{M}}$ and the well-studied hyperbolic components of the Mandelbrot set for the quadratic polynomials (see e.g. \cite{Douady1984}). Furthermore, there is a remarkable elephant-like structure which reminds us of the fractal geometry of the Mandelbrot set near parabolic parameters. The parade of elephants in \pref{fig:MandelFrot2} (blow-up of $\widetilde{\mathcal{M}}$ in the third quadrant of the $\lambda$-plane) seems to be almost invariant under translation by $-1$, similarly to the illustrations in \cite{Branner1998} for the quadratic family. However, in our case it looks like the elephants' trunks actually terminate in neck of the next one, with the right-most one reaching the parameter singularity at $-\pi i$, whose Newton's method $N_{-\pi i}$ is indeed conjugate, via $z\mapsto -2\pi i z$, to a Fatou function of the form $z-1+e^{-z}$ (analyzed in \cite{Weinreich1990}). The rest of trunk endings seems to occur at $\lambda$-values for which the free critical point of $g_\lambda$ at $C_\lambda$ is an (essential) prepole of order $m\geq 1$, i.e. \vspace{-1.25mm}
\begin{equation}
    \label{eq:freeCriticalPrepole}
    g_{\lambda}^m(C_{\lambda}) = \frac{\lambda-\pi i}{\lambda+\pi i}.
\end{equation}\vspace{-3.5mm}

\begin{figure}[H] 
    \centering
    \begin{tikzpicture}
    \node[anchor=south west,inner sep=0] (image) at (0,0) {\includegraphics[width = 0.83\textwidth]{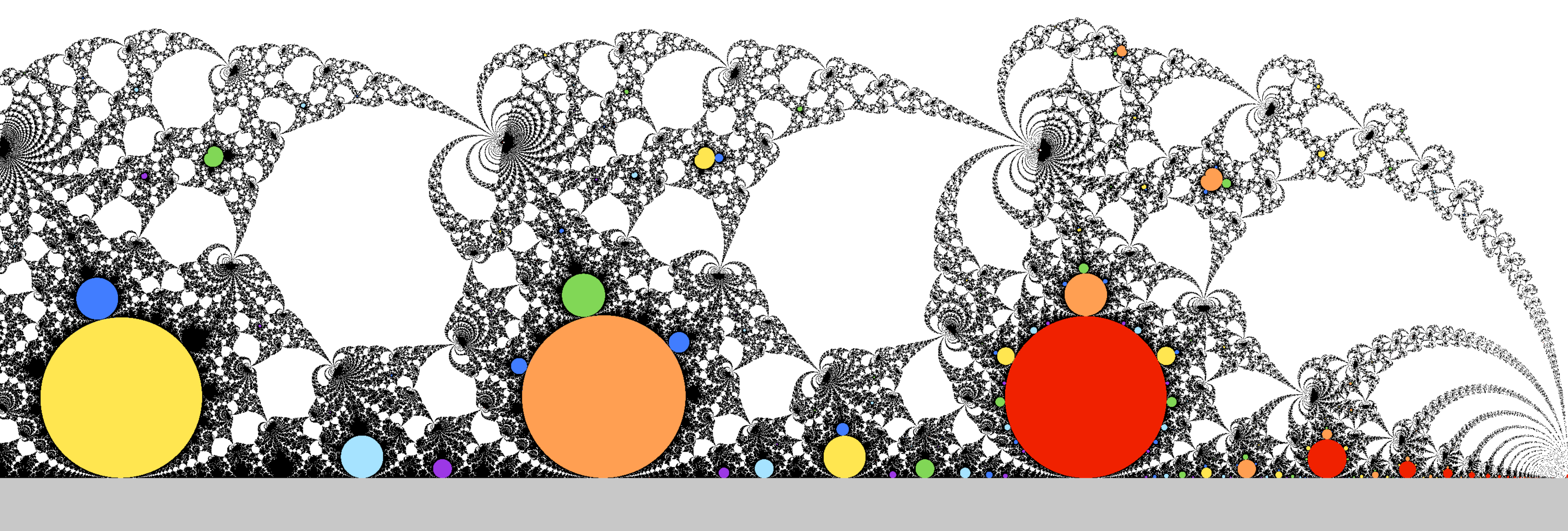}};
    \begin{scope}[x={(image.south east)},y={(image.north west)}]
    \coordinate (a1) at ($ ({(-1+3.25)/3.25},{(-pi+3.25)/1.1}) $);
    \coordinate (a1m2) at ($ ({(-1/2+3.25)/3.25},{(-pi+3.25)/1.1}) $);
    \coordinate (a2) at ($ ({(-2+3.25)/3.25},{(-pi+3.25)/1.1}) $);
    \coordinate (a2m2) at ($ ({(-2/3+3.25)/3.25},{(-pi+3.25)/1.1}) $);
    \coordinate (a3) at ($ ({(-3+3.25)/3.25},{(-pi+3.25)/1.1}) $);
    \coordinate (a3m1) at ($ ({(-3/2+3.25)/3.25},{(-pi+3.25)/1.1}) $);
    \coordinate (c1) at ($ ({(-1+3.25)/3.25},{(-sqrt(pi*pi-1)+3.25)/1.1}) $);
    \coordinate (c1m2) at ($ ({(-1/2+3.25)/3.25},{(-pi+3.25)/1.1}) $);
    \coordinate (c2) at ($ ({(-2+3.25)/3.25},{(-sqrt(pi*pi-1)+3.25)/1.1}) $);
    \coordinate (c2m2) at ($ ({(-2/3+3.25)/3.25},{(-pi+3.25)/1.1}) $);
    \coordinate (c3) at ($ ({(-3+3.25)/3.25},{(-sqrt(pi*pi-1)+3.25)/1.1}) $);
    \coordinate (c3m1) at ($ ({(-3/2+3.25)/3.25},{(-pi+3.25)/1.1}) $);
    \coordinate (b0) at ($ ({(-0.002+3.25)/3.25},{(-pi+3.25)/1.1}) $);
    \coordinate (b1) at ($ ({(-1.096212917+3.25)/3.25},{(-2.4616023296+3.25)/1.1}) $);
    \fill[darkgray!80] (a1) circle (1.25pt) node[below]{\tiny\hspace{-0.16em}$\shortminus 1 \hspace{0.1mm} \shortminus \hspace{0.1mm} \pi i$};
    \fill[darkgray!80] (a1m2) circle (1.25pt); 
    \fill[darkgray!80] (a2) circle (1.25pt) node[below]{\tiny\hspace{-0.16em}$\shortminus 2 \hspace{0.1mm} \shortminus \hspace{0.1mm} \pi i$};
    \fill[darkgray!80] (a2m2) circle (1.25pt);
    \fill[darkgray!80] (a3) circle (1.25pt) node[below]{\tiny\hspace{-0.16em}$\shortminus 3 \hspace{0.1mm} \shortminus \hspace{0.1mm} \pi i$};
    \fill[darkgray!80] (a3m1) circle (1.25pt);
    \draw (c1) node[color=darkgray!80] {\small$\Omega_{0,\shortminus 1}^{\shortminus}$};
    \draw (c1m2) node[color=red!80,below right=-0.85mm] {\tiny$\Omega_{0,\shortminus 2}^{\shortminus}$};
    \draw (c2) node[color=darkgray!80] {\small$\Omega_{\sfrac{1}{2},\shortminus 1}^{\shortminus}$};
    \draw (c2m2) node[color=orange!80,below=-0.6mm] {\tiny$\Omega_{\sfrac{1}{2},\shortminus 2}^{\shortminus}$};
    \draw (c3) node[color=darkgray!80] {\small$\Omega_{\sfrac{2}{3},\shortminus 1}^{\shortminus}$};
    \draw (c3m1) node[color=yellow!80,below=-0.6mm] {\tiny$\Omega_{\sfrac{1}{3},\shortminus 1}^{\shortminus}$};
    \fill[magenta!90] (b0) circle (1.0pt) node[above=-0.6mm]{\tiny$\sfrac{1}{2}$};
    \fill[magenta!90] (b1) circle (1.0pt) node[left=-0.6mm]{\tiny$\lambda_1^{\dagger}$};
    \draw (0.965,0.775) node[color=darkgray!80] {\small$\Omega^\dagger$};
    \draw (0.97,0.05) node[color=darkgray!80] {\small$\Omega^{\shortminus}$};
    \draw[color=cyan!90, line width=0.5pt] ($ ({(-0.985+3.25)/3.25},{(-2.315+3.25)/1.1}) $) rectangle ($ ({(-0.865+3.25)/3.25},{(-2.195+3.25)/1.1}) $);
    \draw[color=cyan!90, line width=0.5pt] ($ ({(-0.89+3.25)/3.25},{(-2.94+3.25)/1.1}) $) rectangle ($ ({(-0.78+3.25)/3.25},{(-2.83+3.25)/1.1}) $);
    \end{scope}
    \end{tikzpicture}\vspace{-1.5mm}
    \caption{Region of the set $\widetilde{\mathcal{M}}$ in the $3$rd quadrant of the $\lambda$-plane (same coloring as in \pref{fig:BakerWDf_fixed}). We indicate on $\partial\Omega^{-}=\{\lambda: \operatorname{Im}\lambda=-\pi\}$ the roots of some components of $\widetilde{\mathcal{M}}$ which emerge from the landing points of internal rays of rational argument $\theta$ in $\Omega^{-}$, denoted by $\Omega_{\theta,k}^{-}$ as described in Remarks \ref{rem:SubhypBD} and \ref{rem:SubhypWD}. The white region consists of capture components, and at $\lambda_1^\dagger\approx -1.096-2.462 i$ the free critical point happens to be an essential prepole of $g_\lambda$.} 
    \label{fig:MandelFrot2}
\end{figure}
\vspace{-3.5mm}

Notice that all \textit{capture components} (i.e. subhyperbolic components in which the free critical point of $g_\lambda$ at $C_\lambda$ eventually falls in the immediate basin of attraction of the fixed point at $1$) are in the complement of $\widetilde{\mathcal{M}}$, shown in white in Figures \ref{fig:BakerWDf_fixed} and \ref{fig:MandelFrot2}. In particular, the one containing $\lambda=0$, denoted by $\Omega^\dagger$, is the analogue of the outside of the Mandelbrot set since $C_\lambda\in\mathcal{A}^*(1)$; see \pref{ex:Newton0_tan}. In each capture component, there seems to be a distinguished parameter (its \textit{center}) for which the free critical point is eventually fixed, so that its Newton map $N_\lambda$ may be called \textit{postcritically fixed} in analogy to the rational case (see \cite{Drach2019} for a combinatorial classification of them). Moreover, if $C_\lambda$ actually lands on a fixed point after $m+1$ iterations, $m\in\NN$, we have detected a value $\lambda_m^\dagger$ satisfying equation \pref{eq:freeCriticalPrepole}, in the boundary of the associated component.

The root-finding method has virtually no obstacles in capture components, as the Fatou set of these Newton maps consists only of the basins of the roots (and the Julia set has empty interior). Nevertheless, as already mentioned, this will not be the case along the set $\widetilde{\mathcal{M}}$, for instance, if the fixed point of its projection $g_\lambda$ at $0$ (resp. $\infty$) is non-repelling, that is, when $\operatorname{Im}\lambda\geq \pi$ (resp. $\operatorname{Im}\lambda\leq -\pi$) as follows from \pref{lem:PseudoTrig}.

\begin{remark}[Components of $\widetilde{\mathcal{M}}$ leading to Baker domains]
    \label{rem:SubhypBD}
    Denote by $\Omega^+$ (resp. $\Omega^{-}$) the subhyperbolic component in which the free critical point of $g_\lambda$ is attracted to the asymptotic value $0$ (resp. $\infty$), i.e.
    \begin{equation}
        \Omega^\pm := \big\{ \lambda: \pm \operatorname{Im}\lambda > \pi \big\} \subset \HH^\pm.
    \end{equation}
    Notice that the \textit{multiplier map} $\rho_{\Omega^\pm}^{}: \Omega^\pm \to \DD^*$, which is explicitly given by \pref{eq:MultiProjAVG}, is a universal covering of $\Omega^\pm$, as it happens in the exponential or tangent family for their hyperbolic components (see e.g. \cite{Fagella2021}). However, in our case we do not observe cusps on $\partial\Omega^\pm$ at the ends of the internal rays of argument zero. Recall that the internal rays of argument $\theta\in[0,1)$ in $\Omega^\pm$ are the curves $\mathcal{R}^{\Omega^\pm}_{\theta,k}:(-\infty,0)\to \Omega^\pm$, $k\in\ZZ$, which are sent by the multiplier map to the radial segment $\left\{e^t e^{2\pi i \theta}: t\in (-\infty,0)\right\}$ of $\DD^*$, and satisfy
    \begin{equation}
        \exp^{-1}\hspace{-0.3mm} \circ \hspace{0.3mm}\rho_{\Omega^\pm}^{}\big( \mathcal{R}^{\Omega^\pm}_{\theta,k}(t) \big) = t+2\pi i (\theta+k).
    \end{equation}
    It is easy to check that, for each $k\in\ZZ$, $\mathcal{R}^{\Omega^\pm}_{\theta,k}$ is the half-circle starting at $\pm\pi i\in\partial\Omega^\pm$ (the \textit{virtual center}) and ending at $\pm\big(\pi i-\frac{1}{\theta+k}\big)$, which degenerates to the half-line $\{\operatorname{Re}\lambda=0, \pm\operatorname{Im}\lambda>\pi\}$ if $\theta=k=0$. In \pref{fig:MandelPlogD} we show rays of argument $\theta=0$ in $\widetilde{\mathcal{M}}$, after the change of parameter $\lambda\mapsto\mu:=\frac{\lambda+2\pi i}{\lambda}$ which sends $\Omega^{-}$ (resp. $\Omega^{+}$) to the unit disk $\DD$ (resp. $\DD+2$); as a reference, $-\pi i\mapsto -1$, $-\pi(1+i)\mapsto -i$, $\infty\mapsto 1$.

    \vspace{-1.5mm}
    \begin{figure}[H] 
        \centering
        \begin{tikzpicture}
        \node[anchor=south west,inner sep=0] (image) at (0,0) {\includegraphics[width = 0.63\textwidth]{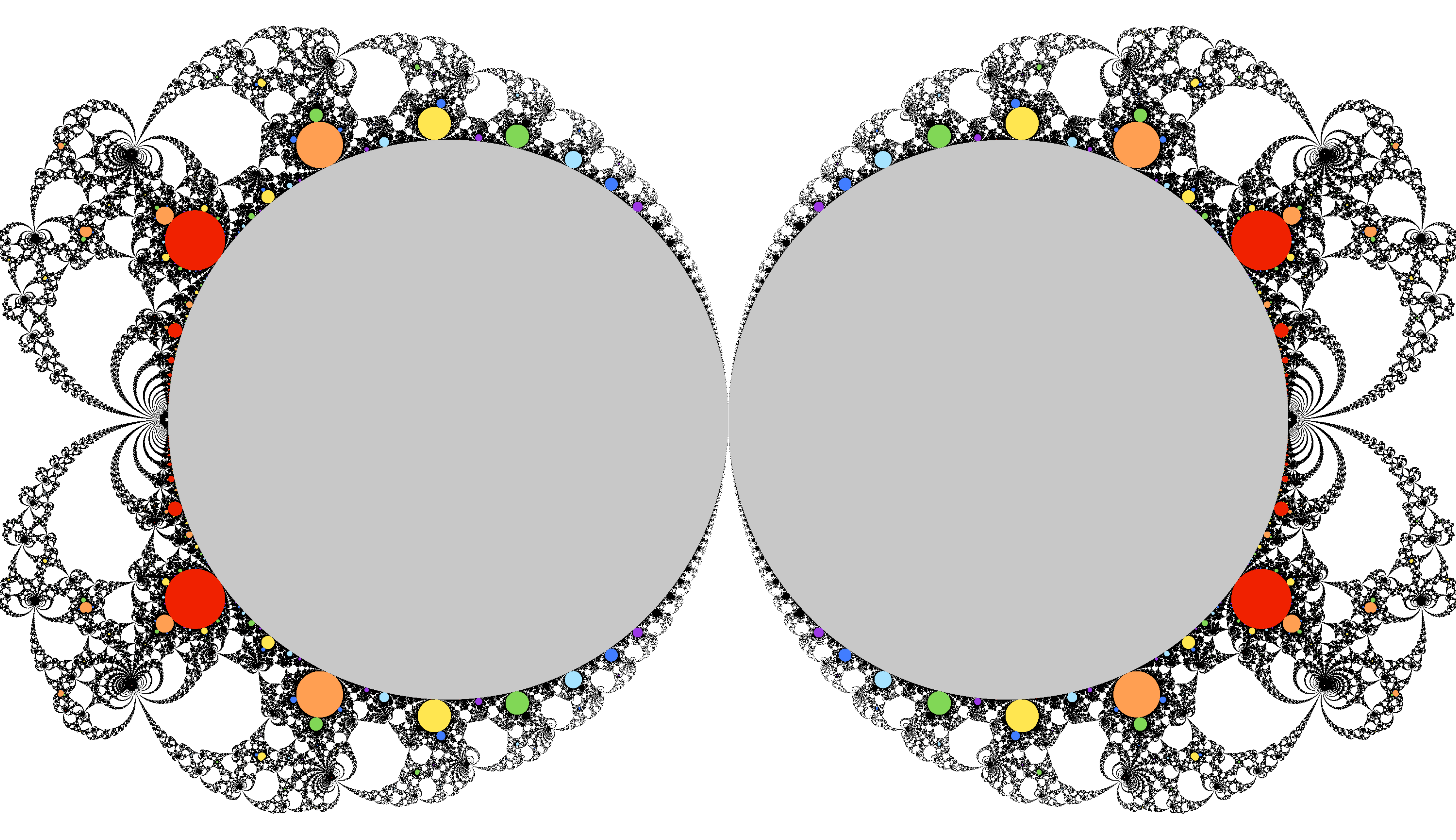}};
        \begin{scope}[x={(image.south east)},y={(image.north west)}]
        \coordinate (a1) at ($ ({(-1+1.6)/5.2},{(0+1.5)/3}) $);
        \coordinate (a2) at ($ ({(1+1.6)/5.2},{(0+1.5)/3}) $);
        \coordinate (a3) at ($ ({(3+1.6)/5.2},{(0+1.5)/3}) $);
        \draw[line width=0.3pt, color = darkgray!80, densely dashed] ($ ({(-1+1.6)/5.2},{(0+1.5)/3}) $) -- ($ ({(3+1.6)/5.2},{(0+1.5)/3}) $);
        \draw ($ ({(0.04+1.6)/5.2},{(0.2+1.5)/3}) $) node[color=darkgray!80] {\small$\Omega^{\shortminus}$};
        \draw ($ ({(2.03+1.6)/5.2},{(0.2+1.5)/3}) $) node[color=darkgray!80] {\small$\Omega^{\shortplus}$};
        \fill[magenta!80] (a1) circle (1.5pt);
        \fill[darkgray!80] (a2) circle (1.5pt) node[below=3mm]{\small$1$};
        \fill[magenta!80] (a3) circle (1.5pt);
    \draw[samples=205,color=darkgray!80,domain=0.5:0,line width=0.3pt, densely dashed] plot ({( (4*pi*sin(deg(2*pi*\x))-cos(deg(2*pi*\x))+6*pi*pi+1)/(2*pi*sin(deg(2*pi*\x))-cos(deg(2*pi*\x))+2*pi*pi+1)  +1.6)/5.2}, {( -(4*pi*sin(deg(pi*\x))*sin(deg(pi*\x)))/(2*pi*sin(deg(2*pi*\x))-cos(deg(2*pi*\x))+2*pi*pi+1) +1.5)/3}) node[below left=7.5mm]{\tiny$\mathcal{R}^{\Omega^{\shortplus}}_{0,1}$};
    \draw[samples=205,color=darkgray!80,domain=-0.5:0,line width=0.3pt, densely dashed] plot ({( (4*pi*sin(deg(2*pi*\x))+cos(deg(2*pi*\x))-6*pi*pi-1)/(2*pi*sin(deg(2*pi*\x))+cos(deg(2*pi*\x))-2*pi*pi-1)  +1.6)/5.2}, {( -(4*pi*sin(deg(pi*\x))*sin(deg(pi*\x)))/(2*pi*sin(deg(2*pi*\x))+cos(deg(2*pi*\x))-2*pi*pi-1) +1.5)/3});
    
    \draw[samples=205,color=darkgray!80,domain=0.5:0,line width=0.3pt, densely dashed] plot ({( (8*pi*sin(deg(2*pi*\x))-cos(deg(2*pi*\x))+24*pi*pi+1)/(4*pi*sin(deg(2*pi*\x))-cos(deg(2*pi*\x))+8*pi*pi+1)  +1.6)/5.2}, {( -(8*pi*sin(deg(pi*\x))*sin(deg(pi*\x)))/(4*pi*sin(deg(2*pi*\x))-cos(deg(2*pi*\x))+8*pi*pi+1) +1.5)/3});
    \draw[samples=205,color=darkgray!80,domain=-0.5:0,line width=0.3pt, densely dashed] plot ({( (8*pi*sin(deg(2*pi*\x))+cos(deg(2*pi*\x))-24*pi*pi-1)/(4*pi*sin(deg(2*pi*\x))+cos(deg(2*pi*\x))-8*pi*pi-1)  +1.6)/5.2}, {( -(8*pi*sin(deg(pi*\x))*sin(deg(pi*\x)))/(4*pi*sin(deg(2*pi*\x))+cos(deg(2*pi*\x))-8*pi*pi-1) +1.5)/3});
    \draw[samples=205,color=darkgray!80,domain=0.5:0,line width=0.3pt, densely dashed] plot ({( -(cos(deg(2*pi*\x))+2*pi*pi-1)/(2*pi*sin(deg(2*pi*\x))-cos(deg(2*pi*\x))+2*pi*pi+1)  +1.6)/5.2}, {( -(4*pi*sin(deg(pi*\x))*sin(deg(pi*\x)))/(2*pi*sin(deg(2*pi*\x))-cos(deg(2*pi*\x))+2*pi*pi+1) +1.5)/3}) node[below right=7.05mm]{\tiny$\mathcal{R}^{\Omega^{\shortminus}}_{0,\shortminus 1}$};
    \draw[samples=205,color=darkgray!80,domain=-0.5:0,line width=0.3pt, densely dashed] plot ({( (cos(deg(2*pi*\x))+2*pi*pi-1)/(2*pi*sin(deg(2*pi*\x))+cos(deg(2*pi*\x))-2*pi*pi-1)  +1.6)/5.2}, {( -(4*pi*sin(deg(pi*\x))*sin(deg(pi*\x)))/(2*pi*sin(deg(2*pi*\x))+cos(deg(2*pi*\x))-2*pi*pi-1) +1.5)/3});
    
    \draw[samples=205,color=darkgray!80,domain=0.5:0,line width=0.3pt, densely dashed] plot ({( -(cos(deg(2*pi*\x))+8*pi*pi-1)/(4*pi*sin(deg(2*pi*\x))-cos(deg(2*pi*\x))+8*pi*pi+1)  +1.6)/5.2}, {( -(8*pi*sin(deg(pi*\x))*sin(deg(pi*\x)))/(4*pi*sin(deg(2*pi*\x))-cos(deg(2*pi*\x))+8*pi*pi+1) +1.5)/3});
    \draw[samples=205,color=darkgray!80,domain=-0.5:0,line width=0.3pt, densely dashed] plot ({( (cos(deg(2*pi*\x))+8*pi*pi-1)/(4*pi*sin(deg(2*pi*\x))+cos(deg(2*pi*\x))-8*pi*pi-1)  +1.6)/5.2}, {( -(8*pi*sin(deg(pi*\x))*sin(deg(pi*\x)))/(4*pi*sin(deg(2*pi*\x))+cos(deg(2*pi*\x))-8*pi*pi-1) +1.5)/3});

        \end{scope}
        \end{tikzpicture}\vspace{-1mm}
        \caption{The set $\mathcal{\widetilde{M}}$ for the parameter $\mu:= \frac{\lambda+2\pi i}{\lambda}$ (same coloring as in Figures \ref{fig:BakerWDf_fixed} and \ref{fig:MandelFrot2}). We indicate the internal rays $\mathcal{R}^{\Omega^\pm}_{0,k}$ of argument zero in $\Omega^\pm$ for $k\in\{0,\pm 1,\pm 2\}$, following \pref{rem:SubhypBD}.} 
        \label{fig:MandelPlogD}
    \end{figure}
    \vspace{-2.5mm}

    \noindent In this situation, i.e. when $\lambda\in\Omega^-$ (resp. $\lambda\in\Omega^+$), we know that the basin of attraction $V$ of $0$ (resp. $\infty$), which is a Picard exceptional value, lifts via $\exp_1$ to a simply-connected Baker domain of the Newton map $N_\lambda$ (see \pref{ex:BakerWD_SuperCoexistence}) of infinite degree, as $V$ contains a logarithmic tract of $g_\lambda$ by \pref{rem:logarithmicSing}. We note that the landing points on $\partial\Omega^\pm$ of internal rays of rational (resp. Brjuno-type) arguments, give rise to a wandering (resp. Baker) domain of $N_\lambda$ as follows from \pref{thm:logFatouJulia} (see also the arguments in \pref{ex:LiftParabolicBR}).
\end{remark}

We point out that the internal rays of argument $\theta=0$ in $\Omega^\pm$ do not terminate at a cusp, but rather at the root of a component of $\widetilde{\mathcal{M}}$ of period $1$, which we denote by $\Omega_{0,k}^\pm$ if it is attached to $\Omega^\pm$ at $\mathcal{R}^{\Omega^\pm}_{0,k}(1)$, $k\in\ZZ^*$.

\begin{remark}[Some components of $\widetilde{\mathcal{M}}$ leading to wandering domains]
    \label{rem:SubhypWD}
    We observe that for every $\lambda\in\Omega_{0,k}^\pm$, $k\in\ZZ^*$, the free critical point of $g_\lambda$ is attracted to a fixed point $w_{\pm k}^*\neq 1$ of the form \pref{eq:ProjPseudoFixedG}, which is clearly the projection under $\exp_1$ of a $(1,\pm k)$-pseudoperiodic point of the Newton map $N_\lambda$. In fact, for any $\sigma\in\ZZ^*$, $g_\lambda'(w_{\sigma}^*)=1$ if and only if $\lambda\in\big\{\mathcal{R}^{\Omega^+}_{0,\sigma}(1),\mathcal{R}^{\Omega^-}_{0,-\sigma}(1)\big\}$, which means that a transcritical bifurcation between such a fixed point and one of the fixed asymptotic values of $g_\lambda$ occurs at the ends of rays of argument zero in $\Omega^\pm$.

    \noindent The multiplier map \vspace{-1mm}
    \begin{equation}
        \label{eq:multsubhypWD}
        \rho_{\Omega_{0,k}^\pm}^{}(\lambda):= g_\lambda'(w_{\pm k}^*), \quad \mbox{ where } \quad w_{\pm k}^*= \frac{1\pm (\lambda-\pi i)k}{1\pm (\lambda+\pi i)k}, \vspace{-1mm}
    \end{equation}
    provides a foliation of these subhyperbolic components by rays (those mapped to radial segments of $\DD$), which can be derived from \pref{eq:ProjPseudoFixedG} as usual. Note that $C_\lambda=w_{\sigma}^*$ at the center of $\Omega_{0,\pm\sigma}^\pm$, which is exactly the value $\lambda_{\sigma}^\pm$ given by \pref{eq:WD_fixedOmega1} in \pref{ex:WD_SuperCoexistence}, with $C_\lambda\in\RR_{}^{-}$. In analogy to the polynomial case (see e.g. \cite{Milnor2000}), for each $q\in\NN^*$, the internal rays of rational argument $\sfrac{s}{q}$ (in lowest terms, with $s>0$) in $\Omega_{0,k}^\pm$, land at period $q$-tupling bifurcation parameters, which are roots of (satellite) subhyperbolic component of $\widetilde{\mathcal{M}}$ of period $q$.

    \noindent We observe that the $q$-periodic cycle which attracts the free critical point at $C_\lambda$ in these satellite components, lifts via $\exp_1$ to a collection of pseudoperiodic points of type $(q,\pm q k)$ of the Newton map $N_\lambda$. For example, when $\lambda$ crosses the value at the end of the internal ray of argument $\sfrac{1}{3}$ in $\Omega_{0,-1}^-$ (indeed landing at $-\pi i-1$ by \pref{rem:SubhypBD}), the fixed point at $w_{1}^*$ becomes repelling and $C_\lambda$ is then attracted to a $3$-periodic cycle. The points in this $3$-cycle happen to be the projection of $(3,1)$-pseudoperiodic points of $N_\lambda$ (escaping to $\infty$ under iteration; see \pref{prop:IteratesPseudo}). Hence, the immediate basin of attraction of such a cycle lifts to a chain of wandering domains of $N_\lambda$, which looks like a menagerie of Douady rabbits as shown in \pref{fig:furtherNf} (\textit{left}).
\end{remark}

\vspace{-1.5mm}
\begin{figure}[H] 
    \centering
    \hfill
    \begin{tikzpicture}
    \node[anchor=south west,inner sep=0] (image) at (0,0) {\includegraphics[width = 0.48\textwidth]{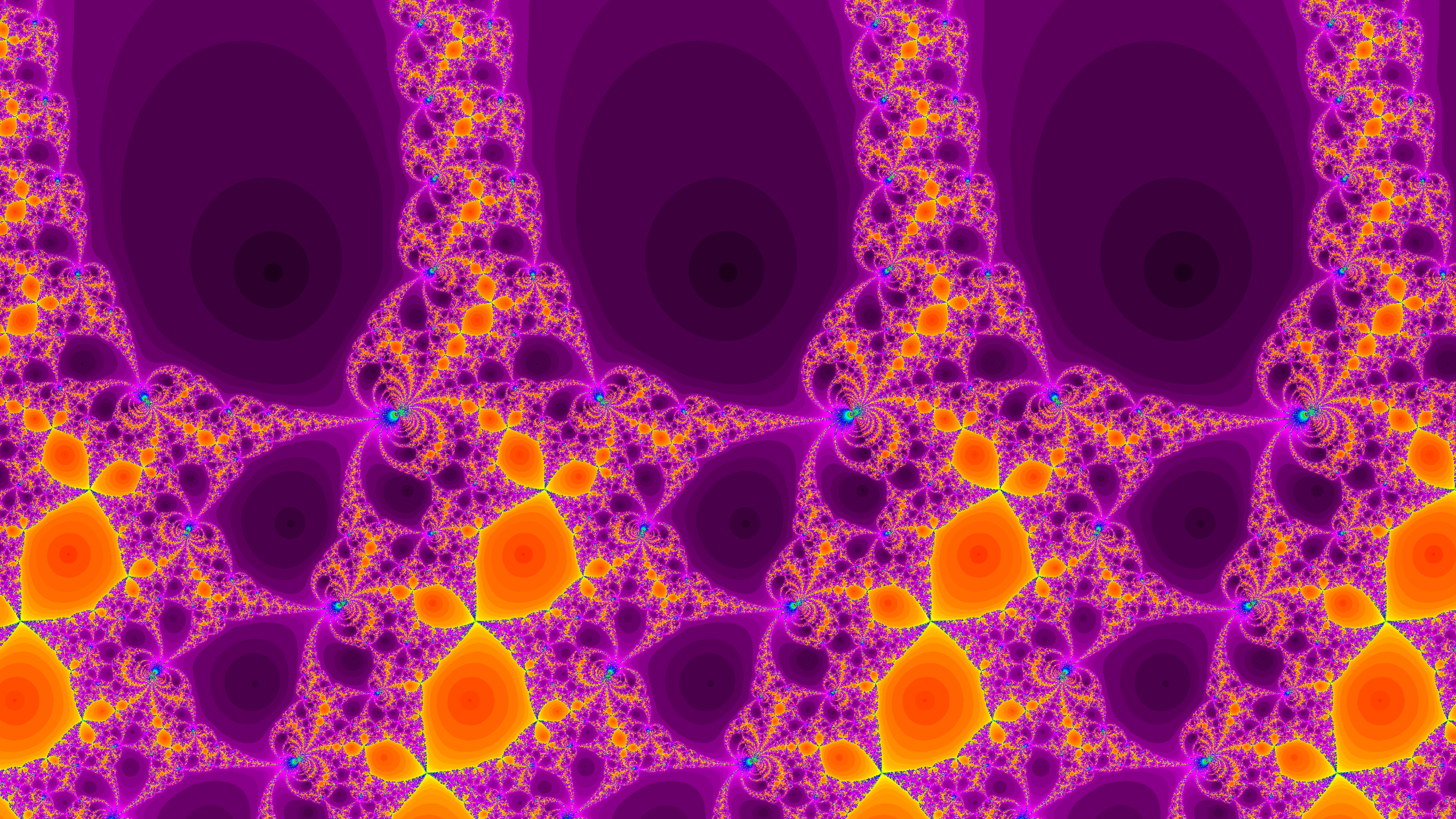}};
    \begin{scope}[x={(image.south east)},y={(image.north west)}]
        \coordinate (a) at ($ ({(0+1.6)/3.2},{(0+1.2)/1.8}) $);
        \coordinate (a2) at ($ ({(1+1.6)/3.2},{(0+1.2)/1.8}) $);
        \coordinate (a3) at ($ ({(-1+1.6)/3.2},{(0+1.2)/1.8}) $);
        \coordinate (b1) at ($ ({(-1.450175+1.6)/3.2},{(-0.619038+1.2)/1.8}) $);
        \coordinate (b2) at ($ ({(-0.328572+1.6)/3.2},{(-0.448625+1.2)/1.8}) $);
        \coordinate (b3) at ($ ({(0.541862+1.6)/3.2},{(-0.399623+1.2)/1.8}) $);
        \coordinate (b4) at ($ ({(1.549825+1.6)/3.2},{(-0.619038+1.2)/1.8}) $);
        \fill[white!80] (a) circle (0.9pt) node[above]{\tiny$0$};
        \fill[white!80] (a2) circle (0.9pt) node[above]{\tiny$1$};
        \fill[white!80] (a3) circle (0.9pt) node[above]{\tiny\hspace{-0.55em}$\shortminus 1$};
        \draw[line width=0.5pt,color=white,-stealth] (b1) to[bend right] (b2);
        \draw[line width=0.5pt,color=white,-stealth] (b2) to[bend right] (b3);
        \draw[line width=0.5pt,color=white,-stealth] (b3) to[bend right] (b4);
        \draw (b1) node[color=black,scale=1] {\tiny$\star$} node[color=black,below]{\tiny$\widetilde{z}\hspace{0.1mm} \ \ $};
        \draw (b2) node[color=black,scale=1] {\tiny$\star$};
        \draw (b3) node[color=black,scale=1] {\tiny$\star$};
        \draw (b4) node[color=black,scale=1] {\tiny$\star$} node[color=black,below]{\tiny$\widetilde{z}\shortplus 3 \quad $};
        \end{scope}
    \end{tikzpicture}
    \hfill
    \begin{tikzpicture}
    \node[anchor=south west,inner sep=0] (image) at (0,0) {\includegraphics[width = 0.48\textwidth]{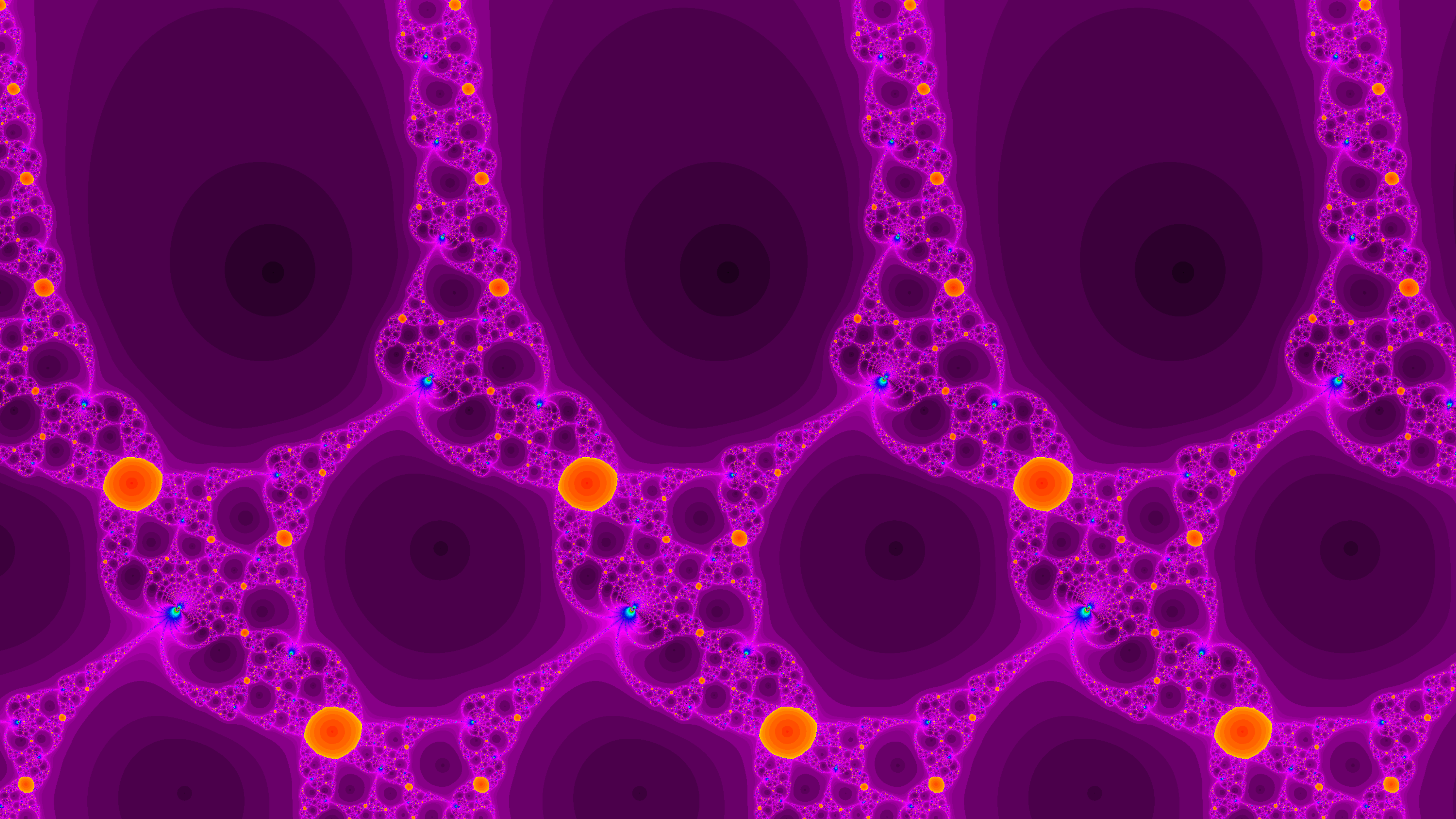}};
    \begin{scope}[x={(image.south east)},y={(image.north west)}]
        \coordinate (a) at ($ ({(0+1.6)/3.2},{(0+1.2)/1.8}) $);
        \coordinate (a2) at ($ ({(1+1.6)/3.2},{(0+1.2)/1.8}) $);
        \coordinate (a3) at ($ ({(-1+1.6)/3.2},{(0+1.2)/1.8}) $);
        \coordinate (b1) at ($ ({(-0.31074+1.6)/3.2},{(-0.462754+1.2)/1.8}) $);
        \coordinate (c1) at ($ ({(0.284204+1.6)/3.2},{(-0.101985+1.2)/1.8}) $);
        \coordinate (bp1) at ($ ({(-0.31074+1+1.6)/3.2},{(-0.462754+1.2)/1.8}) $);
        \coordinate (cp1) at ($ ({(0.284204+1+1.6)/3.2},{(-0.101985+1.2)/1.8}) $);
        \coordinate (bm1) at ($ ({(-0.31074-1+1.6)/3.2},{(-0.462754+1.2)/1.8}) $);
        \coordinate (cm1) at ($ ({(0.284204-1+1.6)/3.2},{(-0.101985+1.2)/1.8}) $);
        \fill[white!80] (a) circle (0.9pt) node[above]{\tiny$0$};
        \fill[white!80] (a2) circle (0.9pt) node[above]{\tiny$1$};
        \fill[white!80] (a3) circle (0.9pt) node[above]{\tiny\hspace{-0.55em}$\shortminus 1$};
        \draw[line width=0.5pt,color=white,-stealth] (b1) to[bend right] (c1);
        \draw[line width=0.5pt,color=white,-stealth] (c1) to[bend right] (b1);
        \draw[line width=0.5pt,color=white,-stealth] (bp1) to[bend right] (cp1);
        \draw[line width=0.5pt,color=white,-stealth] (cp1) to[bend right] (bp1);
        \draw[line width=0.5pt,color=white,-stealth] (bm1) to[bend right] (cm1);
        \draw[line width=0.5pt,color=white,-stealth] (cm1) to[bend right] (bm1);
        \fill[black] (b1) circle (0.6pt);
        \fill[black] (bp1) circle (0.6pt);
        \fill[black] (bm1) circle (0.6pt);
        \fill[black] (c1) circle (0.2pt);
        \fill[black] (cp1) circle (0.2pt);
        \fill[black] (cm1) circle (0.2pt);
        \end{scope}
    \end{tikzpicture}
    \hfill\vspace{-0.5mm}
    \caption{\textit{Left (dynamical plane of $N_\lambda$ for $\lambda\approx -0.833-2.889 i$)}: The basins of attraction (in purple) of the fixed points of $N_\lambda$ at the integers coexist with a chain of simply-connected wandering domains (in orange), containing a $(3,3)$-pseudoperiodic point $\widetilde{z}$ of $N_\lambda$ which projects to some superattracting $3$-periodic point of $g_\lambda$; see \pref{rem:SubhypWD}. 
    \textit{Right (dynamical plane of $N_\lambda$ for $\lambda\approx -0.924-2.256 i$)}: The basins of the fixed points of $N_\lambda$ coexist now with infinitely many $2$-cycles of immediate superattracting basins (in orange). Ranges: $[-1.5,1.5] \hspace{-0.3mm}\times\hspace{-0.3mm} [-1.2,0.6]$. The color palettes refer to the speed of convergence to the periodic points for $g_{\lambda}$. The values of $\lambda$ are, respectively, at the center of the yellow (satellite) component and the orange (primitive) component of $\widetilde{\mathcal{M}}$ inside the cyan squares shown in \pref{fig:MandelFrot2}.}
    \label{fig:furtherNf}
\end{figure}\vspace{-3.5mm}

This kind of bifurcation phenomena can be also identified for subhyperbolic components of higher period. Following the previous remarks, we denote by $\Omega_{\sfrac{r}{p},k}^\pm$ the component of $\widetilde{\mathcal{M}}$ of period $p\geq 2$ (see \pref{fig:MandelFrot2}) which emerges from the ray of rational argument $\theta=\sfrac{r}{p}$ (with $1\leq r<p$, and $r$ coprime to $p$) landing at
\begin{equation}
    \label{eq:rationalRayP}
    \mathcal{R}_{\sfrac{r}{p},k}^{\Omega^\pm}(1) = \pm\bigg(\pi i - \frac{p}{r+k p}\bigg).
\end{equation}
Then, as $\lambda$ goes from $\Omega^\pm$ to $\Omega_{\sfrac{r}{p},k}^\pm$ through this value, a Baker domain of the Newton's method $N_\lambda$ turns into a chain of wandering domains, coexisting with the infinitely many basins of its fixed points. Our observations indicate that the bulb $\Omega_{\sfrac{r}{p},k}^\pm$ gives rise to a wandering domain $U$ of $N_\lambda$ such that $N_\lambda^p(U)\subset U\mp (r+pk)$ via the lifting method, which shrinks as it undergoes successive period $q$-tupling bifurcations from there. 

In contrast to subhyperbolic components of period $1$ in $\widetilde{\mathcal{M}}$, which always generate wandering domains for the Newton's method of the entire function $F_\lambda$, the components of higher period $p$ may lead to $p$-cycles of immediate attracting basins, alongside the unbounded invariant basins of the roots of $F_\lambda$, as illustrated in \pref{fig:furtherNf} (\textit{right}) for $p=2$. Our numerical inspection suggests that this occurs for subhyperbolic components of $\widetilde{\mathcal{M}}$ of primitive type whose root (a cusp not lying on the boundary of another component) happens to be accessible from the central capture component $\Omega^\dagger$ (the one containing $\lambda=0$).

We believe that these analogies and observations on $\widetilde{\mathcal{M}}$ are worth further exploration, which is nevertheless out of the scope of this paper.
%
\printbibliography
%
\end{document}
\typeout{get arXiv to do 4 passes: Label(s) may have changed. Rerun}